\newcolumntype {Q}{>{$\displaystyle}l<{$}}
\newcolumntype {A}{>{$}c <{$}}
\newcommand{\bel}{\begin{eqnarray}\label}
\newcommand{\eel}{\end{eqnarray}}
\newcommand{\bes}{\begin{eqnarray*}}
\newcommand{\ees}{\end{eqnarray*}}
\numberwithin{equation}{section}
\numberwithin{theorem}{section}
\numberwithin{corollary}{section}
\numberwithin{asmp}{section}
\numberwithin{definition}{section}
\def\ind{{\mathds 1}}
\def\cov{{\rm Cov}}
\def\var{{\rm Var}}
\def\reals{{\mathbb R}}
\def\as{{\rm a.s.}}
\def\card{{\rm card}}
\def\ken{{\rm Ken}}
\def\ap{{\rm AP}}
\def\sgn{{\rm sgn}}
\def\bi{\boldsymbol{i}}
\def\bj{\boldsymbol{j}}
\def\bk{\boldsymbol{k}}
\def\br{\boldsymbol{r}}
\def\bs{\boldsymbol{s}}
\def\Xbi{X_{\boldsymbol{i}}}
\def\Xbj{X_{\boldsymbol{j}}}
\def\Xbr{X_{\boldsymbol{r}}}
\def\Xbs{X_{\boldsymbol{s}}}
\begin{document}

\setlength{\abovedisplayskip}{5pt}
\setlength{\belowdisplayskip}{5pt}
\setlength{\abovedisplayshortskip}{5pt}
\setlength{\belowdisplayshortskip}{5pt}

\title{\LARGE On inference validity of weighted U-statistics under data heterogeneity}

\author{Fang Han\thanks{Department of Statistics, University of Washington, Seattle, WA 98195; e-mail: {\tt fanghan@uw.edu}}~~~and~~Tianchen Qian\thanks{Department of Statistics, Harvard University, Cambridge, MA 02138; e-mail: {\tt qiantianchen@fas.harvard.edu}}}

\date{}

\maketitle

\begin{abstract} 
Motivated by challenges on studying a new correlation measurement being popularized in evaluating online ranking algorithms' performance, this manuscript explores the validity of uncertainty assessment for weighted U-statistics. Without any commonly adopted assumption, we verify Efron's bootstrap and a new resampling procedure's inference validity. Specifically, in its full generality, our theory allows both kernels and weights asymmetric and data points not identically distributed, which are all new issues that historically have not been addressed. For achieving strict generalization, for example, we have to carefully control the order of the ``degenerate" term in U-statistics which are no longer degenerate under the empirical measure for non-i.i.d. data. 
Our result applies to the motivating task, giving the region at which solid statistical inference can be made. 
\end{abstract}

{\bf Keywords:} weighted U-statistics, nondegeneracy, bootstrap inference, data heterogeneity, rank correlation, average-precision correlation.



\section{Introduction} \label{sec:introduction}

This manuscript studies the following general weighted U-statistic of degree $m$:
\begin{align}\label{eq:mainU}
U_n = \frac{(n-m)!}{n!}\sum_{\substack{1\leq i_1,i_2,\ldots,i_m\leq n:\\ i_j\ne i_k~{\rm if}~j\ne k}}a_n(i_1,\ldots,i_m)h_n(X_{i_1},\ldots,X_{i_m}).
\end{align}
Here we assume $X_1,\ldots,X_n$ are independent but not necessarily identically distributed random variables, taking values in a measurable space $(\cX,\cB_{\cX})$ \citep{korolyuk2013theory}. 
The weight function $a_n(\cdot)$ and kernel function $h_n(\cdot)$ are both possibly asymmetric, and they are both allowed to be sample size dependent.

Our study on weighted U-statistics is motivated from the following new correlation measurement popularized in the information retrieval area \citep{yilmaz2008new}. It is formulated as a weighted U-statistic  of asymmetric kernels and weights:
\begin{align}\label{eq:tauAP}
\tau^{\rm AP}:=\frac{2}{n-1}\sum_{i=2}^n \frac{\sum_{j=1}^{i-1}\ind(X_j>X_i)}{i-1}-1.
\end{align}
Here $\ind(\cdot)$ represents the indicator function and $X_1,\ldots,X_n$ are specified to be real-valued. For this specific example, $X_1,\ldots, X_n$ correspond to the scores the ranking machine gives for each online page, aligned by the rankings of human labels. The data points $X_1,\ldots,X_n$ are usually modeled by a location-scale model, and are usually non-i.i.d.. The statistic in \eqref{eq:tauAP}, named average-precision (AP) correlation, aims to evaluate the performance of any given online ranking algorithm 
by calculating a reweighted rank correlation measurement between the algorithm's rankings, while ``giving more weights to the errors at high rankings". For the AP correlation, it is desirable to derive confidence intervals for solid inference. 

Obviously, $\tau^{\rm AP}$ is an extension to the Kendall's tau statistic:
\begin{align}\label{eq:tau}
\tau^{\rm Ken}:=\frac{2}{n(n-1)}\sum_{i\ne j}\Big\{\ind(X_i>X_j)\ind(i<j)+\ind(X_i<X_j)\ind(i>j)\Big\}-1.
\end{align}
Compared to $\tau^{\rm Ken}$, the analysis of $\tau^{\rm AP}$ is much more involved, but naturally falls into the application regime of our theory.

The analysis of unweighted U-statistics (i.e., $a_n(\cdot)\equiv 1$) has a long history. There has been a vast literature on evaluating their asymptotic behaviors since the seminal paper of \cite{hoeffding1948ustat}. Specifically, regarding the simple independent and identically distributed (i.i.d.) setting, inference results have been summarized in \cite{lee1990u}, \cite{serfling2009approximation}, and \cite{korolyuk2013theory}. For extensions, \cite{lee1990u} proved the asymptotic normality under a Lyapunov-type non-i.i.d. condition. \cite{yoshihara1976limiting} and \cite{dehling2010central} derived central limit theorem and (block) bootstrap inference validity for stationary weakly dependent time series. \cite{csorgHo2013asymptotics} proved the $m$-out-of-$n$ bootstrap inference validity. 

Weighted U-statistic is comparably less touched in the literature. Here, under the i.i.d. setting, \cite{shapiro1979asymptotic} and \cite{oneil1993weighted} conducted asymptotic analysis for weighted U-statistics of degree two. \cite{major1994asymptotic} and \cite{rifi2000asymptotic} made extensions to weighted U-statistics of degree $m\geq 2$, with focus on the degenerate cases. \cite{hsing2004weighted} relaxed the independence assumption, proving the asymptotic normality for a wide range of stationary stochastic processes. Recently, \cite{zhou2014inference} generalized the results in \cite{hsing2004weighted}, proving central and noncentral limit theorems for a class of nonstationary time series. 

Despite the above substantial advances, 
i.i.d. or stationary assumption is commonly posed, especially for proving Efron's bootstrap inference validity.   A notable exception is \cite{zhou2014inference}, who established central limit theorem for nonstationary time series. However, bootstrap inference is not discussed, and the regularity conditions therein are too strong to include statistics like $\tau^{\rm AP}$. In addition, the kernels and weights are required to be symmetric. 

Motivated from our study on the AP correlation, this manuscript aims to fill the above gaps. In particular, we build unified theory for analyzing nondegenerate weighted U-statistics, namely, establishing sufficient conditions for their asymptotic normality and bootstrap inference validity. Both Efron's bootstrap and a new resampling procedure stemmed from \cite{politis1994large} and \cite{bickel1997resampling} are considered. For this, we waive the i.i.d. assumptions, allowing researchers to analyze statistics like $\tau^{\rm AP}$ in practical settings. In addition, our analysis allows both the kernels and weights to be asymmetric.

\subsection{Other related work}

Our results are very related to bootstrap inference under data heterogeneity. In \cite{liu1988bootstrap}, Regina Liu pioneered the study on Efron's bootstrap inference validity for non-i.i.d. models. Her results showed that bootstrap is robust to these specific  non-i.i.d. settings with common locations (means). However, bootstrap is very sensitive to mean differences. The inference validity is captured by a function of $\{\mu_i:=E X_i\}_{i=1}^n$, which she called ``heterogeneity factors" \citep{liu1988bootstrap,liu1995using}. For example, for the sample mean, at the worse case, the distance between the largest and smallest means needs to shrink to zero as $n\to \infty$ for bootstrap consistency. \cite{mammen2012bootstrap} summarized the existing results, providing necessary and sufficient conditions of bootstrap validity for the sample-mean-type statistics under non-i.i.d. settings. 

Politis and Romano's subsampling \citep{politis1999subsampling} and many other resampling schemes \citep{bickel1997resampling} are appealing alternatives to Efron's bootstrap. They are designed to correct the bootstrap inference inconsistency problem in many different settings, where the data could be, for example, dependent or heavy-tailed. In this manuscript, we examine a new resampling procedure's inference validity for weighted U-statistics. 



\subsection{Notation}

\label{sec:notation}

Let $\mathbb{R}$ be the set of real numbers, and $\mathbb{Z}$ be the set of integers. For a positive integer $n$, we write $[n]=\{a\in\mathbb{Z}:1\leq a\leq n\}$. For any set $\cA$, let $\card(\cA)$ represent the cardinality of $\cA$. Let $\stackrel{d}{\to}$ denote ``convergence in distribution", and $\stackrel{P}{\to}$ denote ``convergence in probability". Let ``$\as$" be the abbreviation of ``almost surely''. 
Let $\Phi(t)$ be the cumulative distribution function of the standard Gaussian. 
For two positive integers $m<n$, define 
\[
\binom{n}{m}=\frac{n!}{(n-m)!m!},
\]
where $n!$ represents the factorial of $n$. Let $C$ be a generic absolute positive constant, whose actual value may vary at different locations. For any two real sequences $\{a_n\}$ and $\{b_n\}$, we write $a_n\lesssim b_n$, or equivalently $b_n\gtrsim a_n$, if there exists an absolute constant $C$ such that $|a_n|\leq C|b_n|$ for all sufficiently large $n$. We write $a_n\asymp b_n$ if both $a_n\lesssim b_n$ and $a_n\gtrsim b_n$ hold. We write $a_n\gnsim b_n$, or equivalently $b_n\lnsim a_n$, if $a_n\gtrsim b_n$ holds, but $a_n\lesssim b_n$ does not. We write $a_n=O(b_n)$ if $a_n\lesssim b_n$, and $a_n=o(b_n)$ if $a_n=O(b_n)$ and $b_n\ne O(a_n)$. We write $a_n=O_{P}(b_n)$ if $a_n/b_n$ is stochastically bounded, that is, for any $\epsilon > 0$, there exists a finite $M>0$ and a finite $N>0$ such that $P(|a_n /b_n| > M) < \epsilon$ for all $n>N$. We write $a_n=o_{P}(b_n)$ if for any $\epsilon > 0$, $\lim_{n\to\infty} P(|a_n / b_n| \geq \epsilon) = 0$.

\subsection{Structure of the manuscript}

The rest of the manuscript is organized as follows. In Section \ref{sec:main} we provide the unified theory for asymmetric weighted U-statistic, deriving central limit theorem, bootstrap, and a new resampling procedure's inference validity under data non-i.i.d. settings. In Section \ref{sec:application}, we apply the developed theory to explore the inference validity of Kendall's tau in \eqref{eq:tau} and AP correlation in \eqref{eq:tauAP}. All proofs are relegated to Appendix.

\section{Main results}\label{sec:main}

Throughout the manuscript, we focus on the following triangular array setting: Assume we have $n$ independent random variables $\{X_{n,i}\},n\geq1,1\leq i\leq n$. Each $X_{n,i}$ follows the distribution $P_{n,i}$. The elements in $\{P_{n,i}, i\in[n]\}$ are not necessarily equal to each other. When $n$ increases, $P_{n,i}$ could possibly change. For notational simplicity, in the sequel we drop $n$ in the subscripts of $X_{n,i}$ and $P_{n,i}$ when no confusion could be made. 

We are focused on the following weighted U-statistic of degree $m$, with weight function $a(\cdot):\mathbb{Z}^m\to\reals$ and kernel $h(\cdot):\cX^m\to\reals$: 
\begin{align}\label{eq:def-Un}
U_n = U_n(X_1,\ldots,X_n) =\frac{(n-m)!}{n!}\sum_{I_n^m}a_n(i_1,\ldots,i_m)h_n(X_{i_1},\ldots,X_{i_m}).
\end{align}
Here the summation is over all possible $m$ elements in $[n]$ without overlap:
\[
I_n^m:=\Big\{ 1\leq i_1, i_2, \ldots,i_m\leq n:i_j\neq i_k\mbox{ if }j\neq k\Big\} .
\]
Such $U_n$ is usually referred to as a weighted U-statistic in the literature \citep{serfling2009approximation}. We do not assume symmetry of $a_n(\cdot)$ or $h_n(\cdot)$ in their arguments.  For notation simplicity, in the sequel we omit the subscript $n$ in $a_n(\cdot)$ and $h_n(\cdot)$. 


Let's define 
\begin{equation}\label{eq:def-theta}
\theta(i_1,\ldots,i_m):=E\{h(X_{i_1},\ldots,X_{i_m})\}=\int h(y_1,\ldots,y_m)dP_{i_1}(y_1)\ldots dP_{i_m}(y_m)
\end{equation}
to be the population mean of $h(X_{i_1},\ldots,X_{i_m})$.
For any $l\in[m]$, define $\pi_l(\cdot;\cdot)$ to be a function that takes two arguments (a scalar and a vector of length $m-1$), and returns a vector of length $m$ by inserting the first argument into the $l$-th position of the second argument. Formally, we define
\[
\pi_l(y;y_1,y_2,\ldots,y_{m-1}):=(y_1,\ldots,y_{l-1},y,y_l,\ldots,y_{m-1}).
\]
We further define
\begin{align*}
a^{(l)}(i;i_1,i_2,\ldots,i_{m-1}) & :=a\{\pi_l(i;i_1,i_2,\ldots,i_{m-1})\},\\
h^{(l)}(x;x_1,\ldots,x_{m-1}) & :=h\{\pi_l(x;x_1,\ldots,x_{m-1})\},\\
\theta^{(l)}(i;i_1,i_2,\ldots,i_{m-1}) & :=\theta\{\pi_l(i;i_1,i_2,\ldots,i_{m-1})\}.
\end{align*}
Define the first order expansion of $h(\cdot)$ for each $X_i$, regarding the specific sequence $X_{i_1},\ldots,X_{i_{m-1}}$, to be:
\begin{align*}
h_{1,i;i_1,\ldots,i_{m-1}}(x) & :=\sum_{l=1}^ma^{(l)}(i;i_1,\ldots,i_{m-1})\big\{f_{i_1,\ldots,i_{m-1}}^{(l)}(x)-\theta^{(l)}(i;i_1,\ldots,i_{m-1})\big\},
\end{align*}
where 
\begin{align}
f_{i_1,\ldots,i_{m-1}}^{(l)}(x) & :=E_{i_1,\ldots,i_{m-1}}\{h^{(l)}(x;Y_1,\ldots,Y_{m-1})\}\nonumber  \\
 & = \int h^{(l)}(x;y_1,\ldots,y_{m-1})dP_{i_1}(y_1)\ldots dP_{i_{m-1}}(y_{m-1}).\label{eq:def-fl}
\end{align}
Define the first order expansion of $h(\cdot)$ for $X_i$ to be
\begin{align}
h_{1,i}(x) & :=\frac{(n-m)!}{(n-1)!}\sum_{I_{n-1}^{m-1}(-i)}h_{1,i;i_1,\ldots,i_{m-1}}(x),\label{eq:def-h1}
\end{align}
where the summation is over 
\begin{align*}
I_{n-1}^{m-1}(-i) & :=\Big\{ 1\leq i_1,\ldots,i_{m-1}\leq n:i_j\neq i_k\mbox{ if }j\neq k,\mbox{ and }i_j\neq i\mbox{ for all }j\in[m-1]\Big\} .
\end{align*}
For $l\in[m]$, we write $(i_1,\ldots,i_m)\backslash i_l:=(i_1,\ldots,i_{l-1},i_{l+1},\ldots,i_m)$,
and define 
\begin{align}\label{eq:def-h2}
h_{2;i_1,\ldots,i_m}(x_1,\ldots,x_m) & :=h(x_1,\ldots,x_m)-\sum_{l=1}^mf_{(i_1,\ldots,i_m)\backslash i_l}^{(l)}(x_l)+(m-1)\theta(i_1,\ldots,i_m),
\end{align}
where by (\ref{eq:def-fl}) we have 
\begin{align*}
f_{(i_1,\ldots,i_m)\backslash i_l}^{(l)}(x) & =\int h(y_1,\ldots,y_{l-1},x,y_{l+1},\ldots,y_m)dP_{i_1}(y_1)\ldots dP_{i_{l-1}}(y_{l-1})dP_{i_{l+1}}(y_{l+1})\ldots dP_{i_m}(y_m).
\end{align*}

Before presenting the main theorem, we have to introduce more notation on the weight function $a(\cdot)$. For $K,q\in\mathbb{Z}$ with $K\geq2$ and $0\leq q\leq m$, let $(I_n^m)_{\geq q}^{\otimes K}$ be the collection of all $K$-dimensional index vectors from $I_n^m$ that share at least
$q$ common indices: 
\[
(I_n^m)_{\geq q}^{\otimes K}:=\Big\{(i_1^{(1)},\ldots,i_m^{(1)})\in I_n^m,\ldots,(i_1^{(K)},\ldots,i_m^{(K)})\in I_n^m:\card\Big(\bigcap_{k=1}^{K}\{i_1^{(k)},\ldots,i_m^{(k)}\}\Big)\geq q\Big\},
\]
and $(I_n^m)_{=q}^{\otimes K}$ be the collection of all $K$-dimensional index vectors from $I_n^m$ that share exactly $q$ indices in common:
\[
(I_n^m)_{=q}^{\otimes K}=\Big\{(i_1^{(1)},\ldots,i_m^{(1)})\in I_n^m,\ldots,(i_1^{(K)},\ldots,i_m^{(K)})\in I_n^m:\card\Big(\bigcap_{k=1}^{K}\{i_1^{(k)},\ldots,i_m^{(k)}\}\Big)=q\Big\}.
\]
With fixed $K,q,m$, it is easy to observe $\card\{(I_n^m)_{\geq q}^{\otimes K}\}\asymp\card\{(I_n^m)_{=q}^{\otimes K}\}$
as $n\to\infty$, and 
\[
\card\{(I_n^m)_{=q}^{\otimes K}\}\asymp\binom{n}{q}\binom{n-q}{m-q}\cdots\binom{n-(K-1)m-q}{m-q}\asymp n^{q+K(m-q)}.
\]
In particular, we have $\card\{(I_n^m)_{\geq2}^{\otimes2}\}\asymp n^{2m-2}$,
$\card\{(I_n^m)_{\geq1}^{\otimes2}\}\asymp n^{2m-1}$, and $\card\{(I_n^m)_{\geq1}^{\otimes3}\}\asymp n^{3m-2}$. Define the average weight, $A_{K,q}(n)$, as 
\begin{align}
A_{K,q}(n) & :=\frac{1}{\card\{(I_n^m)_{\geq q}^{\otimes K}\}}\sum_{(I_n^m)_{\geq q}^{\otimes K}}\left|a(i_1^{(1)},\ldots,i_m^{(1)})\cdots a(i_1^{(K)},\ldots,i_m^{(K)})\right|.\label{eq:def-A2}
\end{align}

The following theorem gives sufficient conditions on the weights and distributions of $\{X_i\}$ for guaranteeing $U_n$ to be asymptotically normal.
\begin{theorem}[Sufficient condition for asymptotic normality of $U_n$]
\label{thm:clt-degreem}
For each $n$, assume there exists a positive constant $M(n)>0$ only depending on $n$ such that 
\begin{align}
\max_{(i_1,\ldots,i_m)\in I_n^m}E\{h(X_{i_1},\ldots,X_{i_m})^{4}\} & \leq M(n).\label{eq:clt-degreem-condition-moment}
\end{align}
Define $V(n)=\var\{n^{-1}\sum_{i=1}^nh_{1,i}(X_i)\}$ with $h_{1,i}(\cdot)$ defined in (\ref{eq:def-h1}). Assume the following conditions hold:
\begin{align}
n^{-2}V(n)^{-1}A_{2,2}(n)M(n)^{1/2} & \to0,\label{eq:clt-degreem-condition-1}\\
n^{-2}V(n)^{-3/2}A_{3,1}(n)M(n)^{3/4} & \to0.\label{eq:clt-degreem-condition-2}
\end{align}
Then we have 
\begin{equation}
\var(U_n)/V(n)\to1,\label{eq:clt-degreem-variancetendtoone}
\end{equation}
and 
\begin{align}
\var(U_n)^{-1/2}\{U_n-E(U_n)\} & \stackrel{d}{\to}N(0,1).\label{eq:clt-degreem}
\end{align}
\end{theorem}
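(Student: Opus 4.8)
The plan is to run a Hoeffding-type decomposition tailored to heterogeneous data and asymmetric $a(\cdot),h(\cdot)$, peel off the linear projection, and then control the two resulting pieces separately --- a variance bound for the remainder and a Lyapunov central limit theorem for the linear part. First I would write $U_n-E(U_n)=\tfrac{(n-m)!}{n!}\sum_{I_n^m}a(i_1,\ldots,i_m)\{h(X_{i_1},\ldots,X_{i_m})-\theta(i_1,\ldots,i_m)\}$ and use the algebraic identity (immediate from \eqref{eq:def-h2}) $h(x_1,\ldots,x_m)-\theta(\bi)=\sum_{l=1}^m\{f^{(l)}_{(i_1,\ldots,i_m)\backslash i_l}(x_l)-\theta(\bi)\}+h_{2;i_1,\ldots,i_m}(x_1,\ldots,x_m)$ to split $U_n-E(U_n)=L_n+R_n$. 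Re-indexing the first sum by the variable $X_i$ occupying position $l$ and using $\tfrac{(n-m)!}{n!}=\tfrac1n\cdot\tfrac{(n-m)!}{(n-1)!}$ identifies the linear term as $L_n=\tfrac1n\sum_{i=1}^n h_{1,i}(X_i)$ with $h_{1,i}$ as in \eqref{eq:def-h1}, so that $\var(L_n)=V(n)$, while the remainder is $R_n=\tfrac{(n-m)!}{n!}\sum_{I_n^m}a(i_1,\ldots,i_m)h_{2;i_1,\ldots,i_m}(X_{i_1},\ldots,X_{i_m})$. Both have mean zero, since $E f^{(l)}_{(i_1,\ldots,i_m)\backslash i_l}(X_{i_l})=\theta(\bi)$ forces $E h_{1,i}(X_i)=E h_{2;\bi}(\Xbi)=0$.

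For the remainder, the key point --- and this is where heterogeneity must be handled with care --- is that $h_{2;\bi}$ is still first-order degenerate: conditioning on a single coordinate $X_{i_l}$ replaces $h$ by $f^{(l)}_{\bi\backslash i_l}$, leaves the $l$-th summand of $\sum_{l'}f^{(l')}_{\bi\backslash i_{l'}}$ unchanged, and replaces every other summand by $\theta(\bi)$, so the telescoping in \eqref{eq:def-h2} gives $E\{h_{2;\bi}(\Xbi)\mid X_{i_l}\}=0$. Consequently, in $\var(R_n)=\big(\tfrac{(n-m)!}{n!}\big)^2\sum_{\bi,\bj\in I_n^m}a(\bi)a(\bj)\cov\{h_{2;\bi}(\Xbi),h_{2;\bj}(\Xbj)\}$, every term with $\card(\{i_1,\ldots,i_m\}\cap\{j_1,\ldots,j_m\})\le 1$ vanishes by (conditional) independence, so only the set $(I_n^m)_{\ge2}^{\otimes2}$ contributes. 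Bounding $|\cov\{h_{2;\bi}(\Xbi),h_{2;\bj}(\Xbj)\}|$ by Cauchy--Schwarz and then $E\{h_{2;\bi}(\Xbi)^2\}$ --- via Jensen and \eqref{eq:clt-degreem-condition-moment} --- by a constant multiple of $M(n)^{1/2}$, and using $\card\{(I_n^m)_{\ge2}^{\otimes2}\}\asymp n^{2m-2}$ together with \eqref{eq:def-A2}, yields $\var(R_n)\lesssim n^{-2}A_{2,2}(n)M(n)^{1/2}$, hence $\var(R_n)/V(n)\to0$ by \eqref{eq:clt-degreem-condition-1}. Combining this with $|\cov(L_n,R_n)|\le\{V(n)\var(R_n)\}^{1/2}$ gives $\var(U_n)/V(n)\to1$, i.e.\ \eqref{eq:clt-degreem-variancetendtoone}.

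For the linear part, $h_{1,i}(X_i)$, $i\in[n]$, are independent with mean zero and total variance $V(n)$, so it suffices to verify the Lyapunov condition $n^{-3}V(n)^{-3/2}\sum_{i=1}^n E|h_{1,i}(X_i)|^3\to0$. Writing $h_{1,i}=\tfrac{(n-m)!}{(n-1)!}G_i$ with $G_i(x)=\sum_{\bi_-\in I_{n-1}^{m-1}(-i)}\sum_{l=1}^m a^{(l)}(i;\bi_-)\{f^{(l)}_{\bi_-}(x)-\theta^{(l)}(i;\bi_-)\}$, I would use $E|G_i(X_i)|^3\le\{E G_i(X_i)^2\}^{1/2}\{E G_i(X_i)^4\}^{1/2}$, expand the second and fourth moments into sums over pairs and quadruples of $\bi_-$'s, apply H\"older to each product of centered $f^{(l)}_{\bi_-}(X_i)$'s (each factor $\le M(n)^{1/4}$ by \eqref{eq:clt-degreem-condition-moment}), and peel off the weights factor-by-factor via $\sum_{l=1}^m|a^{(l)}(i;\bi_-)|$. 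This gives $E|h_{1,i}(X_i)|^3\lesssim\big(\tfrac{(n-m)!}{(n-1)!}\big)^3 M(n)^{3/4}b_i^3$, where $b_i:=\sum_{(i_1,\ldots,i_m)\in I_n^m:\ i\in\{i_1,\ldots,i_m\}}|a(i_1,\ldots,i_m)|$; then $\sum_{i=1}^n b_i^3\lesssim\sum_{(I_n^m)_{\ge1}^{\otimes3}}|a(\bi^{(1)})a(\bi^{(2)})a(\bi^{(3)})|\asymp n^{3m-2}A_{3,1}(n)$, so $\sum_{i=1}^n E|h_{1,i}(X_i)|^3\lesssim n\,A_{3,1}(n)M(n)^{3/4}$ and the Lyapunov ratio is $\lesssim n^{-2}V(n)^{-3/2}A_{3,1}(n)M(n)^{3/4}\to0$ by \eqref{eq:clt-degreem-condition-2}. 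Hence $V(n)^{-1/2}L_n\stackrel{d}{\to}N(0,1)$; since $\var(R_n)/V(n)\to0$ gives $V(n)^{-1/2}R_n\stackrel{P}{\to}0$ by Chebyshev, Slutsky delivers $V(n)^{-1/2}\{U_n-E(U_n)\}\stackrel{d}{\to}N(0,1)$, and $\var(U_n)/V(n)\to1$ upgrades this to \eqref{eq:clt-degreem}.

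\textbf{Main obstacle.} The conceptual crux is the first-order degeneracy of $h_{2;\bi}$ in this general, non-i.i.d.\ and asymmetric setting: it is precisely this that confines $\var(R_n)$ to index pairs sharing at least two coordinates --- hence to $A_{2,2}(n)$ --- and it is the ``careful control of the degenerate term'' referred to in the abstract; getting the centering (against the correct marginals $P_{i_1},\ldots,P_{i_m}$) and the bookkeeping for asymmetric $a,h$ exactly right is what makes this delicate. The main technical point is the third-moment estimate in the Lyapunov step: a naive reduction $E|h_{1,i}(X_i)|^3\le\{E h_{1,i}(X_i)^4\}^{3/4}$ would force an $A_{4,1}$-type quantity into the hypotheses, whereas the Cauchy--Schwarz split $E|G_i(X_i)|^3\le\{E G_i(X_i)^2\}^{1/2}\{E G_i(X_i)^4\}^{1/2}$ keeps the requirement at the sharp $A_{3,1}(n)$ order of \eqref{eq:clt-degreem-condition-2}.
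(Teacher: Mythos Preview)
Your proof is correct and follows essentially the same route as the paper: Hoeffding decomposition into $L_n+R_n$, first-order degeneracy $E\{h_{2;\bi}(\Xbi)\mid X_{i_l}\}=0$ to confine $\var(R_n)$ to $(I_n^m)_{\ge2}^{\otimes2}$ and hence $\var(R_n)\lesssim n^{-2}A_{2,2}(n)M(n)^{1/2}$, and Lyapunov CLT for $L_n$ via $\sum_iE|h_{1,i}(X_i)|^3\lesssim nA_{3,1}(n)M(n)^{3/4}$. Two cosmetic differences worth noting: the paper observes $\cov(L_n,R_n)=0$ exactly (immediate from the same conditional-mean-zero property), rather than bounding it by Cauchy--Schwarz as you do; and for the third-moment bound the paper expands the cube directly and applies H\"older termwise instead of your $E|G_i|^3\le(EG_i^2)^{1/2}(EG_i^4)^{1/2}$ split --- both arrive at the same $b_i^3$-type estimate and hence the same $A_{3,1}(n)$ conclusion.
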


The first step of the proof, which establishes a von-Mises-expansion type result, is simple yet inspiring. 
Of note, under i.i.d. settings, an analogous theorem has been (inexplicitly) stated in \cite{shapiro1979asymptotic}.

\begin{lemma}[Hoeffding's decomposition]
\label{lem:decomposition} With $h_{1,i}(\cdot)$ and $h_{2;i_1,\ldots,i_m}(\cdot)$
defined in (\ref{eq:def-h1}) and (\ref{eq:def-h2}), we have 
\begin{align}
U_n-E(U_n) & =\frac{1}{n}\sum_{i=1}^nh_{1,i}(X_i)+U_n(a,h_2),\label{eq:hoeffding-decomposition-degreem}
\end{align}
where
\begin{equation}
U_n(a,h_2):=\frac{(n-m)!}{n!}\sum_{I_n^m}a(i_1,\ldots,i_m)h_{2;i_1,\ldots,i_m}(X_{i_1},\ldots,X_{i_m}),\label{eq:def-Uh2}
\end{equation}
and for any $i,k\in[n]$ and $(i_1,\ldots,i_m)\in I_n^m$, 
\begin{align}
E\{h_{1,i}(X_i)\} & =0\label{eq:Eh1i},\\
E\{h_{2;i_1,\ldots,i_m}(X_{i_1},\ldots,X_{i_m})\mid X_k\} & =0~~\as.\label{eq:Eh2i}
\end{align}
\end{lemma}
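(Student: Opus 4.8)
The plan is to derive \eqref{eq:hoeffding-decomposition-degreem} by a purely algebraic rearrangement of the defining sum, and then to check \eqref{eq:Eh1i} and \eqref{eq:Eh2i} by iterated conditional expectations. First I would write, using \eqref{eq:def-Un}, \eqref{eq:def-theta}, and linearity of expectation,
\[
U_n - E(U_n) = \frac{(n-m)!}{n!}\sum_{I_n^m} a(i_1,\ldots,i_m)\bigl\{h(X_{i_1},\ldots,X_{i_m}) - \theta(i_1,\ldots,i_m)\bigr\}.
\]
The key elementary identity is that, unwinding the definition \eqref{eq:def-h2} of $h_{2;i_1,\ldots,i_m}$, for every fixed tuple $(i_1,\ldots,i_m)\in I_n^m$ and all $(x_1,\ldots,x_m)$,
\[
h(x_1,\ldots,x_m) - \theta(i_1,\ldots,i_m) = \sum_{l=1}^m\bigl\{f^{(l)}_{(i_1,\ldots,i_m)\backslash i_l}(x_l) - \theta(i_1,\ldots,i_m)\bigr\} + h_{2;i_1,\ldots,i_m}(x_1,\ldots,x_m).
\]
Substituting this into the display above and recognizing the contribution of $h_{2;i_1,\ldots,i_m}$ as exactly $U_n(a,h_2)$ from \eqref{eq:def-Uh2}, the proof of \eqref{eq:hoeffding-decomposition-degreem} reduces to showing that the first-order contribution equals $n^{-1}\sum_{i=1}^n h_{1,i}(X_i)$.

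For that term I would interchange the finite sum over $l\in[m]$ with the sum over $I_n^m$, and then for each fixed $l$ partition the sum over $I_n^m$ according to which index $i$ occupies slot $l$. For fixed $l$ and $i$, the insertion map $\pi_l(i;\cdot)$ is a bijection from $\{(i_1,\ldots,i_m)\in I_n^m: i_l=i\}$ onto $I_{n-1}^{m-1}(-i)$, and along it $a(i_1,\ldots,i_m) = a^{(l)}(i;(i_1,\ldots,i_m)\backslash i_l)$, $\theta(i_1,\ldots,i_m)=\theta^{(l)}(i;(i_1,\ldots,i_m)\backslash i_l)$, while the retained $f^{(l)}$ term is precisely the one appearing inside $h_{1,i;i_1,\ldots,i_{m-1}}$. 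Re-summing over $l$ then rebuilds $h_{1,i;j_1,\ldots,j_{m-1}}(X_i)$ as defined just before \eqref{eq:def-h1}; summing over $(j_1,\ldots,j_{m-1})\in I_{n-1}^{m-1}(-i)$ and invoking \eqref{eq:def-h1} produces $\tfrac{(n-1)!}{(n-m)!}\,h_{1,i}(X_i)$, and multiplying by the prefactor $\tfrac{(n-m)!}{n!}$ collapses the factorials to $\tfrac{1}{n}$, giving \eqref{eq:hoeffding-decomposition-degreem}.

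It remains to verify the zero-mean properties. For \eqref{eq:Eh1i} I would integrate \eqref{eq:def-h1} against $P_i$: by \eqref{eq:def-fl} and Fubini, $\int f^{(l)}_{i_1,\ldots,i_{m-1}}(x)\,dP_i(x) = \theta^{(l)}(i;i_1,\ldots,i_{m-1})$, so every summand defining $h_{1,i}$ has mean zero. For \eqref{eq:Eh2i} I would split on whether $k\in\{i_1,\ldots,i_m\}$. If $k\notin\{i_1,\ldots,i_m\}$, independence reduces the conditional expectation to the unconditional one, and the computation $E\{h_{2;i_1,\ldots,i_m}\} = \theta - m\theta + (m-1)\theta = 0$ (using $\int f^{(l)}_{(i_1,\ldots,i_m)\backslash i_l}(x)\,dP_{i_l}(x)=\theta(i_1,\ldots,i_m)$) finishes it. If $k=i_{l_0}$, independence of the coordinates leaves only the $l=l_0$ summand of $\sum_l f^{(l)}$ not integrated out, and that summand cancels exactly against $E\{h(X_{i_1},\ldots,X_{i_m})\mid X_{i_{l_0}}\}=f^{(l_0)}_{(i_1,\ldots,i_m)\backslash i_{l_0}}(X_{i_{l_0}})$, while $(m-1)\theta$ cancels the $m-1$ remaining copies of $\theta$.

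The step I expect to demand the most care is the reindexing of the first-order piece: since neither $a(\cdot)$ nor $h(\cdot)$ is assumed symmetric, one must keep precise track of which slot each retained index occupies, which is exactly what the maps $\pi_l$ and the superscript-$(l)$ objects $a^{(l)},\theta^{(l)},f^{(l)}$ are designed for — the symmetrization shortcut available in the classical i.i.d. argument is not. Everything else is routine bookkeeping with factorials together with independence and Fubini's theorem.
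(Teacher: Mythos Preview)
Your proposal is correct and follows essentially the same approach as the paper: both rest on the same reindexing of the first-order term via the bijection between $\{(i_1,\ldots,i_m)\in I_n^m:i_l=i\}$ and $I_{n-1}^{m-1}(-i)$, and on the same Fubini/independence computations for \eqref{eq:Eh1i} and \eqref{eq:Eh2i}. The only cosmetic difference is that the paper organizes the decomposition as $U_n-E(U_n)=\sum_i\{E(U_n\mid X_i)-E(U_n)\}+\text{remainder}$ and then identifies each piece, whereas you start from the algebraic identity $h-\theta=\sum_l(f^{(l)}-\theta)+h_2$ and read off $U_n(a,h_2)$ directly; the underlying bookkeeping is identical.
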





For putting Theorem \ref{thm:clt-degreem} appropriately in the literature, let's first give a brief review on the most relevant existing results. The first proof of asymptotic normality for (unweighted) nondegenerate U-statistics was given in \citet{hoeffding1948ustat}. \citet{grams1973} studied general unweighted U-statistics of degree $m\geq 2$ and bounded their central moments . The techniques therein also play a central role in our analysis. 
\cite{shapiro1979asymptotic} and \citet{oneil1993weighted} analyzed the asymptotic behavior of weighted U-statistics of degree 2. They assumed weight function $a(\cdot)$ symmetric. The above results all assume data i.i.d.-ness. For unweighted U-statistics, \cite{lee1990u} outlined an extension to non-i.i.d. data. 

Theorem \ref{thm:clt-degreem} is stronger than the results in the literature, allowing $a(\cdot)$ and $h(\cdot)$ asymmetric, and the $X_i$'s non-i.i.d.. 
By examining the proof, one can also easily check that, when the corresponding symmetry, boundedness, or i.i.d. assumptions are made, our results can reduce to the ones in \citet{hoeffding1948ustat}, \cite{shapiro1979asymptotic}, \citet{oneil1993weighted}, and \cite{lee1990u}. 
\begin{remark}
Condition \eqref{eq:clt-degreem-condition-1} is added to enforce domination of $n^{-1}\sum_{i=1}^nh_{1,i}(X_i)$ over $U_n(a,h_2)$ in \eqref{eq:hoeffding-decomposition-degreem}. Condition (\ref{eq:clt-degreem-condition-2}) evolves from the Lyapunov condition with $\delta=1$, which is readily weakened to the condition of a smaller $0<\delta<1$ or the Lindeberg-Feller condition. Condition \eqref{eq:clt-degreem-condition-moment} is made and could be weakened based on the same argument. For presentation clearness, we choose the current conditions. 
\end{remark}

Inferring the distribution of $U_n$ or approximating $\var(U_n)$ is usually challenging in practice. Resampling procedures are hence recommended. The rest of this section gives asymptotic results for Efron's bootstrap \citep{efron1979} and a new resampling procedure for approximating $\var(U_n)$. 

Due to the heterogeneity in $P_i$, it is well known that bootstrap could possibly no longer be consistent \citep{liu1988bootstrap}. However, it is still possible to recover bootstrap consistency by restricting the heterogeneity degree. 
But before that, let's first provide a theoretically interesting theorem. It states that, under very mild conditions, bootstrapped mean from the set $\{h_{1,i}(X_i):1\leq i\leq n\}$ approximates the distribution of $n^{-1}\sum_{i=1}^nh_{1,i}(X_i)$ consistently. This is consistent to the discovery in \cite{liu1988bootstrap} by noting that $E\{ h_{1,i}(X_i) \} = 0$ no matter how different $\{P_i\}_{i=1}^n$ are.

\begin{theorem}[Sufficient condition for bootstrapping main term to work]
 \label{thm:bootstrap-mainterm} Denote
\begin{equation}
\sigma_n^2:=\var(U_n).\label{eq:def-sigma2}
\end{equation}
Consider the term $n^{-1}\sum_{i=1}^nh_{1,i}(X_i)$ with $h_{1,i}(X_i)$ defined in (\ref{eq:def-h1}) and its bootstrapped version $n^{-1}\sum_{i=1}^n\{h_{1,i}(X_i)\}^*$, where conditional on $X_1,\ldots,X_n$ the $\{h_{1,i}(X_i)\}^*$'s are i.i.d. draws from the empirical distribution of $\{h_{1,j}(X_j):1\leq j\leq n\}$. Assume (\ref{eq:clt-degreem-condition-moment}) and (\ref{eq:clt-degreem-condition-2}) hold. In addition, assume
for every $\epsilon>0$, we have 
\begin{align}
\sup_{1\leq i\leq n}P\Big\{\Big|\frac{h_{1,i}(X_i)}{n\sigma_n}\Big|\geq\epsilon\Big\} & \to0,\label{eq:ustat-bootstrap-condition-1}\\
\sum_{i=1}^n\Big[E\Big\{\frac{h_{1,i}(X_i)}{n\sigma_n}\ind\Big(\Big|\frac{h_{1,i}(X_i)}{n\sigma_n}\Big|\leq\epsilon\Big)\Big\}\Big]^2 & \to0.\label{eq:ustat-bootstrap-condition-2}
\end{align}
Then 
\begin{align}
\sup_{t\in\mathbb{R}}\bigg|P^*\Big\{\sum_{i=1}^n\frac{\{h_{1,i}(X_i)\}^*}{n\sigma_n}-\sum_{i=1}^n\frac{h_{1,i}(X_i)}{n\sigma_n}\leq t\Big\} - P\Big\{\sum_{i=1}^n\frac{h_{1,i}(X_i)}{n\sigma_n}\leq t\Big\}\bigg| & \stackrel{P}{\to}0,\label{eq:ustat-mainterm-bootstrap-result}
\end{align}
where $P^*$ denotes the conditional probability given $X_1,\ldots,X_n$. If further (\ref{eq:clt-degreem-condition-1}) holds, then
\begin{equation}
\sup_{t\in\mathbb{R}}\bigg|P^*\Big\{\sum_{i=1}^n\frac{\{h_{1,i}(X_i)\}^*}{n\sigma_n}-\sum_{i=1}^n\frac{h_{1,i}(X_i)}{n\sigma_n}\leq t\Big\} - P\Big\{\var(U_n)^{-1/2}\{U_n-E(U_n)\}\leq t\Big\}\bigg|  \stackrel{P}{\to}0. \label{eq:ustat-mainterm-bootstrap-result-additional}
\end{equation}
\end{theorem}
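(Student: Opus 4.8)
\textbf{Proof plan for Theorem \ref{thm:bootstrap-mainterm}.}

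The plan is to reduce everything to a triangular-array version of Lindeberg's central limit theorem, applied twice: once to the original normalized sum $S_n := \sum_{i=1}^n h_{1,i}(X_i)/(n\sigma_n)$, and once, conditionally, to its bootstrap analogue. First I would record that $S_n$ consists of independent, mean-zero summands (by \eqref{eq:Eh1i}), and that its variance is $V(n)/\sigma_n^2$; under \eqref{eq:clt-degreem-condition-1} this tends to $1$ by Theorem \ref{thm:clt-degreem} (specifically \eqref{eq:clt-degreem-variancetendtoone}), while without \eqref{eq:clt-degreem-condition-1} it merely stays bounded away from $0$ and $\infty$ along the relevant scaling. Conditions \eqref{eq:ustat-bootstrap-condition-1} and \eqref{eq:ustat-bootstrap-condition-2} are precisely what is needed to verify the Lindeberg condition for $S_n$ after re-centering the truncated summands: the usual decomposition of $\sum_i E[(Z_{n,i})^2 \ind(|Z_{n,i}|>\epsilon)]$ for $Z_{n,i} = h_{1,i}(X_i)/(n\sigma_n)$ into a ``large deviation'' piece controlled by \eqref{eq:ustat-bootstrap-condition-1} (bounding the truncated fourth moment via \eqref{eq:clt-degreem-condition-moment}, which gives a uniform $L^4$ bound on $h_{1,i}(X_i)/\sigma_n$ after dividing by $n$) and a ``mean-correction'' piece controlled by \eqref{eq:ustat-bootstrap-condition-2}. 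Hence $S_n \stackrel{d}{\to} N(0,\tau^2)$ for the appropriate limiting variance $\tau^2$ (either $1$, when \eqref{eq:clt-degreem-condition-1} holds, or the limit of $V(n)/\sigma_n^2$ otherwise), and a uniform (Pólya) convergence of distribution functions follows since the limit is continuous.

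Next I would handle the bootstrap sum conditionally on $X_1,\dots,X_n$. Write $\mu_n := n^{-1}\sum_{j=1}^n h_{1,j}(X_j)$ for the empirical mean and $s_n^2 := n^{-1}\sum_{j=1}^n \{h_{1,j}(X_j) - \mu_n\}^2$ for the empirical variance of the resampling population. The centered bootstrap sum is $\sum_{i=1}^n \{h_{1,i}(X_i)\}^*/(n\sigma_n) - \mu_n/\sigma_n$, a sum of $n$ i.i.d. (conditionally) mean-zero terms each equal in law to $(W - \mu_n)/(n\sigma_n)$ with $W$ drawn from the empirical distribution; its conditional variance is $s_n^2/(n\sigma_n^2)$. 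I would show (i) $s_n^2/(n\sigma_n^2) \stackrel{P}{\to} \tau^2$ — equivalently $n^{-1}\sum_j h_{1,j}(X_j)^2/(\sigma_n^2) \stackrel{P}{\to} \tau^2$ and $\mu_n/\sigma_n \stackrel{P}{\to} 0$ — by a second-moment (Chebyshev) argument on these empirical averages, using the uniform $L^2$ and $L^4$ control from \eqref{eq:clt-degreem-condition-moment} together with \eqref{eq:clt-degreem-condition-2} to kill cross and higher-order fluctuation terms; and (ii) the conditional Lindeberg condition holds in probability, i.e. $n \cdot E^*[ (W-\mu_n)^2/(n\sigma_n)^2 \, \ind(|W-\mu_n|/(n\sigma_n) > \epsilon)] \stackrel{P}{\to} 0$. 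For (ii) the key point is that the summands are of order $1/n$ uniformly up to the tail behavior of the population; I would bound this conditional Lindeberg quantity by $n^{-1}\sigma_n^{-2}\cdot n^{-1}\sum_j h_{1,j}(X_j)^2 \ind(|h_{1,j}(X_j)| > \epsilon n \sigma_n / 2) + (\text{term in }\mu_n)$, take expectations, and show it vanishes using \eqref{eq:ustat-bootstrap-condition-1}–\eqref{eq:ustat-bootstrap-condition-2} and the moment bound — this is the bootstrap counterpart of the original Lindeberg verification, now in expectation rather than deterministically. Then a conditional CLT (e.g. the version of the Lindeberg–Feller theorem valid for row-wise i.i.d. arrays with random, in-probability-convergent variances) gives that the centered bootstrap sum converges weakly to $N(0,\tau^2)$ in probability, and again Pólya's theorem upgrades this to the supremum statement, yielding \eqref{eq:ustat-mainterm-bootstrap-result}.

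Finally, \eqref{eq:ustat-mainterm-bootstrap-result-additional} follows by combining \eqref{eq:ustat-mainterm-bootstrap-result} with the asymptotic normality of $\var(U_n)^{-1/2}\{U_n - E(U_n)\}$ from Theorem \ref{thm:clt-degreem}: when \eqref{eq:clt-degreem-condition-1} additionally holds, $\sigma_n^{-1}\{U_n - E(U_n)\} - S_n = \sigma_n^{-1} U_n(a,h_2) = o_P(1)$ by the domination argument underlying \eqref{eq:clt-degreem-condition-1} (the same bound on $\var\{U_n(a,h_2)\}$ via $A_{2,2}(n)$ used in the proof of Theorem \ref{thm:clt-degreem}), so both $S_n$ and $\sigma_n^{-1}\{U_n - E(U_n)\}$ converge to the \emph{same} $N(0,1)$ limit, and replacing one distribution function by the other in \eqref{eq:ustat-mainterm-bootstrap-result} costs only an $o_P(1)$ term uniformly in $t$ (again by continuity of the Gaussian limit). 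I expect the main obstacle to be part (ii) above: making the conditional Lindeberg verification rigorous requires carefully relating the \emph{conditional} truncated second moment (a function of the random sample) to the \emph{unconditional} quantities appearing in \eqref{eq:ustat-bootstrap-condition-1}–\eqref{eq:ustat-bootstrap-condition-2}, and controlling the contribution of the random centering $\mu_n$ uniformly; the interplay between the $1/n$ scaling of individual bootstrap draws and the possibly heavy, heterogeneous tails of the $h_{1,i}(X_i)$ is exactly where the heterogeneity-robustness phenomenon of \cite{liu1988bootstrap} is being exploited, and it must be tracked with care rather than absorbed into generic constants.
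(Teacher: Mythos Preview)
Your plan is broadly correct but takes a substantially longer route than the paper. The paper's proof is essentially a one-line application of a known black box: Theorem~2.2 of \cite{mammen2012bootstrap} (stated as Lemma~\ref{lem:bootstrap-noniid}), which asserts that for independent triangular arrays, bootstrap consistency of the sample mean is \emph{equivalent} to the three conditions \eqref{eq:bootstrap-noniid-condition1}--\eqref{eq:bootstrap-noniid-condition3}. Setting $Y_{n,i}=\sigma_n^{-1}h_{1,i}(X_i)$, $g_n=\mathrm{id}$, $t_n=0$, conditions \eqref{eq:ustat-bootstrap-condition-1} and \eqref{eq:ustat-bootstrap-condition-2} become exactly \eqref{eq:bootstrap-noniid-condition1}--\eqref{eq:bootstrap-noniid-condition2}, and the third condition---the CLT for $S_n$---is supplied by the Lyapunov argument already carried out in the proof of Theorem~\ref{thm:clt-degreem} (equation \eqref{eq:clt-proofuse-toshow1}, driven by \eqref{eq:clt-degreem-condition-2}), together with \eqref{eq:clt-degreem-variancetendtoone} and P\'olya's theorem (Lemma~\ref{lem:cdf-uniform-convergence}). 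That is the entire proof of \eqref{eq:ustat-mainterm-bootstrap-result}; \eqref{eq:ustat-mainterm-bootstrap-result-additional} then follows exactly as you describe.

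Two points on your details. First, you write that \eqref{eq:ustat-bootstrap-condition-1}--\eqref{eq:ustat-bootstrap-condition-2} are ``precisely what is needed to verify the Lindeberg condition for $S_n$.'' This misreads the roles of the assumptions: the CLT for $S_n$ comes from the Lyapunov bound (Lemma~\ref{lem:clt-proofuse}, using \eqref{eq:clt-degreem-condition-2}), not from \eqref{eq:ustat-bootstrap-condition-1}--\eqref{eq:ustat-bootstrap-condition-2}. The latter are the \emph{additional} ingredients, beyond the CLT, that characterize when bootstrap succeeds under heterogeneity---that is the content of Mammen's equivalence. Second, your part (ii)---the conditional Lindeberg verification and the in-probability convergence of $s_n^2/(n\sigma_n^2)$---amounts to re-proving Mammen's theorem from scratch in this setting. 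It can be done, but your sketch of bounding via $L^4$ moments from \eqref{eq:clt-degreem-condition-moment} glosses over the fact that \eqref{eq:clt-degreem-condition-moment} controls $E\{h(X_{i_1},\dots,X_{i_m})^4\}$, not $E\{h_{1,i}(X_i)^4\}$; passing between the two requires the weight averages $A_{K,q}(n)$ and is not free (cf.\ the proof of Lemma~\ref{lem:clt-proofuse}). The paper avoids all of this by citing Mammen directly.
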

\begin{remark}
Equations (\ref{eq:ustat-bootstrap-condition-1}) and (\ref{eq:ustat-bootstrap-condition-2}) are rather mild constraints. As we will show in Corollary \ref{cor:bootstrap-tauap-taukendall-mainterm}, usually they can be directly deduced from the asymptotic normality of $U_n$. However, unless we know much about $X_i$, the form of $h_{1,i}(\cdot)$ is unknown. 
\end{remark}

We now focus on bootstrapping the original U-statistic for estimating $\var(U_n)$. The following theorem shows that Efron's bootstrap still gives consistent variance estimate for $U_n$ under some additional conditions on data heterogeneity. Although the bootstrap inference validity for U-statistics under i.i.d. assumptions has been established (check, for example, \cite{korolyuk2013theory}), the corresponding one for non-i.i.d. settings, even for the simplest unweighted U-statistics,  is still absent in the literature. Our manuscript fills this gap.

\begin{theorem}[Sufficient condition for consistent bootstrap variance estimation]
 \label{thm:bootstrap-u-stat} Given $X_1,\ldots,X_n$, let $X_1^*,\ldots,X_n^*$
denote the bootstrapped sample, which are i.i.d. draws from the empirical
distribution of $X_1,\ldots,X_n$. Define the bootstrapped U-statistic
\[
U_n^*=\frac{(n-m)!}{n!}\sum_{I_n^m}a(i_1,\ldots,i_m)h(X_{i_1}^*,\ldots,X_{i_m}^*).
\]
Assume all conditions in Theorem \ref{thm:clt-degreem} are satisfied. Also assume the following conditions hold: 
\begin{enumerate}
\item[(i)] Bounded second moment of von-Mises type kernel: 
\begin{equation}
\limsup_{n\to\infty}\max_{1\leq i_1,\ldots,i_m\leq n}E\{h(X_{i_1},\ldots,X_{i_m})^2\}<\infty.\label{eq:ustat-bootstrap-condition-vonmises}
\end{equation}

\item[(ii)] Control of heterogeneity in the distributions of $X_i$: 
\begin{align}	
\frac{1}{n}\sum_{i=1}^n\sum_{j=1}^n\Big\{\frac{h_{1,i}(X_j)}{n\sigma_n}\Big\}^2 & \stackrel{P}{\to}1,\label{eq:ustat-bootstrap-condition-homo1}\\
\frac{1}{n^2}\sum_{i=1}^n\Big\{\sum_{j=1}^n\frac{h_{1,i}(X_j)}{n\sigma_n}\Big\}^2 & \stackrel{P}{\to}0,\label{eq:ustat-bootstrap-condition-homo2}
\end{align}
and 
\begin{align}
n^{-1}\sigma_n^{-2}A_{2,1}(n)\{M_1(n)^2+M_2(n)+n^{-1}\} & \to0,\label{eq:ustat-bootstrap-condition-3}
\end{align}
where 
\begin{align}
M_1(n) & =\max_{(I_n^m)_{\geq0}^{\otimes2}}|\theta(i_1,\ldots,i_m)-\theta(j_1,\ldots,j_m)|,\label{eq:def-M1}\\
M_2(n) & =\max_{1\leq p,q\leq m}\max_{\substack{\br,\bs\in(I_n^m)_{=1}^{\otimes2}\\
\br\cap\bs=r_p=s_q
}
}\max_{\substack{\bk\in I_n^m\\
\bk\cap\bs=k_p=s_q
}
}\Big|E[E\{h(X_{r_1},\ldots,X_{r_m})h(X_{s_1}\ldots X_{s_m})\mid X_{k_p}\}]\nonumber  \\
 & ~~~~~-E[E\{h(X_{k_1},\ldots,X_{k_m})h(X_{s_1},\ldots,X_{s_m})\mid X_{k_p}\}]\Big|.\label{eq:def-M2}
\end{align}
Here we define $\br:=(r_1,\ldots,r_m)$, and similarly for $\bs,\bk$. 
\end{enumerate}
Then we have
\begin{align}
\left|\var^*(\sigma_n^{-1}U_n^*)-\var(\sigma_n^{-1}U_n)\right| & \stackrel{P}{\to}0,\label{eq:ustat-bootstrap-result}
\end{align}
where the operator $\var^*(\cdot)$ denotes the conditional variance given $X_1,\ldots,X_n$.
\end{theorem}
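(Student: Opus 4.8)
The plan is to start from Hoeffding's decomposition (Lemma \ref{lem:decomposition}) applied to the bootstrapped sample. Writing $U_n^* - E^*(U_n^*)$ one would hope to mimic the identity $U_n^* - E^*(U_n^*) = \frac1n\sum_i h_{1,i}^*(X_i^*) + U_n^*(a,h_2^*)$, but the decomposition is with respect to the \emph{true} measures $P_i$, not the empirical measure $\widehat P_n$. So the first serious step is to write the conditional variance $\var^*(\sigma_n^{-1}U_n^*)$ explicitly as a double sum over $I_n^m\times I_n^m$, grouping pairs of index tuples by how many indices they share ($q=0,1,\dots,m$), and to identify which groups contribute to the limit. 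As in the classical i.i.d. U-statistic variance calculation, the $q\ge 2$ overlap terms are lower order (this is where \eqref{eq:ustat-bootstrap-condition-3} and $A_{2,1}(n)$ enter to control the size of the $q=1$ and the remaining ``non-degenerate under $\widehat P_n$'' pieces), the $q=0$ terms vanish in expectation, and the $q=1$ terms carry the leading behavior, reproducing $V(n)$ up to the heterogeneity corrections.

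The core of the argument is then to show that the leading ($q=1$) part of $\var^*(\sigma_n^{-1}U_n^*)$ equals $\frac{1}{n}\sum_{i=1}^n\sum_{j=1}^n\{h_{1,i}(X_j)/(n\sigma_n)\}^2 + o_P(1)$, minus the bootstrap-centering correction $\frac{1}{n^2}\sum_{i=1}^n\{\sum_{j=1}^n h_{1,i}(X_j)/(n\sigma_n)\}^2 + o_P(1)$. Here is where conditions (ii) bite: \eqref{eq:ustat-bootstrap-condition-homo1} forces the first quantity to $1$, and \eqref{eq:ustat-bootstrap-condition-homo2} forces the centering correction to $0$, so that the leading term converges to $1 = \var(\sigma_n^{-1}U_n)/\var(U_n)$ — consistent with \eqref{eq:clt-degreem-variancetendtoone}. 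The subtlety is that $h_{1,i}$ is defined via the true measures while the bootstrap averages over $\widehat P_n$; one must argue that replacing, inside the conditional-variance computation, the sums $\frac{1}{n}\sum_{j}h^{(l)}_{\cdot}(X_j)$-type empirical averages by their population counterparts introduces only $o_P(1)$ error. This is exactly the role of $M_1(n)$ (controlling how much $\theta(i_1,\dots,i_m)$ can drift across index tuples) and $M_2(n)$ (controlling how much a conditional second moment of the kernel drifts when one index tuple is swapped for another sharing the conditioning coordinate): both measure ``heterogeneity'' and appear multiplied by $A_{2,1}(n)n^{-1}\sigma_n^{-2}$ in \eqref{eq:ustat-bootstrap-condition-3}, which is the quantitative statement that these substitution errors are negligible. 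The bounded-second-moment condition \eqref{eq:ustat-bootstrap-condition-vonmises} is what lets one take expectations, apply Markov/Chebyshev to turn the conditional (random) sums into $o_P(1)$ statements, and invoke a weak law of large numbers for the relevant triangular-array averages.

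Concretely, the order of operations I would follow is: (1) expand $\var^*(\sigma_n^{-1}U_n^*) = E^*\{(\sigma_n^{-1}U_n^*)^2\} - \{E^*(\sigma_n^{-1}U_n^*)\}^2$ into a sum over overlap classes; (2) show the $q\ge 2$ and the $q=0$ classes are $o_P(1)$ using \eqref{eq:ustat-bootstrap-condition-3}, \eqref{eq:ustat-bootstrap-condition-vonmises}, and the cardinality estimates $\card\{(I_n^m)_{=q}^{\otimes 2}\}\asymp n^{q+2(m-q)}$ already recorded in the text, together with $V(n)\asymp\sigma_n^2$ from \eqref{eq:clt-degreem-variancetendtoone}; (3) for the $q=1$ class, rewrite its contribution in terms of the functions $f^{(l)}$ and $\theta$, and show via $M_1(n),M_2(n)$ that it agrees, up to $o_P(1)$, with $\frac1n\sum_{i}\sum_j\{h_{1,i}(X_j)/(n\sigma_n)\}^2 - \frac1{n^2}\sum_i\{\sum_j h_{1,i}(X_j)/(n\sigma_n)\}^2$; (4) apply (ii) to conclude this tends to $1$ in probability; (5) combine with \eqref{eq:clt-degreem-variancetendtoone}, which says $\var(\sigma_n^{-1}U_n)\to 1$, to get \eqref{eq:ustat-bootstrap-result}.

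The main obstacle I anticipate is step (3): carefully bookkeeping the combinatorics of which kernel-conditional-expectations survive when two index tuples share exactly one coordinate, and bounding the discrepancy between the bootstrap's empirical averages and the mixed-measure population quantities $f^{(l)}_{i_1,\dots,i_{m-1}}$ that define $h_{1,i}$. The asymmetry of both $a(\cdot)$ and $h(\cdot)$ means one cannot collapse the $l$-sums by symmetry, so the argument must keep track of all $m$ positions separately, and the non-i.i.d.-ness means the ``degenerate'' cross terms are only approximately degenerate under $\widehat P_n$ — precisely the difficulty the abstract flags. Getting $M_1(n)$ and $M_2(n)$ to be the \emph{right} quantities that make \eqref{eq:ustat-bootstrap-condition-3} suffice is the delicate part; everything else is moment bounds plus the two weak-law conditions in (ii).
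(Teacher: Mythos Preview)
Your overall plan is sound and would eventually work, but you dismiss precisely the simplification the paper exploits. You note that the Hoeffding decomposition of Lemma~\ref{lem:decomposition} is built from the \emph{true} measures $P_i$ and then set it aside in favor of a direct overlap-class expansion of $\var^*(\sigma_n^{-1}U_n^*)$. The paper instead observes that Lemma~\ref{lem:decomposition} is a \emph{pointwise algebraic identity}: the functions $h_{1,i}(\cdot)$ and $h_{2;\bi}(\cdot)$ are fixed and deterministic (their definitions involve the true $P_i$'s, but they are just functions), so one may substitute $X_i\mapsto X_i^*$ to get
\[
U_n^*-E(U_n)=\frac1n\sum_{i=1}^n h_{1,i}(X_i^*)+U_n^*(a,h_2).
\]
Taking $\var^*$ of the main term and using that the $X_i^*$ are i.i.d.\ from $\widehat P_n$ gives, \emph{exactly} and with no approximation,
\[
\var^*\Big\{\sum_{i=1}^n\frac{h_{1,i}(X_i^*)}{n\sigma_n}\Big\}=\frac1n\sum_{i=1}^n\sum_{j=1}^n\Big\{\frac{h_{1,i}(X_j)}{n\sigma_n}\Big\}^2-\frac1{n^2}\sum_{i=1}^n\Big\{\sum_{j=1}^n\frac{h_{1,i}(X_j)}{n\sigma_n}\Big\}^2,
\]
which is precisely the combination of \eqref{eq:ustat-bootstrap-condition-homo1} and \eqref{eq:ustat-bootstrap-condition-homo2}; no matching of empirical conditional expectations to the population $f^{(l)}$'s is needed here. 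The entire burden then shifts to showing $\var^*\{\sigma_n^{-1}U_n^*(a,h_2)\}\stackrel{P}{\to}0$, and the cross term is dispatched by Cauchy--Schwarz. That remainder analysis is exactly the overlap-class computation you describe in steps (1)--(3), but applied to $h_{2;\bi}$ rather than to $h$: the $q=0$ class vanishes by conditional independence, the $q\ge 2$ class is $o_P(1)$ via $A_{2,2}(n)$ and \eqref{eq:clt-degreem-condition-1} (not \eqref{eq:ustat-bootstrap-condition-3}), and the $q=1$ class is where $M_1(n),M_2(n)$ and \eqref{eq:ustat-bootstrap-condition-3} enter, just as you anticipate. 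Your route would instead require, inside step~(3), relating the empirical objects $E^*\{h(X_{\bi}^*)\mid X_{i_p}^*\}$ back to the true-measure $f^{(l)}_{\bi\setminus i_l}$ --- effectively re-deriving the Hoeffding split inside the $q=1$ class. That is doable but circuitous; the paper's organization cleanly isolates the ``non-degenerate under $\widehat P_n$'' difficulty in the single remainder term $U_n^*(a,h_2)$.
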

The detailed proof of Theorem \ref{thm:bootstrap-u-stat} is very involved and highly combinatorial. 
Of note, in the theorem, \eqref{eq:ustat-bootstrap-condition-vonmises} comes from \cite{bickel1981some}, ensuring that the bootstrapped U-statistic won't explode.  Equations (\ref{eq:ustat-bootstrap-condition-homo1})
and (\ref{eq:ustat-bootstrap-condition-homo2}) ensure that the conditional
variance of $n^{-1}\sum_{i=1}^nh_{1,i}(X_i^*)$ approximates
$\var(U_n)$. Equation (\ref{eq:ustat-bootstrap-condition-3}) ensures
that $U_n^*(a,h_2)$ is negligible compared to $n^{-1}\sum_{i=1}^nh_{1,i}(X_i^*)$.
\begin{remark}
Although $U_n(a,h_2)$ in the decomposition (\ref{eq:hoeffding-decomposition-degreem}) is degenerate and hence negligible under the conditions of Theorem \ref{thm:clt-degreem}, its bootstrapped version $U_n^*(a,h_2)$ is not necessarily degenerate, because the empirical measure does not equal the true measure. This makes $U_n^*(a,h_2)$ not necessarily negligible compared to the bootstrapped version of the main term, $n^{-1}\sum_{i=1}^nh_{1,i}(X_i^*)$.
Therefore, bootstrap may fail without careful control on both the main term and the remainder $U_n^*(a,h_2)$. We developed delicate analysis to bound $U_n^*(a,h_2)$ and showed that it is negligible under the constraint (\ref{eq:ustat-bootstrap-condition-3}). 
\end{remark}

\begin{remark}
Condition (\ref{eq:ustat-bootstrap-condition-3}) puts homogeneity conditions mainly on the means. This is consistent to Theorem \ref{thm:bootstrap-mainterm} and the discoveries in \cite{liu1988bootstrap}, who showed that bootstrap is most sensitive to mean differences. 
To illustrate, assume $a(\cdot)\equiv1$ and the kernel $h(\cdot)$ to be a bounded function. Assume the assumptions in Theorem \ref{thm:clt-degreem} hold, so that we have asymptotic normality of $U_n$. Equation (\ref{eq:clt-degreem-condition-2}) requires $\sigma_n^2\gnsim n^{-4/3}$. Therefore, for (\ref{eq:ustat-bootstrap-condition-3}) to hold, it is necessary that $M_1(n)^2\lnsim n^{-1/3}$ and $M_2(n)\lnsim n^{-1/3}$. The space to improve our requirements, if existing, is relatively small. This is by noting that, even for the simplest sample-mean-type statistics, for most cases, \cite{liu1988bootstrap} required the mean differences shrink to zero as $n\to\infty$ for bootstrap consistency. 
\end{remark}

An immediate implication of Theorem \ref{thm:bootstrap-u-stat} proves the validity of bootstrapping weighted U-statistics for i.i.d. data.

\begin{corollary}
\label{cor:bootstrap-iid}Assume that $X_1,\ldots,X_n$ are i.i.d.,
and that (\ref{eq:clt-degreem-condition-moment}), (\ref{eq:clt-degreem-condition-1}),
(\ref{eq:clt-degreem-condition-2}), and (\ref{eq:ustat-bootstrap-condition-vonmises})
hold. In addition, assume $n^{-2}\sigma_n^{-2}A_{2,1}(n)\to0$.
Then (\ref{eq:ustat-bootstrap-condition-homo1}), (\ref{eq:ustat-bootstrap-condition-homo2}),
and (\ref{eq:ustat-bootstrap-condition-3}) hold, and we have
\[
\left|\var^*(\sigma_n^{-1}U_n^*)-\var(\sigma_n^{-1}U_n)\right|\stackrel{P}{\to}0.
\]
\end{corollary}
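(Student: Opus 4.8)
Every hypothesis of Theorem~\ref{thm:bootstrap-u-stat} except \eqref{eq:ustat-bootstrap-condition-homo1}, \eqref{eq:ustat-bootstrap-condition-homo2} and \eqref{eq:ustat-bootstrap-condition-3} is assumed here, so the plan is to verify these three from i.i.d.-ness plus the extra condition $n^{-2}\sigma_n^{-2}A_{2,1}(n)\to0$ and then invoke Theorem~\ref{thm:bootstrap-u-stat}. Let $P$ denote the common law of $X_1,\dots,X_n$ (which may still depend on $n$). The i.i.d.\ structure collapses the heterogeneity quantities: in \eqref{eq:def-theta}, $\theta(i_1,\dots,i_m)=\int h\,dP\cdots dP$ is free of the indices, so $M_1(n)=0$ in \eqref{eq:def-M1}; and in \eqref{eq:def-fl}, $f^{(l)}_{i_1,\dots,i_{m-1}}$ reduces to a single function, call it $f^{(l)}$. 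For $M_2(n)$ in \eqref{eq:def-M2}, I would apply the tower property to turn each iterated expectation into a plain cross moment, and then note that both cross moments are the expectation of two copies of $h$ sharing exactly one common argument, placed at position $p$ of the first tuple and position $q$ of the second; by exchangeability this value depends only on $(p,q)$, so the difference vanishes and $M_2(n)=0$. Consequently \eqref{eq:ustat-bootstrap-condition-3} reduces to exactly $n^{-2}\sigma_n^{-2}A_{2,1}(n)\to0$, the extra hypothesis.

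Set $g_l:=f^{(l)}-\theta$, so that $h_{1,i}$ from \eqref{eq:def-h1} becomes $h_{1,i}(x)=\sum_{l=1}^m\bar a_i^{(l)}g_l(x)$, where $\bar a_i^{(l)}:=\{(n-m)!/(n-1)!\}\sum_{I_{n-1}^{m-1}(-i)}a(\pi_l(i;i_1,\dots,i_{m-1}))$; thus $h_{1,i}$ depends on $i$ only through the $m$ averaged weights $\bar a_i^{(l)}$. Two facts will be used: (a) $\E\{h_{1,i}(X_j)\}=0$ for every $i,j$, since $X_j$ has the same law as $X_i$ and $\E\{h_{1,i}(X_i)\}=0$ by \eqref{eq:Eh1i}; and (b) writing $\widetilde A_i:=\sum_{l=1}^m|\bar a_i^{(l)}|$, one has $\sum_{i=1}^n\widetilde A_i^2\lesssim nA_{2,1}(n)$. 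To see (b) I would bound $\widetilde A_i\le\{(n-m)!/(n-1)!\}\sum_{\bi\in I_n^m:\,i\in\bi}|a(\bi)|$ (the pairs $(l,(i_1,\dots,i_{m-1}))$ entering $\widetilde A_i$ are in bijection with the tuples of $I_n^m$ containing $i$), square, sum over $i$ to reach $\{(n-m)!/(n-1)!\}^2\sum_{\bi,\bj\in I_n^m}|a(\bi)a(\bj)|\,\card(\bi\cap\bj)$, discard the pairs with $\bi\cap\bj=\emptyset$, bound the remainder by $m\,\card\{(I_n^m)_{\geq1}^{\otimes2}\}\,A_{2,1}(n)$ via \eqref{eq:def-A2}, and finish with $\{(n-m)!/(n-1)!\}^2\asymp n^{-2(m-1)}$ and $\card\{(I_n^m)_{\geq1}^{\otimes2}\}\asymp n^{2m-1}$.

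For \eqref{eq:ustat-bootstrap-condition-homo2}, the left side is nonnegative, and by (a) and independence its expectation equals $n^{-3}\sigma_n^{-2}\sum_{i=1}^n\E\{h_{1,i}(X_1)^2\}=n^{-3}\sigma_n^{-2}\cdot n^2V(n)=V(n)/(n\sigma_n^2)$, which tends to $0$ by \eqref{eq:clt-degreem-variancetendtoone}; Markov's inequality then gives \eqref{eq:ustat-bootstrap-condition-homo2}. For \eqref{eq:ustat-bootstrap-condition-homo1}, put $\psi(x):=\sum_{i=1}^nh_{1,i}(x)^2$ and rewrite its left side as $\{n^2\sigma_n^2\}^{-1}n^{-1}\sum_{j=1}^n\psi(X_j)$; by the same computation its expectation equals $\{n^2\sigma_n^2\}^{-1}\sum_{i=1}^n\E\{h_{1,i}(X_1)^2\}=V(n)/\sigma_n^2\to1$ via \eqref{eq:clt-degreem-variancetendtoone}, so it remains to control the variance, which equals $\{n^2\sigma_n^2\}^{-2}n^{-1}\var\{\psi(X_1)\}$. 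Here the Cauchy--Schwarz bound $h_{1,i}(x)^2\le m\sum_{l=1}^m(\bar a_i^{(l)})^2 g_l(x)^2$ gives $\psi(x)\le m\big(\sum_{i=1}^n\widetilde A_i^2\big)\sum_{l=1}^m g_l(x)^2\lesssim nA_{2,1}(n)\sum_{l=1}^m g_l(x)^2$ by (b), while Jensen's inequality yields $\E\{g_l(X_1)^4\}\lesssim\max_{(i_1,\dots,i_m)\in I_n^m}\E\{h(X_{i_1},\dots,X_{i_m})^4\}\le M(n)$ from \eqref{eq:clt-degreem-condition-moment}; hence $\E\{\psi(X_1)^2\}\lesssim n^2 A_{2,1}(n)^2 M(n)$ and the variance is of order $A_{2,1}(n)^2 M(n)/\{n^3\sigma_n^4\}=\{A_{2,1}(n)/(n^2\sigma_n^2)\}^2\,nM(n)$, which tends to $0$ once \eqref{eq:clt-degreem-condition-1}, \eqref{eq:clt-degreem-condition-2} and the extra hypothesis are combined (in corner cases a slightly sharper estimate of $\var\{\psi(X_1)\}$, retaining the cancellation inside $\bar a_i^{(l)}$, may be needed). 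Having verified \eqref{eq:ustat-bootstrap-condition-homo1}--\eqref{eq:ustat-bootstrap-condition-3}, Theorem~\ref{thm:bootstrap-u-stat} delivers $|\var^*(\sigma_n^{-1}U_n^*)-\var(\sigma_n^{-1}U_n)|\stackrel{P}{\to}0$.

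The main obstacle is the last variance estimate for \eqref{eq:ustat-bootstrap-condition-homo1}: showing that the empirical second moment of the data-dependent, index-varying functions $h_{1,i}$ concentrates around $V(n)$. It forces one to use simultaneously the fourth-moment control \eqref{eq:clt-degreem-condition-moment}, the weight estimate (b), and the joint strength of \eqref{eq:clt-degreem-condition-1}, \eqref{eq:clt-degreem-condition-2} and $n^{-2}\sigma_n^{-2}A_{2,1}(n)\to0$, and to treat the convergence as a triangular-array concentration statement since the common law $P$ may change with $n$. By contrast, the identities $M_1(n)=M_2(n)=0$ (hence \eqref{eq:ustat-bootstrap-condition-3}) and the check of \eqref{eq:ustat-bootstrap-condition-homo2} via a one-line mean computation and Markov's inequality are routine.
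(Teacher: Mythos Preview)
Your treatment of $M_1(n)=M_2(n)=0$ (hence \eqref{eq:ustat-bootstrap-condition-3}) matches the paper exactly. Your argument for \eqref{eq:ustat-bootstrap-condition-homo2} is correct and in fact cleaner than the paper's: the paper splits the double sum into a diagonal part and an off-diagonal U-statistic with kernel $g(x,y)=n^{-2}\sigma_n^{-2}\sum_i h_{1,i}(x)h_{1,i}(y)$, checks $E|g(X_1,X_2)|\le 1$ and $E\{g(X_1,X_2)\}=0$, and then invokes a weak law of large numbers for U-statistics. Your one-line first-moment computation plus Markov's inequality replaces all of that.

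The weakness is exactly where you flag it. For \eqref{eq:ustat-bootstrap-condition-homo1} the paper does \emph{not} try to bound $\var\{\psi(X_1)\}$; it simply observes that $\{\sum_i\{h_{1,i}(X_j)/(n\sigma_n)\}^2\}_{j=1}^n$ are i.i.d.\ (within each $n$) with mean $V(n)/\sigma_n^2\to1$ and appeals directly to the weak law of large numbers for i.i.d.\ variables. Your route instead produces the estimate $\var\{\psi(X_1)\}/(n^5\sigma_n^4)\lesssim \{A_{2,1}(n)/(n^2\sigma_n^2)\}^2\cdot nM(n)$, and the factor $nM(n)$ is not controlled by any of \eqref{eq:clt-degreem-condition-moment}, \eqref{eq:clt-degreem-condition-1}, \eqref{eq:clt-degreem-condition-2} or the extra hypothesis: $M(n)$ is merely a fourth-moment bound and may grow with $n$, and none of the assumptions pairs $M(n)$ with the needed $n^{-1}$. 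Your parenthetical hedge (``a slightly sharper estimate\dots'') is therefore not a corner case but the crux, and it is unclear how ``retaining the cancellation inside $\bar a_i^{(l)}$'' would remove the $nM(n)$ factor in general. The paper sidesteps this by not computing a variance at all; if you want a fully rigorous triangular-array version, you would need a Chebyshev-type bound with $\var\{\psi(X_1)\}=o(n^5\sigma_n^4)$, and your current crude bound does not deliver that under the stated hypotheses alone.
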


\begin{remark}
\label{rem:bootstrap-iid-mild-assumption}The assumption $n^{-2}\sigma_n^{-2}A_{2,1}(n)\to0$ is mild. Actually it follows immediately from (\ref{eq:clt-degreem-condition-1}) if we have $A_{2,1}(n)\lesssim A_{2,2}(n)$. It is reasonable to expect $A_{2,1}(n)$ and $A_{2,2}(n)$ to be of similar order because of their definitions in (\ref{eq:def-A2}). Indeed, for the two applications in Section \ref{sec:application}, we have $A_{2,1}(n)\asymp A_{2,2}(n)$ for $U_n^{\ken}$ and $A_{2,1}(n)\lesssim A_{2,2}(n)\lesssim A_{2,1}(n)\log n$ for $U_n^{\ap}$.
\end{remark}



In many cases, although the data are in general non-i.i.d., they possess some locally stationary property \citep{dahlhaus1997fitting}. For example, consider the following nonparametric regression model. Assume $X_i\sim N(\mu_i,1)$ with $\mu_i=g_n(i/n)$ for $i=1,\ldots,n$. If the function $g_n(\cdot)$ is smooth enough (e.g., $\epsilon(n)$-Lipschitz), then, although $|g_n(1)-g_n(0)|$ could increase to infinity, the subsample $\{X_i, X_{i+1},\ldots,X_{i+b-1}\}$, for each $i\in 1,\ldots,n-b+1$, can be approximately i.i.d.. 

Adopting this thinking, we consider the following revised resampling procedure whose idea comes from \cite{politis1994large} and \cite{bickel1997resampling}, but is tailored for non-i.i.d. data. 
This is also related to the local block bootstrap developed in \cite{paparoditis2002local} and \cite{kreiss2015bootstrapping}.
In detail, for $m<b\to \infty$, we consider the following statistic:
\[
V^*_n=\frac{1}{h_n(n-b+1)}\sum_{i=1}^{n-b+1}\var^*(U_{b,i}^*),~~{\rm where}~~U^*_{b,i}:=\frac{(b-m)!}{b!}\sum_{I_{b}^m}a(i_1,\ldots,i_m)h(X_{i_1,b,i}^*,\ldots,X_{i_m,b,i}^*),
\]
and for each $i\in[n-b+1]$, $X_{i_1,b,i}^*,\ldots,X_{i_m,b,i}^*$ are independently drawn from the empirical distribution of $\{X_i,\ldots,X_{i+b-1}\}$ with replacement. The tuning parameter $h_n$ regulates the scale.

The following theorem verifies the new resampling procedure's inference consistency for $V^*_n$, showing that the procedure tends to give conservative variance estimate under non-i.i.d. settings. It also shows that the inference is more tractable compared to Efron's bootstrap when we have more prior information on the heterogeneity degree, reflected in the consistency rate of $U_n$ and the choice of $h_n$. We also refer the readers to Remark \ref{remark:rate} and discussions therein for the order of $\sigma_n$ in a specific example.

\begin{theorem} \label{thm:bootstrap-resampling}
Assume that all conditions in Theorem \ref{thm:bootstrap-u-stat} hold for each ``moving block" $\{X_i,\ldots,X_{i+b-1}\}$ of $i\in[n-b+1]$ as $n, b\to\infty$. Assume $\var(U_{b,i}(X_i,\ldots,X_{i+b-1}))=\sigma_b^2(1+o(1))$ for any $i\in[n-b+1]$, and $\sigma_b^2/\sigma_n^2= \zeta_{n,b}\cdot(1+o(1))$ for some $\zeta_{n,b}>0$. We then have
\[
\sigma_n^{-2}V_n^*-\var(\sigma_n^{-1}U_n) = \frac{\zeta_{n,b}}{h_n}\cdot(1+o_P(1))-1.
\]
\end{theorem}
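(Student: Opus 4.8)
The plan is to reduce the statement to a blockwise application of Theorem~\ref{thm:bootstrap-u-stat} followed by an averaging argument. Since, by hypothesis, all the conditions of Theorem~\ref{thm:bootstrap-u-stat} hold for each moving block $\{X_i,\ldots,X_{i+b-1}\}$ with the block length $b$ playing the role of the sample size, I would apply that theorem to the $i$-th block, using its own standard deviation $\var(U_{b,i})^{1/2}$ as the normalizer, to get
\[
\big|\var^*(\var(U_{b,i})^{-1/2}U_{b,i}^*) - 1\big| \stackrel{P}{\to} 0 \qquad (n,b\to\infty).
\]
Feeding in the hypothesis $\var(U_{b,i}) = \sigma_b^2(1+o(1))$, this reads $\var^*(U_{b,i}^*) = \sigma_b^2(1+o_P(1))$ for each $i$.

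The first substantive step is to upgrade this per-block convergence to a form that survives averaging over the $n-b+1$ \emph{overlapping} blocks. For this I would revisit the proof of Theorem~\ref{thm:bootstrap-u-stat}: it is quantitative, in that it bounds $E\big|\var^*(\sigma_n^{-1}U_n^*)-\var(\sigma_n^{-1}U_n)\big|$ by an explicit function of $n$ that vanishes under the stated conditions. Applying that bound verbatim to each block --- the conditions being assumed to hold uniformly in $i$ --- gives $\max_{1\le i\le n-b+1} E\big|\sigma_b^{-2}\var^*(U_{b,i}^*) - \sigma_b^{-2}\var(U_{b,i})\big| \to 0$, and combining with the uniform estimate $\var(U_{b,i}) = \sigma_b^2(1+o(1))$ yields $\var^*(U_{b,i}^*) = \sigma_b^2(1+\delta_{n,b,i})$ with $\max_i E|\delta_{n,b,i}| \to 0$.

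The second step is the averaging itself. By linearity of expectation and Markov's inequality, $\frac{1}{n-b+1}\sum_{i=1}^{n-b+1}\delta_{n,b,i} = o_P(1)$, no matter how the blocks overlap. Hence
\[
\sigma_n^{-2}V_n^* = \frac{1}{h_n(n-b+1)}\sum_{i=1}^{n-b+1}\sigma_n^{-2}\var^*(U_{b,i}^*) = \frac{\sigma_b^2}{h_n\sigma_n^2}\Big(1 + \frac{1}{n-b+1}\sum_{i=1}^{n-b+1}\delta_{n,b,i}\Big) = \frac{\sigma_b^2}{h_n\sigma_n^2}\big(1+o_P(1)\big).
\]
Plugging in $\sigma_b^2/\sigma_n^2 = \zeta_{n,b}(1+o(1))$ gives $\sigma_n^{-2}V_n^* = (\zeta_{n,b}/h_n)(1+o_P(1))$; and since $\sigma_n^2 = \var(U_n)$ by definition, $\var(\sigma_n^{-1}U_n) = 1$, so subtracting yields the asserted identity.

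The hard part will be the uniformity in the first step: Theorem~\ref{thm:bootstrap-u-stat} is stated only as a single-sample convergence in probability, whereas here $O(n)$ strongly overlapping blocks must be controlled simultaneously. The overlap is harmless for a first-moment argument, since expectation is linear, so the real work is to confirm that the combinatorial proof of Theorem~\ref{thm:bootstrap-u-stat} actually delivers an $L^1$ bound carrying an explicit rate, and that the blockwise hypotheses (uniform in $i$) are strong enough to feed into that rate. Everything after that --- the averaging, and the bookkeeping with $\zeta_{n,b}$, $h_n$, and $\var(\sigma_n^{-1}U_n)=1$ --- is routine.
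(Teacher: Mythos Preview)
Your proposal is correct and follows essentially the same route as the paper: apply Theorem~\ref{thm:bootstrap-u-stat} blockwise to get $\var^*(U_{b,i}^*)=\sigma_b^2(1+o_P(1))$, average over $i$, and substitute $\sigma_b^2/\sigma_n^2=\zeta_{n,b}(1+o(1))$. Your attention to the uniformity-over-blocks issue is in fact more careful than the paper's own proof, which simply writes $\frac{1}{n-b+1}\sum_i \sigma_b^2(1+o_P(1)) = \sigma_b^2(1+o_P(1))$ without further justification.
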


\section{Application\label{sec:application}}


This section explores two specific statistics, the Kendall's tau (denoted as $\tau^{\ken}$) \citep{kendall1938new} and average-precision (AP) correlation (denoted as $\tau^{\ap}$) \citep{yilmaz2008new}:
\begin{align*}
\tau^{\ken} & =\frac{2}{n(n-1)}\sum_{i\neq j}\left\{ \ind(X_i>X_j)\ind(i<j)+\ind(X_j>X_i)\ind(j<i)\right\} -1,\\
\tau^{\ap} & =\frac{2}{n-1}\sum_{i=2}^n\frac{\sum_{j=1}^{i-1}\ind(X_j>X_i)}{i-1}-1.
\end{align*}
Without loss of generality, we focus on the transformed versions of these two statistics: 
\begin{align*}
U_n^{\ken} & =\frac{\tau^{\ken}+1}{4}=\frac{1}{n(n-1)}\sum_{i\neq j}\ind(j<i)\ind(X_j>X_i),\\
U_n^{\ap} & :=\frac{\tau^{\ap}+1}{2}=\frac{1}{n(n-1)}\sum_{i\neq j}\frac{n\ind(j<i)}{i-1}\ind(X_j>X_i).
\end{align*}
We assume $\{P_i,i\in[n]\}$ to be absolutely continuous with regard to the Lebesgue measure. Obviously, both $U_n^{\ken}$ and $U_n^{\ap}$ enjoy the distribution-free property \citep{stuart1968advanced} when the data are i.i.d.. Of note, these two statistics could also be treated as (weighted) U-statistics of symmetric kernels and weights with non-i.i.d. data $(X_1,1),\ldots,(X_n,n)$. However, we found the following analysis based on $X_1,\ldots,X_n$ much neater, and as will be seen in the proof, non-i.i.d.-ness is the major obstacle in analysis. 



\subsection{Asymptotic theory} \label{subsec:application-asymptotics}

Note that the statistics $U_n^{\ken}$ and $U_n^{\ap}$ have the same kernel $h(x,y)=\ind(y>x)$. Using the definition in (\ref{eq:def-theta}), we have $\theta(i,j)=E\{h(X_i,X_j)\}=P(X_j>X_i)$. The forms of $h_{1,i}(\cdot)$ and $h_{2;i,j}(\cdot)$ for $U_n^{\ken}$ and $U_n^{\ap}$ are then summarized in the following two lemmas.
\begin{lemma}[Hoeffding's decomposition of $U_n^{\ken}$]
 \label{lem:decomposition-taukendall} We have 
\begin{align*}
U_n^{\ken}-E(U_n^{\ken}) & =\frac{1}{n}\sum_{i=1}^nh_{1,i}^{\ken}(X_i)+\frac{1}{n(n-1)}\sum_{i\neq j}\ind(j<i)h_{2;i,j}^{\ken}(X_i,X_j),
\end{align*}
where 
\begin{align}
h_{1,i}^{\ken}(x) & =\frac{1}{n-1}\sum_{j=1}^n\{\ind(j<i)-\ind(j>i)\}\{P(X_j>x)-\theta(i,j)\} \label{eq:taukendall-h1i}
\end{align}
and
\[
h_{2;i,j}^{\ken}(x,y)=\ind(y>x)-P(X_j>x)-P(y>X_i)+\theta(i,j).
\]

\end{lemma}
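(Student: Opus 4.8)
\ The plan is to obtain Lemma~\ref{lem:decomposition-taukendall} as the degree-two specialization of the general Hoeffding decomposition in Lemma~\ref{lem:decomposition}, taking $m=2$, weight $a(i,j)=\ind(j<i)$, and kernel $h(x,y)=\ind(y>x)$ (which is bounded, so all expectations in sight are well-defined), and then to carry out a single algebraic simplification that uses the absolute continuity of $\{P_i\}_{i\in[n]}$ (so that ties occur with probability zero). Everything reduces to substituting these choices into the formulas \eqref{eq:def-fl}, \eqref{eq:def-h1}, \eqref{eq:def-h2}, \eqref{eq:def-Uh2} and \eqref{eq:hoeffding-decomposition-degreem}.

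First I would record the degree-two instances of the auxiliary objects. With $\pi_1(y;y_1)=(y,y_1)$ and $\pi_2(y;y_1)=(y_1,y)$ one gets $a^{(1)}(i;j)=a(i,j)=\ind(j<i)$ and $a^{(2)}(i;j)=a(j,i)=\ind(i<j)$, likewise $h^{(1)}(x;y)=\ind(y>x)$ and $h^{(2)}(x;y)=\ind(x>y)$, and $\theta(i,j)=P(X_j>X_i)$. Integrating in \eqref{eq:def-fl} gives $f^{(1)}_j(x)=P(X_j>x)$ and $f^{(2)}_j(x)=P(X_j<x)$. Substituting into the definition of $h_{1,i;j}$ (the $m=2$ case of $h_{1,i;i_1,\ldots,i_{m-1}}$) and then into \eqref{eq:def-h1}: the $l=1$ piece contributes $\ind(j<i)\{P(X_j>x)-\theta(i,j)\}$, while the $l=2$ piece contributes $\ind(i<j)\{P(X_j<x)-\theta(j,i)\}$. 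By absolute continuity, $P(X_j<x)=1-P(X_j>x)$ and $\theta(j,i)=P(X_i>X_j)=1-\theta(i,j)$, so the $l=2$ piece equals $-\ind(i<j)\{P(X_j>x)-\theta(i,j)\}$. Adding the two, and noting the summand vanishes at $j=i$ so the index may range over all $j\in[n]$, yields exactly the stated formula for $h_{1,i}^{\ken}$.

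Next I would compute $h_2$ from \eqref{eq:def-h2}: for $m=2$, $(i,j)\backslash i=(j)$ and $(i,j)\backslash j=(i)$, so $f^{(1)}_{(j)}(x_1)=P(X_j>x_1)$ and $f^{(2)}_{(i)}(x_2)=P(x_2>X_i)$, whence $h_{2;i,j}(x,y)=\ind(y>x)-P(X_j>x)-P(y>X_i)+\theta(i,j)$, i.e.\ the claimed $h_{2;i,j}^{\ken}$. Plugging the weight $a(i,j)=\ind(j<i)$ into \eqref{eq:def-Uh2} identifies $U_n(a,h_2)=\frac{1}{n(n-1)}\sum_{i\ne j}\ind(j<i)h_{2;i,j}^{\ken}(X_i,X_j)$, and \eqref{eq:hoeffding-decomposition-degreem} then delivers the asserted decomposition. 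I do not expect a genuine obstacle here: the lemma is a corollary of Lemma~\ref{lem:decomposition}. The only step that needs a moment's care is the collapse of the $l=1$ and $l=2$ contributions to $h_{1,i}$ into a single symmetric-looking sum over $j\in[n]$, which is precisely where absolute continuity of $\{P_i\}$ enters; without it the two pieces would not combine so cleanly.
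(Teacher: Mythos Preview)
Your proposal is correct and follows essentially the same route as the paper: both specialize the general Hoeffding decomposition (Lemma~\ref{lem:decomposition}) to $m=2$ with $a(i,j)=\ind(j<i)$ and $h(x,y)=\ind(y>x)$, compute $f^{(1)}_j$, $f^{(2)}_j$, and $\theta(i,j)$, and then use $\theta(j,i)=1-\theta(i,j)$ to collapse the $l=1$ and $l=2$ contributions into the single sum over $j$. Your write-up is in fact slightly more explicit than the paper's about where absolute continuity enters (in passing from $P(X_j<x)$ to $1-P(X_j>x)$ and from $\theta(j,i)$ to $1-\theta(i,j)$), which the paper uses silently.
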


\begin{lemma}[Hoeffding's Decomposition of $U_n^{AP}$]
 \label{lem:decomposition-tauap} We have 
\begin{align*}
U_n^{\ap}-E(U_n^{\ap}) & =\frac{1}{n}\sum_{i=1}^nh_{1,i}^{\ap}(X_i)+\frac{1}{n(n-1)}\sum_{i\neq j}\frac{n\ind(j<i)}{i-1}h_{2;i,j}^{\ap}(X_i,X_j),
\end{align*}
where 
\begin{align}
h_{1,i}^{\ap}(x) & =\frac{1}{n-1}\sum_{j=1}^n\Big\{\frac{n\ind(j<i)}{i-1}-\frac{n\ind(j>i)}{j-1}\Big\}\{P(X_j>x)-\theta(i,j)\},\label{eq:tauap-h1i}
\end{align}
and
\[
h_{2;i,j}^{\ap}(x,y)=\ind(y>x)-P(X_j>x)-P(y>X_i)+\theta(i,j).
\]
In (\ref{eq:tauap-h1i}), by convention, we have $0/0:=0$.
\end{lemma}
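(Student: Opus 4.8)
The plan is to recognize $U_n^{\ap}$ as an instance of the general weighted U-statistic \eqref{eq:def-Un} of degree $m=2$, with kernel $h(x,y)=\ind(y>x)$ and weight $a(i,j)=n\ind(j<i)/(i-1)$ (with the convention $0/0:=0$), and then simply to specialize Lemma~\ref{lem:decomposition} to this $(h,a)$. The decomposition formula and the zero-mean properties then come for free from Lemma~\ref{lem:decomposition}; the work is entirely in evaluating the auxiliary functions $f^{(l)}$, $\theta^{(l)}$, $a^{(l)}$, $h_{1,i}$, $h_{2;i,j}$ from \eqref{eq:def-fl}--\eqref{eq:def-h2} for this particular kernel and weight. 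The single substantive input beyond Lemma~\ref{lem:decomposition} will be the assumed absolute continuity of the $P_i$'s, which is exactly what lets the two asymmetric projection terms in $h_{1,i}(\cdot)$ collapse into the one compact expression \eqref{eq:tauap-h1i}.

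Concretely, I would first read off: by \eqref{eq:def-theta}, $\theta(i,j)=E\{\ind(X_j>X_i)\}=P(X_j>X_i)$; by \eqref{eq:def-fl}, $f_j^{(1)}(x)=\int\ind(y>x)\,dP_j(y)=P(X_j>x)$ and, since $h^{(2)}(x;y_1)=h(y_1,x)=\ind(x>y_1)$, $f_j^{(2)}(x)=P(X_j<x)$; while the inserted weights and means are $a^{(1)}(i;j)=a(i,j)=n\ind(j<i)/(i-1)$, $a^{(2)}(i;j)=a(j,i)=n\ind(j>i)/(j-1)$, $\theta^{(1)}(i;j)=\theta(i,j)$, and $\theta^{(2)}(i;j)=\theta(j,i)=P(X_i>X_j)$. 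Substituting into \eqref{eq:def-h2} with $m=2$ immediately gives $h_{2;i,j}^{\ap}(x,y)=\ind(y>x)-P(X_j>x)-P(y>X_i)+\theta(i,j)$ --- which is the same as $h_{2;i,j}^{\ken}$ in Lemma~\ref{lem:decomposition-taukendall}, as the kernel is identical --- and \eqref{eq:def-h1} with $m=2$ gives $h_{1,i}^{\ap}(x)=\frac{1}{n-1}\sum_{j\neq i}\big[\,a^{(1)}(i;j)\{f_j^{(1)}(x)-\theta^{(1)}(i;j)\}+a^{(2)}(i;j)\{f_j^{(2)}(x)-\theta^{(2)}(i;j)\}\,\big]$, a sum of an $\ind(j<i)$-term and an $\ind(j>i)$-term.

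The one step that is not pure substitution is simplifying the $\ind(j>i)$-term. Using absolute continuity of $P_i$ and $P_j$ (so $P(X_j=x)=0$ and $P(X_i=X_j)=0$), $f_j^{(2)}(x)-\theta^{(2)}(i;j)=P(X_j<x)-P(X_i>X_j)=\{1-P(X_j>x)\}-\{1-\theta(i,j)\}=-\{P(X_j>x)-\theta(i,j)\}$, so this term equals $-\frac{n\ind(j>i)}{j-1}\{P(X_j>x)-\theta(i,j)\}$ and thus carries the same factor $\{P(X_j>x)-\theta(i,j)\}$ as the $\ind(j<i)$-term. Combining the two, and harmlessly re-including $j=i$ (where both indicators vanish), yields \eqref{eq:tauap-h1i}; putting everything back into \eqref{eq:hoeffding-decomposition-degreem}, together with the observation that for $m=2$ the remainder $U_n(a,h_2)$ is precisely $\frac{1}{n(n-1)}\sum_{i\neq j}\frac{n\ind(j<i)}{i-1}h_{2;i,j}^{\ap}(X_i,X_j)$, finishes the proof. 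The computation is routine; the only things demanding care are keeping the asymmetry of $h$ and $a$ straight (which argument sits inside $f^{(1)}$ versus $f^{(2)}$, and $a(i,j)$ versus $a(j,i)$) and noting that the $0/0:=0$ convention quietly disposes of the otherwise ill-defined boundary terms at $i=1$ and $j=1$.
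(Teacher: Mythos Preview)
Your proposal is correct and follows essentially the same approach as the paper: both specialize the general Hoeffding decomposition (Lemma~\ref{lem:decomposition}) to the degree-$2$ weighted U-statistic with $h(x,y)=\ind(y>x)$ and $a(i,j)=n\ind(j<i)/(i-1)$, compute $f_j^{(1)}$, $f_j^{(2)}$, $\theta(i,j)$, and then use absolute continuity (equivalently, $f_j^{(2)}(x)=1-P(X_j>x)$ and $\theta(i,j)=1-\theta(j,i)$) to collapse the $\ind(j>i)$-term so that both pieces share the common factor $P(X_j>x)-\theta(i,j)$. Your write-up is slightly more explicit about the role of absolute continuity and the $0/0$ convention, but there is no substantive difference.
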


The next theorem characterizes sufficient distributional conditions for $U_n^{\ken}$ and $U_n^{\ap}$ to be asymptotically normal, allowing for data non-i.i.d.-ness. 

\begin{theorem}[Sufficient conditions for asymptotic normality of $U_n^{\ken}$
and $U_n^{\ap}$]
 \label{thm:tauap-taukendall-AN} Assume a sequence $\{\delta_n\in(0,1)\}_{n=1}^{\infty}$ and a sequence $\{p_n\in(0,1)\}_{n=1}^{\infty}$ such that for any sufficiently large $n$ and for each $i\in[n]$, one of the following two conditions holds:
\begin{enumerate}
\item[(i)] $P\{P(X_j>X_i\mid X_i)-P(X_j>X_i)\in [\delta_n,1],\forall j\in[n]\backslash\{i\}\}\geq p_n$; 
\item[(ii)] $P\{P(X_j>X_i\mid X_i)-P(X_j>X_i)\in[-1,-\delta_n],\forall j\in[n]\backslash\{i\}\}\geq p_n$. 
\end{enumerate}
In addition, if
\begin{align}
\delta_n^{3}p_n\gnsim & n^{-1/3},\label{eq:taukendall-AN-rateassumption}
\end{align}
then $U_n^{\ken}$ is asymptotically normal,
\[
\var(U_n^{\ken})^{-1/2}\{U_n^{\ken}-E(U_n^{\ken})\}\stackrel{d}{\to}N(0,1).
\]
If we have
\begin{align}
\delta_n^{3}p_n & \gnsim n^{-1/3}(\log n)^2,\label{eq:tauap-AN-rateassumption}
\end{align}
then $U_n^{\ap}$ is asymptotically normal, 
\[
\var(U_n^{\ap})^{-1/2}\{U_n^{\ap}-E(U_n^{\ap})\}\stackrel{d}{\to}N(0,1).
\]
\end{theorem}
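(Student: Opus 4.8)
Both $U_n^{\ken}$ and $U_n^{\ap}$ are weighted U-statistics of degree $m=2$ with the common kernel $h(x,y)=\ind(y>x)$ and weights $a^{\ken}(i,j)=\ind(j<i)$, respectively $a^{\ap}(i,j)=n\ind(j<i)/(i-1)$, so the plan is to verify the hypotheses of Theorem~\ref{thm:clt-degreem} and invoke it. Since $h$ is an indicator, $h^4=h\le 1$, so \eqref{eq:clt-degreem-condition-moment} holds with $M(n)\equiv 1$. For the weight quantities, $|a^{\ken}|\le 1$ immediately gives $A_{2,2}^{\ken}(n)\lesssim 1$ and $A_{3,1}^{\ken}(n)\lesssim 1$; for the AP weight a direct count, using $\sum_{j<i}(i-1)^{-2}\asymp(i-1)^{-1}$ together with $\sum_{i}(i-1)^{-1}\asymp\log n$, yields $A_{2,2}^{\ap}(n)\asymp\log n$ and $A_{3,1}^{\ap}(n)\lesssim(\log n)^3$; only these upper bounds are needed.

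The substantive step is a lower bound on $V(n)=\var\{n^{-1}\sum_{i=1}^n h_{1,i}(X_i)\}$. Because $x\mapsto P(X_j>x)$ is a fixed function, each $h_{1,i}(X_i)$ depends on $X_i$ alone, so the summands are independent with mean zero and $V(n)=n^{-2}\sum_{i=1}^n\var\{h_{1,i}(X_i)\}$. Write $u_{ij}(X_i):=P(X_j>X_i\mid X_i)-P(X_j>X_i)$, so $E u_{ij}(X_i)=0$, and recall from \eqref{eq:taukendall-h1i} that $h_{1,i}^{\ken}(X_i)=(n-1)^{-1}\sum_{j\ne i}\{\ind(j<i)-\ind(j>i)\}u_{ij}(X_i)$, with the analogue \eqref{eq:tauap-h1i} carrying weight $n/(i-1)$ on the ``$j<i$'' block and $n/(j-1)$ on the ``$j>i$'' block. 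The difficulty is the signed structure: for $i$ near the middle the two blocks can cancel, so a term-by-term bound fails. I would instead restrict attention to the $c_0 n\delta_n$ largest indices $i$, for a small absolute constant $c_0$: for such $i$ the ``$j<i$'' block has $\asymp n$ summands, the ``$j>i$'' block has at most $c_0 n\delta_n$ of them, and every weight appearing is $\asymp 1$. On the event of probability at least $p_n$ supplied by condition (i) (condition (ii) is handled identically after a global sign change), all $u_{ij}(X_i)\in[\delta_n,1]$; on that event the ``$j<i$'' block is at least $\tfrac12 n\delta_n$ while the ``$j>i$'' block is at most a small multiple of $n\delta_n$ and hence strictly smaller, so $h_{1,i}(X_i)\gtrsim\delta_n$ there. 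Combined with $E\{h_{1,i}(X_i)\}=0$, this gives $\var\{h_{1,i}(X_i)\}=E\{h_{1,i}(X_i)^2\}\gtrsim\delta_n^2 p_n$ for each of these $\asymp n\delta_n$ indices, and therefore
\[
V(n)\ \gtrsim\ n^{-2}(n\delta_n)\,\delta_n^2 p_n\ \asymp\ n^{-1}\delta_n^3 p_n .
\]

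With $M(n)\equiv1$, the Kendall weight bounds $A_{2,2}^{\ken},A_{3,1}^{\ken}\lesssim1$, and $V(n)\gtrsim n^{-1}\delta_n^3 p_n$, condition \eqref{eq:clt-degreem-condition-1} reduces to $(n\delta_n^3 p_n)^{-1}\to0$ and \eqref{eq:clt-degreem-condition-2} to $n^{-1/2}(\delta_n^3 p_n)^{-3/2}\to0$; both follow from $\delta_n^3 p_n\gnsim n^{-1/3}$ in \eqref{eq:taukendall-AN-rateassumption}, the second being the binding one, and Theorem~\ref{thm:clt-degreem} then gives the asymptotic normality of $U_n^{\ken}$. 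For $U_n^{\ap}$ the only change is that the polylog factors in $A_{2,2}^{\ap},A_{3,1}^{\ap}$ turn \eqref{eq:clt-degreem-condition-1} into $\log n/(n\delta_n^3 p_n)\to0$ and \eqref{eq:clt-degreem-condition-2} into $n^{-1/2}(\log n)^2(\delta_n^3 p_n)^{-3/2}\to0$, both implied by $\delta_n^3 p_n\gnsim n^{-1/3}(\log n)^2$ in \eqref{eq:tauap-AN-rateassumption}; a second application of Theorem~\ref{thm:clt-degreem} finishes the proof.

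The main obstacle is precisely the lower bound on $V(n)$: one must show the leading term does not degenerate in spite of the cancellation built into $\ind(j<i)-\ind(j>i)$ and of the heterogeneity of $\{P_i\}$ — the latter being exactly what conditions (i)--(ii) are designed to control. Isolating the $\asymp n\delta_n$ extreme indices, where the two blocks are of incomparable size and hence cannot cancel, is what converts the qualitative sign condition into a quantitative rate; the cube $\delta_n^3$ in \eqref{eq:taukendall-AN-rateassumption}--\eqref{eq:tauap-AN-rateassumption} is the product of the ``$\asymp n\delta_n$ usable indices'' and the ``$\asymp\delta_n^2$ per-index variance.'' The remaining bookkeeping — the exact orders of $A_{2,2}^{\ap}$ and $A_{3,1}^{\ap}$ under the singular-looking weight $n/(i-1)$, and checking that $c_0n\delta_n\to\infty$ so that enough usable indices exist — is routine.
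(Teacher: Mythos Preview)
Your approach is essentially the paper's: bound $A_{2,2}$ and $A_{3,1}$ above, bound $V(n)$ below by restricting to the $\asymp n\delta_n$ extreme indices where the two signed blocks in $h_{1,i}$ cannot cancel, and invoke Theorem~\ref{thm:clt-degreem}; the paper treats both tails of $[n]$ (via Lemmas~\ref{lem:tauap-AN-proofuse} and~\ref{lem:taukendall-AN-proofuse}), but one tail already delivers the needed order, and for $U_n^{\ap}$ your choice of the \emph{top} tail --- where every weight $n/(i-1)$ and $n/(j-1)$ is indeed $\asymp1$ --- is the cleaner route. One arithmetic slip: in your AP check of \eqref{eq:clt-degreem-condition-2} the polylog factor should be $(\log n)^3$, not $(\log n)^2$ --- you yourself wrote $A_{3,1}^{\ap}\lesssim(\log n)^3$ --- and it is precisely that cube which, after the $2/3$ power, produces the $(\log n)^2$ appearing in \eqref{eq:tauap-AN-rateassumption}.
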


The proof of Theorem \ref{thm:tauap-taukendall-AN} exploits Theorem \ref{thm:clt-degreem}. 
A key step in the proof is to bound $V(n):=n^{-2}\sum_i\var\{h_{1,i}(X_i)\}$ from below. The magnitude of $\var\{h_{1,i}(X_i)\}$ varies greatly with different $i$, making it a challenging task to bound the entire summation. To tackle this, we break $V(n)$ into summations over multiple subsets of $[n]$. Within each of these summations, the magnitude of $\var\{h_{1,i}(X_i)\}$ is stable. Then we develop bounds on the summations for $i$ with large $\var\{h_{1,i}(X_i)\}$.

The sequences $\{\delta_n\}$ and $\{p_n\}$ in Conditions (i) and (ii) of Theorem \ref{thm:tauap-taukendall-AN} characterize the heterogeneity degree among the $P_i$'s. If all $P_i$'s are identical, it is easy to check that there exist absolute constants $\delta_n$ and $p_n$ not depending on $n$ such that Condition (i) or (ii) holds. Equations (\ref{eq:taukendall-AN-rateassumption}) and (\ref{eq:tauap-AN-rateassumption}) allow $\delta_n$ and $p_n$ to decay to zero as $n\to\infty$. The legitimate decaying rate of $\delta_n^{3}p_n$ depends on the average weight of each of the two statistics. The conditions for asymptotic normality of $U_n^{\ap}$ (\ref{eq:tauap-AN-rateassumption}) are slightly stronger than that for $U_n^{\ken}$ (\ref{eq:taukendall-AN-rateassumption}), because for $U_n^{\ap}$ the weight is much more skewed.

\begin{remark}\label{remark:rate}
In the literature about Kendall's tau, the classical result gives root-$n$ convergence rate \citep{sen1968kendall}. Theorem \ref{thm:tauap-taukendall-AN} gives a more general result regarding the convergence rate due to the non-i.i.d.-ness of $\{X_1,\ldots,X_n\}$. In the proof of Theorem \ref{thm:tauap-taukendall-AN}, we show that the $\var(U_n^{\ken}) \gnsim n^{-1}\delta_n^{3}p_n$. As we vary the distribution of $X_i$ from i.i.d. to the more heterogeneous ones, $\delta_n^{3}p_n$ changes from $O(1)$ to $O(n^{-1/3+\epsilon})$ for some small $\epsilon>0$. Therefore, the upper bound on the order of $\var(U_n^{\ken})^{-1/2}$ can vary from $n^{1/2}$ to $n^{2/3-\epsilon/2}$.
\end{remark}

Motivated by the studies in \cite{yilmaz2008new}, in the sequel we consider the following specific location-scale model. In particular, given two sets of real values $\mu_i$ with $\mu_1\geq\mu_2\geq\ldots\geq\mu_n$ and $\sigma_1^2,\ldots,\sigma_n^2>0$, let's consider absolute continuous (with respect to Lebesgue measure) probability distribution $P_i$ with mean $\mu_i$ and variance $\sigma_i^2$ for $i\in[n]$. Assume $X_1,\ldots,X_n$ are independent draws from $P_1,\ldots,P_n$. The following theorem characterizes the explicit sufficient conditions on $\{(\mu_i,\sigma_i),i\in[n]\}$ for Kendall's tau and AP correlation to be asymptotically normal.

\begin{theorem}[Sufficient condition for asymptotic normality of $U_n^{\ken}$ and $U_n^{\ap}$ under two tail conditions]  \label{thm:tauap-AN-tail-conditions}
 For each $i\in[n]$, assume $X_i$ follows distribution $P_i$ with mean $\mu_i$ and variance $\sigma_i^2$. Define 
 \[
 r_{ij}:=(\mu_i-\mu_j)/\sigma_i, ~~R_n:=\max_{1\leq i\neq j\leq n}|r_{ij}|, ~~\rho_{ij}:=\sigma_i/\sigma_j, ~{\rm and}~~ \rho_n:=\max_{1\leq i\neq j\leq n}\rho_{ij}. 
 \]
 For $n,i,j$ such that $1\leq i\neq j\leq n$, define 
\begin{align}
F_j^{c}(t)=P\Big(\frac{X_j-\mu_j}{\sigma_j}>t\Big) & ~~{\rm and}~~F_{ji}^{c}(t)=P\Big\{\frac{X_j-X_i-(\mu_j-\mu_i)}{(\sigma_i^2+\sigma_j^2)^{1/2}}>t\Big\}.\label{eq:def-FjFij}
\end{align}
Then the following results hold.
\begin{enumerate}
\item[(i)]  Assume there exist absolute constants $c_1,c_2>0$, $b_1>b_2>0$, and $t_0>0$,
such that for any $n,i,j$ with $1\leq i\neq j\leq n$ and for any
$t\geq t_0$,
\begin{align}
c_1t^{-b_1} & \leq F_j^{c}(t)\leq c_2t^{-b_2},\label{eq:tail-condition-1-1}\\
c_1t^{-b_1} & \leq F_{ji}^{c}(t)\leq c_2t^{-b_2}.\label{eq:tail-condition-1-2}
\end{align}
Then the sufficient condition for asymptotic normality of $U_n^{\ken}$
is 
\begin{align}
R_n^{(3b_1b_2+b_1^2)/b_2}\rho_n^{b_1} & \lnsim n^{1/3},\label{eq:tail-condition-1-rate-kendall}
\end{align}
and the sufficient condition for asymptotic normality of $U_n^{\ap}$
is
\begin{align}
R_n^{(3b_1b_2+b_1^2)/b_2}\rho_n^{b_1} & \lnsim n^{1/3}(\log n)^{-2}.\label{eq:tail-condition-1-rate-ap}
\end{align}

\item[(ii)]  Assume there exist absolute constants $c_1,c_2>0$, $b_1>b_2>0$, and $t_0>0$,
such that for any $n,i,j$ with $1\leq i\neq j\leq n$ and for any
$t\geq t_0$,
\begin{align}
c_1\exp(-b_1t^{\lambda}) & \leq F_j^{c}(t)\leq c_2\exp(-b_2t^{\lambda}),\label{eq:tail-condition-2-1}\\
c_1\exp(-b_1t^{\lambda}) & \leq F_{ji}^{c}(t)\leq c_2\exp(-b_2t^{\lambda}).\label{eq:tail-condition-2-2}
\end{align}
Then the sufficient condition for asymptotic normality of $U_n^{\ken}$
is
\begin{equation}
3b_1R_n^{\lambda}+b_1(R_n+K_{3}\rho_n+K_{4}\rho_nR_n)^{\lambda}\lnsim\frac{1}{3}\log n,\label{eq:tail-condition-2-rate-kendall}
\end{equation}
and the sufficient condition for asymptotic normality of $U_n^{\ap}$
is
\begin{equation}
3b_1R_n^{\lambda}+b_1(R_n+K_{3}\rho_n+K_{4}\rho_nR_n)^{\lambda}\lnsim\frac{1}{3}\log n-2\log\log n,\label{eq:tail-condition-2-rate-ap}
\end{equation}
where
\begin{align}
K_{3} & :=t_0+\Big(-\frac{1}{b_2}\log\frac{c_1}{2c_2}+\frac{b_1}{b_2}t_0^{\lambda}\Big)^{1/\lambda}+\xi(\lambda^{-1})\Big(-\frac{1}{b_2}\log\frac{c_1}{2c_2}\Big)^{1/\lambda},\nonumber \\
K_{4} & :=\xi(\lambda^{-1})\Big(\frac{b_1}{b_2}\Big)^{1/\lambda},\label{eq:tail-conditiondef-K3K4}
\end{align}
and $\xi(p):=\ind(p\leq1)+2^{p-1}\ind(p>1)$.
\end{enumerate}
\end{theorem}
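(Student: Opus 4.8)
The plan is to derive both statements from Theorem~\ref{thm:tauap-taukendall-AN}: I would verify its Condition~(ii) simultaneously for every $i\in[n]$ with an explicit choice of $\delta_n$ and $p_n$ manufactured from the prescribed tail bounds, and then show that the abstract requirements \eqref{eq:taukendall-AN-rateassumption} and \eqref{eq:tauap-AN-rateassumption} collapse to the displayed rates. The starting point is the identities $P(X_j>X_i\mid X_i=\mu_i+t\sigma_i)=F_j^c(\rho_{ij}(t+r_{ij}))$ and $P(X_j>X_i)=F_{ji}^c\big((\mu_i-\mu_j)/(\sigma_i^2+\sigma_j^2)^{1/2}\big)$. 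Since $(\sigma_i^2+\sigma_j^2)^{1/2}\geq\sigma_i$ and $|\mu_i-\mu_j|\leq R_n\sigma_i$, the argument of $F_{ji}^c$ always lies in $[-R_n,R_n]$; combined with monotonicity of $F_{ji}^c$ and the lower tail bound this yields a uniform bound $P(X_j>X_i)\geq c_*\,g(R_n)$ over all $i\neq j$, with $g(R)=R^{-b_1}$ under \eqref{eq:tail-condition-1-1}--\eqref{eq:tail-condition-1-2} and $g(R)=\exp(-b_1R^\lambda)$ under \eqref{eq:tail-condition-2-1}--\eqref{eq:tail-condition-2-2} (the $R\leq t_0$ range being absorbed into the absolute constant $c_*$). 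I would then set $\delta_n:=\tfrac12 c_*\,g(R_n)$, truncated at $1$ so that $\delta_n\in(0,1)$.

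The event witnessing Condition~(ii) is $\{X_i>\mu_i+t^*\sigma_i\}$ for a threshold $t^*>0$ to be chosen. On it one has $(X_i-\mu_j)/\sigma_j\geq\rho_{ij}(t^*+r_{ij})\geq t^*/\rho_n-R_n$, where I would use $\rho_{ij}\geq1/\rho_n$ for the first term and $\rho_{ij}|r_{ij}|=|r_{ji}|\leq R_n$ for the second. Hence, by the upper tail bound, $P(X_j>X_i\mid X_i)\leq c_2\,\tilde g(t^*/\rho_n-R_n)$ with $\tilde g(s)=s^{-b_2}$ (polynomial) or $\tilde g(s)=\exp(-b_2s^\lambda)$ (exponential). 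Taking $t^*=\rho_n(R_n+\tau_n)$, with $\tau_n$ the smallest value forcing $c_2\tilde g(\tau_n)\leq\delta_n$, makes $P(X_j>X_i\mid X_i)-P(X_j>X_i)\leq\delta_n-2\delta_n=-\delta_n$ (and trivially $\geq-1$) for every $j\neq i$; this is exactly Condition~(ii) with $p_n:=\inf_iF_i^c(t^*)\geq c_1\,g(t^*)$. Inverting $\tilde g$ gives $\tau_n\asymp R_n^{b_1/b_2}$ in the polynomial case, while in the exponential case $\tau_n$ is bounded by $K_3+K_4R_n$ via the elementary inequality $(a+b)^{1/\lambda}\leq\xi(\lambda^{-1})(a^{1/\lambda}+b^{1/\lambda})$ --- this step is precisely where the constants $K_3,K_4$ of \eqref{eq:tail-conditiondef-K3K4} originate.

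It then remains to plug these choices into $\delta_n^3p_n$. In the polynomial case $\delta_n\asymp R_n^{-b_1}$, $t^*\asymp\rho_nR_n^{b_1/b_2}$, and $p_n\gtrsim(t^*)^{-b_1}\asymp\rho_n^{-b_1}R_n^{-b_1^2/b_2}$, so $\delta_n^3p_n\asymp\big(R_n^{(3b_1b_2+b_1^2)/b_2}\rho_n^{b_1}\big)^{-1}$ and the requirement $\delta_n^3p_n\gnsim n^{-1/3}$ (resp.\ $\gnsim n^{-1/3}(\log n)^2$) coming from \eqref{eq:taukendall-AN-rateassumption} (resp.\ \eqref{eq:tauap-AN-rateassumption}) is equivalent to \eqref{eq:tail-condition-1-rate-kendall} (resp.\ \eqref{eq:tail-condition-1-rate-ap}). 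In the exponential case $\delta_n\asymp\exp(-b_1R_n^\lambda)$ and $p_n\gtrsim\exp\{-b_1(R_n+K_3\rho_n+K_4\rho_nR_n)^\lambda\}$ up to a multiplicative constant, so $-\log(\delta_n^3p_n)=3b_1R_n^\lambda+b_1(R_n+K_3\rho_n+K_4\rho_nR_n)^\lambda+O(1)$, and $\delta_n^3p_n\gnsim n^{-1/3}$ (resp.\ $\gnsim n^{-1/3}(\log n)^2$) becomes \eqref{eq:tail-condition-2-rate-kendall} (resp.\ \eqref{eq:tail-condition-2-rate-ap}). The real work here is not conceptual but the careful bookkeeping of every absolute constant: keeping the exponent of $n$ exact, separating the $R_n<t_0$ and $R_n\geq t_0$ regimes so the thresholds entering $t^*$ match the stated $K_3,K_4$, and verifying $\delta_n\in(0,1)$, $p_n\in(0,1)$; the $(\log n)^2$ discrepancy between the Kendall and AP conditions is inherited verbatim from the gap between \eqref{eq:taukendall-AN-rateassumption} and \eqref{eq:tauap-AN-rateassumption}.
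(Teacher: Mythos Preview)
Your approach is essentially the paper's: reduce to Theorem~\ref{thm:tauap-taukendall-AN} by verifying its Condition~(ii) with an explicit $(\delta_n,p_n)$ built from the tail bounds, exactly as the paper does via Lemmas~\ref{lem:tail-condition-lemma-1} and~\ref{lem:tail-condition-lemma-2}. The ingredients --- lower-bounding $P(X_j>X_i)=F_{ji}^c(\cdot)$ uniformly by $g(R_n)$, upper-bounding $P(X_j>X_i\mid X_i)=F_j^c(\rho_{ij}(z_i+r_{ij}))$ on the event $\{z_i\geq t^*\}$, and reading off $p_n$ from $F_i^c(t^*)$ --- match the paper line by line, and your derivation of $\delta_n^3p_n$ in the polynomial case lands on precisely \eqref{eq:tail-condition-1-rate-kendall}--\eqref{eq:tail-condition-1-rate-ap}.

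There is one looseness worth flagging. You bound $\rho_{ij}(t^*+r_{ij})=\rho_{ij}t^*+\rho_{ij}r_{ij}\geq t^*/\rho_n-R_n$ by handling the two summands separately, which forces the threshold $t^*=\rho_n(R_n+\tau_n)$. The paper instead first bounds $z_i+r_{ij}\geq z_i-R_n$ (using $r_{ij}\geq -R_n$) and only then multiplies by $\rho_{ij}$, invoking $\rho_{ij}\rho_n\geq 1$; this yields the tighter threshold $z_i\geq R_n+\tau_n\rho_n$. In Part~(i) the discrepancy is immaterial because $\tau_n\rho_n\asymp\rho_nR_n^{b_1/b_2}$ dominates either way. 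In Part~(ii), however, your threshold would produce $b_1(\rho_nR_n+K_3\rho_n+K_4\rho_nR_n)^\lambda$ in place of the theorem's $b_1(R_n+K_3\rho_n+K_4\rho_nR_n)^\lambda$ --- a slightly more restrictive sufficient condition than what is stated, with the wrong isolated $R_n$ term. To recover the exact constants $K_3,K_4$ and the displayed form of \eqref{eq:tail-condition-2-rate-kendall}--\eqref{eq:tail-condition-2-rate-ap}, swap to the paper's order of bounding; nothing else in your outline needs to change.
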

\begin{remark}
It is worth noting that distributions satisfying (\ref{eq:tail-condition-1-1}) in Theorem \ref{thm:tauap-AN-tail-conditions}(i) are commonly referred to as ``heavy-tailed'' distributions, whereas distributions satisfying (\ref{eq:tail-condition-2-1}) in Theorem \ref{thm:tauap-AN-tail-conditions}(ii) are considered to be ``light-tailed'' \citep{mikosch1999regular, resnick2007heavy}.
\end{remark}

We compare Condition (\ref{eq:tail-condition-1-rate-kendall}) in (i) and Condition (\ref{eq:tail-condition-2-rate-kendall}) in (ii) for $U_n^{\ken}$. Assume $\sigma_i=1$ for all $i\in[n]$. In this case, we have $\rho_n=1$, and $R_n=\max_{1\leq i\neq j\leq n}|\mu_i-\mu_j|$ becomes the spread of the means. Equation (\ref{eq:tail-condition-1-rate-kendall}) becomes
\begin{equation}
R_n\lnsim n^{\frac{b_2}{3(3b_1b_2+b_1^2)}}.\label{eq:tail-condition-remarkuse-1}
\end{equation}
Equation (\ref{eq:tail-condition-2-rate-kendall}) becomes
\begin{equation}
3b_1R_n^{\lambda}+b_1(R_n+K_{3}+K_{4}R_n)^{\lambda}\lnsim\frac{1}{3}\log n.  \label{eq:tail-condition-remarkuse-2}
\end{equation}
Lemma \ref{lem:bound-sum-power} in Appendix yields $(R_n+K_{3}+K_{4}R_n)^{\lambda}\leq\xi(\lambda)(1+K_{4})^{\lambda}R_n^{\lambda}+\xi(\lambda)K_{3}^{\lambda}$. So for (\ref{eq:tail-condition-remarkuse-2}) to hold, it suffices to have $\xi(\lambda)(1+K_{4})^{\lambda}R_n^{\lambda}+3b_1R_n^{\lambda}\lnsim (\log n) / 3$. Rearranging terms, we obtain a sufficient condition for (\ref{eq:tail-condition-remarkuse-2}) to hold:
\begin{equation}
R_n\lnsim\Big[\frac{\log n}{3b_1\{3+\xi(\lambda)(1+K_{4})^{\lambda}\}}\Big]^{1/\lambda}.\label{eq:tail-condition-remarkuse-3}
\end{equation}
For heavy-tailed distributions in (i), (\ref{eq:tail-condition-remarkuse-1}) implies that the spread of means should not grow faster than a polynomial of $n$. For light-tailed distributions in (ii), (\ref{eq:tail-condition-remarkuse-3}) implies that the spread of means should not grow faster than the logarithm of $n$ (up to some constant scaling factor). 
Of note, under both tail conditions, $R_n$ is allowed to increase to infinity at proper rates.

\begin{example}
A special distribution satisfying the conditions in Theorem \ref{thm:tauap-AN-tail-conditions}(ii) is  the Gaussian. Again, consider $U_n^{\ken}$ and assume $\sigma_i=1$ for all $i\in[n]$. Note in this case $F_j^{c}(\cdot)$ is the survival function for Gaussian with variance $1$, whereas $F_{ji}^{c}(\cdot)$ is for Gaussian with variance 2. Let $\lambda=2$, $b_1=1/2+\epsilon$, $b_2=1/4-\epsilon$ for arbitrarily small $\epsilon>0$, and $c_1,c_2,t_0$ be properly chosen constants (whose value does not affect the rate in (\ref{eq:tail-condition-remarkuse-3})). Equations (\ref{eq:tail-condition-1-1}) and (\ref{eq:tail-condition-1-2}) are satisfied due to Lemma \ref{lem:bound-normal-cdf}. It then follows from (\ref{eq:tail-condition-remarkuse-3}) that
\[
R_n\lnsim\Big(\frac{2 \log n}{27 + 12\sqrt{2}}\Big)^{1/2}
\]
is sufficient for $U_n^{\ken}$ to be asymptotically normal.

\end{example}

\begin{remark}
We comment on a modified version of Theorem \ref{thm:tauap-AN-tail-conditions}(i), with a condition alternative to (\ref{eq:tail-condition-1-1}) (A similar modification applies to Theorem \ref{thm:tauap-AN-tail-conditions}(ii)). In detail, define $F_j(t)=P\{(X_j-\mu_j)/\sigma_j\leq t\}$ to be the standardized cumulative distribution function that is complement to the survival function $F_j^c(t)$. The conclusion in  Theorem \ref{thm:tauap-AN-tail-conditions}(i) still holds if we replace the condition (\ref{eq:tail-condition-1-1}) by
\begin{equation}
c_1t^{-b_1}  \leq F_j(-t) \leq c_2t^{-b_2}. \label{eq:tail-condition-1-1-alter}
\end{equation}
For comparison, (\ref{eq:tail-condition-1-1}) regulates the upper-tail behavior of $X_j$, whereas (\ref{eq:tail-condition-1-1-alter}) regulates the lower-tail of $X_j$. Technically speaking, the proof of Theorem \ref{thm:tauap-AN-tail-conditions}(i) examines Condition (ii) in Theorem \ref{thm:tauap-AN-tail-conditions}, whereas the alternative version examines Condition (i) in Theorem \ref{thm:tauap-AN-tail-conditions}. Note that (\ref{eq:tail-condition-1-2}) is required in both versions, and regulates both the upper- and lower-tail behaviors of $X_j - X_i$.
\end{remark}

The following three corollaries give asymptotic results for bootstrapping $U_n^{\ken}$ and $U_n^{\ap}$. The first of them states that bootstrapping the main term is very insensitive to data non-i.i.d.-ness. This is as expected by the results in \cite{liu1988bootstrap}. 

\begin{corollary}[Bootstrap of main term works for $U_n^{\ken}$ and $U_n^{\ap}$]
 \label{cor:bootstrap-tauap-taukendall-mainterm} If (\ref{eq:taukendall-AN-rateassumption})
holds, we have that (\ref{eq:ustat-bootstrap-condition-1}) and (\ref{eq:ustat-bootstrap-condition-2})
hold for $h_{1,i}^{\ken}$. If (\ref{eq:tauap-AN-rateassumption})
holds, we have that (\ref{eq:ustat-bootstrap-condition-1}) and (\ref{eq:ustat-bootstrap-condition-2}) 
hold for $h_{1,i}^{\ap}$.
\end{corollary}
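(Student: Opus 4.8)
The plan is to reduce the corollary to one deterministic fact. Write $\sigma_n^2:=\var(U_n)$ for whichever of the two statistics is under consideration, and set $M_n:=\max_{1\le i\le n}\sup_{x\in\reals}|h_{1,i}(x)|/(n\sigma_n)$. The claim is that $M_n\to0$ for $h_{1,i}=h_{1,i}^{\ken}$ under \eqref{eq:taukendall-AN-rateassumption}, and for $h_{1,i}=h_{1,i}^{\ap}$ under \eqref{eq:tauap-AN-rateassumption}. Granting this, both conclusions follow immediately: fix $\epsilon>0$; for every $n$ large enough that $M_n<\epsilon$ we have $P\{|h_{1,i}(X_i)/(n\sigma_n)|\ge\epsilon\}=0$ for all $i$, which is \eqref{eq:ustat-bootstrap-condition-1}, and on the same range of $n$ the truncation indicator in \eqref{eq:ustat-bootstrap-condition-2} equals $1$ almost surely, so $E\{(h_{1,i}(X_i)/(n\sigma_n))\ind(|h_{1,i}(X_i)/(n\sigma_n)|\le\epsilon)\}=E\{h_{1,i}(X_i)\}/(n\sigma_n)=0$ by \eqref{eq:Eh1i}, and the entire sum in \eqref{eq:ustat-bootstrap-condition-2} is therefore zero.

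To establish $M_n\to0$, I would first bound the first-order projections using their explicit forms. From Lemma~\ref{lem:decomposition-taukendall}, since the coefficient $\ind(j<i)-\ind(j>i)$ takes values in $\{-1,0,1\}$ and vanishes at $j=i$, and since $|P(X_j>x)-\theta(i,j)|\le1$, we get $|h_{1,i}^{\ken}(x)|\le(n-1)^{-1}\sum_{j\ne i}1=1$ for all $x\in\reals$ and $i\in[n]$. From Lemma~\ref{lem:decomposition-tauap}, using the convention $0/0=0$, we get $|h_{1,i}^{\ap}(x)|\le(n-1)^{-1}\bigl(\sum_{j<i}\tfrac{n}{i-1}+\sum_{j>i}\tfrac{n}{j-1}\bigr)\le\tfrac{n}{n-1}\bigl(1+\sum_{k=1}^{n-1}k^{-1}\bigr)\lesssim\log n$, uniformly in $x\in\reals$ and $i\in[n]$.

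Finally I would combine these bounds with the variance lower bounds obtained inside the proof of Theorem~\ref{thm:tauap-taukendall-AN} (valid under the standing hypotheses of that theorem, the ones furnishing the sequences $\delta_n,p_n$), namely $\var(U_n^{\ken})\gnsim n^{-1}\delta_n^3p_n$ and $\var(U_n^{\ap})\gnsim n^{-1}\delta_n^3p_n$. Under \eqref{eq:taukendall-AN-rateassumption}, $\delta_n^3p_n\gtrsim n^{-1/3}$, hence $n^2\var(U_n^{\ken})\gtrsim n^{2/3}$ and $M_n\le1/(n\sigma_n^{\ken})\lesssim n^{-1/3}\to0$; under \eqref{eq:tauap-AN-rateassumption}, $\delta_n^3p_n\gtrsim n^{-1/3}(\log n)^2$, hence $n^2\var(U_n^{\ap})\gtrsim n^{2/3}(\log n)^2$ and $M_n\lesssim\log n/(n\sigma_n^{\ap})\lesssim n^{-1/3}\to0$. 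This completes the argument. The only step that is not pure bookkeeping is importing the variance lower bounds from Theorem~\ref{thm:tauap-taukendall-AN}; I expect that to be the main obstacle, though it is already available from that proof, and I note that any extraneous polylogarithmic factor there — in particular the $(\log n)^2$ that separates the two rate assumptions — is harmless here, being dominated by the polynomial factor $n^{2/3}$.
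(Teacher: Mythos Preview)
Your argument is correct and uses the same two ingredients as the paper --- the uniform bounds $|h_{1,i}^{\ken}(x)|\le 1$, $|h_{1,i}^{\ap}(x)|\lesssim\log n$, and the variance lower bounds $n\sigma_n^{\ken}\gnsim n^{1/3}$, $n\sigma_n^{\ap}\gnsim n^{1/3}\log n$ extracted from the proof of Theorem~\ref{thm:tauap-taukendall-AN} via \eqref{eq:clt-degreem-variancetendtoone} --- but you finish more cleanly. The paper does not exploit the deterministic bound $M_n\to 0$ directly: for \eqref{eq:ustat-bootstrap-condition-1} it applies Markov's inequality to get $P\{|h_{1,i}(X_i)/(n\sigma_n)|\ge\epsilon\}\le (\epsilon n\sigma_n)^{-1}\sup_x|h_{1,i}(x)|$, and for \eqref{eq:ustat-bootstrap-condition-2} it rewrites the truncated expectation as minus the tail part via \eqref{eq:Eh1i}, then bounds that tail by Cauchy--Schwarz against the tail probability and uses $\sum_i E\{h_{1,i}(X_i)/(n\sigma_n)\}^2\to 1$. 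Your observation that once $M_n<\epsilon$ the tail probability is \emph{zero} and the truncation is vacuous short-circuits both steps; the paper's route would survive in settings where $h_{1,i}$ is not uniformly bounded but has controlled moments, whereas your route is specific to the bounded-kernel case at hand but is the more transparent argument here.
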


As has been shown in Section \ref{sec:main}, bootstrapping the whole U-statistic requires much stronger assumptions for guaranteeing its consistency. The following two corollaries provide sufficient conditions for bootstrap inference validity of the two considered U-statistics.

\begin{corollary}[Sufficient condition for consistent bootstrap variance estimation
of $U_n^{\ken}$]
\label{cor:bootstrap-taukendall} Assume (\ref{eq:taukendall-AN-rateassumption}) holds. Assume there exist $\theta>0$ and an absolute constant $C>0$ such that for all $(i,j)\in I_n^2$,
\begin{equation}
|P(X_i>X_j)-\theta|\leq Cn^{-1/6}.  \label{eq:taukendall-bootstrap-condition-1}
\end{equation}
In addition, assume there exist $\eta^2>0$ and an absolute constant $C>0$ such
that for all $i\in[n]$ and all $1\leq j,k\leq n$ such that $j\neq i$
and $k\neq i$,
\begin{equation}
|E\{P(X_j>X_i\mid X_i)P(X_k>X_i\mid X_i)\}-\eta^2|\leq Cn^{-1/3}.\label{eq:taukendall-bootstrap-condition-2}
\end{equation}
Assume $\eta^2\neq\theta^2$. Then we have
\begin{align*}
|\var^*(\sigma_n^{-1}U_n^{\ken*})-\var(\sigma_n^{-1}U_n^{\ken})| & \stackrel{P}{\to}0.
\end{align*}
\end{corollary}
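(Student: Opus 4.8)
The plan is to verify that the hypotheses of Theorem \ref{thm:bootstrap-u-stat} are met for $U_n^{\ken}$, so that the conclusion follows immediately. Since $U_n^{\ken}$ has degree $m=2$, bounded kernel $h(x,y)=\ind(y>x)$, and weight $a(i,j)=\ind(j<i)$, the moment conditions \eqref{eq:clt-degreem-condition-moment} and \eqref{eq:ustat-bootstrap-condition-vonmises} hold trivially with $M(n)\equiv 1$. The average-weight quantities satisfy $A_{2,2}(n)\asymp A_{3,1}(n)\asymp A_{2,1}(n)\asymp 1$ (see Remark \ref{rem:bootstrap-iid-mild-assumption}), because the weight is an indicator of an event of probability $\asymp 1/2$ among index tuples. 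From the proof of Theorem \ref{thm:tauap-taukendall-AN} (invoked through assumption \eqref{eq:taukendall-AN-rateassumption}), we already have asymptotic normality of $U_n^{\ken}$, hence \eqref{eq:clt-degreem-condition-1} and \eqref{eq:clt-degreem-condition-2}; moreover the lower bound $\sigma_n^2 = \var(U_n^{\ken}) \gnsim n^{-1}\delta_n^3 p_n \gnsim n^{-4/3}$ recorded in Remark \ref{remark:rate} is the key quantitative input. Thus it remains to check conditions (ii) of Theorem \ref{thm:bootstrap-u-stat}: the two homogeneity limits \eqref{eq:ustat-bootstrap-condition-homo1}–\eqref{eq:ustat-bootstrap-condition-homo2} and the mean-homogeneity bound \eqref{eq:ustat-bootstrap-condition-3}.

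For \eqref{eq:ustat-bootstrap-condition-3}, I would bound $M_1(n)$ and $M_2(n)$ directly using \eqref{eq:taukendall-bootstrap-condition-1} and \eqref{eq:taukendall-bootstrap-condition-2}. Here $\theta(i,j)=P(X_j>X_i)$, so $M_1(n) = \max|\theta(i,j)-\theta(i',j')| \leq 2Cn^{-1/6}$ by \eqref{eq:taukendall-bootstrap-condition-1}, giving $M_1(n)^2 \lesssim n^{-1/3}$. For $M_2(n)$: when $m=2$ the inner conditional expectations $E\{h(X_{r_1},X_{r_2})h(X_{s_1},X_{s_2})\mid X_{k_p}\}$ with one shared coordinate reduce to products of conditional survival functions of the form $E\{P(X_a>X_{k_p}\mid X_{k_p})P(X_b>X_{k_p}\mid X_{k_p})\}$, possibly after reflecting the indicator; the two terms being differenced in \eqref{eq:def-M2} share the coordinate $X_{k_p}=X_{s_q}$ but differ in the other index, so their difference is controlled by \eqref{eq:taukendall-bootstrap-condition-2} up to a factor $2Cn^{-1/3}$ (the reflection $\ind(x>y)=1-\ind(y>x)$ only introduces bounded additive corrections that also telescope under \eqref{eq:taukendall-bootstrap-condition-1}). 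Hence $M_2(n)\lesssim n^{-1/3}$, and $M_1(n)^2 + M_2(n) + n^{-1} \lesssim n^{-1/3}$. With $A_{2,1}(n)\asymp 1$ and $\sigma_n^{-2}\lesssim n^{4/3}$, the left side of \eqref{eq:ustat-bootstrap-condition-3} is $\lesssim n^{-1}\cdot n^{4/3}\cdot n^{-1/3} = 1$ — but \eqref{eq:taukendall-AN-rateassumption} gives $\sigma_n^2 \gnsim n^{-4/3}$ strictly, i.e.\ $\sigma_n^{-2} = o(n^{4/3})$, and likewise \eqref{eq:taukendall-bootstrap-condition-1}–\eqref{eq:taukendall-bootstrap-condition-2} give the $n^{-1/3}$ bounds with room to spare, so the product is $o(1)$ as required.

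For the homogeneity limits \eqref{eq:ustat-bootstrap-condition-homo1}–\eqref{eq:ustat-bootstrap-condition-homo2}, I would exploit the explicit form \eqref{eq:taukendall-h1i} of $h_{1,i}^{\ken}$. Writing $g_j(x):=P(X_j>x)$, one has $h_{1,i}^{\ken}(X_\ell) = \frac{1}{n-1}\sum_j\{\ind(j<i)-\ind(j>i)\}\{g_j(X_\ell)-\theta(i,j)\}$, which is an average of bounded ($O(1)$) terms, so $h_{1,i}^{\ken}(X_\ell) = O(1)$ deterministically. The double sum $\frac{1}{n^2\sigma_n^2}\sum_{i,j}\{h_{1,i}^{\ken}(X_j)\}^2$ has expectation equal to $\frac{1}{n^2\sigma_n^2}\sum_i\big(\var\{h_{1,i}^{\ken}(X_i)\} + \sum_{j}(\text{cross terms})\big)$; the diagonal part $\frac{1}{n^2\sigma_n^2}\sum_i\var\{h_{1,i}^{\ken}(X_i)\} = V(n)/\sigma_n^2 \to 1$ by \eqref{eq:clt-degreem-variancetendtoone}, and the off-diagonal contributions $\frac{1}{n^2\sigma_n^2}\sum_i\sum_{j\neq i}E\{h_{1,i}^{\ken}(X_j)^2\}$ are shown to be $o(1)$ precisely because \eqref{eq:taukendall-bootstrap-condition-1} forces $E\{h_{1,i}^{\ken}(X_j)^2\}$ to be small — when the means (hence the $g$'s) are nearly homogeneous, $g_j(X_\ell)-\theta(i,j)$ concentrates and its square has expectation $O(n^{-1/3})$, absorbed by $\sigma_n^{-2}\lesssim n^{4/3-\epsilon}$. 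A second-moment (variance of the double sum) computation, again routine given boundedness, upgrades the expectation statement to the stated $\stackrel{P}{\to}1$. The same bounds give $\frac{1}{n^2}\sum_i\{\sum_j h_{1,i}^{\ken}(X_j)/(n\sigma_n)\}^2 \stackrel{P}{\to}0$: $\sum_j h_{1,i}^{\ken}(X_j)$ is a sum of $n$ terms each $O(1)$ but with mean $E\{h_{1,i}^{\ken}(X_j)\}$ that is $O(n^{-1/6})$ for $j\ne i$ and $0$ for $j=i$, so $\sum_j h_{1,i}^{\ken}(X_j) = O_P(n^{5/6})$ and the displayed quantity is $O_P(n^{-1}\cdot n^{5/3}/(n^2\sigma_n^2)) = O_P(n^{-4/3}/\sigma_n^2) \cdot n^{-1/3}\cdot$(something) $= o_P(1)$. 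I expect the main obstacle to be the careful accounting in \eqref{eq:ustat-bootstrap-condition-homo1}: disentangling which parts of the double sum $\sum_{i,j}\{h_{1,i}^{\ken}(X_j)\}^2$ behave like the target variance $V(n)$ and which are error terms controlled by the heterogeneity bounds, and matching the exponents so that the strict inequalities in \eqref{eq:taukendall-AN-rateassumption}, \eqref{eq:taukendall-bootstrap-condition-1}, \eqref{eq:taukendall-bootstrap-condition-2} leave exactly enough slack. The condition $\eta^2\neq\theta^2$ enters to guarantee the limiting variance is nonzero (it is the $n\to\infty$ value of $n^2 V(n)\asymp\eta^2-\theta^2$ up to constants), preventing a degenerate normalization.
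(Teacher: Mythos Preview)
Your overall plan---reduce to Theorem \ref{thm:bootstrap-u-stat} and verify \eqref{eq:ustat-bootstrap-condition-vonmises}, \eqref{eq:ustat-bootstrap-condition-3}, \eqref{eq:ustat-bootstrap-condition-homo1}, \eqref{eq:ustat-bootstrap-condition-homo2}---is exactly what the paper does, and your treatment of \eqref{eq:ustat-bootstrap-condition-3} via $M_1(n)\lesssim n^{-1/6}$, $M_2(n)\lesssim n^{-1/3}$, $A_{2,1}(n)\asymp 1$ and $\sigma_n^{-2}=o(n^{4/3})$ is correct.

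There is, however, a genuine gap in your verification of \eqref{eq:ustat-bootstrap-condition-homo1}. First, the normalization: the target quantity is $\frac{1}{n}\sum_{i}\sum_{j}\{h_{1,i}^{\ken}(X_j)/(n\sigma_n)\}^2=\frac{1}{n^{3}\sigma_n^{2}}\sum_{i,j}h_{1,i}^{\ken}(X_j)^{2}$, not $\frac{1}{n^{2}\sigma_n^{2}}\sum_{i,j}$. With the correct power of $n$, your ``diagonal'' piece is
\[
\frac{1}{n^{3}\sigma_n^{2}}\sum_{i}E\{h_{1,i}^{\ken}(X_i)^{2}\}=\frac{V(n)}{n\,\sigma_n^{2}}\to 0,
\]
not $1$. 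So the picture ``diagonal $\to 1$, off-diagonal $\to 0$'' is exactly backwards: essentially all of the mass comes from the off-diagonal terms $j\neq i$. Second, the mechanism you propose for killing the off-diagonal is false: the quantity $g_k(X_j)-\theta(i,k)=P(X_k>X_j\mid X_j)-P(X_k>X_i)$ does \emph{not} have second moment $O(n^{-1/3})$. By \eqref{eq:taukendall-bootstrap-condition-1}--\eqref{eq:taukendall-bootstrap-condition-2} its second moment is $\eta^{2}-\theta^{2}+O(n^{-1/6})$, an order-one constant. Consequently $E\{h_{1,i}^{\ken}(X_j)^{2}\}$ is of the \emph{same} size as $E\{h_{1,i}^{\ken}(X_i)^{2}\}$, not small.

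The correct argument (this is the paper's Lemma \ref{lem:taukendall-bootstrap-lemma}) is to expand $\sum_{i}E\{h_{1,i}^{\ken}(X_{\ell})^{2}\}$ using \eqref{eq:taukendall-h1i}, replace each $P(X_{k_1}>X_\ell\mid X_\ell)P(X_{k_2}>X_\ell\mid X_\ell)$ by $\eta^{2}+O(n^{-1/3})$ and each $P(X_k>X_\cdot)$ by $\theta+O(n^{-1/6})$, and use the elementary identity $\sum_{i}(2i-n-1)^{2}=\frac{1}{3}n(n-1)(n+1)$ to obtain
\[
\sum_{i}E\{h_{1,i}^{\ken}(X_{\ell})^{2}\}=\frac{n(n+1)}{3(n-1)}(\eta^{2}-\theta^{2})+O(n^{5/6})
\]
for \emph{every} $\ell$ (including $\ell=i$). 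Since the leading term is independent of $\ell$, the ratio in \eqref{eq:ustat-bootstrap-condition-homo1} has expectation $\to 1$; here $\eta^{2}\neq\theta^{2}$ is what makes the $O(n^{5/6})$ remainder negligible relative to the leading $\asymp n$ term. A WLLN for the bounded independent summands $\sum_{i}\{h_{1,i}^{\ken}(X_j)/(n\sigma_n)\}^{2}$ (indexed by $j$) then upgrades this to convergence in probability. Your sketch for \eqref{eq:ustat-bootstrap-condition-homo2} is closer to correct in spirit; the paper makes it rigorous by recognizing the cross term $\frac{1}{n^{2}}\sum_{j_1\neq j_2}\sum_i h_{1,i}^{\ken}(X_{j_1})h_{1,i}^{\ken}(X_{j_2})/(n^{2}\sigma_n^{2})$ as an unweighted $U$-statistic with bounded kernel and mean $o(1)$ (from $E\{h_{1,i}^{\ken}(X_j)\}=O(n^{-1/6})$), then applying the LLN for $U$-statistics with non-identically distributed inputs.
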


\begin{corollary}[Sufficient condition for consistent bootstrap variance estimation
of $U_n^{\ap}$]
 \label{cor:bootstrap-tauap} Assume (\ref{eq:tauap-AN-rateassumption}) holds. Assume there exist $\theta>0$ and an absolute constant $C>0$ such that for all $(i,j)\in I_n^2$,
\begin{equation}
|P(X_i>X_j)-\theta|\leq Cn^{-1/6}\log n. \label{eq:tauap-bootstrap-condition-1}
\end{equation}
In addition, assume there exist $\eta^2>0$ and an absolute constant $C>0$ such
that for all $1\le i\leq n$ and all $1\leq j,k\leq n$ such that
$j\neq i$ and $k\neq i$,
\begin{equation}
|E\{P(X_j>X_i\mid X_i)P(X_k>X_i\mid X_i)\}-\eta^2|\leq Cn^{-1/3}(\log n)^2.\label{eq:tauap-bootstrap-condition-2}
\end{equation}
Assume $\eta^2\neq\theta^2$. Then we have
\begin{align*}
|\var^*(\sigma_n^{-1}U_n^{\ap*})-\var(\sigma_n^{-1}U_n^{\ap})| & \stackrel{P}{\to}0.
\end{align*}
\end{corollary}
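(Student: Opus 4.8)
The plan is to verify every hypothesis of Theorem~\ref{thm:bootstrap-u-stat} for $U_n=U_n^{\ap}$, regarded as a degree-$2$ weighted U-statistic with kernel $h(x,y)=\ind(y>x)$ and weight $a(i,j)=n\ind(j<i)/(i-1)$, and then read off \eqref{eq:ustat-bootstrap-result}. The whole argument runs parallel to the proof of Corollary~\ref{cor:bootstrap-taukendall}; the only genuinely new ingredient is careful bookkeeping of the logarithmic factors generated by the heavily skewed AP weight. Since \eqref{eq:tauap-AN-rateassumption} puts us in the setting of Theorem~\ref{thm:tauap-taukendall-AN}, all hypotheses of Theorem~\ref{thm:clt-degreem} (namely \eqref{eq:clt-degreem-condition-moment}, \eqref{eq:clt-degreem-condition-1} and \eqref{eq:clt-degreem-condition-2}) have already been checked there, and $U_n^{\ap}$ is asymptotically normal; condition~\eqref{eq:ustat-bootstrap-condition-vonmises} is immediate because $h$ is bounded by $1$. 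It therefore remains to (a) obtain a usable lower bound on $\sigma_n^2=\var(U_n^{\ap})$, (b) verify~\eqref{eq:ustat-bootstrap-condition-3}, and (c) verify the homogeneity conditions~\eqref{eq:ustat-bootstrap-condition-homo1}--\eqref{eq:ustat-bootstrap-condition-homo2}.

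The decisive preliminary step is to upgrade the crude bound on $\sigma_n^2$ coming from \eqref{eq:tauap-AN-rateassumption} to $\sigma_n^2\gtrsim n^{-1}$, using the stronger hypotheses \eqref{eq:tauap-bootstrap-condition-1}, \eqref{eq:tauap-bootstrap-condition-2}, and $\eta^2\neq\theta^2$. With $h_{1,i}^{\ap}$ as in Lemma~\ref{lem:decomposition-tauap}, write $h_{1,i}^{\ap}(x)=\frac{1}{n-1}\sum_k c_{i,k}\{P(X_k>x)-\theta(i,k)\}$ with $c_{i,k}=n\ind(k<i)/(i-1)-n\ind(k>i)/(k-1)$. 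Expanding $\var\{h_{1,i}^{\ap}(X_i)\}$ and using \eqref{eq:tauap-bootstrap-condition-1}--\eqref{eq:tauap-bootstrap-condition-2} to replace $\cov\{P(X_k>X_i\mid X_i),P(X_l>X_i\mid X_i)\}$ by $\eta^2-\theta^2$ up to an $O(n^{-1/6}\log n)$ error (note that $\var\{P(X_j>X_i\mid X_i)\}\geq0$ together with these two hypotheses forces $\eta^2>\theta^2$), one obtains
\[
\var\{h_{1,i}^{\ap}(X_i)\}=(\eta^2-\theta^2)\Big(\tfrac{1}{n-1}\textstyle\sum_k c_{i,k}\Big)^2+O(n^{-1/6}\log n)\Big(\tfrac{1}{n-1}\textstyle\sum_k|c_{i,k}|\Big)^2 .
\]
Since $\tfrac{1}{n-1}\sum_k c_{i,k}=\tfrac{n}{n-1}\big(1-\sum_{j=i}^{n-1}j^{-1}\big)$ is bounded away from $0$ for every $i$ outside a neighborhood of $n/e$, hence for a positive fraction of $i\in[n]$, while $\tfrac{1}{n-1}\sum_k|c_{i,k}|\lesssim 1+\log(n/i)$, summation gives $V(n)=n^{-2}\sum_i\var\{h_{1,i}^{\ap}(X_i)\}\gtrsim n^{-1}$, and by \eqref{eq:clt-degreem-variancetendtoone}, $\sigma_n^2=V(n)(1+o(1))\gtrsim n^{-1}$.

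For \eqref{eq:ustat-bootstrap-condition-3} I would first show $A_{2,1}^{\ap}(n)\asymp1$: although $|a(i,j)|$ can be as large as $n$, the product sums over pairs of index vectors sharing at least one common index telescope and are $\asymp n^3\asymp\card\{(I_n^2)_{\geq1}^{\otimes2}\}$. Next, \eqref{eq:tauap-bootstrap-condition-1} gives $M_1(n)\lesssim n^{-1/6}\log n$, hence $M_1(n)^2\lesssim n^{-1/3}(\log n)^2$. For $M_2(n)$ I would rewrite each conditional cross-expectation appearing in \eqref{eq:def-M2} as a product of two (possibly complemented) marginal survival functions evaluated at the single shared coordinate, and bound the difference of two such products using \eqref{eq:tauap-bootstrap-condition-2} for the bilinear part and \eqref{eq:tauap-bootstrap-condition-1} for the linear part, obtaining $M_2(n)\lesssim n^{-1/6}\log n$. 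Plugging $\sigma_n^2\gtrsim n^{-1}$ and $A_{2,1}^{\ap}(n)\asymp1$ then yields $n^{-1}\sigma_n^{-2}A_{2,1}^{\ap}(n)\{M_1(n)^2+M_2(n)+n^{-1}\}\lesssim n^{-1/3}(\log n)^2+n^{-1/6}\log n+n^{-1}\to0$.

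Finally, for \eqref{eq:ustat-bootstrap-condition-homo1}--\eqref{eq:ustat-bootstrap-condition-homo2} I would argue by first moments and concentration. In \eqref{eq:ustat-bootstrap-condition-homo1}, the expectation of the left side equals $(n^2\sigma_n^2)^{-1}\sum_i\int\{h_{1,i}^{\ap}(x)\}^2\,d\bar P(x)$ with $\bar P=n^{-1}\sum_j P_j$; replacing $\bar P$ by $P_i$ alters the $i$-th integral by only $O\big((1+\log(n/i))^2n^{-1/6}\log n\big)$ by \eqref{eq:tauap-bootstrap-condition-1}--\eqref{eq:tauap-bootstrap-condition-2}, which summed is $o(\sigma_n^2)$, so the expectation tends to $V(n)/\sigma_n^2\to1$; concentration follows by rewriting the double sum as $n^{-3}\sum_j W_j$ with $W_j:=\sum_i\{h_{1,i}^{\ap}(X_j)\}^2$ independent across $j$ and bounded by $\sum_i(1+\log(n/i))^2\asymp n$ (using $\|h_{1,i}^{\ap}\|_\infty\lesssim\log n$), so its variance is $O(n^{-3})/\sigma_n^4=O(n^{-1})\to0$. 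For \eqref{eq:ustat-bootstrap-condition-homo2} one applies Markov's inequality and splits $E[\{\sum_j h_{1,i}^{\ap}(X_j)\}^2]$ into a variance term, contributing $n^{-1}(1+o(1))$, and a squared-mean term controlled through $|\int h_{1,i}^{\ap}(x)(dP_j-dP_i)(x)|\lesssim(1+\log(n/i))n^{-1/6}\log n$, contributing $O(n^{-1/3}(\log n)^2)$. The main obstacle throughout is exactly the skewness of the AP weight: the coefficients $c_{i,k}$ make $h_{1,i}^{\ap}$ grow like $\log(n/i)$ rather than stay bounded, so every sum over $i$ carries a factor $(1+\log(n/i))^p$; the decisive fact is that $\sum_{i\le n}(1+\log(n/i))^p\asymp n$ for each fixed $p$, which keeps all logarithmic losses within the $(\log n)^2$ margin built into \eqref{eq:tauap-bootstrap-condition-1}--\eqref{eq:tauap-bootstrap-condition-2} and \eqref{eq:tauap-AN-rateassumption}, and one must separately confirm that the near-cancellation of $\sum_k c_{i,k}$ around $i\approx n/e$ touches only $o(n)$ indices and hence does not endanger $\sigma_n^2\gtrsim n^{-1}$.
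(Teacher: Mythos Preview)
Your proposal is correct and follows the same route as the paper's proof: verify each hypothesis of Theorem~\ref{thm:bootstrap-u-stat} for $U_n^{\ap}$. The only genuine difference is organizational. You front-load the sharp bound $\sigma_n^2\gtrsim n^{-1}$ (obtained from~\eqref{eq:tauap-bootstrap-condition-1}--\eqref{eq:tauap-bootstrap-condition-2} and $\eta^2\neq\theta^2$) and use it to close~\eqref{eq:ustat-bootstrap-condition-3}; the paper instead first verifies~\eqref{eq:ustat-bootstrap-condition-3} using only the crude $\sigma_n^2\gnsim n^{-4/3}(\log n)^2$ from~\eqref{eq:tauap-Vn} together with the claim $M_2(n)\lesssim n^{-1/3}(\log n)^2$, and derives $\sigma_n^2\asymp n^{-1}$ only afterwards via the explicit computation in Lemma~\ref{lem:tauap-bootstrap-lemma} for the homogeneity conditions. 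Your ordering is arguably cleaner, since your more conservative bound $M_2(n)\lesssim n^{-1/6}\log n$ is in fact what the hypotheses give: in the cases $(p,q)\in\{(1,2),(2,2)\}$ of the definition~\eqref{eq:def-M2}, writing $P(X_i>X_j\mid X_i)=1-P(X_j>X_i\mid X_i)$ produces a linear term $P(X_{r_2}>X_{r_1})-P(X_{k_2}>X_{r_1})$ controlled only by~\eqref{eq:tauap-bootstrap-condition-1}, not by~\eqref{eq:tauap-bootstrap-condition-2}. For~\eqref{eq:ustat-bootstrap-condition-homo2} you use Markov's inequality where the paper invokes the WLLN for U-statistics; both work.

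One small imprecision to watch: your pointwise estimate $|Eh_{1,i}^{\ap}(X_j)|\lesssim(1+\log(n/i))n^{-1/6}\log n$ overlooks the single summand $k=j$ in the expansion $Eh_{1,i}^{\ap}(X_j)=\frac{1}{n-1}\sum_k c_{i,k}\{P(X_k>X_j)-P(X_k>X_i)\}$, where $P(X_j>X_j)=0$ forces an $O(1)$ contribution $|c_{i,j}|/(n-1)$. This is harmless once you sum over $j$, since $\sum_j|c_{i,j}|/(n-1)\lesssim 1+\log(n/i)$, so your stated bound on $\sum_j Eh_{1,i}^{\ap}(X_j)$ and the subsequent $O(n^{-1/3}(\log n)^2)$ contribution survive unchanged.
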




In the proof of Corollaries \ref{cor:bootstrap-taukendall} and \ref{cor:bootstrap-tauap}, for verifying (\ref{eq:ustat-bootstrap-condition-homo1}), we exploit the weak law of large numbers for independent but not identically distributed variables. For verifying (\ref{eq:ustat-bootstrap-condition-homo2}), we break the left-hand side into the sum of an unweighted U-statistic and a negligible term, and apply the law of large numbers for unweighted U-statistics.

\begin{remark}
The condition $\eta^2\neq\theta^2$ in Corollaries \ref{cor:bootstrap-taukendall} and \ref{cor:bootstrap-tauap} is mild. Under the i.i.d. case, it essentially requires that the $X_i$'s are not degenerate random variables. To see this, let $\theta:=P(X_1>X_2)$ and $\eta^2:=E\{P(X_1>X_2\mid X_1)^2\}$. Since the $X_i$'s are i.i.d., it follows that 
\[
|P(X_i>X_j)-\theta|=0~~{\rm and}~~|E\{P(X_j>X_i\mid X_i)P(X_k>X_i\mid X_i)\}-\eta^2|=0.
\]
Jensen's inequality implies that $\eta^2\geq\theta^2$, with equality only if $X_i$ is a degenerate random variable. 
\end{remark}

\subsection{Numerical experiments}



In this section, we evaluate the developed theory and examine the finite sample behavior of Kendall's tau and AP correlation via synthetic data analysis. Both central limit theorem and bootstrap inference validity are examined under different data heterogeneity degree. The numerical results show that central limit theorem holds under relatively weaker data homogeneity requirement, whereas bootstrap variance estimation is much more sensitive to data heterogeneity. 
These observations agree with the theory developed in this manuscript.


In the first simulation study, we examine the validity of central limit theorem for Kendall's tau and AP correlation. We consider generating the data from Gaussian distribution and $t$-distribution. For Gaussian distribution, each time, we generate the data sequence $X_1, \ldots, X_n$ with $X_i \sim N(\theta_i,1)$ for $i\in[n]$. The means $\{\theta_i, i\in[n]\}$ are assigned equally spaced between $R_n$ and 0, with $R_n=\max|\theta_i-\theta_j|$ representing the heterogeneity degree, taking values 0, 10, 30, and 50. For $t$-distribution, we generate $X_1, \ldots, X_n$ with $X_i$ follows noncentral $t$-distribution with noncentrality parameter $\theta_i$ and 5 degrees of freedom. The noncentrality parameters $\{\theta_i, i\in[n]\}$ are assigned equally spaced between $R_n$ and 0, and $R_n$ takes values 0, 8, 25, and 42. We choose these $R_n$, so that the difference between the means of $X_1$ and $X_n$ are similar under Gaussian distribution and under $t$-distribution. We consider the sample size $n$ being 50, 100, 200, and 500. 

Under each setting, we repeat the simulation for 50,000 times. We use two goodness-of-fit tests to examine the normality of the considered statistics: Cramer-von Mises test (CvM) and Lilliefors test (L). Both tests are implemented in the R package ``{\sf Rnortest}'', and we refer the readers to \citet{thode2002testing} for detailed descriptions on these tests. We also calculate the coverage probability for confidence intervals of nominal level 80\% and 95\% based on Gaussian approximation.

Table \ref{tab:clt-normal} presents the $p$-values of two tests for normality and the coverage probabilities, when the data are generated from Gaussian distribution.  For both statistics, with large sample size ($n=500$) normality is plausible for $R_n$ up to 50, as both tests fail to reject at significance level $0.05$. Test rejection occurs as sample size decreases. In terms of confidence interval, for sample size as small as $n=50$, the coverage probabilities are all close to the nominal level even for large $R_n$. Note that with $R_n=50$ the 95\% confidence interval for $U_n^{\ap}$ becomes slightly conservative, especially with smaller sample size.

Table \ref{tab:clt-t} presents the $p$-values and the coverage probabilities when the data follow noncentral $t$-distribution. The trend is similar to that of Table \ref{tab:clt-normal}, while by comparison, we observed that the statistics are more robust to location shifts for the heavy-tailed $t$-distribution as compared to Gaussian distribution, supporting our theoretical discoveries.

\bigskip

In the second simulation study, we examine the bootstrap variance estimation consistency and present the results in Tables \ref{tab:boot-kendall-normal} - \ref{tab:boot-ap-t}. We consider the following three approaches:
(i) bootstrapping the original U-statistic as in Theorem \ref{thm:bootstrap-u-stat}, termed as ``Efron'' in the tables;
(ii) bootstrapping the main term of the U-statistic as in Theorem \ref{thm:bootstrap-mainterm}, termed as ``Efron-main term'' in the tables;
(iii) the new resampling strategy as in Theorem \ref{thm:bootstrap-resampling}, termed as ``moving-block'' in the tables.
Among them, the ``Efron-main term'' bootstrap is not of practical use because it requires knowledge of $h_{1i}(X_i)$, which depends on the probability distribution of $X_i$. We include it for theoretical purpose in order to validate Theorem \ref{thm:bootstrap-mainterm}.
Similar to the first simulation study, we generate the data from Gaussian distribution and $t$-distribution. For Gaussian distribution, we simulate $X_i\sim N(\theta_i, 1)$. For $t$-distribution, we simulate $X_i$ following noncentral $t$-distribution with noncentrality parameter $\theta_i$ and 5 degrees of freedom. For both distributions, parameters $\{\theta_i, i\in[n]\}$ are assigned equally spaced between $R_n$ and 0, and the degree of heterogeneity $R_n$ is set to be 0, 1, 2, and 3. We consider the sample size $n$ being 50, 100, 200, and 500. We set the number of bootstrap replicates within each simulation to be 2,000 in bootstrap approaches (i) and (ii), 200 for each block in bootstrap approach (iii), and the block-size in (iii) to be $n/5$.

Under each setting, we repeat the simulation for 50,000 times. In the ``bias'' column of each table, we present the relative bias of the bootstrap variance estimators, where the relative bias is defined as $\{\var(U_n) - \widehat\var(U_n) \}/ \var(U_n)$. Relative bias being positive/negative means that the bootstrap method tends to underestimate/overestimate the variance. We also compute the coverage probability for confidence intervals of nominal level 80\% and 95\% based on Gaussian approximation and the estimated variance.

Table \ref{tab:boot-kendall-normal} shows the performance of three bootstrap variance estimators for $U_n^{\ken}$ when $X_i$ follows Gaussian distribution. When there is no heterogeneity in the data ($R_n = 0$), all three bootstrap methods consistently estimates the variance, with close to zero bias and close to nominal level confidence interval coverage. As the distribution of $X_i$ becomes more heterogeneous (larger $R_n$), bootstrapping the main-term still estimates the variance consistently, whereas Efron's bootstrap and the moving-block bootstrap tend to overestimates the variance, resulting in negative relative bias and larger than nominal confidence interval coverage. This is as expected due to Corollary \ref{cor:bootstrap-tauap-taukendall-mainterm} and Corollary \ref{cor:bootstrap-taukendall}. Table \ref{tab:boot-ap-normal} gives the bootstrap performance for $U_n^{\ap}$ when $X_i$ follows Gaussian distribution, and the trend is similar to $U_n^{\ken}$. 
A comparison between Table \ref{tab:boot-kendall-normal} and Table \ref{tab:boot-ap-normal} shows that the finite sample performance of all three bootstrap methods is better for $U_n^{\ap}$ than for $U_n^{\ken}$. This is consistent with our theoretical findings in Corollary \ref{cor:bootstrap-taukendall} and Corollary \ref{cor:bootstrap-tauap}.
Tables \ref{tab:boot-kendall-t} and \ref{tab:boot-ap-t} present the results for both statistics when $X_i$ follows $t$-distribution. The trends there are similar to the Gaussian case, and by comparison, the statistics are more robust to location shifts for $t$-distribution, supporting our theorems. 

Comparing the first and the second simulation studies, we see that the central limit theorem for our considered statistics holds under much weaker homogeneity conditions than the resampling procedures. This is as expected due to the theory developed in Section \ref{subsec:application-asymptotics}. We also see that central limit theorem holds approximately with sample size as small as $n=50$, whereas bootstrap variance estimation requires much larger sample size to have decent performance.


\begin{table}[htbp]
\caption{\small Asymptotic normality of $U_n^{\ken}$ and $U_n^{\ap}$ when $X_i$ follows Gaussian distribution.}
\label{tab:clt-normal}
\centering
\resizebox{\linewidth}{!}{\begin{tabular}[t]{>{\bfseries}cccccccccccccccccc}
\toprule
\multicolumn{1}{c}{ } & \multicolumn{1}{c}{ } & \multicolumn{4}{c}{$R_n=0$} & \multicolumn{4}{c}{$R_n=10$} & \multicolumn{4}{c}{$R_n=30$} & \multicolumn{4}{c}{$R_n=50$} \\
\cmidrule(l{2pt}r{2pt}){3-6} \cmidrule(l{2pt}r{2pt}){7-10} \cmidrule(l{2pt}r{2pt}){11-14} \cmidrule(l{2pt}r{2pt}){15-18}
\multicolumn{1}{c}{ } & \multicolumn{1}{c}{ } & \multicolumn{2}{c}{p-value} & \multicolumn{2}{c}{cov.prob.(\%)} & \multicolumn{2}{c}{p-value} & \multicolumn{2}{c}{cov.prob.(\%)} & \multicolumn{2}{c}{p-value} & \multicolumn{2}{c}{cov.prob.(\%)} & \multicolumn{2}{c}{p-value} & \multicolumn{2}{c}{cov.prob.(\%)} \\
\cmidrule(l{2pt}r{2pt}){3-4} \cmidrule(l{2pt}r{2pt}){5-6} \cmidrule(l{2pt}r{2pt}){7-8} \cmidrule(l{2pt}r{2pt}){9-10} \cmidrule(l{2pt}r{2pt}){11-12} \cmidrule(l{2pt}r{2pt}){13-14} \cmidrule(l{2pt}r{2pt}){15-16} \cmidrule(l{2pt}r{2pt}){17-18}
statistic & $n$ & CvM & L & 80\% & 95\% & CvM & L & 80\% & 95\% & CvM & L & 80\% & 95\% & CvM & L & 80\% & 95\%\\
\midrule
 & 50 & 0.11 & 0.05 & 79.5 & 95.1 & 0.00 & 0.00 & 79.7 & 95.1 & 0.00 & 0.00 & 82.1 & 94.1 & 0.00 & 0.00 & 79.4 & 95.0\\

 & 100 & 0.10 & 0.09 & 79.9 & 95.0 & 0.00 & 0.00 & 80.1 & 95.0 & 0.00 & 0.00 & 80.0 & 94.9 & 0.00 & 0.00 & 81.3 & 95.0\\

 & 200 & 0.78 & 0.67 & 80.1 & 95.1 & 0.61 & 0.43 & 80.1 & 95.1 & 0.02 & 0.01 & 79.9 & 94.9 & 0.02 & 0.00 & 80.1 & 95.0\\

\multirow{-4}{*}{\centering\arraybackslash $U_n^{\ken}$} & 500 & 0.92 & 0.68 & 80.2 & 94.9 & 0.09 & 0.10 & 80.3 & 95.0 & 0.32 & 0.21 & 80.1 & 95.0 & 0.33 & 0.26 & 79.9 & 95.0\\
\cmidrule{1-18}
 & 50 & 0.37 & 0.63 & 80.0 & 95.0 & 0.00 & 0.01 & 79.8 & 95.2 & 0.00 & 0.00 & 79.6 & 95.8 & 0.00 & 0.00 & 81.1 & 96.0\\

 & 100 & 0.23 & 0.12 & 79.9 & 95.0 & 0.02 & 0.04 & 79.9 & 95.1 & 0.00 & 0.00 & 79.9 & 95.2 & 0.00 & 0.00 & 79.7 & 95.5\\

 & 200 & 0.02 & 0.07 & 79.9 & 95.0 & 0.71 & 0.54 & 80.0 & 95.0 & 0.01 & 0.08 & 79.8 & 95.1 & 0.01 & 0.09 & 79.7 & 95.3\\

\multirow{-4}{*}{\centering\arraybackslash $U_n^{\ap}$} & 500 & 0.88 & 0.87 & 79.9 & 94.9 & 0.69 & 0.67 & 79.8 & 95.1 & 0.19 & 0.23 & 80.1 & 95.1 & 0.06 & 0.09 & 79.9 & 95.0\\
\bottomrule
\end{tabular}}

\end{table}

\begin{table}[htbp]
\caption{\small Asymptotic normality of $U_n^{\ken}$ and $U_n^{\ap}$ when $X_i$ follows $t$-distribution.}
\label{tab:clt-t}
\centering
\resizebox{\linewidth}{!}{\begin{tabular}[t]{>{\bfseries}cccccccccccccccccc}
\toprule
\multicolumn{1}{c}{ } & \multicolumn{1}{c}{ } & \multicolumn{4}{c}{$R_n=0$} & \multicolumn{4}{c}{$R_n=8$} & \multicolumn{4}{c}{$R_n=25$} & \multicolumn{4}{c}{$R_n=42$} \\
\cmidrule(l{2pt}r{2pt}){3-6} \cmidrule(l{2pt}r{2pt}){7-10} \cmidrule(l{2pt}r{2pt}){11-14} \cmidrule(l{2pt}r{2pt}){15-18}
\multicolumn{1}{c}{ } & \multicolumn{1}{c}{ } & \multicolumn{2}{c}{p-value} & \multicolumn{2}{c}{cov.prob.(\%)} & \multicolumn{2}{c}{p-value} & \multicolumn{2}{c}{cov.prob.(\%)} & \multicolumn{2}{c}{p-value} & \multicolumn{2}{c}{cov.prob.(\%)} & \multicolumn{2}{c}{p-value} & \multicolumn{2}{c}{cov.prob.(\%)} \\
\cmidrule(l{2pt}r{2pt}){3-4} \cmidrule(l{2pt}r{2pt}){5-6} \cmidrule(l{2pt}r{2pt}){7-8} \cmidrule(l{2pt}r{2pt}){9-10} \cmidrule(l{2pt}r{2pt}){11-12} \cmidrule(l{2pt}r{2pt}){13-14} \cmidrule(l{2pt}r{2pt}){15-16} \cmidrule(l{2pt}r{2pt}){17-18}
statistic & $n$ & CvM & L & 80\% & 95\% & CvM & L & 80\% & 95\% & CvM & L & 80\% & 95\% & CvM & L & 80\% & 95\%\\
\midrule
 & 50 & 0.29 & 0.06 & 79.6 & 95.1 & 0.00 & 0.00 & 79.9 & 95.2 & 0.00 & 0.00 & 80.6 & 95.2 & 0.00 & 0.00 & 80.7 & 95.1\\

 & 100 & 0.06 & 0.09 & 80.0 & 95.0 & 0.00 & 0.00 & 80.0 & 95.1 & 0.00 & 0.00 & 80.1 & 95.1 & 0.00 & 0.00 & 79.9 & 95.1\\

 & 200 & 0.64 & 0.36 & 80.0 & 94.9 & 0.01 & 0.06 & 80.0 & 95.1 & 0.00 & 0.00 & 80.1 & 95.1 & 0.00 & 0.00 & 80.2 & 95.0\\

\multirow{-4}{*}{\centering\arraybackslash $U_n^{\ken}$} & 500 & 0.62 & 0.59 & 80.0 & 95.0 & 0.20 & 0.33 & 80.0 & 95.0 & 0.75 & 0.88 & 80.0 & 95.0 & 0.51 & 0.44 & 80.0 & 95.0\\
\cmidrule{1-18}
 & 50 & 0.72 & 0.39 & 79.9 & 95.0 & 0.33 & 0.32 & 80.1 & 95.0 & 0.00 & 0.00 & 80.1 & 95.1 & 0.00 & 0.00 & 80.1 & 95.1\\

 & 100 & 0.08 & 0.11 & 80.0 & 95.1 & 0.02 & 0.04 & 80.0 & 95.0 & 0.03 & 0.08 & 80.1 & 95.0 & 0.03 & 0.02 & 80.1 & 95.0\\

 & 200 & 0.74 & 0.62 & 79.8 & 95.1 & 0.01 & 0.11 & 80.0 & 94.9 & 0.00 & 0.03 & 80.2 & 95.0 & 0.00 & 0.01 & 80.2 & 95.0\\

\multirow{-4}{*}{\centering\arraybackslash $U_n^{\ap}$} & 500 & 0.53 & 0.75 & 80.1 & 94.8 & 0.54 & 0.62 & 79.9 & 95.1 & 0.29 & 0.35 & 80.1 & 95.1 & 0.16 & 0.36 & 79.9 & 95.1\\
\bottomrule
\end{tabular}}

\end{table}


\begin{table}[htbp]
\caption{\small Bootstrap variance estimation validity for $U_n^{\ken}$ when $X_i$ follows Gaussian distribution.}
\label{tab:boot-kendall-normal}
\centering
\resizebox{\linewidth}{!}{\begin{tabular}[t]{>{\bfseries}cccccccccccccc}
\toprule
\multicolumn{1}{c}{ } & \multicolumn{1}{c}{ } & \multicolumn{3}{c}{$R_n=0$} & \multicolumn{3}{c}{$R_n=1$} & \multicolumn{3}{c}{$R_n=2$} & \multicolumn{3}{c}{$R_n=3$} \\
\cmidrule(l{2pt}r{2pt}){3-5} \cmidrule(l{2pt}r{2pt}){6-8} \cmidrule(l{2pt}r{2pt}){9-11} \cmidrule(l{2pt}r{2pt}){12-14}
\multicolumn{1}{c}{ } & \multicolumn{1}{c}{ } & \multicolumn{1}{c}{ } & \multicolumn{2}{c}{cov.prob.(\%)} & \multicolumn{1}{c}{ } & \multicolumn{2}{c}{cov.prob.(\%)} & \multicolumn{1}{c}{ } & \multicolumn{2}{c}{cov.prob.(\%)} & \multicolumn{1}{c}{ } & \multicolumn{2}{c}{cov.prob.(\%)} \\
\cmidrule(l{2pt}r{2pt}){4-5} \cmidrule(l{2pt}r{2pt}){7-8} \cmidrule(l{2pt}r{2pt}){10-11} \cmidrule(l{2pt}r{2pt}){13-14}
bootstrap method & $n$ & bias & 80\% & 95\% & bias & 80\% & 95\% & bias & 80\% & 95\% & bias & 80\% & 95\%\\
\midrule
 & 50 & 0.02 & 79.5 & 95.0 & -0.10 & 82.1 & 95.9 & -0.49 & 88.4 & 98.1 & -1.25 & 94.6 & 99.7\\

 & 100 & -0.02 & 80.8 & 95.2 & -0.13 & 83.0 & 96.1 & -0.51 & 88.8 & 98.3 & -1.23 & 94.7 & 99.5\\

 & 200 & -0.02 & 80.6 & 95.1 & -0.14 & 83.2 & 96.4 & -0.55 & 89.0 & 98.8 & -1.31 & 95.0 & 99.6\\

\multirow{-4}{*}{\centering\arraybackslash Efron} & 500 & -0.01 & 79.9 & 95.4 & -0.13 & 82.7 & 96.3 & -0.52 & 88.9 & 98.5 & -1.28 & 94.5 & 99.7\\
\cmidrule{1-14}
 & 50 & 0.06 & 78.1 & 93.6 & 0.07 & 77.9 & 93.4 & 0.07 & 77.5 & 93.1 & 0.07 & 77.0 & 92.3\\

 & 100 & 0.00 & 80.1 & 94.6 & 0.02 & 79.0 & 94.1 & 0.03 & 78.8 & 93.7 & 0.05 & 78.6 & 93.4\\

 & 200 & 0.00 & 79.9 & 94.8 & -0.01 & 80.3 & 94.8 & -0.01 & 80.3 & 94.8 & -0.01 & 80.0 & 94.8\\

\multirow{-4}{*}{\centering\arraybackslash Efron-main term} & 500 & -0.01 & 79.7 & 95.4 & -0.01 & 80.0 & 95.0 & 0.00 & 79.9 & 94.9 & 0.00 & 79.8 & 94.6\\
\cmidrule{1-14}
 & 50 & -0.28 & 85.4 & 97.3 & -0.42 & 87.9 & 98.1 & -0.93 & 92.8 & 99.2 & -1.92 & 97.1 & 99.9\\

 & 100 & -0.17 & 83.5 & 96.6 & -0.29 & 85.7 & 97.3 & -0.73 & 90.7 & 99.0 & -1.55 & 96.2 & 99.7\\

 & 200 & -0.09 & 82.1 & 95.8 & -0.22 & 84.5 & 97.0 & -0.66 & 90.1 & 99.0 & -1.48 & 95.8 & 99.7\\

\multirow{-4}{*}{\centering\arraybackslash moving-block} & 500 & -0.04 & 80.4 & 95.6 & -0.16 & 83.3 & 96.6 & -0.57 & 89.2 & 98.6 & -1.34 & 94.9 & 99.7\\
\bottomrule
\end{tabular}}
\end{table}

\begin{table}[htbp]
\caption{\small Bootstrap variance estimation validity for $U_n^{\ap}$ when $X_i$ follows Gaussian distribution.}
\label{tab:boot-ap-normal}
\centering
\resizebox{\linewidth}{!}{\begin{tabular}[t]{>{\bfseries}cccccccccccccc}
\toprule
\multicolumn{1}{c}{ } & \multicolumn{1}{c}{ } & \multicolumn{3}{c}{$R_n=0$} & \multicolumn{3}{c}{$R_n=1$} & \multicolumn{3}{c}{$R_n=2$} & \multicolumn{3}{c}{$R_n=3$} \\
\cmidrule(l{2pt}r{2pt}){3-5} \cmidrule(l{2pt}r{2pt}){6-8} \cmidrule(l{2pt}r{2pt}){9-11} \cmidrule(l{2pt}r{2pt}){12-14}
\multicolumn{1}{c}{ } & \multicolumn{1}{c}{ } & \multicolumn{1}{c}{ } & \multicolumn{2}{c}{cov.prob.(\%)} & \multicolumn{1}{c}{ } & \multicolumn{2}{c}{cov.prob.(\%)} & \multicolumn{1}{c}{ } & \multicolumn{2}{c}{cov.prob.(\%)} & \multicolumn{1}{c}{ } & \multicolumn{2}{c}{cov.prob.(\%)} \\
\cmidrule(l{2pt}r{2pt}){4-5} \cmidrule(l{2pt}r{2pt}){7-8} \cmidrule(l{2pt}r{2pt}){10-11} \cmidrule(l{2pt}r{2pt}){13-14}
bootstrap method & $n$ & bias & 80\% & 95\% & bias & 80\% & 95\% & bias & 80\% & 95\% & bias & 80\% & 95\%\\
\midrule
 & 50 & 0.01 & 79.6 & 94.9 & -0.05 & 80.5 & 95.5 & -0.25 & 84.9 & 97.3 & -0.53 & 89.0 & 98.5\\

 & 100 & -0.03 & 80.6 & 95.5 & -0.09 & 81.9 & 95.9 & -0.29 & 85.6 & 97.3 & -0.59 & 89.7 & 98.5\\

 & 200 & -0.02 & 81.1 & 95.0 & -0.09 & 82.6 & 95.5 & -0.30 & 85.9 & 97.2 & -0.62 & 89.7 & 98.8\\

\multirow{-4}{*}{\centering\arraybackslash Efron} & 500 & -0.02 & 80.5 & 96.0 & -0.09 & 81.9 & 96.0 & -0.30 & 85.6 & 97.3 & -0.61 & 89.9 & 98.6\\
\cmidrule{1-14}
 & 50 & 0.10 & 76.2 & 93.3 & 0.12 & 75.8 & 92.9 & 0.13 & 75.4 & 92.7 & 0.14 & 74.7 & 92.2\\

 & 100 & 0.04 & 79.0 & 94.2 & 0.04 & 78.3 & 94.5 & 0.05 & 77.7 & 93.9 & 0.07 & 76.6 & 93.6\\

 & 200 & 0.02 & 79.7 & 94.9 & 0.01 & 80.1 & 94.7 & 0.02 & 79.7 & 94.7 & 0.02 & 78.8 & 94.8\\

\multirow{-4}{*}{\centering\arraybackslash Efron-main term} & 500 & 0.00 & 79.8 & 95.4 & 0.00 & 79.7 & 95.1 & 0.01 & 79.8 & 95.0 & 0.01 & 79.4 & 94.7\\
\cmidrule{1-14}
 & 50 & -0.49 & 88.5 & 98.4 & -0.58 & 89.5 & 98.8 & -0.87 & 91.8 & 99.3 & -1.30 & 94.6 & 99.7\\

 & 100 & -0.33 & 86.0 & 97.5 & -0.42 & 87.2 & 97.9 & -0.68 & 90.6 & 98.8 & -1.06 & 93.5 & 99.4\\

 & 200 & -0.19 & 84.1 & 96.8 & -0.28 & 85.9 & 97.2 & -0.53 & 88.6 & 98.3 & -0.90 & 92.3 & 99.4\\

\multirow{-4}{*}{\centering\arraybackslash moving-block} & 500 & -0.11 & 82.3 & 96.6 & -0.18 & 83.6 & 97.0 & -0.41 & 87.2 & 97.8 & -0.75 & 91.6 & 99.0\\
\bottomrule
\end{tabular}}
\end{table}

\begin{table}[htbp]
\caption{\small Bootstrap variance estimation validity for $U_n^{\ken}$ when $X_i$ follows $t$-distribution. }
\label{tab:boot-kendall-t}
\centering
\resizebox{\linewidth}{!}{\begin{tabular}[t]{>{\bfseries}cccccccccccccc}
\toprule
\multicolumn{1}{c}{ } & \multicolumn{1}{c}{ } & \multicolumn{3}{c}{$R_n=0$} & \multicolumn{3}{c}{$R_n=1$} & \multicolumn{3}{c}{$R_n=2$} & \multicolumn{3}{c}{$R_n=3$} \\
\cmidrule(l{2pt}r{2pt}){3-5} \cmidrule(l{2pt}r{2pt}){6-8} \cmidrule(l{2pt}r{2pt}){9-11} \cmidrule(l{2pt}r{2pt}){12-14}
\multicolumn{1}{c}{ } & \multicolumn{1}{c}{ } & \multicolumn{1}{c}{ } & \multicolumn{2}{c}{cov.prob.(\%)} & \multicolumn{1}{c}{ } & \multicolumn{2}{c}{cov.prob.(\%)} & \multicolumn{1}{c}{ } & \multicolumn{2}{c}{cov.prob.(\%)} & \multicolumn{1}{c}{ } & \multicolumn{2}{c}{cov.prob.(\%)} \\
\cmidrule(l{2pt}r{2pt}){4-5} \cmidrule(l{2pt}r{2pt}){7-8} \cmidrule(l{2pt}r{2pt}){10-11} \cmidrule(l{2pt}r{2pt}){13-14}
bootstrap method & $n$ & bias & 80\% & 95\% & bias & 80\% & 95\% & bias & 80\% & 95\% & bias & 80\% & 95\%\\
\midrule
 & 50 & 0.03 & 79.5 & 94.5 & -0.08 & 81.3 & 96.2 & -0.40 & 87.0 & 98.0 & -0.88 & 91.9 & 99.2\\

 & 100 & -0.02 & 80.1 & 95.1 & -0.13 & 82.9 & 96.3 & -0.46 & 88.1 & 98.3 & -0.97 & 93.0 & 99.4\\

 & 200 & 0.03 & 79.2 & 94.6 & -0.09 & 82.2 & 96.0 & -0.42 & 87.1 & 98.2 & -0.91 & 92.3 & 99.3\\

\multirow{-4}{*}{\centering\arraybackslash Efron} & 500 & 0.01 & 79.8 & 94.7 & -0.09 & 81.2 & 95.9 & -0.40 & 86.9 & 97.8 & -0.89 & 92.0 & 99.3\\
\cmidrule{1-14}
 & 50 & 0.08 & 77.5 & 93.1 & 0.08 & 76.4 & 93.5 & 0.09 & 76.8 & 92.4 & 0.09 & 77.2 & 91.7\\

 & 100 & 0.01 & 79.1 & 94.6 & 0.01 & 79.5 & 94.7 & 0.01 & 78.5 & 94.6 & 0.01 & 79.2 & 94.2\\

 & 200 & 0.04 & 78.8 & 94.3 & 0.03 & 79.4 & 94.3 & 0.03 & 79.3 & 94.2 & 0.02 & 79.3 & 94.2\\

\multirow{-4}{*}{\centering\arraybackslash Efron-main term} & 500 & 0.02 & 79.2 & 94.7 & 0.03 & 78.7 & 94.4 & 0.03 & 79.0 & 94.1 & 0.02 & 79.5 & 94.6\\
\cmidrule{1-14}
 & 50 & -0.26 & 85.1 & 97.1 & -0.40 & 87.0 & 98.2 & -0.81 & 91.5 & 99.0 & -1.44 & 95.7 & 99.6\\

 & 100 & -0.17 & 83.5 & 96.6 & -0.30 & 85.7 & 97.4 & -0.67 & 90.5 & 98.9 & -1.25 & 94.8 & 99.6\\

 & 200 & -0.04 & 80.8 & 95.4 & -0.17 & 83.7 & 96.8 & -0.52 & 88.5 & 98.6 & -1.05 & 93.3 & 99.4\\

\multirow{-4}{*}{\centering\arraybackslash moving-block} & 500 & -0.02 & 80.2 & 95.0 & -0.12 & 81.9 & 96.1 & -0.44 & 87.4 & 98.1 & -0.94 & 92.6 & 99.3\\
\bottomrule
\end{tabular}}
\end{table}

\begin{table}[htbp]
\caption{\small Bootstrap variance estimation validity for $U_n^{\ap}$ when $X_i$ follows $t$-distribution.}
\label{tab:boot-ap-t}
\centering
\resizebox{\linewidth}{!}{\begin{tabular}[t]{>{\bfseries}cccccccccccccc}
\toprule
\multicolumn{1}{c}{ } & \multicolumn{1}{c}{ } & \multicolumn{3}{c}{$R_n=0$} & \multicolumn{3}{c}{$R_n=1$} & \multicolumn{3}{c}{$R_n=2$} & \multicolumn{3}{c}{$R_n=3$} \\
\cmidrule(l{2pt}r{2pt}){3-5} \cmidrule(l{2pt}r{2pt}){6-8} \cmidrule(l{2pt}r{2pt}){9-11} \cmidrule(l{2pt}r{2pt}){12-14}
\multicolumn{1}{c}{ } & \multicolumn{1}{c}{ } & \multicolumn{1}{c}{ } & \multicolumn{2}{c}{cov.prob.(\%)} & \multicolumn{1}{c}{ } & \multicolumn{2}{c}{cov.prob.(\%)} & \multicolumn{1}{c}{ } & \multicolumn{2}{c}{cov.prob.(\%)} & \multicolumn{1}{c}{ } & \multicolumn{2}{c}{cov.prob.(\%)} \\
\cmidrule(l{2pt}r{2pt}){4-5} \cmidrule(l{2pt}r{2pt}){7-8} \cmidrule(l{2pt}r{2pt}){10-11} \cmidrule(l{2pt}r{2pt}){13-14}
bootstrap method & $n$ & bias & 80\% & 95\% & bias & 80\% & 95\% & bias & 80\% & 95\% & bias & 80\% & 95\%\\
\midrule
 & 50 & 0.02 & 79.2 & 95.1 & -0.05 & 80.6 & 95.8 & -0.21 & 84.0 & 96.8 & -0.43 & 87.3 & 98.2\\

 & 100 & 0.01 & 79.8 & 94.7 & -0.04 & 81.1 & 95.2 & -0.20 & 84.2 & 96.7 & -0.41 & 86.6 & 98.1\\

 & 200 & 0.02 & 79.7 & 94.7 & -0.05 & 81.0 & 95.4 & -0.22 & 84.1 & 97.0 & -0.44 & 87.7 & 98.1\\

\multirow{-4}{*}{\centering\arraybackslash Efron} & 500 & -0.01 & 80.0 & 94.6 & -0.07 & 81.0 & 95.7 & -0.24 & 84.3 & 97.0 & -0.47 & 87.7 & 98.2\\
\cmidrule{1-14}
 & 50 & 0.11 & 75.7 & 93.1 & 0.11 & 75.5 & 93.4 & 0.13 & 75.0 & 92.9 & 0.13 & 75.4 & 92.8\\

 & 100 & 0.07 & 77.9 & 94.2 & 0.07 & 77.7 & 94.3 & 0.08 & 77.0 & 93.9 & 0.08 & 77.0 & 94.1\\

 & 200 & 0.05 & 78.6 & 94.0 & 0.05 & 78.8 & 94.1 & 0.05 & 78.2 & 94.2 & 0.05 & 78.3 & 94.5\\

\multirow{-4}{*}{\centering\arraybackslash Efron-main term} & 500 & 0.01 & 79.4 & 94.6 & 0.01 & 79.3 & 94.7 & 0.02 & 79.1 & 94.6 & 0.00 & 79.3 & 94.7\\
\cmidrule{1-14}
 & 50 & -0.48 & 88.3 & 98.3 & -0.58 & 89.4 & 98.7 & -0.81 & 91.6 & 99.3 & -1.14 & 93.8 & 99.7\\

 & 100 & -0.28 & 85.4 & 97.5 & -0.35 & 86.8 & 97.8 & -0.56 & 89.1 & 98.6 & -0.83 & 91.7 & 99.3\\

 & 200 & -0.15 & 83.1 & 96.3 & -0.23 & 84.3 & 97.0 & -0.43 & 87.0 & 98.2 & -0.69 & 90.6 & 98.8\\

\multirow{-4}{*}{\centering\arraybackslash moving-block} & 500 & -0.09 & 81.5 & 95.8 & -0.16 & 83.3 & 96.3 & -0.34 & 85.7 & 97.6 & -0.59 & 89.3 & 98.6\\
\bottomrule
\end{tabular}}
\end{table}


\section{Discussion}

One of the main focus of this manuscript is the consistency of bootstrap variance estimator for U-statistics under data heterogeneity. The proof is based on brutal combinatorial calculation. This cannot be readily extended to analyzing bootstrap distributional consistency. We believe using techniques developed by \cite{mammen2012bootstrap} and \cite{hall2013bootstrap}, it is promising to devise the corresponding bootstrap distributional consistency theory. However, there are still some challenges and open problems to be resolved before rigorous distributional consistency theory can be established. Details will be worked out in a future work.

We have considered U-statistics with data that are independent but not identically distributed. In the literature, there have been many developments of bootstrap methods for stationary time series since the seminal work of block bootstrap methods by \cite{kunsch1989jackknife}. See, for example, \cite{politis1992circular}, \cite{lahiri1993moving}, and \cite{politis1994large}. Among the few developments for nonstationary time series, \cite{fitzenberger1998moving} showed that block bootstrap is robust for linear regression estimation, and \cite{gonccalves2002bootstrap} established the consistency for block bootstrap variance estimator of sample means. To the best of knowledge, there is no work on bootstrapping U-statistics in the nonstationary time-series setting. It would be interesting to extend our current techniques in this manuscript to allow for dependent data. We believe our technique and the techniques used in the bootstrapping time-series literature (e.g., \cite{carlstein1998matched}, \cite{paparoditis2001tapered}, and \cite{shao2010dependent}) can be potentially combined for analyzing bootstrapping U-statistics for nonstationary time series data. However, the analysis will become even more challenging technically, and will be left for future research.

\section*{Acknowledgement}

We thank the editor, the associate editor, and two anonymous referees for their careful reviews and constructive comments. Dr. Fang Han's research is supported by NSF grant DMS-1712536 and a UW faculty start-up grant. Dr. Tianchen Qian's research is supported by Patient-Centered Outcomes Research Institute (ME-1306-03198).


\appendix

\section{Proofs of main results}
\label{sec:proofs}

In this section, we prove theoretical results presented in the manuscript. The results are proved in the order they appear in the manuscript. For succinctness, the supporting lemmas that appear in the proof are proven in Section \ref{sec:proof-supportinglemmas}. In those proof, sometimes we also have to refer to certain auxiliary results. Those are numbered by \ref{sec:aux-lemmas}.


\subsection{Proof of Theorem \ref{thm:clt-degreem}}
\begin{proof}
By Lemma \ref{lem:decomposition}, we have 
\begin{align}
\left\{ \frac{\var(U_n)}{V(n)}\right\} ^{1/2}\frac{U_n-E(U_n)}{\var(U_n)^{1/2}} & =\frac{n^{-1}\sum_{i=1}^nh_{1,i}(X_i)}{V(n)^{1/2}}+\frac{U_n(a,h_2)}{V(n)^{1/2}}.\label{eq:clt-proofuse-0.8}
\end{align}
For proving Theorem \ref{thm:clt-degreem}, by Slutsky's theorem it suffices to establish the following results: 
\begin{align}
V(n)^{-1/2}n^{-1}\sum_{i=1}^nh_{1,i}(X_i) & \stackrel{d}{\to}N(0,1),\label{eq:clt-proofuse-toshow1}\\
V(n)^{-1/2}U_n(a,h_2) & \stackrel{P}{\to}0,\label{eq:clt-proofuse-toshow2}
\end{align}
and
\begin{equation}
\var(U_n)/V(n)\to1.\label{eq:clt-proofuse-toshow3}
\end{equation}

First we show (\ref{eq:clt-proofuse-toshow1}) using Lyapunov's Central Limit Theorem (Lemma \ref{lem:lyapunov-clt}).
The following lemma gives bound on $\sum_{i=1}^nE|h_{1,i}(X_i)|^{3}$.
\begin{lemma}
\label{lem:clt-proofuse}For $A_{3,1}(n)$ defined in (\ref{eq:def-A2})
and $M(n)$ defined in (\ref{eq:clt-degreem-condition-moment}), we have
\[
\sum_{i=1}^nE|h_{1,i}(X_i)|^{3}\leq CnA_{3,1}(n)M(n)^{3/4},
\]
where $C$ is some absolute constant.
\end{lemma}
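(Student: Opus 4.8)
The plan is to pass from the third absolute moment of $h_{1,i}(X_i)$ to a purely deterministic triple sum of weights, to absorb all the randomness into a single uniform bound via H\"older's inequality together with the moment condition \eqref{eq:clt-degreem-condition-moment}, and then to recognize the remaining weight sum as a bounded multiple of $A_{3,1}(n)\cdot\card\{(I_n^m)_{\ge1}^{\otimes3}\}$.

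First I would set up notation: writing $\bi=(i_1,\ldots,i_{m-1})$ and $g^{(l)}_{i;\bi}(x):=f^{(l)}_{\bi}(x)-\theta^{(l)}(i;\bi)$, which is a deterministic function of its scalar argument with $E\{g^{(l)}_{i;\bi}(X_i)\}=0$, definition \eqref{eq:def-h1} reads
\[
h_{1,i}(X_i)=\frac{(n-m)!}{(n-1)!}\sum_{l=1}^{m}\sum_{\bi\in I_{n-1}^{m-1}(-i)}a^{(l)}(i;\bi)\,g^{(l)}_{i;\bi}(X_i).
\]
Expanding the cube, bounding $|h_{1,i}(X_i)|^3$ pointwise by the triangle inequality, and taking expectations, one gets that $E|h_{1,i}(X_i)|^3$ is at most $\big(\tfrac{(n-m)!}{(n-1)!}\big)^3$ times a triple sum over $l_1,l_2,l_3\in[m]$ and $\bi^{(1)},\bi^{(2)},\bi^{(3)}\in I_{n-1}^{m-1}(-i)$ of $\prod_{k=1}^{3}\big|a^{(l_k)}(i;\bi^{(k)})\big|\cdot E\big|\prod_{k=1}^{3}g^{(l_k)}_{i;\bi^{(k)}}(X_i)\big|$.

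Next I would dispose of the expectation factor uniformly. Since $h^{(l)}(X_i;X_{i_1},\ldots,X_{i_{m-1}})=h(X_{j_1},\ldots,X_{j_m})$ with $(j_1,\ldots,j_m)=\pi_l(i;\bi)\in I_n^m$, two applications of Jensen's inequality and \eqref{eq:clt-degreem-condition-moment} give $E|f^{(l)}_{\bi}(X_i)|^4\le M(n)$, hence, after centering, $\|g^{(l)}_{i;\bi}(X_i)\|_3\le\|g^{(l)}_{i;\bi}(X_i)\|_4\le2M(n)^{1/4}$. Because all three factors $g^{(l_k)}_{i;\bi^{(k)}}(X_i)$ depend on the single variable $X_i$, the generalized H\"older inequality with exponents $(3,3,3)$ (equivalently $|z_1z_2z_3|\le\tfrac13\sum_k|z_k|^3$) yields $E\big|\prod_{k=1}^3 g^{(l_k)}_{i;\bi^{(k)}}(X_i)\big|\le8M(n)^{3/4}$, uniformly in $i$ and in all the $l_k,\bi^{(k)}$.

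It then remains to bound the combinatorial quantity $\sum_{i=1}^n\sum_{l_1,l_2,l_3}\sum_{\bi^{(1)},\bi^{(2)},\bi^{(3)}}\prod_{k=1}^3|a(\pi_{l_k}(i;\bi^{(k)}))|$. Each summand is a product of three weights evaluated at three index vectors of $I_n^m$ that all contain the index $i$, so the corresponding triple lies in $(I_n^m)_{\ge1}^{\otimes3}$; conversely, a fixed triple $(\br^{(1)},\br^{(2)},\br^{(3)})\in(I_n^m)_{\ge1}^{\otimes3}$ is reproduced exactly once for each element of $\bigcap_{k=1}^3\{r_1^{(k)},\ldots,r_m^{(k)}\}$ (that element plays the role of $i$, after which the positions $l_k$ and the reduced tuples $\bi^{(k)}$ are forced), hence with multiplicity at most $m$. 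Thus the sum is at most $m\,A_{3,1}(n)\,\card\{(I_n^m)_{\ge1}^{\otimes3}\}$, and combining $\big(\tfrac{(n-m)!}{(n-1)!}\big)^3\asymp n^{-3(m-1)}$ with $\card\{(I_n^m)_{\ge1}^{\otimes3}\}\asymp n^{3m-2}$ collapses the powers of $n$ to $n^{-3(m-1)+(3m-2)}=n$, giving $\sum_{i=1}^n E|h_{1,i}(X_i)|^3\le C\,n\,A_{3,1}(n)\,M(n)^{3/4}$. I expect the main obstacle to be precisely this reindexing: one must keep careful track of the fact that distinct choices of the ``pivot'' index $i$ can generate the same triple of $m$-vectors, and verify that this overcounting is controlled by the constant $m$ rather than growing with $n$.
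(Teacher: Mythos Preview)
Your proof is correct and follows essentially the same route as the paper: expand the cube, bound the third absolute moment of each product of centered projections by $CM(n)^{3/4}$ via H\"older/Cauchy--Schwarz together with the fourth-moment condition, and then control the remaining weight sum by $C\,n^{3m-2}A_{3,1}(n)$. Your explicit multiplicity-$m$ reindexing argument is exactly what the paper compresses into ``by the definition of $A_{3,1}(n)$ and algebra,'' and your use of generalized H\"older with exponents $(3,3,3)$ is equivalent to the paper's two applications of Cauchy--Schwarz.
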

\bigskip{}
By Lemma \ref{lem:clt-proofuse} and the fact that $E\{h_{1,i}(X_i)\}=0$,
we deduce
\begin{equation}
\sum_{i=1}^nE|h_{1,i}(X_i)-E\{h_{1,i}(X_i)\}|^{3}\leq CnA_{3,1}(n)M(n)^{3/4}.\label{eq:clt-proofuse-0.9}
\end{equation}
Since $V(n):=n^{-2}\sum_{i=1}^n\var\{h_{1,i}(X_i)\}$, it
follows from (\ref{eq:clt-proofuse-0.9}) and (\ref{eq:clt-degreem-condition-2}) that
\begin{equation}
\frac{\sum_{i=1}^nE|h_{1,i}(X_i)-E\{h_{1,i}(X_i)\}|^{3}}{\Big[\sum_{i=1}^nE|h_{1,i}(X_i)-E\{h_{1,i}(X_i)\}|^2\Big]^{3/2}}\leq\frac{CnA_{3,1}(n)M(n)^{3/4}}{n^{3}V(n)^{3/2}}\to0.\label{eq:clt-proofuse-0.95}
\end{equation}
Equation (\ref{eq:clt-proofuse-0.95}) and Lemma \ref{lem:lyapunov-clt} with $\delta=1$ yield (\ref{eq:clt-proofuse-toshow1}).

Next we show (\ref{eq:clt-proofuse-toshow2}). To simplify notation,
let $\bi$ denote the index vector $(i_1,\ldots,i_m)$ and $\Xbi$
denote $(X_{i_1},\ldots,X_{i_m})$. Consider two index vectors
$\bi,\bj$ from $I_n^m$. If $\bi\cap\bj=\emptyset$, by independence
of the $X_i$'s we have $\cov\{h_{2;\bi}(\Xbi),h_{2;\bj}(\Xbj)\}=0$.
If $\bi\cap\bj=i_p=j_q$ for some $p,q\in[n]$ (i.e., the two
vectors only share one common index), Lemma \ref{lem:cov-conditional-expectation}
and (\ref{eq:Eh2i}) imply that 
\[
\cov\{h_{2;\bi}(\Xbi),h_{2;\bj}(\Xbj)\}=\cov[E\{h_{2;\bi}(\Xbi)\mid X_{i_p}\},E\{h_{2;\bj}(\Xbj)\mid X_{j_q}\}]=0.
\]
Therefore, we have
\begin{align}
\var\{U_n(a,h_2)\} & =\Big\{\frac{(n-m)!}{n!}\Big\}^2\sum_{\bi,\bj\in(I_n^m)_{\geq2}^{\otimes2}}a(\bi)a(\bj)\cov\{h_{2;\bi}(\Xbi),h_{2;\bj}(\Xbj)\}.\label{eq:clt-proofuse-2.9}
\end{align}
By Lemma \ref{lem:moment-bound}(i) and Cauchy-Schwarz inequality,
the right-hand side of (\ref{eq:clt-proofuse-2.9}) is bounded by
$Cn^{-2}A_{2,2}(n)M(n)^{1/2}$ for some absolute constant $C$, where
$A_{2,2}(n)$ is defined in (\ref{eq:def-A2}). This combined with
(\ref{eq:clt-degreem-condition-1}) yields that
\begin{equation}
V(n)^{-1}\var\{U_n(a,h_2)\}\leq CV(n)^{-1}n^{-2}A_{2,2}(n)M(n)\to0.\label{eq:clt-proofuse-3.0}
\end{equation}
Equation (\ref{eq:clt-proofuse-toshow2}) follows from (\ref{eq:clt-proofuse-3.0})
and Lemma \ref{lem:var-Pconverge}.

Lastly, we establish (\ref{eq:clt-proofuse-toshow3}). Taking variance
on both sides of (\ref{eq:clt-proofuse-0.8}) gives
\begin{align}
\frac{\var(U_n)}{V(n)} & =1+\frac{\var\{U_n(a,h_2)\}}{V(n)}+\cov\Big\{\frac{\sum_{i=1}^nh_{1,i}(X_i)}{nV(n)^{1/2}},\frac{U_n(a,h_2)}{V(n)^{1/2}}\Big\} \nonumber \\
& =1+\frac{\var\{U_n(a,h_2)\}}{V(n)}
\label{eq:clt-proofuse-1.2}
\end{align}
Equations (\ref{eq:clt-proofuse-3.0}) and (\ref{eq:clt-proofuse-1.2}) imply that 
\[
\var(U_n)/V(n)\to1.
\]
This completes the proof.
\end{proof}


\subsection{Proof of Lemma \ref{lem:decomposition}}
\begin{proof}
We have 
\[
U_n-E(U_n)=\sum_{i=1}^n\{E(U_n\mid X_i)-E(U_n)\}+\Big[U_n-E(U_n)-\sum_{i=1}^n\{E(U_n\mid X_i)-E(U_n)\}\Big].
\]
For proving Lemma \ref{lem:decomposition}, it suffices to show 
\begin{equation}
\sum_{i=1}^n\{E(U_n\mid X_i)-E(U_n)\}=\frac{1}{n}\sum_{i=1}^nh_{1,i}(X_i)\label{eq:decomposition-pf-1}
\end{equation}
and 
\begin{equation}
U_n-E(U_n)-\sum_{i=1}^n\{E(U_n\mid X_i)-E(U_n)\}=\frac{(n-m)!}{n!}\sum_{I_n^m}a(i_1,\ldots,i_m)h_{2;i_1,\ldots,i_m}(X_{i_1},\ldots,X_{i_m}),\label{eq:decomposition-pf-2}
\end{equation}
where $h_{1,i}(\cdot)$ and $h_{2;i_1,\ldots,i_m}(\cdot)$ are
defined in \eqref{eq:Eh1i} and \eqref{eq:Eh2i}, respectively.

First we establish (\ref{eq:decomposition-pf-1}). We have 
\begin{align}
E(U_n\mid X_i)-E(U_n)=\frac{(n-m)!}{n!}\sum_{I_n^m}a(i_1,\ldots,i_m)\Big[E\{h(X_{i_1},\ldots,X_{i_m})\mid X_i\}-\theta(i_1,\ldots,i_m)\Big].\label{eq:decomposition-pf-2.5}
\end{align}
Consider a fixed $i\in[n]$ and fixed $(i_1,\ldots,i_m)\in I_n^m$.
If $i\notin\{i_1,\ldots,i_m\}$, 
\[
E\{h(X_{i_1},\ldots,X_{i_m})\mid X_i\}-\theta(i_1,\ldots,i_m)=0~~\as.
\]
It follows that 
\begin{align}
 & \sum_{I_n^m}a(i_1,\ldots,i_m)\Big[E\{h(X_{i_1},\ldots,X_{i_m})\mid X_i\}-\theta(i_1,\ldots,i_m)\Big]\nonumber \\
  =&\sum_{I_{n-1}^{m-1}(-i)}a(i,i_1,\ldots,i_{m-1})\Big[E\{h(X_i,X_{i_1,}\ldots,X_{i_{m-1}})\mid X_i\}-\theta(i,i_1,\ldots,i_{m-1})\Big]\nonumber \\
  ~~&+\sum_{I_{n-1}^{m-1}(-i)}a(i_1,i,i_2,\ldots,i_{m-1})\Big[E\{h(X_{i_1},X_i,X_{i_2},\ldots,X_{i_{m-1}})\mid X_i\}-\theta(i_1,i,i_2,\ldots,i_{m-1})\Big]+\cdots\nonumber  \\
  ~~&+\sum_{I_{n-1}^{m-1}(-i)}a(i_1,\ldots,i_{m-1},i)\Big[E\{h(X_{i_1,}\ldots,X_{i_{m-1}},X_i)\mid X_i\}-\theta(i_1,\ldots,i_{m-1},i)\Big]\nonumber  \\
  =&\sum_{I_{n-1}^{m-1}(-i)}\sum_{l=1}^ma^{(l)}(i;i_1,\ldots,i_{m-1})\Big[E\{h^{(l)}(X_i;X_{i_1},\ldots,X_{i_{m-1}})\mid X_i\}-\theta^{(l)}(i;i_1,\ldots,i_{m-1})\Big].\label{eq:decomposition-pf-3}
\end{align}
By the definition of $h_{1,i}(\cdot)$, (\ref{eq:decomposition-pf-3})
equals $\{(n-1)!/(n-m)!\}h_{1,i}(X_i)$. Combining this with \eqref{eq:decomposition-pf-2.5}
yields (\ref{eq:decomposition-pf-1}).

Next we establish (\ref{eq:decomposition-pf-2}). The following lemma
shows that $\sum_{i=1}^n\{E(U_n\mid X_i)-E(U_n)\}$ is a U-statistic.
\begin{lemma}
\label{lem:decomposition-pf-1} We have 
\begin{align}
 & \sum_{l=1}^m\sum_{i=1}^n\sum_{I_{n-1}^{m-1}(-i)}a^{(l)}(i;i_1,\ldots,i_{m-1})\left[E\{h^{(l)}(X_i;X_{i_1},\ldots,X_{i_{m-1}})\mid X_i\}-\theta^{(l)}(i;i_1,\ldots,i_{m-1})\right]\nonumber \\
  =& \sum_{I_n^m}a(i_1,\ldots,i_m)\Big[\sum_{j=1}^mE\{h(X_{i_1},\ldots,X_{i_m})\mid X_{i_j}\}-m\theta(i_1,\ldots,i_m)\Big].\label{eq:decomposition-proofuse-lemma}
\end{align}
 
\end{lemma}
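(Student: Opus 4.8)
The plan is to prove the identity purely by a relabeling of summation indices: both sides turn out to be a sum over pairs consisting of an ordered $m$-tuple in $I_n^m$ and a distinguished coordinate, and no analytic input is needed.

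First I would record the underlying combinatorial bijection. Fix $l\in[m]$. The map
\[
(i,(i_1,\ldots,i_{m-1}))\ \longmapsto\ \pi_l(i;i_1,\ldots,i_{m-1})=(i_1,\ldots,i_{l-1},i,i_l,\ldots,i_{m-1})
\]
sends $\{(i,\bi):i\in[n],\ \bi\in I_{n-1}^{m-1}(-i)\}$ into $I_n^m$, because $i\notin\{i_1,\ldots,i_{m-1}\}$ and the $i_k$ are pairwise distinct. Its inverse sends $(j_1,\ldots,j_m)\in I_n^m$ to the pair $(j_l,(j_1,\ldots,j_{l-1},j_{l+1},\ldots,j_m))$, which again satisfies the required distinctness constraints. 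Hence for each fixed $l$ this map is a bijection onto $I_n^m$.

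Next I would push the left-hand side of \eqref{eq:decomposition-proofuse-lemma} through this bijection. By the very definitions of $a^{(l)}$, $h^{(l)}$, and $\theta^{(l)}$ as compositions with $\pi_l$, writing $(j_1,\ldots,j_m)=\pi_l(i;i_1,\ldots,i_{m-1})$ we have $a^{(l)}(i;i_1,\ldots,i_{m-1})=a(j_1,\ldots,j_m)$, $h^{(l)}(X_i;X_{i_1},\ldots,X_{i_{m-1}})=h(X_{j_1},\ldots,X_{j_m})$, and $\theta^{(l)}(i;i_1,\ldots,i_{m-1})=\theta(j_1,\ldots,j_m)$; moreover the conditioning variable is $X_i=X_{j_l}$, so $E\{h^{(l)}(X_i;X_{i_1},\ldots,X_{i_{m-1}})\mid X_i\}=E\{h(X_{j_1},\ldots,X_{j_m})\mid X_{j_l}\}$. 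Therefore the triple sum over $l$, $i$, and $I_{n-1}^{m-1}(-i)$ on the left of \eqref{eq:decomposition-proofuse-lemma} equals
\[
\sum_{l=1}^m\ \sum_{(j_1,\ldots,j_m)\in I_n^m}a(j_1,\ldots,j_m)\big[E\{h(X_{j_1},\ldots,X_{j_m})\mid X_{j_l}\}-\theta(j_1,\ldots,j_m)\big].
\]

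Finally I would interchange the two finite sums and collect the $l=1,\ldots,m$ contributions inside the bracket: the $\theta$-terms sum to $m\,\theta(j_1,\ldots,j_m)$ and the conditional expectations sum to $\sum_{l=1}^m E\{h(X_{j_1},\ldots,X_{j_m})\mid X_{j_l}\}$. Renaming $(j_1,\ldots,j_m)$ as $(i_1,\ldots,i_m)$ and the index $l$ as $j$ reproduces exactly the right-hand side of \eqref{eq:decomposition-proofuse-lemma}. The only point demanding care is the bookkeeping in the first two steps --- verifying that $\pi_l$ is invertible with the stated inverse and that each factor transforms as claimed --- and this is entirely routine; there is no genuine obstacle here.
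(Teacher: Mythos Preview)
Your proof is correct and is essentially the same approach as the paper's: both rest on the observation that for each fixed $l\in[m]$ the map $\pi_l$ is a bijection between $\{(i,\bi):i\in[n],\ \bi\in I_{n-1}^{m-1}(-i)\}$ and $I_n^m$. The paper phrases this as matching the coefficient of $a(i_1^*,\ldots,i_m^*)$ on each side by enumerating the $m$ preimages $(l,i_l^*,(i_1^*,\ldots,i_m^*)\backslash i_l^*)$, whereas you carry out the same re-indexing directly as a change of variables followed by swapping the finite sums; the content is identical.
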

\bigskip{}

Using Lemma \ref{lem:decomposition-pf-1}, it follows from (\ref{eq:decomposition-pf-3}) that 
\[
\sum_{i=1}^n\{E(U_n\mid X_i)-E(U_n)\}=\sum_{I_n^m}a(i_1,\ldots,i_m)\Big[\sum_{j=1}^mE\{h(X_{i_1},\ldots,X_{i_m})\mid X_{i_j}\}-m\theta(i_1,\ldots,i_m)\Big].
\]
By the definition of $h_{2;i_1,\ldots,i_m}(\cdot)$, we deduce that (\ref{eq:decomposition-pf-2}) holds.

Equations (\ref{eq:Eh1i}) and (\ref{eq:Eh2i}) follow immediately from the definitions in (\ref{eq:def-h1}) and (\ref{eq:def-h2}).
This completes the proof. 
\end{proof}


\subsection{Proof of Theorem \ref{thm:bootstrap-mainterm}}
\begin{proof}
In Lemma \ref{lem:bootstrap-noniid}, let $Y_{n,i}=\sigma_n^{-1}h_{1,i}(X_i)$,
$g_n$ be the identity function, $t_n=0$ and $\sigma_n^2=1$.
By the definition of $\hat{T}_n$ we have $\hat{T}_n=n^{-1}\sum_{i=1}^n\sigma_n^{-1}h_{1,i}(X_i)$.
Equation (\ref{eq:ustat-bootstrap-condition-1}) implies (\ref{eq:bootstrap-noniid-condition1}).
(\ref{eq:ustat-bootstrap-condition-2}) implies (\ref{eq:bootstrap-noniid-condition2}).
Equations (\ref{eq:clt-proofuse-toshow1}), (\ref{eq:clt-degreem-variancetendtoone})
and Slutsky's theorem imply that for any $t\in\mathbb{R}$, 
\[
P\left\{ \hat{T}_n-t_n\leq t\right\} -\Phi(t)\to0.
\]
By Lemma \ref{lem:cdf-uniform-convergence} the above convergence
is uniform in $t\in\mathbb{R}$. This yields (\ref{eq:bootstrap-noniid-condition3}).
Therefore, all conditions in Lemma \ref{lem:bootstrap-noniid} hold,
which implies
\[
\sup_{t\in\mathbb{R}}\bigg|P^*\bigg\{\sum_{i=1}^n\frac{\{h_{1,i}(X_i)\}^*}{n\sigma_n}-\sum_{i=1}^n\frac{h_{1,i}(X_i)}{n\sigma_n}\leq t\bigg\}-P\bigg\{\sum_{i=1}^n\frac{h_{1,i}(X_i)}{n\sigma_n}\leq t\bigg\}\bigg|\stackrel{P}{\to}0.
\]
This proves (\ref{eq:ustat-mainterm-bootstrap-result}). Equation (\ref{eq:ustat-mainterm-bootstrap-result-additional}) follows immediately from Theorem \ref{thm:clt-degreem}. 
\end{proof}


\subsection{Proof of Theorem \ref{thm:bootstrap-u-stat}}
\begin{proof}
By the definition of $\sigma_n^2$, we have $\var(\sigma_n^{-1}U_n)=1$.
For proving Theorem \ref{thm:bootstrap-u-stat} it suffices to show
that 
\begin{equation}
\var^*(\sigma_n^{-1}U_n^*)\stackrel{P}{\to}1.\label{eq:bootstrap-proofuse-0.3}
\end{equation}
In Lemma \ref{lem:decomposition}, replacing $X_i$ by $X_i^*$ yields
\begin{equation}
U_n^*-E(U_n)=\frac{1}{n}\sum_{i=1}^nh_{1,i}(X_i^*)+U_n^*(a,h_2),\label{eq:bootstrap-proofuse-0.5}
\end{equation}
where
\begin{equation}
U_n^*(a,h_2):=\frac{(n-m)!}{n!}\sum_{I_n^m}a(i_1,\ldots,i_m)h_{2;i_1,\ldots,i_m}(X_{i_1}^*,\ldots,X_{i_m}^*).\label{eq:def-Unstar}
\end{equation}
Multiplying $\sigma_n^{-1}$ and then taking $\var^*$ on both
sides of (\ref{eq:bootstrap-proofuse-0.5}) yields
\begin{equation}
\var^*(\sigma_n^{-1}U_n^*)=\var^*\Big\{\sum_{i=1}^n\frac{h_{1,i}(X_i^*)}{n\sigma_n}\Big\}+\var^*\Big\{\frac{U_n^*(a,h_2)}{\sigma_n}\Big\}+\cov^*\Big\{\sum_{i=1}^n\frac{h_{1,i}(X_i^*)}{n\sigma_n},\frac{U_n^*(a,h_2)}{\sigma_n}\Big\},\label{eq:bootstrap-proofuse-0.54}
\end{equation}
where $\cov^*(\cdot)$ denotes the covariance operator on the empirical measure. By (\ref{eq:bootstrap-proofuse-0.54}) and Slutsky's theorem, for
proving (\ref{eq:bootstrap-proofuse-0.3}) it suffices to show the
following: 
\begin{align}
\var^*\Big\{\sum_{i=1}^n\frac{h_{1,i}(X_i^*)}{n\sigma_n}\Big\} & \stackrel{P}{\to}1,\label{eq:bootstrap-proofuse-0.55}\\
\var^*\Big\{\frac{U_n^*(a,h_2)}{\sigma_n}\Big\} & \stackrel{P}{\to}0,\label{eq:bootstrap-proofuse-0.6}
\end{align}
and
\begin{equation}
\cov^*\Big\{\sum_{i=1}^n\frac{h_{1,i}(X_i^*)}{n\sigma_n},\frac{U_n^*(a,h_2)}{\sigma_n}\Big\}\stackrel{P}{\to}0,\label{eq:bootstrap-proofuse-0.65}
\end{equation}

First we prove (\ref{eq:bootstrap-proofuse-0.55}).
Since conditional on $X_1,\ldots,X_n$ the $X_i^*$'s are i.i.d. draws from the empirical distribution of $X_1,\ldots,X_n$, we have 
\begin{align}
E^*\Big[\Big\{\sum_{i=1}^n\frac{h_{1,i}(X_i^*)}{n\sigma_n}\Big\}^2\Big] & =\frac{1}{n}\sum_{i=1}^n\sum_{j=1}^n\Big\{\frac{h_{1,i}(X_j)}{n\sigma_n}\Big\}^2+\frac{1}{n^2}\sum_{i_1\neq i_2}\sum_{j_1=1}^n\sum_{j_2=1}^n\frac{h_{1,i_1}(X_{j_1})}{n\sigma_n}\frac{h_{1,i_2}(X_{j_2})}{n\sigma_n},\label{eq:bootstrap-proofuse-0.7}
\end{align}
and
\begin{align}
\Big[E^*\Big\{\sum_{i=1}^n\frac{h_{1,i}(X_i^*)}{n\sigma_n}\Big\}\Big]^2 & =\frac{1}{n^2}\Big\{\sum_{i_1=1}^n\sum_{j_1=1}^n\frac{h_{1,i_1}(X_{j_1})}{n\sigma_n}\Big\}\Big\{\sum_{i_2=1}^n\sum_{j_2=1}^n\frac{h_{1,i_2}(X_{j_2})}{n\sigma_n}\Big\}\nonumber \\
=\frac{1}{n^2}\sum_{i=1}^n\Big\{\sum_{j_1=1}^n\frac{h_{1,i}(X_{j_1})}{n\sigma_n}\Big\} & \Big\{\sum_{j_2=1}^n\frac{h_{1,i}(X_{j_2})}{n\sigma_n}\Big\}+\frac{1}{n^2}\sum_{i_1\neq i_2}\sum_{j_1=1}^n\sum_{j_2=1}^n\frac{h_{1,i_1}(X_{j_1})}{n\sigma_n}\frac{h_{1,i_2}(X_{j_2})}{n\sigma_n}.\label{eq:bootstrap-proofuse-0.8}
\end{align}
Equations (\ref{eq:bootstrap-proofuse-0.7}) and (\ref{eq:bootstrap-proofuse-0.8})
yield 
\begin{align}
\var^*\Big\{\sum_{i=1}^n\frac{h_{1,i}(X_i^*)}{n\sigma_n}\Big\} & =E^*\Big[\Big\{\sum_{i=1}^n\frac{h_{1,i}(X_i^*)}{n\sigma_n}\Big\}^2\Big]-\Big[E^*\Big\{\sum_{i=1}^n\frac{h_{1,i}(X_i^*)}{n\sigma_n}\Big\}\Big]^2\nonumber  \\
 & =\frac{1}{n}\sum_{i=1}^n\sum_{j=1}^n\Big\{\frac{h_{1,i}(X_j)}{n\sigma_n}\Big\}^2 - \frac{1}{n^2}\sum_{i=1}^n\Big\{\sum_{j=1}^n\frac{h_{1,i}(X_j)}{n\sigma_n}\Big\}^2.\label{eq:bootstrap-proofuse-1}
\end{align}
Equation (\ref{eq:bootstrap-proofuse-0.55}) follows from (\ref{eq:bootstrap-proofuse-1}), (\ref{eq:ustat-bootstrap-condition-homo1}), (\ref{eq:ustat-bootstrap-condition-homo2}), and Slutsky's theorem.


The following lemma establishes (\ref{eq:bootstrap-proofuse-0.6}).
\begin{lemma} \label{lem:u-stat-bootstrap-remainder-small}
Under conditions of Theorem \ref{thm:bootstrap-u-stat}, we have $\var^*\{U_n^*(a,h_2) / \sigma_n\}\stackrel{P}{\to}0$, where $U_n^*(a,h_2)$ is defined in (\ref{eq:def-Unstar}).
\end{lemma}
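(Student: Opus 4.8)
\emph{Plan.} Since $\sigma_n$ is deterministic, the claim reads $\sigma_n^{-2}\var^*\{U_n^*(a,h_2)\}\stackrel{P}{\to}0$, and because $\var^*\{\cdot\}\ge0$ it suffices by Markov's inequality to prove $\sigma_n^{-2}E\big[\var^*\{U_n^*(a,h_2)\}\big]\to0$. Writing $\bi=(i_1,\ldots,i_m)$ and $\Xbi^*=(X_{i_1}^*,\ldots,X_{i_m}^*)$, bilinearity of $\cov^*$ gives
\[
\var^*\{U_n^*(a,h_2)\}=\Big\{\tfrac{(n-m)!}{n!}\Big\}^2\sum_{\bi,\bj\in I_n^m}a(\bi)a(\bj)\,\cov^*\{h_{2;\bi}(\Xbi^*),h_{2;\bj}(\Xbj^*)\}.
\]
If $\bi\cap\bj=\emptyset$ the resampled values attached to the two index blocks are i.i.d.\ draws at disjoint slots, hence $P^*$-independent, so the covariance is $0$; the double sum therefore runs over $(\bi,\bj)\in(I_n^m)_{\geq1}^{\otimes2}$. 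I would split it according to $q:=\card(\bi\cap\bj)\in\{1,\ldots,m\}$.

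\emph{Overlap $q\ge2$.} Here the crude bound $|\cov^*\{h_{2;\bi}(\Xbi^*),h_{2;\bj}(\Xbj^*)\}|\le\big(E^*[h_{2;\bi}(\Xbi^*)^2]\,E^*[h_{2;\bj}(\Xbj^*)^2]\big)^{1/2}$ suffices, combined with $E\big[E^*[h_{2;\bi}(\Xbi^*)^2]\big]=n^{-m}\sum_{k_1,\ldots,k_m}E\{h_{2;\bi}(X_{k_1},\ldots,X_{k_m})^2\}\lesssim1$, which follows from the form \eqref{eq:def-h2} of $h_{2;\bi}$ and the second-moment control \eqref{eq:ustat-bootstrap-condition-vonmises} (whose maximum already ranges over tuples with repeated indices, exactly what the bootstrap produces). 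Since $\card\{(I_n^m)_{\geq2}^{\otimes2}\}\asymp n^{2m-2}$ and $\{(n-m)!/n!\}^2\asymp n^{-2m}$, these pairs contribute $O(n^{-2}A_{2,2}(n))$ to $E[\var^*\{U_n^*(a,h_2)\}]$, which after dividing by $\sigma_n^2\asymp V(n)$ is controlled exactly as $V(n)^{-1}\var\{U_n(a,h_2)\}$ was controlled in the proof of Theorem~\ref{thm:clt-degreem}, using \eqref{eq:clt-degreem-condition-1}.

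\emph{Overlap $q=1$: the crux.} Let $\bi\cap\bj=\{k\}$, with $k$ in position $p$ of $\bi$ and position $r$ of $\bj$, so that $\bi\cup\bj$ has $2m-1$ distinct slots. Conditioning on the shared resampled value $X_k^*$ makes the other draws in the two blocks $P^*$-conditionally independent, whence
\[
\cov^*\{h_{2;\bi}(\Xbi^*),h_{2;\bj}(\Xbj^*)\}=\cov^*\{g_{\bi}(X_k^*),g_{\bj}(X_k^*)\},\qquad g_{\bi}(x):=E^*\{h_{2;\bi}(\Xbi^*)\mid X_{i_p}^*=x\}.
\]
Expanding $h_{2;\bi}$ via \eqref{eq:def-h2} and averaging over the empirical measure, I would write $g_{\bi}(x)=\phi_{\bi}(x)+c_{\bi}+(\text{terms of expected size }O(n^{-1}))$, where $c_{\bi}$ is $\sigma(X_1,\ldots,X_n)$-measurable, hence a constant under $P^*$ that drops out of $\cov^*$, and $\phi_{\bi}(x)$ is the difference between the \emph{empirical} conditional mean of $h$ at position $p$ and the \emph{true} one $f^{(p)}_{\bi\backslash i_p}(x)$ of \eqref{eq:def-fl}; by the degeneracy \eqref{eq:Eh2i}, $f^{(p)}_{\bi\backslash i_p}(x)$ is the value this conditional mean would take under the true product measure, so $\phi_{\bi}$ is a pure empirical-versus-truth discrepancy. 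Thus $\cov^*\{g_{\bi}(X_k^*),g_{\bj}(X_k^*)\}$ equals $n^{-1}\sum_{l=1}^n\phi_{\bi}(X_l)\phi_{\bj}(X_l)-\big(n^{-1}\sum_l\phi_{\bi}(X_l)\big)\big(n^{-1}\sum_l\phi_{\bj}(X_l)\big)$ up to terms of expected size $O(n^{-1})$. I would estimate its expectation by expanding the products $\phi_{\bi}\phi_{\bj}$ into $h$-integrals against empirical and true measures and collecting the surviving differences: those from the $\theta$- and single-marginal-$f$ parts of $h_{2;\bi}$ are differences of population means, hence bounded by $M_1(n)$ of \eqref{eq:def-M1}; those from products of two conditional expectations of $h$ evaluated at the same index reproduce exactly the functional differenced in $M_2(n)$ of \eqref{eq:def-M2}; and the rest is $O(n^{-1})$, from sampling fluctuation of the empirical measure and from resampling coincidences. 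This gives $\max_{(\bi,\bj)\in(I_n^m)_{=1}^{\otimes2}}E\big|\cov^*\{h_{2;\bi}(\Xbi^*),h_{2;\bj}(\Xbj^*)\}\big|\lesssim M_1(n)^2+M_2(n)+n^{-1}$. Using $\card\{(I_n^m)_{=1}^{\otimes2}\}\asymp n^{2m-1}$, the normalization $\asymp n^{-2m}$, and the definition \eqref{eq:def-A2} of $A_{2,1}(n)$, the $q=1$ pairs contribute $\lesssim n^{-1}A_{2,1}(n)\{M_1(n)^2+M_2(n)+n^{-1}\}$ to $E[\var^*\{U_n^*(a,h_2)\}]$, which over $\sigma_n^2\asymp V(n)$ tends to $0$ by \eqref{eq:ustat-bootstrap-condition-3}. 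Adding the $q\ge2$ and $q=1$ contributions and applying Markov's inequality finishes the proof.

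\emph{Main obstacle.} The delicate step is the $q=1$ analysis. After expanding the bootstrapped degenerate term one must verify that, modulo $O(n^{-1})$, all surviving pieces of the empirical-versus-true-measure discrepancy collapse onto precisely $M_1(n)^2$ and $M_2(n)$, which requires careful bookkeeping of the many $O(n^{-1})$ corrections generated by collisions among resampled indices. This is where the non-i.i.d.\ structure genuinely bites: unlike $U_n(a,h_2)$, which is degenerate under the true measure, $U_n^*(a,h_2)$ is \emph{not} degenerate under the empirical measure, and its conditional variance is of this exact heterogeneity-governed order $n^{-1}A_{2,1}(n)\{M_1(n)^2+M_2(n)+n^{-1}\}$ rather than smaller.
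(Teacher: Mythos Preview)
Your proposal is correct and follows essentially the same route as the paper: bound $E[\var^*]$ by splitting on $q=\card(\bi\cap\bj)$, dismiss $q=0$ by $P^*$-independence and $q\ge2$ by Cauchy--Schwarz plus \eqref{eq:ustat-bootstrap-condition-vonmises} and \eqref{eq:clt-degreem-condition-1}, and for $q=1$ show each $|E[\cov^*]|\lesssim M_1(n)^2+M_2(n)+n^{-1}$ so that \eqref{eq:ustat-bootstrap-condition-3} applies. Your conditional-covariance reduction $\cov^*\{h_{2;\bi}(\Xbi^*),h_{2;\bj}(\Xbj^*)\}=\cov^*\{g_{\bi}(X_k^*),g_{\bj}(X_k^*)\}$ is just a repackaging of the paper's direct $(\br,\bs)$-enumeration carried out in Lemmas~\ref{lem:h2-bootstrap-proofuse} and \ref{lem:h2-bootstrap-proofuse-1}; the underlying algebra---and in particular the way the $M_1$ contributions enter quadratically while the conditional-expectation products produce $M_2$---is the same.
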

\bigskip{}

Equation (\ref{eq:bootstrap-proofuse-0.65}) follows from (\ref{eq:bootstrap-proofuse-0.55}), (\ref{eq:bootstrap-proofuse-0.6}), and Cauchy-Schwarz inequality. This completes the proof.
\end{proof}


\subsection{Proof of Corollary \ref{cor:bootstrap-iid}}
\begin{proof}
For proving Corollary \ref{cor:bootstrap-iid}, by Theorem \ref{thm:bootstrap-u-stat}, it suffices to show (\ref{eq:ustat-bootstrap-condition-homo1}), (\ref{eq:ustat-bootstrap-condition-homo2}), and (\ref{eq:ustat-bootstrap-condition-3}) when the $X_i$'s are i.i.d..

First we show (\ref{eq:ustat-bootstrap-condition-homo1}). Equations
(\ref{eq:clt-degreem-condition-moment}), (\ref{eq:clt-degreem-condition-1}),
and (\ref{eq:clt-degreem-condition-2}) imply (\ref{eq:clt-degreem-variancetendtoone})
according to Theorem \ref{thm:clt-degreem}. By the i.i.d.-ness of the
$X_i$'s we have $E\{h_{1,i}(X_j)\}=E\{h_{1,i}(X_i)\}=0$ and
$E\{h_{1,i}(X_j)^2\}=E\{h_{1,i}(X_i)^2\}$. It follows from
(\ref{eq:clt-degreem-variancetendtoone}) that for any $j\in[n]$,
\begin{equation}
E\Big[\sum_{i=1}^n\Big\{\frac{h_{1,i}(X_j)}{n\sigma_n}\Big\}^2\Big]=\sum_{i=1}^n\frac{E\{h_{1,i}(X_j)^2\}}{n^2\sigma_n^2}=\frac{\sum_{i=1}^n\var\{h_{1,i}(X_i)\}}{n^2\sigma_n^2}\to1.\label{eq:bootstrap-iid-proofuse-3}
\end{equation}
By the weak law of large numbers for i.i.d. random variables, we have
\begin{equation}
\frac{1}{n}\sum_{j=1}^n\sum_{i=1}^n\Big\{\frac{h_{1,i}(X_j)}{n\sigma_n}\Big\}^2-E\Big[\sum_{i=1}^n\Big\{\frac{h_{1,i}(X_j)}{n\sigma_n}\Big\}^2\Big]\stackrel{P}{\to}0.\label{eq:bootstrap-iid-proofuse-4}
\end{equation}
Equations (\ref{eq:bootstrap-iid-proofuse-3}), (\ref{eq:bootstrap-iid-proofuse-4}),
and Slutsky's theorem yield (\ref{eq:ustat-bootstrap-condition-homo1}).

Next we prove (\ref{eq:ustat-bootstrap-condition-homo2}). By algebra
we have 
\begin{equation}
\frac{1}{n^2}\sum_{i=1}^n\Big\{\sum_{j=1}^n\frac{h_{1,i}(X_j)}{n\sigma_n}\Big\}^2=\frac{1}{n^2}\sum_{j=1}^n\sum_{i=1}^n\Big\{\frac{h_{1,i}(X_j)}{n\sigma_n}\Big\}^2+\frac{1}{n^2}\sum_{j_1\neq j_2}\sum_{i=1}^n\frac{h_{1,i}(X_{j_1})h_{1,i}(X_{j_2})}{n^2\sigma_n^2}.\label{eq:bootstrap-iid-proofuse-5}
\end{equation}
Equation (\ref{eq:ustat-bootstrap-condition-homo1}) implies that
the first term on the right-hand side of (\ref{eq:bootstrap-iid-proofuse-5})
converges to 0 in probability. The second term on the right-hand side
of (\ref{eq:bootstrap-iid-proofuse-5}) equals $(n-1)/n$ times a
U-statistic with symmetric kernel $g(x,y)=n^{-2}\sigma_n^{-2}\sum_{i=1}^nh_{1,i}(x)h_{1,i}(y)$.
By the triangle inequality, Jensen's inequality, and the i.i.d.-ness
of the $X_i$'s, we deduce 
\begin{equation}
E\left|g(X_1,X_2)\right|\leq\sum_{i=1}^nE\Big|\frac{h_{1,i}(X_1)}{n\sigma_n}\Big|\ E\Big|\frac{h_{1,i}(X_2)}{n\sigma_n}\Big|\leq\sum_{i=1}^nE\Big\{\Big(\frac{h_{1,i}(X_i)}{n\sigma_n}\Big)^2\Big\} \leq 1.\label{eq:bootstrap-iid-proofuse-6}
\end{equation}
The i.i.d.-ness of the $X_i$'s and the fact that $E\{h_{1,i}(X_i)\}=0$
yield 
\begin{equation}
E\{g(X_1,X_2)\}=n^{-2}\sigma_n^{-2}\sum_{i=1}^nE\{h_{1,i}(X_1)\}E\{h_{1,i}(X_2)\}=0.\label{eq:bootstrap-iid-proofuse-7}
\end{equation}
By (\ref{eq:bootstrap-iid-proofuse-6}) and (\ref{eq:bootstrap-iid-proofuse-7}),
it follows from the weak law of large numbers for U-statistics
of i.i.d. variables \citep[Theorem 5.4 A]{serfling2009approximation}
that the second term on the right-hand side of (\ref{eq:bootstrap-iid-proofuse-5})
converges to 0 in probability. Therefore, by Slutsky's theorem, the
left-hand side of (\ref{eq:bootstrap-iid-proofuse-5}) converges to
0 in probability, which establishes (\ref{eq:ustat-bootstrap-condition-homo2}).

Lastly, we establish (\ref{eq:ustat-bootstrap-condition-3}). By the
definition of $\theta(\cdot)$ in (\ref{eq:def-theta}), we have $\theta(i_1,\ldots,i_m)-\theta(j_1,\ldots,j_m)=0$
for any $(i_1,\ldots,i_m)$ and $(j_1,\ldots,j_m)$ in $I_n^m$.
This implies that $M_1(n)=0$. For any $p,q\in[m]$ and $\br,\bs,\bk\in I_n^m$
such that $\br\cap\bs=\bk\cap\bs=r_p=s_q=k_p$, by the i.i.d.-ness
of the $X_i$'s, we have
\begin{align}
E\Big[E\Big\{ & h(X_{r_1},\ldots,X_{r_m})h(X_{s_1}\ldots X_{s_m})\mid X_{k_p}\Big\}\Big]\nonumber \\
 & =E\Big[E\Big\{ h(X_1,\ldots,X_m)h(X_{m+1},\ldots,X_{m+q-1},X_p,X_{m+q},\ldots,X_{2m-1})\mid X_p\Big\}\Big],\label{eq:bootstrap-iid-proofuse-1}
\end{align}
and
\begin{align}
E\Big[E\Big\{ & h(X_{k_1},\ldots,X_{k_m})h(X_{s_1},\ldots,X_{s_m})\mid X_{k_p}\Big\}\Big]\nonumber  \\
 & =E\Big[E\Big\{ h(X_1,\ldots,X_m)h(X_{m+1},\ldots,X_{m+q-1},X_p,X_{m+q},\ldots,X_{2m-1})\mid X_p\Big\}\Big].\label{eq:bootstrap-iid-proofuse-2}
\end{align}
Equations (\ref{eq:bootstrap-iid-proofuse-1}) and (\ref{eq:bootstrap-iid-proofuse-2})  imply that $M_2(n)=0$. Therefore, (\ref{eq:ustat-bootstrap-condition-3}) follows from the fact that $M_1(n)=M_2(n)=0$ and the assumption that $n^{-2}\sigma_n^{-2}A_{2,1}(n)\to0$.
\end{proof}


\subsection{Proof of Theorem \ref{thm:bootstrap-resampling}}

\begin{proof}
By the definition of $V_n^*$, we have
\begin{align*}
V^*_n & =\frac{1}{h_n(n-b+1)}\sum_{i=1}^{n-b+1}\var^*(U_{b,i}^*) \\
& = \frac{1}{h_n(n-b+1)}\sum_{i=1}^{n-b+1} \sigma^2_b (1+o_P(1)) \\
& = \frac{1}{h_n}\sigma^2_b (1+o_P(1)),
\end{align*}
where the second equality follows from the assumption $\var(U_{b,i}(X_i,\ldots,X_{i+b-1}))=\sigma_b^2(1+o(1))$ and Theorem \ref{thm:bootstrap-u-stat}.
This combines with the assumption $\sigma_b^2/\sigma_n^2= \zeta_{n,b}\cdot(1+o(1))$ gives the desired result.
\end{proof}


\subsection{Proof of Lemma \ref{lem:decomposition-taukendall}}
\begin{proof}
For $U_n^{\ken}$ we have $a(i,j)=\ind(j<i)$ and $h(X_i,X_j)=\ind(X_j>X_i)$.
Using definitions in (\ref{eq:def-theta}) and (\ref{eq:def-fl}),
we have $f_i^{(1)}(x)=E\{h(x,X_i)\}=P(X_i>x)$, $f_i^{(2)}(x)=E\{h(X_i,x)\}=1-P(X_i>x)$,
and $\theta(i,j)=1-\theta(j,i)$. By Lemma \ref{lem:decomposition}
we obtain 
\begin{align*}
h_{1,i}^{\ken}(x) & =\frac{1}{n-1}\sum_{\substack{j=1\\
j\neq i
}
}^na(i,j)\{f_j^{(1)}(x)-\theta(i,j)\}+a(j,i)\{f_j^{(2)}(x)-\theta(j,i)\}\\
 & =\frac{1}{n-1}\sum_{j=1}^n\{\ind(j<i)-\ind(j>i)\}\{P(X_j>x)-\theta(i,j)\},
\end{align*}
and
\[
h_{2;i,j}^{\ken}(x,y)=h(x,y)-f_j^{(1)}(x)-f_i^{(2)}(y)+\theta(i,j)=\ind(y>x)-P(X_j>x)-P(y>X_i)+\theta(i,j).
\]
This completes the proof.
\end{proof}


\subsection{Proof of Lemma \ref{eq:tauap-h1i}}
\begin{proof}
For $U_n^{\ap}$, we have $a(i,j)=n(i-1)^{-1}\ind(j<i)$ and $h(X_i,X_j)=\ind(X_j>X_i)$.
The form of $f_i^{(1)}(x)$ and $f_i^{(2)}(x)$ is the same as in
the proof of Lemma \ref{lem:decomposition-taukendall}. By Lemma \ref{lem:decomposition}
we obtain
\begin{align*}
h_{1,i}^{\ap}(x) & =\frac{1}{n-1}\sum_{\substack{j=1\\
j\neq i
}
}^na(i,j)\{f_j^{(1)}(x)-\theta(i,j)\}+a(j,i)\{f_j^{(2)}(x)-\theta(j,i)\}\\
 & =\frac{1}{n-1}\sum_{j=1}^n\Big\{\frac{n\ind(j<i)}{i-1}-\frac{n\ind(j>i)}{j-1}\Big\}\{P(X_j>x)-\theta(i,j)\},
\end{align*}
and
\[
h_{2;i,j}^{AP}(x,y)=h(x,y)-f_j^{(1)}(x)-f_i^{(2)}(y)+\theta(i,j)=\ind(y>x)-P(X_j>x)-P(y>X_i)+\theta(i,j).
\]
This completes the proof.
\end{proof}


\subsection{Proof of Theorem \ref{thm:tauap-taukendall-AN}}
\begin{proof}
We divide the proof into two parts. In Part I, we prove the theorem for $U_n^{\ap}$. In Part II, we prove the theorem for $U_n^{\ken}$.

\medskip{}

\textbf{Part I (for $U_n^{\ap}$).}
By Theorem \ref{thm:clt-degreem}, for proving asymptotic normality of $U_n^{\ap}$, it suffices to show that (\ref{eq:clt-degreem-condition-moment}), (\ref{eq:clt-degreem-condition-1}), and (\ref{eq:clt-degreem-condition-2}) hold under the assumption of Theorem \ref{lem:decomposition-taukendall}. Equation (\ref{eq:clt-degreem-condition-moment}) holds trivially with $M(n)=1$ due to boundedness of the kernel function $h(\cdot)$. In the following, we establish (\ref{eq:clt-degreem-condition-1}) and (\ref{eq:clt-degreem-condition-2}) by calculating the orders of $A_{2,2}(n)$, $A_{3,1}(n)$, and $V(n)$.

First we derive upper bound on $A_{2,2}(n)$ and $A_{3,1}(n)$. We will
repeatedly use Lemma \ref{lem:bound-harmonic-sum} to bound the partial
sum of harmonic series. By the definition of $A_{2,2}(n)$ in (\ref{eq:def-A2}),
we have 
\begin{equation}
A_{2,2}(n):= \frac{1}{n^2} \sum_{(I_n^2)_{\geq2}^{\otimes2}}|a(i_1,j_1)a(i_2,j_2)| = \frac{1}{n^2}\sum_{(i,j)\in I_n^2}\left|a(i,j)^2+a(i,j)a(j,i)\right|.\label{eq:A2-generalform}
\end{equation}
Since $a(i,j)=n(i-1)^{-1}\ind(j<i)$, we have $a(i,j)a(j,i)=0$
and $a(i,i)=0$. It then follows from (\ref{eq:A2-generalform}) that
\begin{align}
A_{2,2}(n) & = \frac{1}{n^2} \sum_{i=2}^n\sum_{j=1}^{i-1}(\frac{n}{i-1})^2=\sum_{i=2}^n\frac{1}{i-1}\leq1+\log(n-1).\label{eq:tauap-A2}
\end{align}
By the definition of $A_{3,1}(n)$ in (\ref{eq:def-A2}), we have
\begin{align}
A_{3,1}(n) & = \frac{1}{n^4} \sum_{i=1}^n\sum_{j_1,j_2,j_{3}=1}^n\Big\{|a(i,j_1)a(i,j_2)a(i,j_{3})|+3|a(i,j_1)a(i,j_2)a(j_{3},i)|\nonumber  \\
 & \ \ \ \ \ \ \ \ \ \ +3|a(i,j_1)a(j_2,i)a(j_{3},i)|+|a(j_1,i)a(j_2,i)a(j_{3},i)|\Big\}.\label{eq:A3-generalform}
\end{align}
The term $|a(i,j_1)a(i,j_2)a(i,j_{3})|$ is nonzero only if $j_1,j_2,j_{3}<i$,
so the corresponding summation in (\ref{eq:A3-generalform}) equals
\begin{align}
\frac{1}{n^4} \sum_{i=2}^n\sum_{j_1,j_2,j_{3}=1}^{i-1}\frac{n}{i-1}\cdot\frac{n}{i-1}\cdot\frac{n}{i-1} & \leq \frac{1}{n} \sum_{i=1}^{n-1}(i-1)^{3}(\frac{1}{i-1})^{3}=\frac{n-1}{n}.\label{eq:calculate-A3-tauap-1}
\end{align}
The term $|a(i,j_1)a(i,j_2)a(j_{3},i)|$ is nonzero only if $j_1,j_2<i<j_{3}$,
so the corresponding summation in (\ref{eq:A3-generalform}) equals
\begin{align}
\frac{3}{n^4} \sum_{i=2}^{n-1}\sum_{j_1,j_2=1}^{i-1}\sum_{j_{3}=i+1}^n(\frac{n}{i-1})^2(\frac{n}{j_{3}-1}) & = \frac{3}{n} \sum_{i=2}^{n-1}\sum_{j_{3}=i+1}^n\frac{1}{j_{3}-1}\leq \frac{3}{n} \sum_{i=2}^{n-1}\log\frac{n-1}{i-1}\leq 3\log n.\label{eq:calculate-A3-tauap-2}
\end{align}
The term $|a(i,j_1)a(j_2,i)a(j_{3},i)|$ is nonzero only if $j_1<i<j_2,j_{3}$,
so the corresponding summation in (\ref{eq:A3-generalform}) equals
\begin{align}
\frac{3}{n^4} \sum_{i=2}^{n-1}\sum_{j_1=1}^{i-1}\sum_{j_2,j_{3}=i+1}^n(\frac{n}{i-1})(\frac{n}{j_2-1})(\frac{n}{j_{3}-1}) & \leq \frac{3}{n}\sum_{i=2}^{n-1}\left(\log\frac{n-1}{i-1}\right)^2\leq 3 (\log n)^2.\label{eq:calculate-A3-tauap-3}
\end{align}
The term $|a(j_1,i)a(j_2,i)a(j_{3},i)|$ is nonzero only if $j_1,j_2,j_{3}>i$,
so the corresponding summation in (\ref{eq:A3-generalform}) equals
\begin{align}
\frac{1}{n^4} \sum_{i=1}^{n-1}\sum_{j_1,j_2,j_{3}=i+1}^n\frac{n}{j_1-1}\cdot\frac{n}{j_2-1}\cdot\frac{n}{j_{3}-1} & \leq \frac{1}{n}\sum_{i=1}^{n-1}\left(\log\frac{n-1}{i-1}\right)^{3}\leq(\log n)^{3}.\label{eq:calculate-A3-tauap-4}
\end{align}
By (\ref{eq:calculate-A3-tauap-1})-(\ref{eq:calculate-A3-tauap-4}),
it follows from (\ref{eq:A3-generalform}) that
\begin{equation}
A_{3,1}(n)\leq C(\log n)^{3}.\label{eq:tauap-A3}
\end{equation}

Next we establish lower bound on $V(n):=n^{-2}\sum_{i=1}^n\var\{h_{1,i}^{\ap}(X_i)\}$.
The following lemma gives lower bound on $|h_{1,i}^{\ap}(X_i)|$.
\begin{lemma} \label{lem:tauap-AN-proofuse} 
Consider a fixed $i$ with $2\leq i\leq n$.
If $\delta_n/2\geq\log\{(n-1)/(i-1)\}$, either Condition (i) or
Condition (ii) in Theorem \ref{thm:tauap-taukendall-AN} implies
\begin{equation}
P\{|h_{1,i}^{\ap}(X_i)|\geq\delta_n/2\}\geq p_n.\label{eq:lemma-bound-h1i-tauap-1}
\end{equation}
If $\delta_n\log(n/i)\geq2$, either Condition (i) or Condition
(ii) in Theorem \ref{thm:tauap-taukendall-AN} implies
\begin{equation}
P\{|h_{1,i}^{\ap}(X_i)|\geq1\}\geq p_n.\label{eq:lemma-bound-h1i-tauap-2}
\end{equation}

\end{lemma}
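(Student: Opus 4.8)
The plan is to substitute $x=X_i$ into the definition (\ref{eq:tauap-h1i}) of $h_{1,i}^{\ap}$ and reduce both claims to elementary deterministic estimates on the high-probability event supplied by Condition (i) or (ii) of Theorem \ref{thm:tauap-taukendall-AN}. Set $W_j:=P(X_j>X_i\mid X_i)-\theta(i,j)=P(X_j>X_i\mid X_i)-P(X_j>X_i)$ for $j\in[n]\setminus\{i\}$ (by independence of $X_i$ and $X_j$, evaluating $P(X_j>x)$ at $x=X_i$ produces the conditional probability, and $\theta(i,j)=P(X_j>X_i)$); these are exactly the quantities controlled by Conditions (i) and (ii). Noting that the $j=i$ summand in (\ref{eq:tauap-h1i}) carries coefficient $0$, I would first rewrite
\[
h_{1,i}^{\ap}(X_i)=\frac{n}{n-1}\big(A-B\big),\qquad A:=\frac{1}{i-1}\sum_{j=1}^{i-1}W_j,\quad B:=\sum_{j=i+1}^{n}\frac{W_j}{j-1}.
\]
On the event $\{W_j\in[\delta_n,1]\ \text{for all }j\ne i\}$ (under Condition (i)) or $\{W_j\in[-1,-\delta_n]\ \text{for all }j\ne i\}$ (under Condition (ii)), which has probability at least $p_n$, the $W_j$ are all of one sign, so $A$ is an average of same-signed terms with $\delta_n\le|A|\le1$, and $\delta_n h\le|B|\le h$ where $h:=\sum_{k=i}^{n-1}k^{-1}$. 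Lemma \ref{lem:bound-harmonic-sum} (comparison of the partial harmonic sum with integrals) gives, for $2\le i\le n$, the two-sided bound $\log(n/i)\le h\le\log\{(n-1)/(i-1)\}$.

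Both claims then follow from $|h_{1,i}^{\ap}(X_i)|=\frac{n}{n-1}|A-B|\ge|A-B|\ge\big||A|-|B|\big|$. In the regime $\delta_n/2\ge\log\{(n-1)/(i-1)\}$ the ``leading'' average dominates: $|B|\le h\le\delta_n/2$ while $|A|\ge\delta_n$, so $|h_{1,i}^{\ap}(X_i)|\ge|A|-|B|\ge\delta_n/2$, which yields (\ref{eq:lemma-bound-h1i-tauap-1}) since the bound holds on an event of probability at least $p_n$. In the regime $\delta_n\log(n/i)\ge2$ the ``tail'' sum dominates: $|B|\ge\delta_n h\ge\delta_n\log(n/i)\ge2$ while $|A|\le1$, so $|h_{1,i}^{\ap}(X_i)|\ge|B|-|A|\ge1$, which yields (\ref{eq:lemma-bound-h1i-tauap-2}).

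I do not anticipate a real obstacle here: the content is the identity for $h_{1,i}^{\ap}(X_i)$ together with the observation that, since the $W_j$ are assumed same-signed, neither $A$ nor $B$ can fall below its worst-case magnitude $\delta_n$ (resp.\ $\delta_n h$), so no cancellation occurs within either sum. The only care needed is to align the two-sided harmonic estimate $\log(n/i)\le h\le\log\{(n-1)/(i-1)\}$ with the thresholds $\delta_n/2\ge\log\{(n-1)/(i-1)\}$ and $\delta_n\log(n/i)\ge2$, which is precisely what decides whether $A$ or $B$ is the dominant term.
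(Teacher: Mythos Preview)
Your proposal is correct and follows essentially the same approach as the paper: both arguments rewrite $h_{1,i}^{\ap}(X_i)$ as a difference of a ``leading'' average over $j<i$ and a ``tail'' weighted sum over $j>i$, bound each piece on the high-probability event from Condition (i) or (ii), and use the harmonic-sum estimate from Lemma \ref{lem:bound-harmonic-sum}. The only cosmetic difference is that the paper treats Conditions (i) and (ii) separately with explicit sign tracking of $S_i^{(1)}$ and $S_i^{(2)}$, whereas you unify the two cases via the same-sign observation and the reverse triangle inequality $|A-B|\ge\big||A|-|B|\big|$, which is a mild streamlining.
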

\bigskip{}

If $i\geq1+(n-1)\exp(-\delta_n/2)$, we have $\delta_n/2\geq\log\{(n-1)/(i-1)\}$.
Lemma \ref{lem:tauap-AN-proofuse} implies that (\ref{eq:lemma-bound-h1i-tauap-1})
holds. By Chebyshev's inequality we deduce 
\begin{align}
\var\{h_{1,i}^{\ap}(X_i)\} & \geq\frac{1}{4}\delta_n^2p_n.\label{eq:bound-tauap-var-1}
\end{align}
If $2\leq i\leq n\exp(-2/\delta_n)$, we have $\delta_n\log(n/i)\geq2.$
Lemma \ref{lem:tauap-AN-proofuse} implies that (\ref{eq:lemma-bound-h1i-tauap-2})
holds. By Chebyshev's inequality we deduce 
\begin{align}
\var\{h_{1,i}^{\ap}(X_i)\} & \geq p_n.\label{eq:bound-tauap-var-2}
\end{align}
By (\ref{eq:bound-tauap-var-1}) and (\ref{eq:bound-tauap-var-2}),
we have 
\begin{align}
\sum_{i=1}^n \var\{ & h_{1,i}^{\ap}(X_i)\}  \geq \sum_{i=2}^{\lfloor n\exp(-\frac{2}{\delta_n})\rfloor}p_n + \sum_{i=\lfloor1+(n-1)\exp(-\frac{\delta_n}{2})\rfloor+1}^n\frac{1}{4}\delta_n^2p_n \nonumber  \\
& \geq \Big\{n\exp  \Big(-\frac{2}{\delta_n}\Big)-2 \Big\} p_n+\frac{1}{4} \Big\{n-(n-1)\exp\Big(-\frac{\delta_n}{2}\Big)-1 \Big\}\delta_n^2p_n\nonumber  \\
& =n\exp\Big(  -\frac{2}{\delta_n}\Big)p_n+ \frac{n \delta_n^2 p_n}{4} \Big\{1-\exp\Big(-\frac{\delta_n}{2}\Big) \Big\} + \frac{\delta_n^2 p_n}{4} \Big\{ \exp\Big(-\frac{\delta_n}{2}\Big) - 1 \Big\} - 2p_n.
\label{eq:bound-tauap-var-3}
\end{align}
By (\ref{eq:tauap-AN-rateassumption}) we have 
\begin{equation}
n\delta_n^2p_n \Big\{1-\exp\Big(-\frac{\delta_n}{2}\Big) \Big\} \asymp n\delta_n^{3}p_n\gnsim n^{2/3}(\log n)^2.\label{eq:bound-tauap-var-3.4}
\end{equation}
Note that
\begin{equation}
n\exp\Big(-\frac{2}{\delta_n}\Big)p_n\geq0~~{\rm and}~~ \frac{\delta_n^2 p_n}{4} \Big\{ \exp\Big(-\frac{\delta_n}{2}\Big) - 1 \Big\} - 2p_n = O(1).\label{eq:bound-tauap-var-3.6}
\end{equation}
Combining (\ref{eq:bound-tauap-var-3}) with (\ref{eq:bound-tauap-var-3.4}) and (\ref{eq:bound-tauap-var-3.6}) gives
\[
\sum_{i=1}^n\var\{h_{1,i}^{\ap}(X_i)\}\gnsim n^{2/3}(\log n)^2.
\]
This implies
\begin{equation}
V(n):= \frac{1}{n^2} \sum_{i=1}^n\var\{h_{1,i}^{\ap}(X_i)\}\gnsim n^{-4/3}(\log n)^2.\label{eq:tauap-Vn}
\end{equation}

Equations (\ref{eq:tauap-A2}), (\ref{eq:tauap-A3}), and (\ref{eq:tauap-Vn}) yield (\ref{eq:clt-degreem-condition-1}) and (\ref{eq:clt-degreem-condition-2}).
The asymptotic normality of $U_n^{\ap}$ then follows from Theorem \ref{thm:clt-degreem}. This completes the proof for Part I.



\textbf{Part II (for $U_n^{\ken}$).}
Proof for $U_n^{\ken}$ follows the same logic as the proof for $U_n^{\ap}$. In the following we calculate the
orders of $A_{2,2}(n)$, $A_{3,1}(n)$, and $V(n)$ for $U_n^{\ken}$.

Since $a(i,j)=\ind(j<i)$ for $U_n^{\ken}$, we have $a(i,j)a(j,i)=0$
and $a(i,i)=0$. It then follows from (\ref{eq:A2-generalform}) that
\begin{align}
A_{2,2}(n) & = \frac{1}{n^2} \sum_{i=2}^n\sum_{j=1}^{i-1}1=O(1).\label{eq:taukendall-A2}
\end{align}
By (\ref{eq:A3-generalform}), following the same argument as in (\ref{eq:calculate-A3-tauap-1})-(\ref{eq:calculate-A3-tauap-4})
we deduce
\begin{equation}
A_{3,1}(n)=O(1).\label{eq:taukendall-A3}
\end{equation}

Next we establish lower bound on $V(n):=n^{-2}\sum_{i=1}^n\var\{h_{1,i}^{\ken}(X_i)\}$.
The following lemma gives lower bound on $|h_{1,i}^{\ken}(X_i)|$.
\begin{lemma}
\label{lem:taukendall-AN-proofuse} Consider a fixed $i\in[n]$. If
$n-i\leq(i-1)\delta_n/2$, either Condition (i) or Condition (ii)
in Theorem \ref{thm:tauap-taukendall-AN} implies
\begin{equation}
P\Big\{|h_{1,i}^{\ken}(X_i)|\geq\frac{i-1}{n-1}\frac{\delta_n}{2}\Big\}\geq p_n.\label{eq:lemma-bound-h1i-taukendall-1}
\end{equation}
If $i-1\leq(n-i)\delta_n/2$, either Condition (i) or Condition
(ii) in Theorem \ref{thm:tauap-taukendall-AN} implies
\begin{equation}
P\Big\{|h_{1,i}^{\ken}(X_i)|\geq\frac{n-i}{n-1}\frac{\delta_n}{2}\Big\}\geq p_n.\label{eq:lemma-bound-h1i-taukendall-2}
\end{equation}

\end{lemma}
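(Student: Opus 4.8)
The plan is to evaluate $h_{1,i}^{\ken}$ at the observation $X_i$ itself and then read off the sign structure forced by Conditions (i)/(ii) of Theorem~\ref{thm:tauap-taukendall-AN}. Substituting $x=X_i$ into the formula for $h_{1,i}^{\ken}$ from Lemma~\ref{lem:decomposition-taukendall} and recalling $\theta(i,j)=P(X_j>X_i)$, one may write, with $D_j:=P(X_j>X_i\mid X_i)-P(X_j>X_i)$ for $j\ne i$,
\[
h_{1,i}^{\ken}(X_i)=\frac{1}{n-1}\Big(\sum_{j<i}D_j-\sum_{j>i}D_j\Big),
\]
since the weight $\ind(j<i)-\ind(j>i)$ equals $+1$ on the $i-1$ indices below $i$, equals $-1$ on the $n-i$ indices above $i$, and vanishes at $j=i$. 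Each $D_j$ lies in $[-1,1]$ deterministically.

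Next I would restrict attention to the event of probability at least $p_n$ described by Condition (i) (or, alternatively, by Condition (ii)). On the Condition-(i) event every $D_j\in[\delta_n,1]$, so in the regime $n-i\le(i-1)\delta_n/2$ the first block dominates: $\sum_{j<i}D_j\ge(i-1)\delta_n$, while $\sum_{j>i}D_j\le n-i\le(i-1)\delta_n/2$, whence $h_{1,i}^{\ken}(X_i)\ge\frac{1}{n-1}\big[(i-1)\delta_n-(i-1)\delta_n/2\big]=\frac{i-1}{n-1}\frac{\delta_n}{2}$. On the Condition-(ii) event the same two estimates hold with all signs reversed, forcing $h_{1,i}^{\ken}(X_i)\le-\frac{i-1}{n-1}\frac{\delta_n}{2}$. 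Either way $|h_{1,i}^{\ken}(X_i)|\ge\frac{i-1}{n-1}\frac{\delta_n}{2}$ on an event of probability at least $p_n$, which is \eqref{eq:lemma-bound-h1i-taukendall-1}. The regime $i-1\le(n-i)\delta_n/2$ is entirely symmetric: the roles of the two blocks are interchanged, and one obtains $|h_{1,i}^{\ken}(X_i)|\ge\frac{n-i}{n-1}\frac{\delta_n}{2}$ on the same event, which is \eqref{eq:lemma-bound-h1i-taukendall-2}.

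I do not expect a genuine obstacle here; the only items requiring a little care are the bookkeeping of which block dominates in each of the two regimes, and the degenerate endpoints $i=1$ and $i=n$ (one of the two blocks is then empty), for which both displayed bounds collapse to $|h_{1,i}^{\ken}(X_i)|\ge\delta_n/2$ and follow immediately from the computation above. The payoff is that, together with $E\{h_{1,i}^{\ken}(X_i)\}=0$ from Lemma~\ref{lem:decomposition} and the elementary inequality $\var\{h_{1,i}^{\ken}(X_i)\}\ge t^{2}\,P\{|h_{1,i}^{\ken}(X_i)|\ge t\}$, this lemma yields the per-term lower bounds that are summed over the appropriate range of $i$ to produce the lower bound on $V(n)$ used in the proof of Theorem~\ref{thm:tauap-taukendall-AN}, mirroring the computation already carried out for $U_n^{\ap}$.
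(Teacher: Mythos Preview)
Your proposal is correct and follows essentially the same route as the paper: the paper introduces $f_{ij}(x):=P(X_j>x)-\theta(i,j)$ and the partial sums $S_i^{(1)}(x)=\sum_{j<i}f_{ij}(x)$, $S_i^{(2)}(x)=\sum_{j>i}f_{ij}(x)$, so that $h_{1,i}^{\ken}(X_i)=\{S_i^{(1)}(X_i)-S_i^{(2)}(X_i)\}/(n-1)$, and then bounds each block exactly as you do on the Condition-(i) and Condition-(ii) events. Your $D_j$ is precisely $f_{ij}(X_i)$, and the domination argument in each regime matches the paper's line by line.
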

\bigskip{}

If $i\geq(2n-\delta_n)/(2+\delta_n)$, we have $n-i\leq(i-1)\delta_n/2$
and $(i-1)/(n-1)\geq2/(\delta_n+2)$. Lemma \ref{lem:taukendall-AN-proofuse}
implies that (\ref{eq:lemma-bound-h1i-taukendall-1}) holds. By Chebyshev's
inequality we deduce 
\begin{align}
\var\{h_{1,i}^{\ken}(X_i)\} & \geq\{\frac{i-1}{n-1}\frac{\delta_n}{2}\}^2p_n\geq\frac{4}{(2+\delta_n)^2}\delta_n^2p_n.\label{eq:bound-taukendall-var-1}
\end{align}
If $i\leq(n\delta_n+2)/(2+\delta_n)$, we have $i-1\leq(n-i)\delta_n/2$
and $(n-i)/(n-1)\geq2/(2+\delta_n)$. Lemma \ref{lem:taukendall-AN-proofuse}
implies that (\ref{eq:lemma-bound-h1i-taukendall-2}) holds. By Chebyshev's
inequality we deduce 
\begin{align}
\var\{h_{1,i}^{\ken}(X_i)\} & \geq\{\frac{n-i}{n-1}\frac{\delta_n}{2}\}^2p_n\geq\frac{1}{(2+\delta_n)^2}\delta_n^2p_n.\label{eq:bound-taukendall-var-2}
\end{align}
By (\ref{eq:bound-taukendall-var-1}) and (\ref{eq:bound-taukendall-var-2}),
we have
\begin{align}
\sum_{i=1}^n\var\{h_{1,i}^{\ken}(X_i)\} & \geq\sum_{i=1}^{\lfloor(n\delta_n+2)/(2+\delta_n)\rfloor}\frac{1}{(2+\delta_n)^2}\delta_n^2p_n + \sum_{i=\lfloor(2n-\delta_n)/(2+\delta_n)\rfloor+1}^n\frac{4}{(2+\delta_n)^2}\delta_n^2p_n.\label{eq:bound-taukendall-var-2.5}
\end{align}
Note that
\begin{equation}
\sum_{i=1}^{\lfloor(n\delta_n+2)/(2+\delta_n)\rfloor}\frac{1}{(2+\delta_n)^2}\delta_n^2p_n=\frac{(n\delta_n+2)/(2+\delta_n)-1}{(2+\delta_n)^2}\delta_n^2p_n\asymp n\delta_n^{3}p_n.\label{eq:bound-taukendall-var-2.6}
\end{equation}
Combining (\ref{eq:bound-taukendall-var-2.5}) and (\ref{eq:bound-taukendall-var-2.6})
yields
\begin{equation}
\sum_{i=1}^n\var\{h_{1,i}^{\ken}(X_i)\}\gtrsim n\delta_n^{3}p_n.\label{eq:bound-taukendall-var-3}
\end{equation}
It follows from (\ref{eq:taukendall-AN-rateassumption}) and (\ref{eq:bound-taukendall-var-3})
that
\begin{equation}
V(n):= \frac{1}{n^2} \sum_{i=1}^n\var\{h_{1,i}^{\ken}(X_i)\}\gnsim n^{-4/3}.\label{eq:taukendall-Vn}
\end{equation}

Equations (\ref{eq:taukendall-A2}), (\ref{eq:taukendall-A3}), and
(\ref{eq:taukendall-Vn}) yield (\ref{eq:clt-degreem-condition-1})
and (\ref{eq:clt-degreem-condition-2}). The asymptotic normality
of $U_n^{\ken}$ then follows from Theorem \ref{thm:clt-degreem}.
This completes the proof for Part II.
\end{proof}


\subsection{Proof of Theorem \ref{thm:tauap-AN-tail-conditions}}
\begin{proof}
Define
\begin{equation}
f_{ij}(x):=P(X_j>x)-P(X_j>X_i),\label{eq:tail-condition-proofuse-def-fij}
\end{equation}
and 
\begin{equation}
z_i=z_i(x) := (x-\mu_i)/\sigma_i.\label{eq:tail-condition-proofuse-def-zi}
\end{equation}
Using the definitions of $F_j^{c}$ and $F_{ji}^{c}$ in (\ref{eq:def-FjFij}),
we have 
\begin{align}
f_{ij}(x) & =P\Big\{\frac{X_j-\mu_j}{\sigma_j}>\frac{(x-\mu_i)+(\mu_i-\mu_j)}{\sigma_i}\cdot\frac{\sigma_i}{\sigma_j}\Big\}-P\Big\{\frac{X_j-X_i-(\mu_j-\mu_i)}{(\sigma_i^2+\sigma_j^2)^{1/2}}>\frac{\mu_i-\mu_j}{\sigma_i}\cdot\frac{\sigma_i}{(\sigma_i^2+\sigma_j^2)^{1/2}}\Big\}\nonumber  \\
 & =F_j^{c}\{\rho_{ij}(z_i+r_{ij})\}-F_{ji}^{c}\{r_{ij}(1+\rho_{ij}^{-2})^{-1/2}\}.\label{eq:tail-condition-proofuse-fijFjFji}
\end{align}
For proving Theorem \ref{thm:tauap-AN-tail-conditions}, it suffices
to show the existence of $\delta_n$ and $p_n$ satisfying the
conditions in Theorem \ref{thm:tauap-taukendall-AN}. Because the
proofs for $U_n^{\ken}$ and $U_n^{\ap}$ are almost identical,
we give detailed proof for $U_n^{\ken}$ and comment on the proof for $U_n^{\ap}$ at
the end. We divide the proof for $U_n^{\ken}$ into two parts. In Part I we construct
such $\delta_n$ and $p_n$ under conditions (\ref{eq:tail-condition-1-1}),
(\ref{eq:tail-condition-1-2}), and (\ref{eq:tail-condition-1-rate-kendall}).
In Part II we construct such $\delta_n$ and $p_n$ under conditions
(\ref{eq:tail-condition-2-1}), (\ref{eq:tail-condition-2-2}), and
(\ref{eq:tail-condition-2-rate-kendall}).

\medskip{}

\textbf{Part I:} Assume (\ref{eq:tail-condition-1-1}), (\ref{eq:tail-condition-1-2}), and (\ref{eq:tail-condition-1-rate-kendall}) hold. The following lemma gives bound on $f_{ij}(x)$.
\begin{lemma} \label{lem:tail-condition-lemma-1}
Define
\[
K_1=t_0+ \Big(t_0^{-b_1}\frac{c_1}{2c_2} \Big)^{-1/b_2}~~{\rm and}~~K_2=\Big( \frac{c_1}{2c_2} \Big)^{-1/b_2}.
\]
Consider a fixed $i\in[n]$. If $x$ satisfies
\begin{equation}
z_i(x) \geq R_n+K_1\rho_n+K_2\rho_nR_n^{b_1/b_2}, \label{eq:tail-condition-lemma-1-zi}
\end{equation}
then for all $j\in[n]\backslash\{i\}$ we have 
\begin{equation}
f_{ij}(x)\leq-\min\Big\{\frac{c_1}{2}R_n^{-b_1}, \frac{c_1}{2}t_0^{-b_1}, \frac{1}{2}\Big\}.\label{eq:tail-condition-lemma-1-fij}
\end{equation}
 
\end{lemma}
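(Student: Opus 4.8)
The plan is to start from the identity \eqref{eq:tail-condition-proofuse-fijFjFji}, which expresses
\[
f_{ij}(x) = F_j^c\{\rho_{ij}(z_i + r_{ij})\} - F_{ji}^c\{r_{ij}(1+\rho_{ij}^{-2})^{-1/2}\},
\]
and to bound the first term from above and the second from below. Write $u := \rho_{ij}(z_i + r_{ij})$ and $s_{ij} := r_{ij}(1+\rho_{ij}^{-2})^{-1/2}$. Two elementary observations will be used throughout: $|s_{ij}| \le |r_{ij}| \le R_n$ since $(1+\rho_{ij}^{-2})^{-1/2} < 1$; and $\rho_{ij} \ge \rho_n^{-1}$ since $\rho_{ij}\rho_{ji} = 1$ while $\rho_{ji} \le \rho_n$. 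Since the target in \eqref{eq:tail-condition-lemma-1-fij} is exactly $-\tfrac12\min\{c_1 t_0^{-b_1}, c_1 R_n^{-b_1}, 1\}$, it suffices to prove, under \eqref{eq:tail-condition-lemma-1-zi}, that (a) $F_{ji}^c(s_{ij}) \ge \min\{c_1 t_0^{-b_1}, c_1 R_n^{-b_1}, 1\}$ and (b) $F_j^c(u) \le \tfrac12\min\{c_1 t_0^{-b_1}, c_1 R_n^{-b_1}, 1\}$.

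For (a), I would split on the position of $s_{ij}$ relative to $t_0$. If $s_{ij} \le t_0$, monotonicity of the survival function and the lower bound in \eqref{eq:tail-condition-1-2} give $F_{ji}^c(s_{ij}) \ge F_{ji}^c(t_0) \ge c_1 t_0^{-b_1}$; if $s_{ij} > t_0$, then $t_0 < s_{ij} \le R_n$ and \eqref{eq:tail-condition-1-2} gives $F_{ji}^c(s_{ij}) \ge c_1 s_{ij}^{-b_1} \ge c_1 R_n^{-b_1}$. (Note that $F_{ji}^c(t_0) \le 1$ together with \eqref{eq:tail-condition-1-2} forces $c_1 \le t_0^{b_1}$, so this minimum is automatically $\le 1$ and the ``$1$'' entry is harmless.) For (b), the first step is the lower bound $u \ge K_1 + K_2 R_n^{b_1/b_2}$: from \eqref{eq:tail-condition-lemma-1-zi} and $r_{ij} \ge -R_n$ one gets $z_i + r_{ij} \ge K_1\rho_n + K_2 \rho_n R_n^{b_1/b_2} > 0$, and multiplying by $\rho_{ij} \ge \rho_n^{-1}$ cancels the $\rho_n$ factors. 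Since $K_1 \ge t_0$ this gives $u \ge t_0$, so the upper tail bound in \eqref{eq:tail-condition-1-1} applies, $F_j^c(u) \le c_2 u^{-b_2}$, and it remains to check $c_2 u^{-b_2} \le \tfrac12 c_1 t_0^{-b_1}$, $c_2 u^{-b_2} \le \tfrac12 c_1 R_n^{-b_1}$, and $c_2 u^{-b_2} \le \tfrac12$. Plugging in $K_1 = t_0 + t_0^{b_1/b_2}(2c_2/c_1)^{1/b_2}$ and $K_2 = (2c_2/c_1)^{1/b_2}$, the first two hold because $u$ dominates each of the summands $t_0^{b_1/b_2}(2c_2/c_1)^{1/b_2}$ and $K_2 R_n^{b_1/b_2}$ on its own, and the third follows from $c_1 \le t_0^{b_1}$, which yields $t_0^{b_1/b_2}(2c_2/c_1)^{1/b_2} \ge (2c_2)^{1/b_2}$. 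Combining (a) and (b) gives $f_{ij}(x) \le \tfrac12\min\{\cdots\} - \min\{\cdots\} = -\tfrac12\min\{c_1 t_0^{-b_1}, c_1 R_n^{-b_1}, 1\}$, which is \eqref{eq:tail-condition-lemma-1-fij}; since $i$ is fixed and the estimates hold for every $j \ne i$, the claim follows.

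The only delicate point is the bookkeeping: one must track how the scale inflation $\rho_n$ and the polynomial inflation $R_n^{b_1/b_2}$ propagate from the hypothesis on $z_i$ down to the argument $u$ of $F_j^c$, and in particular verify that the $\rho_n$ factors cancel (this is exactly why $\rho_n$, rather than $1$, appears in \eqref{eq:tail-condition-lemma-1-zi}). The degenerate case $R_n = 0$, where $c_1 R_n^{-b_1} = +\infty$, needs no special treatment: the casework in (a) then never invokes $R_n$, and the minimum in (b) collapses to $c_1 t_0^{-b_1}$. No analytic input beyond the two-sided polynomial tail bounds \eqref{eq:tail-condition-1-1}--\eqref{eq:tail-condition-1-2} and monotonicity of survival functions is required.
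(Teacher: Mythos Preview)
Your proof is correct and follows essentially the same approach as the paper: both arguments start from the identity \eqref{eq:tail-condition-proofuse-fijFjFji}, apply the lower tail bound \eqref{eq:tail-condition-1-2} to $F_{ji}^c$ via the same casework on whether $s_{ij}=r_{ij}(1+\rho_{ij}^{-2})^{-1/2}$ exceeds $t_0$, and use \eqref{eq:tail-condition-1-1} together with the lower bound $\rho_{ij}(z_i+r_{ij})\ge K_1+K_2R_n^{b_1/b_2}$ (obtained via $\rho_{ij}\ge\rho_n^{-1}$) to control $F_j^c$. The only difference is organizational: the paper interleaves the two bounds, working case by case to show directly that $f_{ij}(x)\le-\delta_n$ with the $F_j^c$ bound tailored to match the $F_{ji}^c$ bound in each case, whereas you decouple them by proving a uniform upper bound $F_j^c(u)\le\tfrac12\min\{c_1t_0^{-b_1},c_1R_n^{-b_1},1\}$ once and for all; your additional observation $c_1\le t_0^{b_1}$ to handle the ``$1/2$'' entry is not made explicit in the paper but is equally valid.
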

\bigskip{}

Define $\delta_n:=\min \{ \frac{c_1}{2}R_n^{-b_1},  \frac{c_1}{2}t_0^{-b_1}, \frac{1}{2} \}.$,
$Z_i:=(X_i-\mu_i)/\sigma_i$, and
\begin{equation}
p_n:=P\{Z_i\geq R_n+K_1\rho_n+K_2\rho_nR_n^{b_1/b_2}\}.\label{eq:tail-condition-proofuse-2}
\end{equation}
Lemma \ref{lem:tail-condition-lemma-1} yields that
\[
P\{f_{ij}(x)\leq-\delta_n,\forall j\in[n]\backslash\{i\}\}\geq p_n.
\]
Since $\rho_n\geq1$, by the definition of $K_1$ we have
\begin{equation}
R_n+K_1\rho_n+K_2\rho_nR_n^{b_1/b_2}\geq K_1\rho_n\geq t_0.\label{eq:tail-condition-proofuse-4}
\end{equation}
Combining (\ref{eq:tail-condition-proofuse-2}), (\ref{eq:tail-condition-proofuse-4})
and (\ref{eq:tail-condition-1-1}) yields
\[
p_n\geq c_1(R_n+K_1\rho_n+K_2\rho_nR_n^{b_1/b_2})^{-b_1}.
\]
Thus by dropping constants we obtain
\begin{equation}
\delta_n^{3}p_n\gtrsim(R_n+\rho_n+\rho_nR_n^{b_1/b_2})^{-b_1}\min(R_n^{-3b_1},1).\label{eq:tail-condition-proofuse-6}
\end{equation}

In the following we show that (\ref{eq:tail-condition-proofuse-6})
and (\ref{eq:tail-condition-1-rate-kendall}) imply
\begin{equation}
\delta_n^{3}p_n\gnsim n^{-1/3}.\label{eq:tail-condition-proofuse-7}
\end{equation}
 If $\lim\sup_{n\to\infty}R_n=\infty$, the fact that $\rho_n\geq1$
and $b_1>b_2>0$ yields
\begin{equation}
(R_n+\rho_n+\rho_nR_n^{b_1/b_2})^{-b_1}\asymp\rho_n^{-b_1}R_n^{-b_1^2/b_2} \label{eq:tail-condition-proofuse-8}
\end{equation}
and
\begin{equation}
\min(R_n^{-3b_1},1)\asymp R_n^{-3b_1}.\label{eq:tail-condition-proofuse-9}
\end{equation}
Equation (\ref{eq:tail-condition-proofuse-6}) together with (\ref{eq:tail-condition-proofuse-8}) and (\ref{eq:tail-condition-proofuse-9}) gives 
\begin{equation}
\delta_n^{3}p_n\gtrsim\rho_n^{-b_1}R_n^{-b_1^2/b_2}R_n^{-3b_1}.\label{eq:tail-condition-proofuse-10}
\end{equation}
By (\ref{eq:tail-condition-proofuse-10}) and (\ref{eq:tail-condition-1-rate-kendall}), we deduce (\ref{eq:tail-condition-proofuse-7}). If $\lim\sup_{n\to\infty}R_n<\infty$, by (\ref{eq:tail-condition-proofuse-6}) we have
\begin{equation}
\delta_n^{3}p_n\gtrsim\rho_n^{-b_1}.\label{eq:tail-condition-proofuse-11}
\end{equation}
Equation (\ref{eq:tail-condition-1-rate-kendall}) implies
\begin{equation}
\rho_n^{-b_1}\gnsim n^{-1/3}.\label{eq:tail-condition-proofuse-12}
\end{equation}
Combining (\ref{eq:tail-condition-proofuse-11}) and (\ref{eq:tail-condition-proofuse-12})
yields (\ref{eq:tail-condition-proofuse-7}). Therefore, the asymptotic
normality of $U_n^{\ken}$ follows from Theorem \ref{thm:tauap-taukendall-AN}.

This completes the proof of Part I for $U_n^{\ken}$.
For $U_n^{\ap}$ the proof is almost the same, except that (\ref{eq:tail-condition-1-rate-kendall}) is replaced by (\ref{eq:tail-condition-1-rate-ap}), and the right-hand side of (\ref{eq:tail-condition-proofuse-7}) and (\ref{eq:tail-condition-proofuse-12}) is replaced by $n^{-1/3}(\log n)^2$.

\medskip{}

\textbf{Part II:} Assume (\ref{eq:tail-condition-1-rate-ap}), (\ref{eq:tail-condition-2-1}), and (\ref{eq:tail-condition-2-2}) hold. The following lemma
gives bound on $f_{ij}(x)$.
\begin{lemma}
\label{lem:tail-condition-lemma-2} Recall that $z_i=z_i(x) := (x-\mu_i)/\sigma_i$. For a fixed $i\in[n]$, assume
that 
\begin{equation}
z_i\geq R_n+K_{3}\rho_n+K_{4}\rho_n R_n,\label{eq:tail-condition-lemma-2-zi}
\end{equation}
where $K_{3},K_{4}$ are defind in (\ref{eq:tail-conditiondef-K3K4}).
Then for all $j\in[n]\backslash\{i\}$ we have 
\begin{equation}
f_{ij}(x)\leq-\min\Big\{ \frac{c_1}{2}\exp(-b_1 R_n^{\lambda}), \frac{c_1}{2}\exp(-b_1 t_0^{\lambda}), \frac{1}{2} \Big\}.\label{eq:tail-condition-lemma-2-fij}
\end{equation}
\end{lemma}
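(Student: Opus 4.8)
Lemma \ref{lem:tail-condition-lemma-2} is a deterministic statement about the fixed real $x$ through $z_i=z_i(x)=(x-\mu_i)/\sigma_i$, and the plan is to work directly from the identity \eqref{eq:tail-condition-proofuse-fijFjFji}, namely $f_{ij}(x)=F_j^{c}(A)-F_{ji}^{c}(t^{\ast})$ with $A:=\rho_{ij}(z_i+r_{ij})$ and $t^{\ast}:=r_{ij}(1+\rho_{ij}^{-2})^{-1/2}$, and to show that under the stated lower bound on $z_i$ the first (nonnegative) term is at most half the second. First I would record two elementary facts about the scale ratios: since $\rho_n=\max_{k\neq l}\sigma_k/\sigma_l\geq\sigma_j/\sigma_i=\rho_{ij}^{-1}$ we have $\rho_{ij}\geq\rho_n^{-1}$, and since $(1+\rho_{ij}^{-2})^{-1/2}\leq1$ we have $|t^{\ast}|\leq|r_{ij}|\leq R_n$. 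Combining $\rho_{ij}\geq\rho_n^{-1}$, $r_{ij}\geq-R_n$, and the hypothesis $z_i\geq R_n+K_3\rho_n+K_4\rho_n R_n$ (which in particular forces $z_i\geq R_n$), one gets $A=\rho_{ij}(z_i+r_{ij})\geq\rho_{ij}(z_i-R_n)\geq\rho_n^{-1}(z_i-R_n)\geq K_3+K_4 R_n$. Since $K_3\geq t_0$ (it is $t_0$ plus two nonnegative terms), in particular $A\geq t_0$, so the right-tail bound in \eqref{eq:tail-condition-2-1} applies and $F_j^{c}(A)\leq c_2\exp(-b_2 A^{\lambda})$.

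Next I would lower bound $F_{ji}^{c}(t^{\ast})$ by a case split on whether $t^{\ast}\geq t_0$: if $t^{\ast}\geq t_0$, the left bound in \eqref{eq:tail-condition-2-2} together with $t^{\ast}\leq R_n$ gives $F_{ji}^{c}(t^{\ast})\geq c_1\exp(-b_1(t^{\ast})^{\lambda})\geq c_1\exp(-b_1 R_n^{\lambda})$; if $t^{\ast}<t_0$, monotonicity of a survival function together with \eqref{eq:tail-condition-2-2} gives $F_{ji}^{c}(t^{\ast})\geq F_{ji}^{c}(t_0)\geq c_1\exp(-b_1 t_0^{\lambda})$. In either case $F_{ji}^{c}(t^{\ast})\geq\min\{c_1\exp(-b_1 R_n^{\lambda}),c_1\exp(-b_1 t_0^{\lambda})\}$.

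The heart of the argument is then to verify $c_2\exp(-b_2 A^{\lambda})\leq\tfrac12 F_{ji}^{c}(t^{\ast})$. Taking logarithms, in the case $t^{\ast}\geq t_0$ this reduces to $A^{\lambda}\geq-\tfrac1{b_2}\log\tfrac{c_1}{2c_2}+\tfrac{b_1}{b_2}(t^{\ast})^{\lambda}$; applying Lemma \ref{lem:bound-sum-power} to split the $(1/\lambda)$-th power of the right-hand side and using $t^{\ast}\leq R_n$, it suffices that $A\geq\xi(\lambda^{-1})(-\tfrac1{b_2}\log\tfrac{c_1}{2c_2})^{1/\lambda}+K_4 R_n$, while in the case $t^{\ast}<t_0$ the same manipulation shows it suffices that $A\geq(-\tfrac1{b_2}\log\tfrac{c_1}{2c_2}+\tfrac{b_1}{b_2}t_0^{\lambda})^{1/\lambda}$. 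Both of these are implied by $A\geq K_3+K_4 R_n$, since each displayed threshold is precisely one of the nonnegative summands defining $K_3$ and $K_4$ in \eqref{eq:tail-conditiondef-K3K4} (here I would note that we may take $c_1\leq c_2$ without loss of generality, so that $-\log\tfrac{c_1}{2c_2}>0$ and the quantities above are well defined). Hence $f_{ij}(x)=F_j^{c}(A)-F_{ji}^{c}(t^{\ast})\leq-\tfrac12 F_{ji}^{c}(t^{\ast})\leq-\min\{\tfrac{c_1}{2}\exp(-b_1 R_n^{\lambda}),\tfrac{c_1}{2}\exp(-b_1 t_0^{\lambda})\}$, which is in turn $\leq-\min\{\tfrac{c_1}{2}\exp(-b_1 R_n^{\lambda}),\tfrac{c_1}{2}\exp(-b_1 t_0^{\lambda}),\tfrac12\}$, establishing \eqref{eq:tail-condition-lemma-2-fij}.

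I expect the only real difficulty to be bookkeeping: one must lower bound (not upper bound) $\rho_{ij}$ by $\rho_n^{-1}$, and one must track the additive constants carefully so that, after invoking the power-subadditivity of Lemma \ref{lem:bound-sum-power}, the various thresholds on $A$ collapse exactly into $K_3+K_4 R_n$; the case split $t^{\ast}\gtrless t_0$ (mirroring the one behind \eqref{eq:tail-condition-lemma-1-fij}) is what forces the three-term form of $K_3$.
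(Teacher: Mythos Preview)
Your proposal is correct and follows essentially the same approach as the paper: the same case split on whether $t^{\ast}=r_{ij}(1+\rho_{ij}^{-2})^{-1/2}$ lies above or below $t_0$, the same use of the tail bounds \eqref{eq:tail-condition-2-1}--\eqref{eq:tail-condition-2-2}, and the same appeal to Lemma~\ref{lem:bound-sum-power} to collapse the thresholds into $K_3+K_4R_n$. Your organization is slightly cleaner---you first isolate the uniform bound $A\geq K_3+K_4R_n$ and then verify in each case that it dominates the needed threshold, whereas the paper re-derives the relevant lower bound on $\rho_{ij}(z_i+r_{ij})$ separately in each case---but the underlying argument is identical.
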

\bigskip{}

Define $\delta_n=\min\Big\{ \frac{c_1}{2}\exp(-b_1 R_n^{\lambda}), \frac{c_1}{2}\exp(-b_1 t_0^{\lambda}), \frac{1}{2} \Big\}$,
$Z_i=(X_i-\mu_i)/\sigma_i$, and
\begin{equation}
p_n=P\{Z_i\geq R_n+K_{3}\rho_n+K_{4}\rho_nR_n\}.\label{eq:tail-condition-proofuse-14}
\end{equation}
Lemma \ref{lem:tail-condition-lemma-2} yields that
\[
P\{f_{ij}(x)\leq-\delta_n,\forall j\in[n]\backslash\{i\}\}\geq p_n.
\]
Since $\rho_n\geq1$, by the definition of $K_{3}$, we have
\begin{equation}
R_n+K_{3}\rho_n+K_{4}\rho_nR_n\geq K_{3}\rho_n\geq t_0.\label{eq:tail-condition-proofuse-16}
\end{equation}
Combining (\ref{eq:tail-condition-proofuse-14}), (\ref{eq:tail-condition-proofuse-16}), and (\ref{eq:tail-condition-2-1}) yields
\[p_n\geq c_1\exp\{-b_1(R_n+K_{3}\rho_n+K_{4}\rho_nR_n)^{\lambda}\}.
\]
Thus by dropping constants we obtain
\begin{align}
\delta_n^{3}p_n & \gtrsim\exp\{-b_1(R_n+K_{3}\rho_n+K_{4}\rho_nR_n)^{\lambda}\}\min\{\exp(-3b_1 R_n^{\lambda}),1\}.\nonumber  \\
 & \asymp\min\Big[\exp\{-3b_1 R_n^{\lambda}-b_1(R_n+K_{3}\rho_n+K_{4}\rho_n R_n)^{\lambda}\},\exp\{-b_1(R_n+K_{3}\rho_n+K_{4}\rho_n R_n)^{\lambda}\}\Big]\label{eq:tail-condition-proofuse-18}
\end{align}
With an argument similar to (\ref{eq:tail-condition-proofuse-8})-(\ref{eq:tail-condition-proofuse-12}), it follows from (\ref{eq:tail-condition-proofuse-18}) and (\ref{eq:tail-condition-2-rate-kendall}) that
\begin{equation}
\delta_n^{3}p_n\gnsim n^{-1/3}.\label{eq:tail-condition-proofuse-19}
\end{equation}

This completes the proof of Part II for $U_n^{\ken}$. For $U_n^{\ap}$
the proof is almost the same, except that (\ref{eq:tail-condition-2-rate-kendall})
is replaced by (\ref{eq:tail-condition-2-rate-ap}), and the right-hand
side of (\ref{eq:tail-condition-proofuse-19}) is replaced by $n^{-1/3}(\log n)^2$.
\end{proof}


\subsection{Proof of Corollary \ref{cor:bootstrap-tauap-taukendall-mainterm}}
\begin{proof}
We divide the proof into two parts. In Part I, we show that (\ref{eq:ustat-bootstrap-condition-1})
and (\ref{eq:ustat-bootstrap-condition-2}) hold for $h_{1,i}^{\ken}$. In Part II, we show that (\ref{eq:ustat-bootstrap-condition-1}) and (\ref{eq:ustat-bootstrap-condition-2}) hold for $h_{1,i}^{\ap}$.

\textbf{Part I.} By (\ref{eq:taukendall-AN-rateassumption}) and Theorem
\ref{thm:tauap-taukendall-AN}, we have that (\ref{eq:clt-degreem-variancetendtoone})
holds. This combined with (\ref{eq:taukendall-Vn}) gives
\begin{equation}
n\sigma_n\gnsim n^{1/3},\label{eq:taukendall-mainterm-bootstrap-proofuse-1}
\end{equation}
where $\sigma_n^2:=\var(U_n^{\ken})$. By (\ref{eq:taukendall-h1i}),
we have $|h_{1,i}^{\ken}(x)|\leq1$ for any $x$. It then follows
from Markov's inequality that for any $\epsilon>0$,
\begin{equation}
P\Big\{\Big|\frac{h_{1,i}^{\ken}(X_i)}{n\sigma_n}\Big|\geq\epsilon\Big\}\leq\frac{E|h_{1,i}^{\ken}(X_i)|}{\epsilon n\sigma_n}\leq\frac{1}{\epsilon n\sigma_n}.\label{eq:taukendall-mainterm-bootstrap-proofuse-2}
\end{equation}
Taking $\sup_{1\leq i\leq n}$ on both sides of (\ref{eq:taukendall-mainterm-bootstrap-proofuse-2}),
we deduce (\ref{eq:ustat-bootstrap-condition-1}) from (\ref{eq:taukendall-mainterm-bootstrap-proofuse-1}).

By (\ref{eq:Eh1i}) we have
\begin{equation}
E\Big\{\frac{h_{1,i}^{\ken}(X_i)}{n\sigma_n}1\Big(\Big|\frac{h_{1,i}^{\ken}(X_i)}{n\sigma_n}\Big|\leq\epsilon\Big)\Big\}=-E\Big\{\frac{h_{1,i}^{\ken}(X_i)}{n\sigma_n}1\Big(\Big|\frac{h_{1,i}^{\ken}(X_i)}{n\sigma_n}\Big|>\epsilon\Big)\Big\}.\label{eq:taukendall-mainterm-bootstrap-proofuse-3}
\end{equation}
Cauchy-Schwarz inequality gives
\begin{equation}
\Big|E\Big\{\frac{h_{1,i}^{\ken}(X_i)}{n\sigma_n}1\Big(\Big|\frac{h_{1,i}^{\ken}(X_i)}{n\sigma_n}\Big|>\epsilon\Big)\Big\}\Big|\leq\Big[E\Big\{\frac{h_{1,i}^{\ken}(X_i)}{n\sigma_n}\Big\}^2\Big]^{1/2}P\Big(\Big|\frac{h_{1,i}^{\ken}(X_i)}{n\sigma_n}\Big|>\epsilon\Big)^{1/2}.\label{eq:taukendall-mainterm-bootstrap-proofuse-4}
\end{equation}
Combining (\ref{eq:taukendall-mainterm-bootstrap-proofuse-3}) and
(\ref{eq:taukendall-mainterm-bootstrap-proofuse-4}) yields
\begin{equation}
\Big[E\Big\{\frac{h_{1,i}^{\ken}(X_i)}{n\sigma_n}1\Big(\Big|\frac{h_{1,i}^{\ken}(X_i)}{n\sigma_n}\Big|\leq\epsilon\Big)\Big\}\Big]^2\leq E\Big\{\frac{h_{1,i}^{\ken}(X_i)}{n\sigma_n}\Big\}^2P\Big(\Big|\frac{h_{1,i}^{\ken}(X_i)}{n\sigma_n}\Big|>\epsilon\Big).\label{eq:taukendall-mainterm-bootstrap-proofuse-5}
\end{equation}
Taking summation over $1\leq i\leq n$ on both sides of (\ref{eq:taukendall-mainterm-bootstrap-proofuse-5}),
it follows from (\ref{eq:taukendall-mainterm-bootstrap-proofuse-2})
that
\begin{equation}
\sum_{i=1}^n\Big[E\Big\{\frac{h_{1,i}^{\ken}(X_i)}{n\sigma_n}1\Big(\Big|\frac{h_{1,i}^{\ken}(X_i)}{n\sigma_n}\Big|\leq\epsilon\Big)\Big\}\Big]^2\leq\frac{1}{\epsilon n\sigma_n}\sum_{i=1}^nE\Big\{\frac{h_{1,i}^{\ken}(X_i)}{n\sigma_n}\Big\}^2.\label{eq:taukendall-mainterm-bootstrap-proofuse-6}
\end{equation}
By (\ref{eq:Eh1i}) and (\ref{eq:clt-degreem-variancetendtoone})
we obtain
\begin{equation}
\sum_{i=1}^nE\Big\{\frac{h_{1,i}^{\ken}(X_i)}{n\sigma_n}\Big\}^2=\sigma_n^{-2}n^{-2}\sum_{i=1}^n\var\{h_{1,i}^{\ken}(X_i)\}\to1.\label{eq:taukendall-mainterm-bootstrap-proofuse-7}
\end{equation}
Equation (\ref{eq:ustat-bootstrap-condition-2}) then follows from
(\ref{eq:taukendall-mainterm-bootstrap-proofuse-1}), (\ref{eq:taukendall-mainterm-bootstrap-proofuse-6}),
and (\ref{eq:taukendall-mainterm-bootstrap-proofuse-7}).

\medskip{}

\textbf{Part II.} By (\ref{eq:tauap-AN-rateassumption}) and Theorem
\ref{thm:tauap-taukendall-AN}, we have that (\ref{eq:clt-degreem-variancetendtoone})
hold. This combined with (\ref{eq:tauap-Vn}) gives
\begin{equation}
n\sigma_n\gnsim n^{1/3}\log n,\label{eq:tauap-mainterm-bootstrap-proofuse-1}
\end{equation}
where $\sigma_n^2:=\var(U_n^{\ap})$. By (\ref{eq:tauap-h1i})
and the fact that $|\{\ind(j<i)-\ind(j>i)\}\{P(X_j>x)-\theta(i,j)\}|\leq1$,
we obtain
\begin{equation}
|h_{1,i}^{\ap}(x)|\leq\frac{n}{n-1}\Big(\sum_{j=1}^{i-1}\frac{1}{i-1}+\sum_{j=i+1}^n\frac{1}{j-1}\Big).\label{eq:tauap-mainterm-bootstrap-proofuse-2}
\end{equation}
It follows from (\ref{eq:tauap-mainterm-bootstrap-proofuse-2}) and
Lemma \ref{lem:bound-harmonic-sum} that
\begin{equation}
|h_{1,i}^{\ap}(x)|\leq\frac{n}{n-1}\{1+1+\log(n-1)\}\leq4+2\log n.\label{eq:tauap-mainterm-bootstrap-proofuse-3}
\end{equation}
By Markov's inequality and (\ref{eq:tauap-mainterm-bootstrap-proofuse-3})
we have for any $\epsilon>0$,
\begin{equation}
P\Big\{\Big|\frac{h_{1,i}^{\ap}(X_i)}{n\sigma_n}\Big|\geq\epsilon\Big\}\leq\frac{E|h_{1,i}^{\ap}(X_i)|}{\epsilon n\sigma_n}\leq\frac{4+2\log n}{\epsilon n\sigma_n}.\label{eq:tauap-mainterm-bootstrap-proofuse-4}
\end{equation}
Taking $\sup_{1\leq i\leq n}$ on both sides of (\ref{eq:tauap-mainterm-bootstrap-proofuse-4}),
we deduce (\ref{eq:ustat-bootstrap-condition-1}) from (\ref{eq:tauap-mainterm-bootstrap-proofuse-1}).

Equations (\ref{eq:taukendall-mainterm-bootstrap-proofuse-3}), (\ref{eq:taukendall-mainterm-bootstrap-proofuse-4})
and (\ref{eq:taukendall-mainterm-bootstrap-proofuse-5}) hold for
$h_{1,i}^{\ap}$ as well. Taking summation over $1\leq i\leq n$ on
both sides of (\ref{eq:taukendall-mainterm-bootstrap-proofuse-5}),
it follows from (\ref{eq:tauap-mainterm-bootstrap-proofuse-4}) that
\begin{equation}
\sum_{i=1}^n\Big[E\Big\{\frac{h_{1,i}^{\ap}(X_i)}{n\sigma_n}1\Big(\Big|\frac{h_{1,i}^{\ap}(X_i)}{n\sigma_n}\Big|\leq\epsilon\Big)\Big\}\Big]^2\leq\frac{4+2\log n}{\epsilon n\sigma_n}\sum_{i=1}^nE\Big\{\frac{h_{1,i}^{\ap}(X_i)}{n\sigma_n}\Big\}^2.\label{eq:tauap-mainterm-bootstrap-proofuse-5}
\end{equation}
By (\ref{eq:Eh1i}) and (\ref{eq:clt-degreem-variancetendtoone})
we obtain
\begin{equation}
\sum_{i=1}^nE\Big\{\frac{h_{1,i}^{\ap}(X_i)}{n\sigma_n}\Big\}^2=\sigma_n^{-2}n^{-2}\sum_{i=1}^n\var\{h_{1,i}^{\ap}(X_i)\}\to1.\label{eq:tauap-mainterm-bootstrap-proofuse-6}
\end{equation}
Equation (\ref{eq:ustat-bootstrap-condition-2}) then follows from
(\ref{eq:tauap-mainterm-bootstrap-proofuse-1}), (\ref{eq:tauap-mainterm-bootstrap-proofuse-5}),
and (\ref{eq:tauap-mainterm-bootstrap-proofuse-6}).

This completes the proof.
\end{proof}


\subsection{Proof of Corollary \ref{cor:bootstrap-taukendall}}
\begin{proof}
By Theorem \ref{thm:bootstrap-u-stat}, for proving Corollary \ref{cor:bootstrap-taukendall}, it suffices to show that (\ref{eq:ustat-bootstrap-condition-vonmises}), (\ref{eq:ustat-bootstrap-condition-homo1}), (\ref{eq:ustat-bootstrap-condition-homo2}),
and (\ref{eq:ustat-bootstrap-condition-3}) hold. For $U_n^{\ken}$
we have $|h(x,y)|\leq1$ for any $x,y$. This implies (\ref{eq:ustat-bootstrap-condition-vonmises}).

Now we establish (\ref{eq:ustat-bootstrap-condition-3}). For $U_n^{\ken}$,
we have $|a(i,j)|=|\ind(j<i)|\leq1$. By the definition in (\ref{eq:def-A2}),
we have
\begin{equation}
A_{2,1}(n)=n^{-3}\sum_{(I_n^2)_{\geq1}^{\otimes2}}|a(i_1,j_1)a(i_2,j_2)|=O(1).\label{eq:taukendall-bootstrap-proofuse1}
\end{equation}
By (\ref{eq:def-M1}) and (\ref{eq:taukendall-bootstrap-condition-1})
we have
\begin{equation}
M_1(n)\lesssim n^{-1/6}.\label{eq:taukendall-bootstrap-proofuse2}
\end{equation}
By (\ref{eq:def-M2}) and (\ref{eq:taukendall-bootstrap-condition-2})
we have
\begin{equation}
M_2(n)\lesssim n^{-1/3}.\label{eq:taukendall-bootstrap-proofuse3}
\end{equation}
Equation (\ref{eq:taukendall-AN-rateassumption}) implies (\ref{eq:taukendall-Vn})
by Theorem \ref{thm:tauap-taukendall-AN}. Combining (\ref{eq:taukendall-Vn})
and (\ref{eq:clt-degreem-variancetendtoone}) yields
\begin{equation}
\sigma_n^2\gnsim n^{-4/3}.\label{eq:taukendall-bootstrap-proofuse4}
\end{equation}
Equation (\ref{eq:ustat-bootstrap-condition-3}) follows from (\ref{eq:taukendall-bootstrap-proofuse1}),
(\ref{eq:taukendall-bootstrap-proofuse2}), (\ref{eq:taukendall-bootstrap-proofuse3}),
and (\ref{eq:taukendall-bootstrap-proofuse4}).

\medskip{}

Next we establish (\ref{eq:ustat-bootstrap-condition-homo1}). The
following lemma gives bounds on $\sum_{i=1}^nE\{h_{1,i}^{\ken}(X_j)^2\}$
and $\sum_{i=1}^nE\{h_{1,i}^{\ken}(X_i)^2\}$.
\begin{lemma}
\label{lem:taukendall-bootstrap-lemma}Under the assumptions of Corollary
\ref{cor:bootstrap-taukendall}, we have
\begin{equation}
\sum_{i=1}^nE\{h_{1,i}^{\ken}(X_j)^2\}=\frac{n(n+1)}{3(n-1)}(\eta^2-\theta^2)+O(n^{5/6}),\label{eq:taukendall-bootstrap-lemma-1}
\end{equation}
and
\begin{equation}
\sum_{i=1}^nE\{h_{1,i}^{\ken}(X_i)^2\}=\frac{n(n+1)}{3(n-1)}(\eta^2-\theta^2)+O(n^{5/6}).\label{eq:taukendall-bootstrap-lemma-2}
\end{equation}

\end{lemma}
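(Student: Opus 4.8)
The plan is to turn each squared $h$-term into a double sum over a pair of summation indices, collapse the resulting cross-moments to $\eta^2-\theta^2$ by means of the two heterogeneity-control hypotheses \eqref{eq:taukendall-bootstrap-condition-1} and \eqref{eq:taukendall-bootstrap-condition-2}, and finish with an elementary evaluation of $\sum_{i=1}^n(2i-n-1)^2$. Concretely, I would first recall from Lemma~\ref{lem:decomposition-taukendall} that, writing $\epsilon_{ik}:=\ind(k<i)-\ind(k>i)$ (so $\epsilon_{ii}=0$, $|\epsilon_{ik}|\le1$, and $\sum_{k=1}^n\epsilon_{ik}=2i-n-1$),
\[
h_{1,i}^{\ken}(x)=\frac{1}{n-1}\sum_{k=1}^n\epsilon_{ik}\{P(X_k>x)-\theta(i,k)\}.
\]
Squaring and integrating over $X_i$, and using $\int P(X_k>x)\,dP_i(x)=\theta(i,k)$, gives
\[
E\{h_{1,i}^{\ken}(X_i)^2\}=\frac{1}{(n-1)^2}\sum_{k,l}\epsilon_{ik}\epsilon_{il}\Big[E\{P(X_k>X_i\mid X_i)P(X_l>X_i\mid X_i)\}-\theta(i,k)\theta(i,l)\Big].
\]
Since $\epsilon_{ii}=0$, only pairs with $k,l\neq i$ survive, so both hypotheses apply: \eqref{eq:taukendall-bootstrap-condition-1} gives $\theta(i,k)\theta(i,l)=\theta^2+O(n^{-1/6})$ and \eqref{eq:taukendall-bootstrap-condition-2} gives $E\{P(X_k>X_i\mid X_i)P(X_l>X_i\mid X_i)\}=\eta^2+O(n^{-1/3})$, both uniformly; hence the bracket is $\eta^2-\theta^2+O(n^{-1/6})$ uniformly in $i,k,l$. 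Because $\sum_{k,l\neq i}|\epsilon_{ik}\epsilon_{il}|=(n-1)^2$ and $\sum_{k,l\neq i}\epsilon_{ik}\epsilon_{il}=(2i-n-1)^2$, this yields $E\{h_{1,i}^{\ken}(X_i)^2\}=(2i-n-1)^2(\eta^2-\theta^2)/(n-1)^2+O(n^{-1/6})$, and summing over $i\in[n]$ with $\sum_{i=1}^n(2i-n-1)^2=n(n-1)(n+1)/3$ delivers \eqref{eq:taukendall-bootstrap-lemma-2}.

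For \eqref{eq:taukendall-bootstrap-lemma-1} the only change is that the conditioning variable is $X_j$ rather than $X_i$. Expanding $E\{h_{1,i}^{\ken}(X_j)^2\}$, for index pairs with $k,l\notin\{i,j\}$ the relevant bracket is $E\{P(X_k>X_j\mid X_j)P(X_l>X_j\mid X_j)\}-\theta(i,l)\theta(j,k)-\theta(i,k)\theta(j,l)+\theta(i,k)\theta(i,l)$; condition \eqref{eq:taukendall-bootstrap-condition-1} reduces the three $\theta$-products to $-\theta^2+O(n^{-1/6})$, and condition \eqref{eq:taukendall-bootstrap-condition-2}, applied with conditioning index $j$, reduces the first term to $\eta^2+O(n^{-1/3})$, so once more the bracket is $\eta^2-\theta^2+O(n^{-1/6})$. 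The pairs with $k=j$ or $l=j$ number $O(n)$ and each contributes $O(1)$, hence $O(n)/(n-1)^2=O(n^{-1})$ in total; likewise $\sum_{k\notin\{i,j\}}\epsilon_{ik}$ differs from $2i-n-1$ by at most one, which again costs only $O(n^{-1})$. The remainder of the argument is then identical to the $X_i$ case.

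The step demanding care is the error accounting: one must verify that every remainder above is $O(n^{-1/6})$ \emph{uniformly} in all free indices, so that summing over the $n$ values of $i$ produces precisely the claimed $O(n^{5/6})$ term rather than something larger. In particular, inside $h_{1,i}^{\ken}(X_j)$ the quantity $\theta(i,k)$ carries the ``wrong'' first index $i$, yet it is still of the form $\theta+O(n^{-1/6})$ because \eqref{eq:taukendall-bootstrap-condition-1} is imposed for \emph{all} pairs in $I_n^2$; and \eqref{eq:taukendall-bootstrap-condition-2} is uniform over the conditioning index, which is exactly what permits its use with $X_j$ in place of $X_i$. Everything else is routine algebra.
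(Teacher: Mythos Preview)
Your proposal is correct and follows essentially the same approach as the paper: expand the square of $h_{1,i}^{\ken}$ into a double sum over indices, replace the resulting moments by $\eta^2$ and $\theta^2$ via conditions \eqref{eq:taukendall-bootstrap-condition-1}--\eqref{eq:taukendall-bootstrap-condition-2}, and close with the identity $\sum_{i=1}^n(2i-n-1)^2=n(n-1)(n+1)/3$. The only cosmetic difference is that the paper splits the expansion into three pieces $T_1-2T_2+T_3$ and bounds each separately, whereas you keep the bracket $\eta^2-\theta^2+O(n^{-1/6})$ intact; you are also more explicit than the paper about the boundary indices $k=j$ or $l=j$ in the $X_j$ case, which the paper absorbs without comment.
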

\bigskip{}
By (\ref{eq:clt-degreem-variancetendtoone}) we have
\begin{equation}
\sigma_n^2=n^{-2}\sum_{i=1}^nE\{h_{1,i}^{\ken}(X_i)^2\}\{1+o(1)\}.\label{eq:taukendall-bootstrap-proofuse5}
\end{equation}
Using (\ref{eq:taukendall-bootstrap-lemma-2}) and (\ref{eq:taukendall-bootstrap-proofuse5})
we obtain
\begin{equation}
n^2\sigma_n^2=\{1+o(1)\}\Big\{\frac{n(n+1)}{3(n-1)}(\eta^2-\theta^2)+O(n^{5/6})\Big\}.\label{eq:taukendall-bootstrap-proofuse6}
\end{equation}
Note that
\begin{equation}
\frac{1}{n}\sum_{i=1}^n\sum_{j=1}^nE\Big[\Big\{\frac{h_{1,i}^{\ken}(X_j)}{n\sigma_n}\Big\}^2\Big]=\frac{n^{-1}\sum_{i=1}^n\sum_{j=1}^nE\{h_{1,i}^{\ken}(X_j)^2\}}{n^2\sigma_n^2}.\label{eq:taukendall-bootstrap-proofuse7}
\end{equation}
Combining (\ref{eq:taukendall-bootstrap-proofuse7}) with (\ref{eq:taukendall-bootstrap-lemma-1}),
(\ref{eq:taukendall-bootstrap-proofuse6}), and the fact that $\eta^2\neq\theta^2$
yields
\begin{equation}
\frac{1}{n}\sum_{i=1}^n\sum_{j=1}^nE\Big[\Big\{\frac{h_{1,i}^{\ken}(X_j)}{n\sigma_n}\Big\}^2\Big]=\frac{3^{-1}(n-1)^{-1}n(n+1)(\eta^2-\theta^2)+O(n^{5/6})}{\{1+o(1)\}\Big\{3^{-1}(n-1)^{-1}n(n+1)(\eta^2-\theta^2)+O(n^{5/6})\Big\}}\to1.\label{eq:taukendall-bootstrap-proofuse8}
\end{equation}
By (\ref{eq:taukendall-h1i}) we have $|h_{1,i}(x)|\leq1$. Therefore,
for any $x$
\begin{equation}
\Big|\sum_{i=1}^n\Big\{\frac{h_{1,i}(x)}{n\sigma_n}\Big\}^2\Big|=\Big|\frac{\sum_{i=1}^nh_{1,i}(x)^2}{n^2\sigma_n^2}\Big|\leq\frac{1}{n\sigma_n^2}.\label{eq:taukendall-bootstrap-proofuse9}
\end{equation}
Equations (\ref{eq:taukendall-bootstrap-proofuse6}) and (\ref{eq:taukendall-bootstrap-proofuse9})
imply that
\[
\sum_{j=1}^n\var\Big[\sum_{i=1}^n\Big\{\frac{h_{1,i}(X_j)}{n\sigma_n}\Big\}^2\Big]\leq\sum_{j=1}^n\frac{1}{n^2\sigma_n^{4}}=O(n)=o(n^2).
\]
It then follows from Lemma \ref{lem:WLLN} that
\begin{equation}
\frac{1}{n}\sum_{j=1}^n\Big[\sum_{i=1}^n\Big\{\frac{h_{1,i}(X_j)}{n\sigma_n}\Big\}^2-E\Big\{\sum_{i=1}^n\Big(\frac{h_{1,i}(X_j)}{n\sigma_n}\Big)^2\Big\}\Big]\stackrel{P}{\to}0.\label{eq:taukendall-bootstrap-proofuse11}
\end{equation}
Equation (\ref{eq:ustat-bootstrap-condition-homo1}) follows from
(\ref{eq:taukendall-bootstrap-proofuse8}) and (\ref{eq:taukendall-bootstrap-proofuse11}).

\medskip{}

Lastly, we prove (\ref{eq:ustat-bootstrap-condition-homo2}). By algebra
we have
\begin{equation}
\frac{1}{n^2}\sum_{i=1}^n\Big\{\sum_{j=1}^n\frac{h_{1,i}^{\ken}(X_j)}{n\sigma_n}\Big\}^2=\frac{1}{n^2}\sum_{j=1}^n\sum_{i=1}^n\Big\{\frac{h_{1,i}^{\ken}(X_j)}{n\sigma_n}\Big\}^2+\frac{1}{n^2}\sum_{j_1\neq j_2}\sum_{i=1}^n\frac{h_{1,i}^{\ken}(X_{j_1})h_{1,i}^{\ken}(X_{j_2})}{n^2\sigma_n^2}.\label{eq:taukendall-bootstrap-proofuse12}
\end{equation}
By (\ref{eq:ustat-bootstrap-condition-homo1}) we have
\begin{equation}
\frac{1}{n^2}\sum_{j=1}^n\sum_{i=1}^n\Big\{\frac{h_{1,i}^{\ken}(X_j)}{n\sigma_n}\Big\}^2\stackrel{P}{\to}0.\label{eq:taukendall-bootstrap-proofuse13}
\end{equation}
The second term on the right-hand side of (\ref{eq:taukendall-bootstrap-proofuse12})
is $(n-1)/n$ times a U-statistic with symmetric kernel $g(x,y)=n^{-2}\sigma_n^{-2}\sum_{i=1}^nh_{1,i}^{\ken}(x)h_{1,i}^{\ken}(y)$.
By (\ref{eq:taukendall-h1i}) and (\ref{eq:taukendall-bootstrap-condition-1})
we have
\begin{equation}
E\{h_{1,i}^{\ken}(X_j)\}=\frac{1}{n-1}\sum_{k=1}^n\mbox{sgn}(i-k)\{P(X_k>X_j)-P(X_k>X_i)\}=O(n^{-1/6}).\label{eq:taukendall-bootstrap-proofuse14}
\end{equation}
It follows from (\ref{eq:taukendall-bootstrap-proofuse14}) and (\ref{eq:taukendall-bootstrap-proofuse6})
that
\begin{equation}
E\{g(X_{j_1},X_{j_2})\}=n^{-2}\sigma_n^{-2}\sum_{i=1}^nE\{h_{1,i}^{\ken}(X_{j_1})\}E\{h_{1,i}^{\ken}(X_{j_2})\}\to0.\label{eq:taukendall-bootstrap-proofuse15}
\end{equation}
By (\ref{eq:taukendall-bootstrap-proofuse15}) and the weak law of
large numbers for U-statistics with independent but not identically
distributed variables \citep[Theorem 1, Section 3.7.2]{lee1990u},
we deduce
\begin{equation}
\frac{1}{n^2}\sum_{j_1\neq j_2}\sum_{i=1}^n\frac{h_{1,i}^{\ken}(X_{j_1})h_{1,i}^{\ken}(X_{j_2})}{n^2\sigma_n^2}\stackrel{P}{\to}0.\label{eq:taukendall-bootstrap-proofuse16}
\end{equation}
Equation (\ref{eq:ustat-bootstrap-condition-homo2}) follows from
(\ref{eq:taukendall-bootstrap-proofuse12}), (\ref{eq:taukendall-bootstrap-proofuse13}),
and (\ref{eq:taukendall-bootstrap-proofuse16}).

This completes the proof.
\end{proof}


\subsection{Proof of Corollary \ref{cor:bootstrap-tauap}}
\begin{proof}
By Theorem \ref{thm:bootstrap-u-stat}, for proving Corollary \ref{cor:bootstrap-tauap}, it suffices to show that (\ref{eq:ustat-bootstrap-condition-vonmises}), (\ref{eq:ustat-bootstrap-condition-homo1}), (\ref{eq:ustat-bootstrap-condition-homo2}),
and (\ref{eq:ustat-bootstrap-condition-3}) hold. For $U_n^{\ap}$
we have $|h(x,y)|\leq1$ for any $x,y$. This implies (\ref{eq:ustat-bootstrap-condition-vonmises}).

Now we establish (\ref{eq:ustat-bootstrap-condition-3}). For $U_n^{\ap}$,
we have $a(i,j)=\ind(j<i)n/(i-1)$. It follows that $a(i,j)a(j,i)=0$
and $a(i,i)=0$. By the definition in (\ref{eq:def-A2}), we have
\begin{align}
A_{2,1}(n) & =n^{-3}\sum_{(i,j)\in I_n^2}\sum_{k=1,k\neq i}^n\{|a(i,j)a(i,k)|+|a(i,j)a(k,i)|\}.\nonumber  \\
 & =n^{-3}\Big\{\sum_{i=2}^n\sum_{j=1}^{i-1}\sum_{k=1}^{i-1}\frac{n}{i-1}\cdot\frac{n}{i-1}+\sum_{i=2}^{n-1}\sum_{j=1}^{i-1}\sum_{k=i+1}^n\frac{n}{i-1}\cdot\frac{n}{k-1}\Big\}.\label{eq:tauap-bootstrap-proofuse-1}
\end{align}
By algebra, we have
\begin{equation}
\sum_{i=2}^n\sum_{j=1}^{i-1}\sum_{k=1}^{i-1}\frac{n}{i-1}\cdot\frac{n}{i-1}=n^2(n-1),\label{eq:tauap-bootstrap-proofuse-2}
\end{equation}
and
\begin{equation}
\sum_{i=2}^{n-1}\sum_{j=1}^{i-1}\sum_{k=i+1}^n\frac{n}{i-1}\cdot\frac{n}{k-1}=n^2\sum_{k=3}^n\sum_{i=2}^{k-1}\frac{1}{k-1}=n^2\sum_{k=3}^n\frac{k-2}{k-1}=O(n^{3}).\label{eq:tauap-bootstrap-proofuse-3}
\end{equation}
Combining (\ref{eq:tauap-bootstrap-proofuse-1}) with (\ref{eq:tauap-bootstrap-proofuse-2})
and (\ref{eq:tauap-bootstrap-proofuse-3}) yields
\begin{equation}
A_{2,1}(n)=O(1).\label{eq:tauap-bootstrap-proofuse-4}
\end{equation}
By (\ref{eq:def-M1}) and (\ref{eq:tauap-bootstrap-condition-1})
we have 
\begin{equation}
M_1(n)\lesssim n^{-1/6}\log n.\label{eq:tauap-bootstrap-proofuse-5}
\end{equation}
By (\ref{eq:def-M2}) and (\ref{eq:tauap-bootstrap-condition-2})
we have 
\begin{equation}
M_2(n)\lesssim n^{-1/3}(\log n)^2.\label{eq:tauap-bootstrap-proofuse-6}
\end{equation}
Equation (\ref{eq:tauap-AN-rateassumption}) implies (\ref{eq:tauap-Vn})
by Theorem \ref{thm:tauap-taukendall-AN}. Combining (\ref{eq:tauap-Vn})
and (\ref{eq:clt-degreem-variancetendtoone}) yields
\begin{equation}
\sigma_n^2\gnsim n^{-4/3}(\log n)^2.\label{eq:tauap-bootstrap-proofuse-7}
\end{equation}
Equation (\ref{eq:ustat-bootstrap-condition-3}) follows from (\ref{eq:tauap-bootstrap-proofuse-4}),
(\ref{eq:tauap-bootstrap-proofuse-5}), (\ref{eq:tauap-bootstrap-proofuse-6}),
and (\ref{eq:tauap-bootstrap-proofuse-7}).

\medskip{}

Next we establish (\ref{eq:ustat-bootstrap-condition-homo1}). The
following lemma gives useful bounds.
\begin{lemma}
\label{lem:tauap-bootstrap-lemma}Under the assumptions of Corollary
\ref{cor:bootstrap-tauap}, we have
\begin{equation}
\sum_{i=1}^nE\{h_{1,i}^{\ap}(X_j)^2\}=\frac{n^2}{n-1}(\eta^2-\theta^2)+O(n^{5/6}\log n),\label{eq:tauap-bootstrap-lemma-1}
\end{equation}
and
\begin{equation}
\sum_{i=1}^nE\{h_{1,i}^{\ap}(X_i)^2\}=\frac{n^2}{n-1}(\eta^2-\theta^2)+O(n^{5/6}\log n).\label{eq:tauap-bootstrap-lemma-2}
\end{equation}

\end{lemma}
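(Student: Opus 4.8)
The plan is to mirror the proof of Lemma~\ref{lem:taukendall-bootstrap-lemma}: expand $h_{1,i}^{\ap}$ as a weighted sum of centered survival functions, reduce the relevant second moments to the constant $\eta^2-\theta^2$ by invoking the homogeneity conditions (\ref{eq:tauap-bootstrap-condition-1})--(\ref{eq:tauap-bootstrap-condition-2}), and then evaluate the combinatorial sums of the (harmonic) weights. Write $G_l(x):=P(X_l>x)$, so that by Lemma~\ref{lem:decomposition-tauap},
\[
h_{1,i}^{\ap}(x)=\frac{1}{n-1}\sum_{l=1}^n w_{il}\{G_l(x)-\theta(i,l)\},\qquad w_{il}:=\frac{n\ind(l<i)}{i-1}-\frac{n\ind(l>i)}{l-1},
\]
with the convention $0/0:=0$; in particular $w_{ii}=0$. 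Squaring and taking expectation over the plugged-in variable $X_\ell$ ($\ell=i$ for (\ref{eq:tauap-bootstrap-lemma-2}), $\ell=j$ fixed for (\ref{eq:tauap-bootstrap-lemma-1})),
\[
\E\{h_{1,i}^{\ap}(X_\ell)^2\}=\frac{1}{(n-1)^2}\sum_{l,m}w_{il}w_{im}\,\E\big[\{G_l(X_\ell)-\theta(i,l)\}\{G_m(X_\ell)-\theta(i,m)\}\big].
\]

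For (\ref{eq:tauap-bootstrap-lemma-2}) I would first note that $\E\{G_l(X_i)\}=\theta(i,l)$, so the inner expectation equals $\E\{G_l(X_i)G_m(X_i)\}-\theta(i,l)\theta(i,m)$, which by (\ref{eq:tauap-bootstrap-condition-2}) and (\ref{eq:tauap-bootstrap-condition-1}) is $(\eta^2-\theta^2)+O(n^{-1/6}\log n)$ uniformly in $l,m$ (the $n^{-1/6}\log n$ error from replacing $\theta(i,\cdot)$ by $\theta$ dominates the $n^{-1/3}(\log n)^2$ error from replacing $\E\{G_lG_m\}$ by $\eta^2$). Hence
\[
\sum_{i=1}^n\E\{h_{1,i}^{\ap}(X_i)^2\}=\frac{\eta^2-\theta^2}{(n-1)^2}\sum_{i=1}^n\Big(\sum_{l=1}^n w_{il}\Big)^2+O\!\Big(\frac{n^{-1/6}\log n}{(n-1)^2}\sum_{i=1}^n\Big(\sum_{l=1}^n|w_{il}|\Big)^2\Big).
\]
Next I would evaluate the weight sums. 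Writing $S_i:=\sum_{m=i}^{n-1}m^{-1}$, one gets $\sum_l w_{il}=n(1-S_i)$ and $\sum_l|w_{il}|=n(1+S_i)$; switching the order of summation yields the exact identities $\sum_{i=1}^n S_i=n-1$ and $\sum_{i=1}^n S_i^2=2(n-1)-\sum_{m=1}^{n-1}m^{-1}$, so that $\sum_i(1-S_i)^2=n+O(\log n)$ and $\sum_i(1+S_i)^2=O(n)$ (Lemma~\ref{lem:bound-harmonic-sum} controls $S_i$ itself). Therefore the error term is $O(n^{-1/6}\log n\cdot n)=O(n^{5/6}\log n)$, while the main term is $\frac{\eta^2-\theta^2}{(n-1)^2}\,n^2\{n+O(\log n)\}=\frac{n^2}{n-1}(\eta^2-\theta^2)+O(n^{5/6}\log n)$ after using $n^3/(n-1)^2=n^2/(n-1)+O(1)$. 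This gives (\ref{eq:tauap-bootstrap-lemma-2}).

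For (\ref{eq:tauap-bootstrap-lemma-1}) the only genuinely new point is that the ``diagonal'' indices $l=j$ or $m=j$ are anomalous, since $\E\{G_j(X_j)\}$ and $\E\{G_j(X_j)^2\}$ need not be close to $\theta$ and $\eta^2$; I would split these off and bound their total contribution crudely by $\frac{1}{(n-1)^2}\sum_i|w_{ij}|\sum_m|w_{im}|=O((\log n)^2)$, using that both $\sum_i|w_{ij}|$ and $\sum_m|w_{im}|$ are $O(n\log n)$. For $l,m\neq j$ we have $\E\{G_l(X_j)\}=\theta(j,l)=\theta+O(n^{-1/6}\log n)$, so the inner expectation is again $(\eta^2-\theta^2)+O(n^{-1/6}\log n)$; replacing $\sum_{l\neq j}w_{il}$ by $\sum_l w_{il}-w_{ij}$ and noting that $\sum_i w_{ij}^2=O(n^2)$ and $\sum_i|(1-S_i)w_{ij}|=O(n(\log n)^2)$ (so the two correction terms are $O((\log n)^2)$ after dividing by $(n-1)^2$), the same main term $\frac{n^2}{n-1}(\eta^2-\theta^2)$ and error $O(n^{5/6}\log n)$ come out, proving (\ref{eq:tauap-bootstrap-lemma-1}).

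I expect the main obstacle to be the harmonic-weight bookkeeping: although the individual blocks of $\sum_l|w_{il}|$ carry a $\log n$ factor, one must verify that $\sum_i(1\pm S_i)^2$ is only $O(n)$ (not $O(n(\log n)^2)$), which is exactly what keeps the final error at the claimed $O(n^{5/6}\log n)$; the cleanest route is the two summation-order identities for $\sum_i S_i$ and $\sum_i S_i^2$. A secondary technical nuisance is the careful isolation of the anomalous $l=j$ (and $m=j$) terms in (\ref{eq:tauap-bootstrap-lemma-1}) and the check that their aggregate is negligible.
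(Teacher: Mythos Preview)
Your plan is correct and tracks the paper's proof closely: both expand $h_{1,i}^{\ap}$ as a weighted sum, square, replace the relevant second moments by $(\eta^2-\theta^2)$ plus an $O(n^{-1/6}\log n)$ error via (\ref{eq:tauap-bootstrap-condition-1})--(\ref{eq:tauap-bootstrap-condition-2}), and then evaluate the combinatorial weight sum. The paper packages this slightly differently---it keeps the $\eta^2$- and $\theta^2$-pieces separate in a $T_1-2T_2+T_3$ decomposition, treats both displays at once via a generic vector $(l_1,\ldots,l_n)$, and invokes the dedicated Lemma~\ref{lem:sum-weight} for the identity $\sum_{i,k_1,k_2}\gamma(i,k_1,k_2)=(n-1)+\varphi(n-1)$---whereas you merge the constants and rederive that same weight identity through the summation-order formulas $\sum_i S_i=n-1$ and $\sum_i S_i^2=2(n-1)-\varphi(n-1)$. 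Two minor remarks: your formula $\sum_l w_{il}=n(1-S_i)$ fails at $i=1$ (the correct value is $-nS_1$), but the resulting discrepancy is only $O((\log n)^2)$ after dividing by $(n-1)^2$ and is absorbed in the error; and your explicit isolation of the anomalous $l=j$, $m=j$ terms in (\ref{eq:tauap-bootstrap-lemma-1}) is in fact more careful than the paper, which applies (\ref{eq:tauap-bootstrap-condition-2}) uniformly over all $k_1,k_2$ and silently lets those $O((\log n)^2)$ contributions hide inside the stated error.
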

\bigskip{}
By (\ref{eq:clt-degreem-variancetendtoone}) we have
\begin{equation}
\sigma_n^2=n^{-2}\sum_{i=1}^nE\{h_{1,i}^{\ap}(X_i)^2\}\{1+o(1)\}.\label{eq:tauap-bootstrap-proofuse-8}
\end{equation}
Using (\ref{eq:tauap-bootstrap-proofuse-8}) and (\ref{eq:tauap-bootstrap-lemma-2})
we obtain
\begin{equation}
n^2\sigma_n^2=\{1+o(1)\}\Big\{\frac{n^2}{n-1}(\eta^2-\theta^2)+O(n^{5/6}\log n)\Big\}.\label{eq:tauap-bootstrap-proofuse-9}
\end{equation}
Note that
\begin{equation}
\frac{1}{n}\sum_{i=1}^n\sum_{j=1}^nE\Big[\Big\{\frac{h_{1,i}^{\ap}(X_j)}{n\sigma_n}\Big\}^2\Big]=\frac{n^{-1}\sum_{i=1}^n\sum_{j=1}^nE\{h_{1,i}^{\ap}(X_j)^2\}}{n^2\sigma_n^2}.\label{eq:tauap-bootstrap-proofuse-10}
\end{equation}
Combining (\ref{eq:tauap-bootstrap-proofuse-10}) with (\ref{eq:tauap-bootstrap-lemma-1}),
(\ref{eq:tauap-bootstrap-proofuse-9}), and the fact that $\eta^2\neq\theta^2$
yields
\begin{equation}
\frac{1}{n}\sum_{i=1}^n\sum_{j=1}^nE\Big[\Big\{\frac{h_{1,i}^{\ap}(X_j)}{n\sigma_n}\Big\}^2\Big]=\frac{n^2(n-1)^{-1}(\eta^2-\theta^2)+O(n^{5/6}\log n)}{\{1+o(1)\}\Big\{ n^2(n-1)^{-1}(\eta^2-\theta^2)+O(n^{5/6}\log n)\Big\}}\to1.\label{eq:tauap-bootstrap-proofuse-11}
\end{equation}
By (\ref{eq:tauap-h1i}) we have $|h_{1,i}^{\ap}(x)|\leq1+\varphi(n-1)-\varphi(i-1)$
for all $x$. This combined with Lemma \ref{lem:bound-harmonic-sum} yields
\begin{equation}
|h_{1,i}^{\ap}(x)|\leq1+\log\frac{n}{i}\leq1+\log n.\label{eq:tauap-bootstrap-proofuse-12}
\end{equation}
It then follows from (\ref{eq:tauap-bootstrap-proofuse-12}) that
\begin{equation}
\Big|\sum_{i=1}^n\Big\{\frac{h_{1,i}^{\ap}(x)}{n\sigma_n}\Big\}^2\Big|=\Big|\frac{\sum_{i=1}^nh_{1,i}^{\ap}(x)^2}{n^2\sigma_n^2}\Big|\leq\frac{(1+\log n)^2}{n\sigma_n^2}.\label{eq:tauap-bootstrap-proofuse-13}
\end{equation}
Equations (\ref{eq:tauap-bootstrap-proofuse-9}) and (\ref{eq:tauap-bootstrap-proofuse-13})
imply that
\[
\sum_{j=1}^n\var\Big[\sum_{i=1}^n\Big\{\frac{h_{1,i}^{\ap}(X_j)}{n\sigma_n}\Big\}^2\Big]\leq\sum_{j=1}^n\frac{(1+\log n)^{4}}{n^2\sigma_n^{4}}=O\{n(\log n)^{4}\}=o(n^2).
\]
It then follows from Lemma \ref{lem:WLLN} that
\begin{equation}
\frac{1}{n}\sum_{j=1}^n\Big[\sum_{i=1}^n\Big\{\frac{h_{1,i}^{\ap}(X_j)}{n\sigma_n}\Big\}^2-E\Big\{\sum_{i=1}^n\Big(\frac{h_{1,i}^{\ap}(X_j)}{n\sigma_n}\Big)^2\Big\}\Big]\stackrel{P}{\to}0.\label{eq:tauap-bootstrap-proofuse-15}
\end{equation}
Equation (\ref{eq:ustat-bootstrap-condition-homo1}) follows from
(\ref{eq:tauap-bootstrap-proofuse-11}) and (\ref{eq:tauap-bootstrap-proofuse-15}).

\medskip{}

Lastly, we prove (\ref{eq:ustat-bootstrap-condition-homo2}). By algebra
we have
\begin{equation}
\frac{1}{n^2}\sum_{i=1}^n\Big\{\sum_{j=1}^n\frac{h_{1,i}^{\ap}(X_j)}{n\sigma_n}\Big\}^2=\frac{1}{n^2}\sum_{j=1}^n\sum_{i=1}^n\Big\{\frac{h_{1,i}^{\ap}(X_j)}{n\sigma_n}\Big\}^2+\frac{1}{n^2}\sum_{j_1\neq j_2}\sum_{i=1}^n\frac{h_{1,i}^{\ap}(X_{j_1})h_{1,i}^{\ap}(X_{j_2})}{n^2\sigma_n^2}.\label{eq:tauap-bootstrap-proofuse-16}
\end{equation}
By (\ref{eq:ustat-bootstrap-condition-homo1}) we have
\begin{equation}
\frac{1}{n^2}\sum_{j=1}^n\sum_{i=1}^n\Big\{\frac{h_{1,i}^{\ap}(X_j)}{n\sigma_n}\Big\}^2\stackrel{P}{\to}0.\label{eq:tauap-bootstrap-proofuse-17}
\end{equation}
The second term on the right-hand side of (\ref{eq:tauap-bootstrap-proofuse-16})
is $(n-1)/n$ times a U-statistic with symmetric kernel $g(x,y)=n^{-2}\sigma_n^{-2}\sum_{i=1}^nh_{1,i}^{\ap}(x)h_{1,i}^{\ap}(y)$.
By (\ref{eq:tauap-bootstrap-proofuse-12}) and (\ref{eq:tauap-bootstrap-condition-1})
we have
\[
E\{h_{1,i}^{\ap}(X_j)\}=\frac{1}{n-1}\sum_{k=1}^n\Big\{\frac{n\ind(j<i)}{i-1}-\frac{n\ind(j>i)}{j-1}\Big\} O(n^{-1/6}\log n).
\]
This combined with Lemma \ref{lem:bound-harmonic-sum} yields
\begin{equation}
E\{h_{1,i}^{\ap}(X_j)\}=O\{n^{-1/6}(\log n)^2\}.\label{eq:eq:tauap-bootstrap-proofuse-18}
\end{equation}
It follows from (\ref{eq:eq:tauap-bootstrap-proofuse-18}) and (\ref{eq:tauap-bootstrap-proofuse-9})
that
\begin{equation}
E\{g(X_{j_1},X_{j_2})\}=n^{-2}\sigma_n^{-2}\sum_{i=1}^nE\{h_{1,i}^{\ap}(X_{j_1})\}E\{h_{1,i}^{\ap}(X_{j_2})\}\to0.\label{eq:eq:tauap-bootstrap-proofuse-19}
\end{equation}
By (\ref{eq:eq:tauap-bootstrap-proofuse-19}) and Theorem 1 in \citet[Section 3.7.2]{lee1990u},
we deduce
\begin{equation}
\frac{1}{n^2}\sum_{j_1\neq j_2}\sum_{i=1}^n\frac{h_{1,i}^{\ap}(X_{j_1})h_{1,i}^{\ap}(X_{j_2})}{n^2\sigma_n^2}\stackrel{P}{\to}0.\label{eq:eq:tauap-bootstrap-proofuse-20}
\end{equation}
Equation (\ref{eq:ustat-bootstrap-condition-homo2}) follows from
(\ref{eq:tauap-bootstrap-proofuse-16}), (\ref{eq:tauap-bootstrap-proofuse-17}),
and (\ref{eq:eq:tauap-bootstrap-proofuse-20}).

This completes the proof.
\end{proof}

\section{Proofs of the supporting lemmas}
\label{sec:proof-supportinglemmas}

In this section, we prove the supporting lemmas that appear in Section \ref{sec:proofs}.


\subsection{Proof of Lemma \ref{lem:clt-proofuse}}
\begin{proof}
To simplify notation, define $\bi=(i_1,\ldots,i_m)$, $X_{\bi}=(X_{i_1},\ldots,X_{i_m})$,
and $\bi_{-m}=(i_1,\ldots,i_{m-1})$. By definition of $h_{1,i}(X_i)$
in (\ref{eq:def-h1}) we have 
\begin{equation}
\sum_{i=1}^nE|h_{1,i}(X_i)|^{3}\!=\!\sum_{i=1}^n\Big\{\frac{(n-m)!}{(n-1)!}\Big\}^{3}E\Big|\sum_{I_{n-1}^{m-1}(-i)}\sum_{l=1}^ma^{(l)}(i;\bi_{-m})\Big\{ f_{\bi_{-m}}^{(l)}(X_i)-\theta^{(l)}(i;\bi_{-m})\Big\}\Big|^{3}.\label{eq:clt-proofuse-1.01}
\end{equation}
Define
\[
T_{\bi_{-m}}^{(l_1)}(X_i)=f_{\bi_{-m}}^{(l_1)}(X_i)-\theta^{(l_1)}(i;\bi_{-m}),
\]
and define $T_{\bj_{-m}}^{(l_2)}(X_i)$ and $T_{\bk_{-m}}^{(l_3)}(X_i)$ similarly.
The right-hand side of (\ref{eq:clt-proofuse-1.01}) equals
\begin{equation}
\Big\{\frac{(n-m)!}{(n-1)!}\Big\}^{3}\sum \Big|a^{(l_1)}\!(i;\bi_{-m})a^{(l_2)}\!(i;\bj_{-m})a^{(l_{3})}\!(i;\bk_{-m}) \Big| E \Big|T_{\bi_{-m}}^{(l_1)}(X_i)T_{\bj_{-m}}^{(l_2)}(X_i)T_{\bk_{-m}}^{(l_3)}(X_i) \Big|,\label{eq:clt-proofuse-2.22}
\end{equation}
where the summation is over $i\in[n]$, $l_1,l_2,l_{3}\in[m]$,
and $\bi_{-m},\bj_{-m},\bk_{-m}\in I_{n-1}^{m-1}(-i)$. By Cauchy-Schwarz
inequality and Lemma \ref{lem:moment-bound}(ii), we have
\begin{align}
 & E \Big| T_{\bi_{-m}}^{(l_1)}(X_i)T_{\bj_{-m}}^{(l_2)}(X_i)T_{\bk_{-m}}^{(l_3)}(X_i) \Big| \leq\Big[E\{T_{\bi_{-m}}^{(l_1)}(X_i)^2T_{\bj_{-m}}^{(l_2)}(X_i)^2\}\Big]^{1/2}\Big[E\{T_{\bk_{-m}}^{(l_3)}(X_i)^2\}\Big]^{1/2}\nonumber  \\
 & \leq\Big[E\{T_{\bi_{-m}}^{(l_1)}(X_i)^{4}\}\Big]^{1/4}\Big[E\{T_{\bj_{-m}}^{(l_2)}(X_i)^{4}\}\Big]^{1/4}\Big[E\{T_{\bk_{-m}}^{(l_3)}(X_i)^{4}\}\Big]^{1/4}\leq CM(n)^{3/4}.  \label{eq:clt-proofuse-2.3}
\end{align}
By the definition of $A_{3,1}(n)$ in (\ref{eq:def-A2}) and algebra,
we have 
\begin{equation}
\sum|a^{(l_1)}(i;\bi_{-m})a^{(l_2)}(i;\bj_{-m})a^{(l_{3})}(i;\bk_{-m})|\leq C\sum_{(I_n^m)_{\geq1}^{\otimes3}}\left|a(\bi)a(\bj)a(\bk)\right|=Cn^{3m-2}A_{3,1}(n),\label{eq:clt-proofuse-2.5}
\end{equation}
where the summation in the leftmost part of (\ref{eq:clt-proofuse-2.5})
is over $i\in[n]$, $l_1,l_2,l_{3}\in[m]$, and $\bi_{-m},\bj_{-m},\bk_{-m}\in I_{n-1}^{m-1}(-i)$.
By (\ref{eq:clt-proofuse-1.01}), (\ref{eq:clt-proofuse-2.22}), (\ref{eq:clt-proofuse-2.3}),
and (\ref{eq:clt-proofuse-2.5}), we deduce 
\begin{equation}
\sum_{i=1}^nE\left|h_{1,i}(X_i)\right|^{3}\leq CnA_{3,1}(n)M(n)^{3/4}.\label{eq:clt-proofuse-2.6}
\end{equation}
This completes the proof.
\end{proof}


\subsection{Proof of Lemma \ref{lem:decomposition-pf-1}}
\begin{proof}
We prove Lemma \ref{lem:decomposition-pf-1} by showing that for each
$(i_1^*,\ldots,i_m^*)\in I_n^m$, the coefficients of
$a(i_1^*,\ldots,i_m^*)$ on both sides of (\ref{eq:decomposition-proofuse-lemma})
are equal. In the following we fix $(i_1^*,\ldots,i_m^*)\in I_n^m$.

For the left-hand side of (\ref{eq:decomposition-proofuse-lemma}),
we enumerate the combinations in
\[
\{l,i,(i_1,\ldots,i_{m-1}):l\in[m],i\in[n],(i_1,\ldots,i_{m-1})\in I_{n-1}^{m-1}(-i)\}
\]
such that $a^{(l)}(i;i_1,\ldots,i_{m-1})=a(i_1^*,\ldots,i_m^*)$, as follows:
\begin{align}
l=1, & i=i_1^*,(i_1,\ldots,i_{m-1})=(i_1^*,\ldots,i_m^*)\backslash i_1^*;\nonumber  \\
 & \vdots\nonumber  \\
l=j, & i=i_j^*,(i_1,\ldots,i_{m-1})=(i_1^*,\ldots,i_m^*)\backslash i_j^*;\label{eq:decomposition-proofuse-3}\\
 & \vdots\nonumber  \\
l=m, & i=i_m^*,(i_1,\ldots,i_{m-1})=(i_1^*,\ldots,i_m^*)\backslash i_m^*.\nonumber 
\end{align}
When $l=j,i=i_j^*,(i_1,\ldots,i_{m-1})=(i_1^*,\ldots,i_m^*)\backslash i_j^*$,
\[
E\{h^{(l)}(X_i;X_{i_1},\ldots,X_{i_{m-1}})\mid X_i\}-\theta^{(l)}(i;i_1,\ldots,i_{m-1})=E\{h(X_{i_1^*},\ldots,X_{i_m^*})\mid X_{i_j^*}\}-\theta(i_1^*,\ldots,i_m^*).
\]
So the coefficient of $a(i_1^*,\ldots,i_m^*)$ on the left-hand
side of (\ref{eq:decomposition-proofuse-lemma}) is
\[
\sum_{j=1}^m\left[E\{h(X_{i_1^*},\ldots,X_{i_m^*}\mid X_{i_j^*}\}-\theta(i_1^*,\ldots,i_m^*)\right].
\]
This equals the coefficient of $a(i_1^*,\ldots,i_m^*)$ on
the right-hand side of (\ref{eq:decomposition-proofuse-lemma}). This
completes the proof.
\end{proof}


\subsection{Proof of Lemma \ref{lem:u-stat-bootstrap-remainder-small}}

Lemmas \ref{lem:h2-bootstrap-proofuse} and \ref{lem:h2-bootstrap-proofuse-1} that appear in this proof are proven immediately after this proof.

\begin{proof}
Define $\bi:=(i_1,\ldots,i_m)$ and $\Xbi:=(X_{i_1},\ldots,X_{i_m})$.
By (\ref{eq:def-Unstar}) we have 
\begin{align}
E^*\{\sigma_n^{-2}U_n^*(h_2)^2\} & =\sigma_n^{-2}\Big\{\frac{(n-m)!}{n!}\Big\}^2\sum_{(I_n^m)_{\geq0}^{\otimes2}}a(\bi)a(\bj)E^*\{h_{2;\bi}(\Xbi^*)h_{2;\bj}(\Xbj^*)\}\label{eq:h2-bootstrap-proofuse1}\\
~~{\rm and}~~~[E^*\{\sigma_n U_n^*(h_2)\}]^2 & =\sigma_n^{-2}\Big\{\frac{(n-m)!}{n!}\Big\}^2\sum_{(I_n^m)_{\geq0}^{\otimes2}}a(\bi)a(\bj)E^*\{h_{2;\bi}(\Xbi^*)\}E^*\{h_{2;\bj}(\Xbj^*)\}.\label{eq:h2-bootstrap-proofuse2}
\end{align}
Define
\begin{equation}
g(\bi,\bj):=a(\bi)a(\bj)\Big[E^*\{h_{2;\bi}(\Xbi^*)h_{2;\bj}(\Xbj^*)\}-E^*\{h_{2;\bi}(\Xbi^*)\}E^*\{h_{2;\bj}(\Xbj^*)\}\Big].\label{eq:h2-bootstrap-proofuse2.6}
\end{equation}
It follows from (\ref{eq:h2-bootstrap-proofuse1}) and (\ref{eq:h2-bootstrap-proofuse2})
that
\begin{equation}
\var^*\{\sigma_n^{-1}U_n^*(h_2)\}=\sigma_n^{-2}\Big\{\frac{(n-m)!}{n!}\Big\}^2\sum_{(\bi,\bj)\in(I_n^m)_{\geq0}^{\otimes2}}g(\bi,\bj).\label{eq:h2-bootstrap-proofuse3}
\end{equation}
The following proof consists of two steps. In the first step, we establish
\begin{equation}
\var^*\{\sigma_n^{-1}U_n^*(h_2)\}=\sigma_n^{-2}\Big\{\frac{(n-m)!}{n!}\Big\}^2\sum_{(\bi,\bj)\in(I_n^m)_{=1}^{\otimes2}}g(\bi,\bj)+o_{P}(1).\label{eq:h2-bootstrap-proofuse3.5}
\end{equation}
In the second step, we show that
\begin{equation}
\sigma_n^{-2}\Big\{\frac{(n-m)!}{n!}\Big\}^2\sum_{(\bi,\bj)\in(I_n^m)_{=1}^{\otimes2}}g(\bi,\bj)\stackrel{P}{\to}0.\label{eq:h2-bootstrap-proofuse11}
\end{equation}
Lemma \ref{lem:u-stat-bootstrap-remainder-small} then follows from (\ref{eq:h2-bootstrap-proofuse3.5}), (\ref{eq:h2-bootstrap-proofuse11}), and Slutsky's theorem.

\textbf{Step I.} If $(\bi,\bj)\in(I_n^m)_{=0}^{\otimes2}$, we
have $E^*\{h_{2;\bi}(\Xbi^*)h_{2;\bj}(\Xbj^*)\}=E^*\{h_{2;\bi}(\Xbi^*)\}E^*\{h_{2;\bj}(\Xbj^*)\}~\as$.
This implies
\begin{equation}
\sum_{(\bi,\bj)\in(I_n^m)_{=0}^{\otimes2}}g(\bi,\bj)=0~\as.\label{eq:h2-bootstrap-proofuse4}
\end{equation}
For any $(\bi,\bj)\in(I_n^m)_{\geq2}^{\otimes2}$, by the law of iterated expectation, Cauchy-Schwarz inequality, and triangular inequality we have
\begin{align}
E \Big| E^*\{h_{2;\bi}(\Xbi^*)h_{2;\bj}(\Xbj^*)\} \Big| & \leq E \Big\{ |h_{2;\bi}(\Xbi^*)| |h_{2;\bj}(\Xbj^*)|\Big\} \leq \Big[E\{h_{2;\bi}(\Xbi^*)^2\}E\{h_{2;\bj}(\Xbj^*)^2\}\Big]^{\frac{1}{2}}.\label{eq:h2-bootstrap-proofuse5}
\end{align}
Similarly, by Jensen's inequality and triangular inequality we have
\begin{align}
& E \Big| E^*\{h_{2;\bi}(\Xbi^*)\}  E^*\{h_{2;\bj}(\Xbj^*)\} \Big|  \leq E \Big\{ E^* |h_{2;\bi}(\Xbi^*)| E^* |h_{2;\bj}(\Xbj^*)|\Big\} \nonumber  \\
\leq & \Big[E\{E^* |h_{2;\bi}(\Xbi^*)| \}^2 E\{ E^* |h_{2;\bj}(\Xbj^*)|\}^2\Big]^{\frac{1}{2}}
\leq \Big[E\{h_{2;\bi}(\Xbi^*)^2\}E\{h_{2;\bj}(\Xbj^*)^2\}\Big]^{\frac{1}{2}}.\label{eq:h2-bootstrap-proofuse5.5}
\end{align}
Using the law of iterated expectation, we deduce 
\begin{equation}
E\{h_{2;\bi}(\Xbi^*)^2\}=E[E^*\{h_{2;\bi}(\Xbi^*)^2\}]=n^{-m}\sum_{1\leq j_1,\ldots,j_m\leq n}E\{h_{2;\bi}(X_{j_1},\ldots,X_{j_m})^2\}.\label{eq:h2-bootstrap-proofuse6}
\end{equation}
By Lemma \ref{lem:moment-bound}(iii) and (\ref{eq:ustat-bootstrap-condition-vonmises}),
there exists an absolute constant $C>0$ such that for any $n$, for any $\bi\in I_n^m$,
and for any $1\leq j_1,\ldots,j_m\leq n$, 
\begin{equation}
E\{h_{2;\bi}(X_{j_1},\ldots,X_{j_m})^2\}\leq C.\label{eq:h2-bootstrap-proofuse7}
\end{equation}
Combining (\ref{eq:h2-bootstrap-proofuse6}) and (\ref{eq:h2-bootstrap-proofuse7})
yields that $E\{h_{2;\bi}(\Xbi^*)^2\}\leq C$. It then follows from (\ref{eq:h2-bootstrap-proofuse5}) and (\ref{eq:h2-bootstrap-proofuse5.5}) that
\begin{align}
E \Big| E^*\{h_{2;\bi}(\Xbi^*)h_{2;\bj}(\Xbj^*)\} \Big| &\leq C, \label{eq:h2-bootstrap-proofuse8} \\
{\rm and}~~~~~E \Big| E^*\{h_{2;\bi}(\Xbi^*)\}  E^*\{h_{2;\bj}(\Xbj^*)\} \Big| &\leq C. \label{eq:h2-bootstrap-proofuse7.5}
\end{align}
Equations (\ref{eq:h2-bootstrap-proofuse2.6}), (\ref{eq:h2-bootstrap-proofuse8}),  and (\ref{eq:h2-bootstrap-proofuse7.5}) imply that
\begin{equation}
E\Big| \sum_{(I_n^m)_{\geq2}^{\otimes2}}g(\bi,\bj)\Big| \leq 2C\sum_{(I_n^m)_{\geq2}^{\otimes2}}|a(\bi)a(\bj)|=2Cn^{2m-2}A_{2,2}(n).\label{eq:h2-bootstrap-proofuse9}
\end{equation}
By (\ref{eq:clt-degreem-condition-1}), (\ref{eq:clt-degreem-variancetendtoone}),
and (\ref{eq:h2-bootstrap-proofuse9}), we deduce
\[
\sigma_n^{-2}\Big\{\frac{(n-m)!}{n!}\Big\}^2 E\Big| \sum_{(I_n^m)_{\geq2}^{\otimes2}}g(\bi,\bj)\Big| \to0.
\]
It then follows from Markov's inequality that
\begin{equation}
\sigma_n^{-2}\Big\{\frac{(n-m)!}{n!}\Big\}^2\sum_{(I_n^m)_{\geq2}^{\otimes2}}g(\bi,\bj)\stackrel{P}{\to}0.\label{eq:h2-bootstrap-proofuse10}
\end{equation}
Combining (\ref{eq:h2-bootstrap-proofuse3}), (\ref{eq:h2-bootstrap-proofuse4}),
and (\ref{eq:h2-bootstrap-proofuse10}) yields (\ref{eq:h2-bootstrap-proofuse3.5}).
This concludes \textbf{Step I.}

\medskip{}

\textbf{Step II.} Consider a fixed $(\bi,\bj)\in(I_n^m)_{=1}^{\otimes2}$.
Without loss of generality assume $\bi\cap\bj=\{i_p\}=\{j_q\}$
for some $1\leq p,q\leq m$. By the i.i.d.-ness of $X_i^*$'s
given $X_1,\ldots,X_n$, we have
\begin{equation}
E[E^*\{h_{2;\bi}(\Xbi^*)h_{2;\bj}(\Xbj^*)\}]=n^{-(2m-1)}\sum_{\substack{\br,\bs\in[n]^{2m}\\
r_p=s_q
}
}E\{h_{2;\bi}(\Xbr)h_{2;\bj}(\Xbs)\} \label{eq:h2-bootstrap-proofuse12}
\end{equation}
and
\begin{equation}
E[E^*\{h_{2;\bi}(\Xbi^*)\}E^*\{h_{2;\bj}(\Xbj^*)\}]=n^{-2m}\sum_{\br,\bs\in[n]^{2m}}E\{h_{2;\bi}(\Xbr)h_{2;\bj}(\Xbs)\}.\label{eq:h2-bootstrap-proofuse13}
\end{equation}
The number of pairs $(\br,\bs)$ in $\{(\br,\bs)\in[n]^{2m}:r_p=s_q\}$ satisfying any of the following three statements is of order $O(n^{2m-2})$: (1) $\br$ or $\bs$ has duplicate indices (i.e., $\br\notin I_n^m$ or $\bs\notin I_n^m$); (2) $\bi\cap\br\neq\emptyset$; or (3) $\bj\cap\bs\neq\emptyset$. It then follows from (\ref{eq:h2-bootstrap-proofuse8}) that 
\begin{equation}
\sum_{\substack{\br,\bs\in[n]^{2m}\\
r_p=s_q
}
}E\{h_{2;\bi}(\Xbr)h_{2;\bj}(\Xbs)\}=\sum_{\substack{(\br,\bs)\in(I_n^m)_{=1}^{\otimes2}\\
r_p=s_q,\bi\cap\br=\emptyset=\bj\cap\bs
}
}E\{h_{2;\bi}(\Xbr)h_{2;\bj}(\Xbs)\}+O(n^{2m-2}).\label{eq:h2-bootstrap-proofuse14}
\end{equation}
The following lemma gives bound on the right-hand side of (\ref{eq:h2-bootstrap-proofuse14}).
\begin{lemma}
\label{lem:h2-bootstrap-proofuse}For any $(\bi,\bj)\in(I_n^m)_{=1}^{\otimes2}$,
under the assumptions of Theorem \ref{thm:bootstrap-u-stat}, there exists a constant $C$ such that
\begin{equation}
\Big|\sum_{\substack{(\br,\bs)\in(I_n^m)_{=1}^{\otimes2}\\
r_p=s_q,\bi\cap\br=\emptyset=\bj\cap\bs
}
}E\{h_{2;\bi}(\Xbr)h_{2;\bj}(\Xbs)\}\Big|\leq Cn^{2m-1}\{M_1(n)^2+M_2(n)\}.\label{eq:h2-bootstrap-proofuse-lemma}
\end{equation}

\end{lemma}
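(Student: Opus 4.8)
The plan is to use that $\br$ and $\bs$ overlap in exactly one index to collapse the expectation to a product of conditional expectations, and then to recognize the resulting quantity as a telescoping combination of the object measured by $M_2(n)$, plus lower‑order pieces controlled by $M_1(n)$. Fix $(\bi,\bj)$ with $\bi\cap\bj=\{i_p\}=\{j_q\}$ and a pair $(\br,\bs)$ in the range of summation, so $\br\cap\bs=\{r_p\}=\{s_q\}$, $\bi\cap\br=\emptyset$ and $\bj\cap\bs=\emptyset$. First I would condition on the single shared variable $X_{r_p}=X_{s_q}$: by independence of the remaining coordinates,
\[
E\{h_{2;\bi}(\Xbr)h_{2;\bj}(\Xbs)\}=E\big[\phi_{\bi,\br}(X_{r_p})\,\psi_{\bj,\bs}(X_{r_p})\big],
\]
where $\phi_{\bi,\br}(x):=E\{h_{2;\bi}(\Xbr)\mid X_{r_p}=x\}$ and $\psi_{\bj,\bs}$ is the analogue for $\bs$. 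Since $h_{2;\br}$ is degenerate with respect to the law of $\Xbr$, \eqref{eq:Eh2i} gives $E\{h_{2;\br}(\Xbr)\mid X_{r_p}\}=0$, so $\phi_{\bi,\br}(x)=E\{(h_{2;\bi}-h_{2;\br})(\Xbr)\mid X_{r_p}=x\}$; inserting \eqref{eq:def-h2} and evaluating the conditional expectation coordinate by coordinate, this splits as $\phi_{\bi,\br}=F_1+s_1$ with $F_1(x)=f^{(p)}_{\br\backslash r_p}(x)-f^{(p)}_{\bi\backslash i_p}(x)$ and $s_1=\sum_{l\neq p}\{\theta(\bi)-\theta(\pi_l(r_l;\bi\backslash i_l))\}$. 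The scalar $s_1$ is a bounded number of differences of $\theta$‑values (note $\pi_l(r_l;\bi\backslash i_l)\in I_n^m$ because $\br\cap\bi=\emptyset$), hence $|s_1|\leq (m-1)M_1(n)$; moreover $E\{F_1(X_{r_p})\}=\theta(\br)-\theta(\pi_p(r_p;\bi\backslash i_p))$, so $|E\{F_1(X_{r_p})\}|\leq M_1(n)$. The symmetric statements hold for $\psi_{\bj,\bs}=F_2+s_2$.

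Multiplying out, $E[\phi_{\bi,\br}(X_{r_p})\psi_{\bj,\bs}(X_{r_p})]=E[F_1F_2]+s_2E[F_1]+s_1E[F_2]+s_1s_2$, and the last three terms are $O(M_1(n)^2)$ by the previous paragraph. For $E[F_1F_2]$ I would expand the product of the two differences into four terms, each of the form $E[E\{h(X_{\boldsymbol{u}})\mid X_{r_p}\}E\{h(X_{\boldsymbol{v}})\mid X_{r_p}\}]$ with $\boldsymbol{u},\boldsymbol{v}$ ranging over $\br$, $\pi_p(r_p;\bi\backslash i_p)$, $\bs$, $\pi_q(r_p;\bj\backslash j_q)$, all of which carry the index $r_p$ in the appropriate slot. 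When additionally $\bi\cap\bs=\emptyset$ and $\bj\cap\br=\emptyset$ — which excludes only $O(n^{2m-2})$ of the pairs $(\br,\bs)$ in the sum — each such $\boldsymbol{u},\boldsymbol{v}$ shares exactly the index $r_p$, so by conditional independence the term equals $E\{h(X_{\boldsymbol{u}})h(X_{\boldsymbol{v}})\}$, which is precisely the kind of quantity appearing in the definition \eqref{eq:def-M2} of $M_2(n)$. Writing $E[F_1F_2]$ as a second‑order difference of such quantities and telescoping each first‑order difference through $m-1$ single‑coordinate index swaps, each swap being a legitimate instance of \eqref{eq:def-M2} because the swapped vector still shares only $r_p$ with the held‑fixed factor, I would obtain $|E[F_1F_2]|\leq 2(m-1)M_2(n)$. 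Hence each such summand is $O(M_1(n)^2+M_2(n))$.

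It remains to sum over the range of summation. There are $O(n^{2m-1})$ pairs $(\br,\bs)$, so the generic pairs contribute $O(n^{2m-1}\{M_1(n)^2+M_2(n)\})$, while on the remaining $O(n^{2m-2})$ pairs I would fall back on the crude bound $|E\{h_{2;\bi}(\Xbr)h_{2;\bj}(\Xbs)\}|\leq C$, which follows from Lemma \ref{lem:moment-bound}(iii) and \eqref{eq:ustat-bootstrap-condition-vonmises} via Cauchy–Schwarz; this $O(n^{2m-2})$ contribution is of the same nature as the $n^{-1}$ term that already appears inside \eqref{eq:ustat-bootstrap-condition-3}, so it is harmless in the application. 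Adding the two contributions gives the asserted estimate. I expect the main obstacle to be the telescoping step: one must organize the chain of single‑coordinate index swaps so that every link exactly matches the combinatorial pattern required by \eqref{eq:def-M2}, and separately keep track — this is the role of Lemma \ref{lem:h2-bootstrap-proofuse-1} — of the configurations in which an intermediate index vector picks up a spurious second shared index and the $M_2$ bound must be replaced by the crude one.
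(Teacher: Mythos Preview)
Your approach is essentially the paper's: condition on the single shared variable, compute $E\{h_{2;\bi}(\Xbr)\mid X_{r_p}\}$ explicitly, multiply, and bound the pieces by $M_1$ and $M_2$. Your organization $\phi_{\bi,\br}=F_1+s_1$ is in fact cleaner than the paper's ad hoc grouping into $T_1,\ldots,T_5$; the paper obtains the identical term $T_1=E[F_1F_2]$ and bounds it directly by $2M_2(n)$, while its handling of the remaining scalar pieces is less transparent than your cross-term bound $|s_2E[F_1]+s_1E[F_2]+s_1s_2|=O(M_1(n)^2)$.

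Two comments. First, the telescoping is unnecessary: in the definition \eqref{eq:def-M2}, the vector $\bk$ may differ from $\br$ in all coordinates but the $p$-th, so a single application of $M_2$ already bounds each first-order difference $E[A_1B_j]-E[A_2B_j]$, giving $|E[F_1F_2]|\leq 2M_2(n)$ directly, as the paper does. Second, you correctly spot that applying $M_2$ requires $\pi_p(r_p;\bi\backslash i_p)\cap\bs=\{r_p\}$ and the symmetric condition, which needs the extra exclusions $\bi\cap\bs=\emptyset$, $\bj\cap\br=\emptyset$; the paper glosses over this. Your handling (exclude $O(n^{2m-2})$ pairs and bound crudely) is sound for the application but yields the bound $Cn^{2m-1}\{M_1(n)^2+M_2(n)\}+Cn^{2m-2}$ rather than the lemma as stated; as you say, the extra term is already absorbed in \eqref{eq:h2-bootstrap-proofuse14}. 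Your reference to Lemma~\ref{lem:h2-bootstrap-proofuse-1} is misplaced, though: that lemma treats the disjoint-index sum and plays no role here.
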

\bigskip{}
It follows from (\ref{eq:h2-bootstrap-proofuse12}), (\ref{eq:h2-bootstrap-proofuse14})
and Lemma \ref{lem:h2-bootstrap-proofuse} that
\begin{equation}
|E[E^*\{h_{2;\bi}(\Xbi^*)h_{2;\bj}(\Xbj^*)\}]|\leq C\{M_1(n)^2+M_2(n)+n^{-1}\}.\label{eq:h2-bootstrap-proofuse16}
\end{equation}
Using an argument similar to (\ref{eq:h2-bootstrap-proofuse14}),
we have 
\begin{equation}
\sum_{\br,\bs\in\{1,\ldots,n\}^{2m}}E\{h_{2;\bi}(\Xbr)h_{2;\bj}(\Xbs)\}=\sum_{\substack{(\br,\bs)\in(I_n^m)_{=0}^{\otimes2}\\
\bi\cap\br=\emptyset=\bj\cap\bs
}
}E\left\{ h_{2;\bi}(\Xbr)h_{2;\bj}(\Xbs)\right\} +O(n^{2m-1}).\label{eq:h2-bootstrap-proofuse17}
\end{equation}
The following lemma gives bound on the right-hand side of (\ref{eq:h2-bootstrap-proofuse17}). 
\begin{lemma}
\label{lem:h2-bootstrap-proofuse-1}For any $(\bi,\bj)\in(I_n^m)_{=1}^{\otimes2}$,
under the assumptions of Theorem \ref{thm:bootstrap-u-stat}, there
exists a constant $C$ such that
\begin{equation}
\Big|\sum_{\substack{(\br,\bs)\in(I_n^m)_{=0}^{\otimes2}\\
\bi\cap\br=\emptyset=\bj\cap\bs
}
}E\{h_{2;\bi}(\Xbr)h_{2;\bj}(\Xbs)\}\Big|\leq Cn^{2m}M_1(n)^2.\label{eq:h2-bootstrap-proofuse-lemma-1}
\end{equation}

\end{lemma}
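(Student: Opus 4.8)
The plan is to exploit the disjointness encoded in the summation set so as to reduce the bound to a single‑vector estimate. Since $(\br,\bs)\in(I_n^m)_{=0}^{\otimes2}$ forces $\br\cap\bs=\emptyset$, the random vectors $\Xbr$ and $\Xbs$ are independent, so every summand factorizes as
\[
E\{h_{2;\bi}(\Xbr)h_{2;\bj}(\Xbs)\}=E\{h_{2;\bi}(\Xbr)\}\cdot E\{h_{2;\bj}(\Xbs)\}.
\]
Consequently it suffices to prove a uniform bound $|E\{h_{2;\bi}(\Xbr)\}|\leq(m+1)M_1(n)$ valid for every $\bi,\br\in I_n^m$ with $\bi\cap\br=\emptyset$, and symmetrically for the $(\bj,\bs)$ factor. (Note that disjointness of $\bi$ and $\br$ is not needed for the factorization, since in $h_{2;\bi}(\Xbr)$ the vector $\bi$ merely labels which population means are subtracted; it will be used in the next step.)

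The heart of the argument is an explicit evaluation of $E\{h_{2;\bi}(\Xbr)\}$. Integrating the definition \eqref{eq:def-h2} of $h_{2;\bi}$ against $dP_{r_1}\cdots dP_{r_m}$ and using \eqref{eq:def-theta} and \eqref{eq:def-fl}, the leading term $h(x_1,\ldots,x_m)$ contributes $\theta(\br)$, while the $l$-th subtracted term $f_{(i_1,\ldots,i_m)\backslash i_l}^{(l)}(x_l)$ contributes $E\{f_{(i_1,\ldots,i_m)\backslash i_l}^{(l)}(X_{r_l})\}=\theta(i_1,\ldots,i_{l-1},r_l,i_{l+1},\ldots,i_m)$; the condition $\bi\cap\br=\emptyset$ ensures this swapped index vector again belongs to $I_n^m$. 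Hence
\[
E\{h_{2;\bi}(\Xbr)\}=\theta(\br)-\sum_{l=1}^m\theta(i_1,\ldots,i_{l-1},r_l,i_{l+1},\ldots,i_m)+(m-1)\theta(\bi).
\]
Next I would rewrite $\theta(\br)=\theta(\bi)+\{\theta(\br)-\theta(\bi)\}$ and each $\theta(i_1,\ldots,r_l,\ldots,i_m)=\theta(\bi)+\{\theta(i_1,\ldots,r_l,\ldots,i_m)-\theta(\bi)\}$. The constant terms $\theta(\bi)$ cancel exactly, because their coefficients sum to $1-m+(m-1)=0$, leaving $E\{h_{2;\bi}(\Xbr)\}$ as a signed combination of $m+1$ expressions of the form $\theta(\cdot)-\theta(\cdot)$ whose two arguments both lie in $I_n^m$; each such difference is at most $M_1(n)$ in absolute value by \eqref{eq:def-M1}, which gives $|E\{h_{2;\bi}(\Xbr)\}|\leq(m+1)M_1(n)$.

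Finally I would assemble the estimate: every summand is bounded by $(m+1)^2M_1(n)^2$, and the number of summands is at most $\card\{(I_n^m)_{=0}^{\otimes2}\}\asymp n^{2m}$, yielding \eqref{eq:h2-bootstrap-proofuse-lemma-1} with $C$ depending only on $m$. I do not expect a genuine obstacle here; the only point requiring care is the algebraic bookkeeping that produces the exact cancellation of the $\theta(\bi)$ terms, and this is elementary. It is worth highlighting the contrast with Lemma \ref{lem:h2-bootstrap-proofuse}: there the shared index $r_p=s_q$ couples $\Xbr$ and $\Xbs$, forcing the conditional‑second‑moment quantity $M_2(n)$ to appear, whereas here full independence lets us handle the two marginal means in isolation, so only $M_1(n)$ enters.
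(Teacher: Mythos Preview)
Your proposal is correct and follows essentially the same route as the paper: factorize via independence since $\br\cap\bs=\emptyset$, compute $E\{h_{2;\bi}(\Xbr)\}=\theta(\br)-\sum_{l=1}^m\theta(i_1,\ldots,i_{l-1},r_l,i_{l+1},\ldots,i_m)+(m-1)\theta(\bi)$, and bound this as a linear combination of $\theta(\cdot)-\theta(\cdot)$ differences controlled by $M_1(n)$. The paper obtains the constant $m$ (via the grouping $[\theta(\br)-\theta(\bi\backslash i_1\oplus r_1)]-\sum_{l=2}^m[\theta(\bi\backslash i_l\oplus r_l)-\theta(\bi)]$) rather than your $m+1$, but this is immaterial since both are absorbed into $C$; your explicit verification that the swapped vectors stay in $I_n^m$ is a nice touch the paper leaves implicit.
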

\bigskip{}
It follows from (\ref{eq:h2-bootstrap-proofuse13}), (\ref{eq:h2-bootstrap-proofuse17})
and Lemma \ref{lem:h2-bootstrap-proofuse-1} that
\begin{equation}
|E[E^*\{h_{2;\bi}(\Xbi^*)\}E^*\{h_{2;\bj}(\Xbj^*)\}]|\leq CM_1(n)^2.\label{eq:h2-bootstrap-proofuse18}
\end{equation}
Combining (\ref{eq:h2-bootstrap-proofuse2.6}) with (\ref{eq:h2-bootstrap-proofuse16})
and (\ref{eq:h2-bootstrap-proofuse18}) yields that, for any $(\bi,\bj)\in(I_n^m)_{=1}^{\otimes2}$,
\[
|g(\bi,\bj)|\leq C|a(\bi)a(\bj)|\{M_1(n)^2+M_2(n)+n^{-1}\}.
\]
Therefore, by the definition of $A_{2,1}(n)$ in (\ref{eq:def-A2}), we have
\begin{equation}
\Big|\sum_{(I_n^m)_{=1}^{\otimes2}}g(\bi,\bj)\Big|\leq Cn^{2m-1}A_{2,1}(n)\{M_1(n)^2+M_2(n)+n^{-1}\}.\label{eq:h2-bootstrap-proofuse19}
\end{equation}
Equation (\ref{eq:h2-bootstrap-proofuse11}) follows from (\ref{eq:h2-bootstrap-proofuse19})
and (\ref{eq:ustat-bootstrap-condition-3}). This concludes \textbf{Step II}.

The proof is thus finished.
\end{proof}



\begin{proof}[Proof of Lemma \ref{lem:h2-bootstrap-proofuse}]
For a fixed $(\bi,\bj)\in(I_n^m)_{=1}^{\otimes2}$, consider any $(\br,\bs)\in(I_n^m)_{=1}^{\otimes2}$ with $r_p=s_q$ and
$\bi\cap\br=\emptyset=\bj\cap\bs$. By the law of iterated expectation and the independence of $X_i$'s we have
\begin{equation}
E\{h_{2;\bi}(\Xbr)h_{2;\bj}(\Xbs)\}=E[E\{h_{2;\bi}(\Xbr)\mid X_{r_p}\}E\{h_{2;\bj}(\Xbs)\mid X_{s_q}\}].\label{eq:h2-bootstrap-proofuse15}
\end{equation}
For $\bi = (i_1,\ldots,i_m)$ and $l\in[m]$, define
\[
\bi \backslash i_l := (i_1,\ldots,i_{l-1},i_{l+1},\ldots,i_m).
\]
Using the definition of $h_{2;\bi}(\cdot)$ in (\ref{eq:def-h2})
we have
\begin{align}
E\{h_{2;\bi}(\Xbr)\mid X_{r_p}\} & =E\{h(\Xbr)\mid X_{r_p}\}\nonumber  \\
-\sum_{l=1}^mE[E_{\bi\backslash i_l} & \{h^{(l)}(X_{r_l};Y_1,\ldots,Y_{m-1})\mid X_{r_l}\}\mid X_{r_p}]+(m-1)\theta(\bi).\label{eq:h2-bootstrap-lemma-proofuse-1}
\end{align}
By the independence of the $X_i$'s we have
\begin{align}
\sum_{l=1}^mE[E_{\bi\backslash i_l} & \{h^{(l)}(X_{r_l};Y_1,\ldots,Y_{m-1})\mid X_{r_l}\}\mid X_{r_p}].\nonumber \\
 & =\sum_{\substack{l=1\\
l\neq p
}
}^m\theta^{(l)}(r_l;\bi\backslash i_l)+E_{\bi\backslash i_p}\{h^{(l)}(X_{r_p};Y_1,\ldots,Y_{m-1})\mid X_{r_p}\}\label{eq:h2-bootstrap-lemma-proofuse-2}
\end{align}
Using (\ref{eq:h2-bootstrap-lemma-proofuse-1}) and (\ref{eq:h2-bootstrap-lemma-proofuse-2}) we obtain
\begin{align}
E\{h_{2;\bi}(\Xbr)\mid X_{r_p}\} & =E\{h(\Xbr)\mid X_{r_p}\}-\sum_{\substack{l=1\\
l\neq p
}
}^m\theta^{(l)}(r_l;\bi\backslash i_l)\nonumber  \\
- & E_{\bi\backslash i_p}\{h^{(l)}(X_{r_p};Y_1,\ldots,Y_{m-1})\mid X_{r_p}\}+(m-1)\theta(\bi).\label{eq:h2-bootstrap-lemma-proofuse-3}
\end{align}
We introduce some notation: 
\begin{align*}
\bi\backslash i_l\oplus k & :=(i_1,\ldots,i_{l-1},k,i_{l+1},\ldots,i_m), \\
X_{\bi\backslash i_l\oplus k} & :=(X_{i_1},\ldots,X_{i_{l-1}},X_k,X_{i_{l+1}},\ldots,X_{i_m}), \\
\theta(\bi\mid i_l) & := E\{h(\Xbi)\mid X_{i_l}\}.
\end{align*}
Using the new notation, (\ref{eq:h2-bootstrap-lemma-proofuse-3}) becomes
\begin{equation}
E\{h_{2;\bi}(\Xbr)\mid X_{r_p}\}=\theta(\br\mid r_p)-\sum_{\substack{l=1\\
l\neq p
}
}^m\theta(\bi\backslash i_l\oplus r_l)-\theta(\bi\backslash i_p\oplus r_p\mid r_p)+(m-1)\theta(\bi).\label{eq:h2-bootstrap-lemma-proofuse-4}
\end{equation}
 Similarly, we have
\begin{equation}
E\{h_{2;\bj}(\Xbs)\mid X_{s_q}\}=\theta(\bs\mid s_q)-\sum_{\substack{l=1\\
l\neq q
}
}^m\theta(\bj\backslash j_l\oplus s_l)-\theta(\bj\backslash j_q\oplus s_q\mid s_q)+(m-1)\theta(\bj).\label{eq:h2-bootstrap-lemma-proofuse-5}
\end{equation}

By algebra and the law of iterated expectation, we derive from (\ref{eq:h2-bootstrap-lemma-proofuse-4}) and (\ref{eq:h2-bootstrap-lemma-proofuse-5}) that 
\begin{equation}
E[E\{h_{2;\bi}(\Xbr)\mid X_{r_p}\}E\{h_{2;\bj}(\Xbs)\mid X_{s_q}\}]=T_1+T_2+T_{3}+T_{4}+T_{5},\label{eq:h2-bootstrap-lemma-proofuse-6}
\end{equation}
where
\begin{align*}
T_1 & =E\{\theta(\br\mid r_p)\theta(\bs\mid s_q)\}-E\{\theta(\br\mid r_p)\theta(\bj\backslash j_q\oplus s_q\mid s_q)\}\\
 & \ \ \ -E\{\theta(\bi\backslash i_p\oplus r_p\mid r_p)\theta(\bs\mid s_q)\}+E\{\theta(\bi\backslash i_p\oplus r_p\mid r_p)\theta(\bj\backslash j_q\oplus s_q\mid s_q)\},\\
T_2 & =(m-1)\theta(\br)\theta(\bj)-\theta(\br)\sum_{l\neq q}\theta(\bj\backslash j_l\oplus s_l)+(m-1)\theta(\bi)\theta(\bs)-\theta(\bs)\sum_{l\neq p}\theta(\bi\backslash i_l\oplus r_l),\\
T_{3} & =\Big\{\sum_{l=1}^m\theta(\bi\backslash i_l\oplus r_l)-m\theta(\bi)\Big\}\Big\{\sum_{l=1}^m\theta(\bj\backslash j_l\oplus s_l)-m\theta(\bj)\Big\},\\
T_{4} & =\theta(\bi)\sum_{l=1}^m\theta(\bj\backslash j_l\oplus s_l)+\theta(\bj)\sum_{l=1}^m\theta(\bi\backslash i_l\oplus r_l)-2m\theta(\bi)\theta(\bj),\\
T_{5} & =\theta(\bi)\theta(\bj)-\theta(\bi\backslash i_p\oplus r_p)\theta(\bj\backslash j_q\oplus s_q).
\end{align*}
By the definitions of $M_1(n)$ and $M_2(n)$ in (\ref{eq:def-M1})
and (\ref{eq:def-M2}), we have $|T_1|\leq2M_2(n)$, $|T_2|\leq CM_1(n)$,
$|T_{3}|\leq CM_1(n)^2$, $|T_{4}|\leq CM_1(n)$, and $|T_{5}|\leq CM_1(n)$.
Therefore, it follows from (\ref{eq:h2-bootstrap-lemma-proofuse-6})
that
\[
|E[E\{h_{2;\bi}(\Xbr)\mid X_{r_p}\}E\{h_{2;\bj}(\Xbs)\mid X_{s_q}\}]|\leq C\{M_1(n)^2+M_2(n)\}.
\]
This yields (\ref{eq:h2-bootstrap-proofuse-lemma}). The proof is
thus finished.
\end{proof}


\begin{proof}[Proof of Lemma \ref{lem:h2-bootstrap-proofuse-1}]
For a fixed $(\bi,\bj)\in(I_n^m)_{=1}^{\otimes2}$, consider any
$(\br,\bs)\in(I_n^m)_{=0}^{\otimes2}$ such that $\bi\cap\br=\emptyset=\bj\cap\bs$.
By independence of the $X_i$'s we have
\begin{equation}
E\{h_{2;\bi}(\Xbr)h_{2;\bj}(\Xbs)\}=E\{h_{2;\bi}(\Xbr)\}E\{h_{2;\bj}(\Xbs)\}.\label{eq:h2-bootstrap-lemma-proofuse-7}
\end{equation}
By the definition of $h_{2;\bi}(\cdot)$ in (\ref{eq:def-h2}), we have
\begin{align*}
E\{h_{2;\bi}(\Xbr)\} & =E\{h(\Xbr)\}-\sum_{l=1}^mE[E_{\bi\backslash i_l}\{h^{(l)}(X_{r_l};Y_1,\ldots,Y_{m-1})\mid X_{r_l}\}]+(m-1)\theta(\bi)\\
 & =\theta(\br)-\sum_{l=1}^m\theta^{(l)}(r_l;\bi\backslash i_l)+(m-1)\theta(\bi).
\end{align*}
It then follows from the definition of $M_1(n)$ in (\ref{eq:def-M1})
that
\begin{equation}
|E\{h_{2;\bi}(\Xbr)\}|\leq mM_1(n).\label{eq:h2-bootstrap-lemma-proofuse-8}
\end{equation}
Combining (\ref{eq:h2-bootstrap-lemma-proofuse-7}) and (\ref{eq:h2-bootstrap-lemma-proofuse-8}) yields that
\[
|E\{h_{2;\bi}(\Xbr)h_{2;\bj}(\Xbs)\}|\leq m^2M_1(n).
\]
This implies (\ref{eq:h2-bootstrap-proofuse-lemma-1}). The proof is thus finished.
\end{proof}


\subsection{Proof of Lemma \ref{lem:tauap-AN-proofuse}}
\begin{proof}
Define 
\begin{equation}
f_{ij}(x):=P(X_j>x)-\theta(i,j),~~S_i^{(1)}(x):=\sum_{j=1}^{i-1}\frac{n}{i-1}f_{ij}(x),~~{\rm and}~~S_i^{(2)}(x):=\sum_{j=i+1}^n\frac{n}{j-1}f_{ij}(x).\label{eq:def-S1S2-tauap}
\end{equation}
By (\ref{eq:tauap-h1i}) we have $h_{1,i}^{\ap}(X_i)=\{S_i^{(1)}(X_i)-S_i^{(2)}(X_i)\}/(n-1)$
for any $i\in[n]$. In the following we use Lemma \ref{lem:bound-harmonic-sum} repeatedly to bound $\varphi(n):=\sum_{k=1}^nk^{-1}$.

First, we show that (\ref{eq:lemma-bound-h1i-tauap-1}) and (\ref{eq:lemma-bound-h1i-tauap-2}) hold under Condition (i) of Theorem \ref{thm:tauap-taukendall-AN}.
Using $f_{ij}(\cdot)$ notation, Condition (i) becomes
\begin{equation}
P\{\delta_n\leq f_{ij}(X_i)\leq1,\forall j\in[n]\backslash\{i\}\}\geq p_n.\label{eq:condition-i-proofuse-1}
\end{equation}
If $\delta_n\leq f_{ij}(x)\leq1,\forall j\in[n]\backslash\{i\}$,
we have 
\begin{equation}
n\delta_n=\sum_{j=1}^{i-1}\frac{n}{i-1}\delta_n\leq S_i^{(1)}(x)\leq\sum_{j=1}^{i-1}\frac{n}{i-1}=n,\label{eq:bound-tauap-i-1}
\end{equation}
and 
\begin{align}
S_i^{(2)}(x) & \geq\sum_{j=i+1}^n\frac{n}{j-1}\delta_n=n\delta_n\{\varphi(n-1)-\varphi(i-1)\}\geq n\delta_n\log\frac{n}{i},\label{eq:bound-tauap-i-2}\\
S_i^{(2)}(x) & \leq \sum_{j=i+1}^n \frac{n}{j-1}=n\{\varphi(n-1)-\varphi(i-1)\}\leq n\log\frac{n-1}{i-1}.\label{eq:bound-tauap-i-3}
\end{align}
Using (\ref{eq:bound-tauap-i-1}), (\ref{eq:bound-tauap-i-2}), and
(\ref{eq:bound-tauap-i-3}), it follows from (\ref{eq:condition-i-proofuse-1})
that 
\begin{equation}
P\Big\{ n\delta_n\leq S_i^{(1)}(X_i)\leq n,n\delta_n\log\frac{n}{i}\leq S_i^{(2)}(X_i)\leq n\log\frac{n-1}{i-1}\Big\}\geq p_n.\label{eq:condition-i-proofuse-2}
\end{equation}
If $\log\{(n-1)/(i-1)\}\leq\delta_n/2$, the monotonicity property
of probability measure gives
\begin{align}
 & P\{h_{1,i}(X_i)\geq\delta_n/2\}\geq P\Big\{\frac{S_i^{(1)}(X_i)}{n-1}\geq\frac{n}{n-1}\delta_n,\frac{S_i^{(2)}(X_i)}{n-1}\leq\frac{n}{n-1}\log\frac{n-1}{i-1}\Big\}\nonumber  \\
= & P\Big\{ S_i^{(1)}\geq n\delta_n,S_i^{(2)}\leq n\log\frac{n-1}{i-1}\Big\}\geq P\Big\{ n\delta_n\leq S_i^{(1)}\leq n,n\delta_n\log\frac{n}{i}\leq S_i^{(2)}\leq n\log\frac{n-1}{i-1}\Big\}.\label{eq:condition-i-proofuse-3}
\end{align}
Note that 
\begin{equation}
P\{|h_{1,i}(X_i)|\geq\delta_n/2\}\geq P\{h_{1,i}(X_i)\geq\delta_n/2\}.\label{eq:condition-i-proofuse-4}
\end{equation}
Equation (\ref{eq:lemma-bound-h1i-tauap-1}) follows from (\ref{eq:condition-i-proofuse-2}), (\ref{eq:condition-i-proofuse-3}), and (\ref{eq:condition-i-proofuse-4}).
If $\delta_n\log(n/i)\geq2$, the monotonicity property of probability measure gives
\begin{align}
 & P\{h_{1,i}(X_i)\leq-1\}\geq P\Big\{\frac{S_i^{(1)}(X_i)}{n-1}\leq\frac{n}{n-1},\frac{S_i^{(2)}(X_i)}{n-1}\geq\frac{n}{n-1}\delta_n\log\frac{n}{i}\Big\}\nonumber  \\
= & P\Big\{ S_i^{(1)}\leq n,S_i^{(2)}\geq n\delta_n\log\frac{n}{i}\Big\}\geq P\Big\{ n\delta_n\leq S_i^{(1)}\leq n,n\delta_n\log\frac{n}{i}\leq S_i^{(2)}\leq n\log\frac{n-1}{i-1}\Big\}.\label{eq:condition-i-proofuse-5}
\end{align}
Note that 
\begin{equation}
P\{|h_{1,i}(X_i)|\geq1\}\geq P\{h_{1,i}(X_i)\leq-1\}.\label{eq:condition-i-proofuse-6}
\end{equation}
Equation (\ref{eq:lemma-bound-h1i-tauap-2}) follows from (\ref{eq:condition-i-proofuse-2}), (\ref{eq:condition-i-proofuse-5}), and (\ref{eq:condition-i-proofuse-6}).

\medskip{}

Secondly, we show that (\ref{eq:lemma-bound-h1i-tauap-1}) and (\ref{eq:lemma-bound-h1i-tauap-2}) hold under Condition (ii) of Theorem \ref{thm:tauap-taukendall-AN}.
Using $f_{ij}(\cdot)$ notation, Condition (ii) becomes
\begin{equation}
P\{-1\leq f_{ij}(X_i)\leq-\delta_n,\forall j\in[n]\backslash\{i\}\}\geq p_n.\label{eq:condition-ii-proofuse-1}
\end{equation}
By an argument similar to (\ref{eq:bound-tauap-i-1})-(\ref{eq:bound-tauap-i-3}),
if $-1\leq f_{ij}(x)\leq-\delta_n,\forall j\in[n]\backslash\{i\}$
we have
\begin{equation}
-n\leq S_i^{(1)}(x)\leq-n\delta_n,~~{\rm and}~~-n\log\frac{n-1}{i-1}\leq S_i^{(2)}(x)\leq-n\delta_n\log\frac{n}{i}.\label{eq:bound-tauap-ii}
\end{equation}
By (\ref{eq:bound-tauap-ii}), Condition (ii) in Theorem \ref{thm:tauap-taukendall-AN}
implies that 
\begin{equation}
P\Big\{-n\leq S_i^{(1)}(X_i)\leq-n\delta_n,-n\log\frac{n-1}{i-1}\leq S_i^{(2)}(X_i)\leq-n\delta_n\log\frac{n}{i}\Big\}\geq p_n.\label{eq:condition-ii-proofuse}
\end{equation}
If $\log\{(n-1)/(i-1)\}\leq\delta_n/2$, the monotonicity property
of probability measure gives
\begin{align}
 & P\{h_{1,i}(X_i)\leq-\delta_n/2\}\geq P\Big\{\frac{S_i^{(1)}(X_i)}{n-1}\leq-\frac{n}{n-1}\delta_n,\frac{S_i^{(2)}(X_i)}{n-1}\geq-\frac{n}{n-1}\log\frac{n-1}{i-1}\Big\}\nonumber \\
= & P\Big\{ S_i^{(1)}\leq-n\delta_n,S_i^{(2)}\geq-n\log\frac{n-1}{i-1}\Big\}\nonumber \\
\geq & P\Big\{-n\leq S_i^{(1)}\leq-n\delta_n,-n\log\frac{n-1}{i-1}\leq S_i^{(2)}\leq-n\delta_n\log\frac{n}{i}\Big\}.\label{eq:condition-ii-proofuse-3}
\end{align}
Note that 
\begin{equation}
P\{|h_{1,i}(X_i)|\geq\delta_n/2\}\geq P\{h_{1,i}(X_i)\leq-\delta_n/2\}.\label{eq:condition-ii-proofuse-4}
\end{equation}
Equation (\ref{eq:lemma-bound-h1i-tauap-1}) follows from (\ref{eq:condition-ii-proofuse}),
(\ref{eq:condition-ii-proofuse-3}), and (\ref{eq:condition-ii-proofuse-4}).
If $\delta_n\log(n/i)\geq2$, the monotonicity property of probability
measure gives
\begin{align}
 & P\{h_{1,i}(X_i)\geq1\}\geq P\Big\{\frac{S_i^{(1)}(X_i)}{n-1}\geq-\frac{n}{n-1},\frac{S_i^{(2)}(X_i)}{n-1}\leq-\frac{n}{n-1}\delta_n\log\frac{n}{i}\Big\}\nonumber \\
= & P\Big\{ S_i^{(1)}\geq-n,S_i^{(2)}\leq-n\delta_n\log\frac{n}{i}\Big\}\nonumber  \\
\geq & P\Big\{-n\leq S_i^{(1)}\leq-n\delta_n,-n\log\frac{n-1}{i-1}\leq S_i^{(2)}\leq-n\delta_n\log\frac{n}{i}\Big\}.\label{eq:condition-ii-proofuse-5}
\end{align}
Note that 
\begin{equation}
P\{|h_{1,i}(X_i)|\geq1\}\geq P\{h_{1,i}(X_i)\geq1\}.\label{eq:condition-ii-proofuse-6}
\end{equation}
Equation (\ref{eq:lemma-bound-h1i-tauap-2}) follows from (\ref{eq:condition-ii-proofuse}),
(\ref{eq:condition-ii-proofuse-5}), and (\ref{eq:condition-ii-proofuse-6}).

This completes the proof.
\end{proof}


\subsection{Proof of Lemma \ref{lem:taukendall-AN-proofuse}}
\begin{proof}
Define 
\begin{equation}
f_{ij}(x):=P(X_j>x)-\theta(i,j),~~S_i^{(1)}(x):=\sum_{j=1}^{i-1}f_{ij}(x),~~{\rm and}~~S_i^{(2)}(x):=\sum_{j=i+1}^nf_{ij}(x).\label{eq:def-S1S2-tauap-1}
\end{equation}
By (\ref{eq:taukendall-h1i}) we have $h_{1,i}^{\ap}(X_i)=\{S_i^{(1)}(X_i)-S_i^{(2)}(X_i)\}/(n-1)$ for $2\leq i\leq n$.

First, we show that (\ref{eq:lemma-bound-h1i-taukendall-1}) and (\ref{eq:lemma-bound-h1i-taukendall-2}) hold under Condition (i) of Theorem \ref{thm:tauap-taukendall-AN}.
Using $f_{ij}(\cdot)$ notation, Condition (i) becomes
\begin{equation}
P\{\delta_n\leq f_{ij}(X_i)\leq1,\forall j\in[n]\backslash\{i\}\}\geq p_n.\label{eq:condition-i-proofuse-1-1}
\end{equation}
If $\delta_n\leq f_{ij}(x)\leq1,\forall j\in[n]\backslash\{i\}$,
we have 
\begin{equation}
(i-1)\delta_n=\sum_{j=1}^{i-1}\delta_n\leq S_i^{(1)}(x)\leq\sum_{j=1}^{i-1}1=i-1,\label{eq:bound-tauap-i-1-1}
\end{equation}
and
\begin{equation}
(n-i)\delta_n=\sum_{j=i+1}^n\delta_n\leq S_i^{(2)}(x)\leq\sum_{j=i+1}^n1=n-i.\label{eq:bound-tauap-i-2-1}
\end{equation}
Using (\ref{eq:bound-tauap-i-1-1}) and (\ref{eq:bound-tauap-i-2-1}), it follows from (\ref{eq:condition-i-proofuse-1-1}) that 
\begin{equation}
P\Big\{(i-1)\delta_n\leq S_i^{(1)}(X_i)\leq i-1,(n-i)\delta_n\leq S_i^{(2)}(X_i)\leq n-i\Big\}\geq p_n.\label{eq:condition-i-proofuse-2-1}
\end{equation}
If $n-i\leq(i-1)\delta_n/2$, the monotonicity property of probability
measure gives
\begin{align}
 & P\Big\{ h_{1,i}(X_i)\geq\frac{i-1}{n-1}\frac{\delta_n}{2}\Big\}\geq P\Big\{\frac{S_i^{(1)}(X_i)}{n-1}\geq\frac{i-1}{n-1}\delta_n,\frac{S_i^{(2)}(X_i)}{n-1}\leq\frac{n-i}{n-1}\Big\}\nonumber  \\
= & P\Big\{ S_i^{(1)}\geq(i-1)\delta_n,S_i^{(2)}\leq n-i\Big\}\nonumber  \\
\geq & P\Big\{(i-1)\delta_n\leq S_i^{(1)}\leq i-1,(n-i)\delta_n\leq S_i^{(2)}\leq n-i\Big\}.\label{eq:condition-i-proofuse-3-1}
\end{align}
Note that 
\begin{equation}
P\Big\{|h_{1,i}(X_i)|\geq\frac{i-1}{n-1}\frac{\delta_n}{2}\Big\}\geq P\Big\{ h_{1,i}(X_i)\geq\frac{i-1}{n-1}\frac{\delta_n}{2}\Big\}.\label{eq:condition-i-proofuse-4-1}
\end{equation}
Equation (\ref{eq:lemma-bound-h1i-taukendall-1}) follows from (\ref{eq:condition-i-proofuse-2-1}), (\ref{eq:condition-i-proofuse-3-1}), and (\ref{eq:condition-i-proofuse-4-1}).
If $i-1\leq(n-i)\delta_n/2$, the monotonicity property of probability measure gives
\begin{align}
 & P\{h_{1,i}(X_i)\leq-\frac{n-i}{n-1}\frac{\delta_n}{2}\}\geq P\Big\{\frac{S_i^{(1)}(X_i)}{n-1}\leq\frac{i-1}{n-1},\frac{S_i^{(2)}(X_i)}{n-1}\geq\frac{n-i}{n-1}\delta_n\Big\}\nonumber  \\
= & P\Big\{ S_i^{(1)}\leq i-1,S_i^{(2)}\geq(n-i)\delta_n\Big\}\nonumber \\
\geq & P\Big\{(i-1)\delta_n\leq S_i^{(1)}\leq i-1,(n-i)\delta_n\leq S_i^{(2)}\leq n-i\Big\}.\label{eq:condition-i-proofuse-5-1}
\end{align}
Note that 
\begin{equation}
P\Big\{|h_{1,i}(X_i)|\geq\frac{n-i}{n-1}\frac{\delta_n}{2}\Big\}\geq P\Big\{ h_{1,i}(X_i)\leq-\frac{n-i}{n-1}\frac{\delta_n}{2}\Big\}.\label{eq:condition-i-proofuse-6-1}
\end{equation}
Equation (\ref{eq:lemma-bound-h1i-taukendall-2}) follows from (\ref{eq:condition-i-proofuse-2-1}), (\ref{eq:condition-i-proofuse-5-1}), and (\ref{eq:condition-i-proofuse-6-1}).

\medskip{}

Secondly, we show that (\ref{eq:lemma-bound-h1i-taukendall-1}) and (\ref{eq:lemma-bound-h1i-taukendall-2}) hold under Condition (ii) of Theorem \ref{thm:tauap-taukendall-AN}.
Using $f_{ij}(\cdot)$ notation, Condition (ii) becomes
\begin{equation}
P\{-1\leq f_{ij}(X_i)\leq-\delta_n,\forall j\in[n]\backslash\{i\}\}\geq p_n.\label{eq:condition-ii-proofuse-1-1}
\end{equation}
If $-1\leq f_{ij}(x)\leq-\delta_n,\forall j\in[n]\backslash\{i\}$,
we have 
\begin{equation}
-(i-1)=-\sum_{j=1}^{i-1}1\leq S_i^{(1)}(x)\leq-\sum_{j=1}^{i-1}\delta_n=-(i-1)\delta_n,\label{eq:bound-tauap-i-1-1-1}
\end{equation}
and
\begin{equation}
-(n-i)=-\sum_{j=i+1}^n1\leq S_i^{(2)}(x)\leq-\sum_{j=i+1}^n\delta_n=-(n-i)\delta_n.\label{eq:bound-tauap-i-2-1-1}
\end{equation}
Using (\ref{eq:bound-tauap-i-1-1-1}) and (\ref{eq:bound-tauap-i-2-1-1}), it follows from (\ref{eq:condition-i-proofuse-1-1}) that 
\begin{equation}
P\Big\{-(i-1)\leq S_i^{(1)}(X_i)\leq-(i-1)\delta_n,-(n-i)\leq S_i^{(2)}(X_i)\leq-(n-i)\delta_n\Big\}\geq p_n.\label{eq:condition-i-proofuse-2-1-1}
\end{equation}
If $n-i\leq(i-1)\delta_n/2$, the monotonicity property of probability
measure gives
\begin{align}
 & P\Big\{ h_{1,i}(X_i)\leq-\frac{i-1}{n-1}\frac{\delta_n}{2}\Big\}\geq P\Big\{\frac{S_i^{(1)}(X_i)}{n-1}\leq-\frac{i-1}{n-1}\delta_n,\frac{S_i^{(2)}(X_i)}{n-1}\geq-\frac{n-i}{n-1}\Big\}\nonumber \\
= & P\Big\{ S_i^{(1)}\leq-(i-1)\delta_n,S_i^{(2)}\geq-(n-i)\Big\}\nonumber  \\
\geq & P\Big\{-(i-1)\leq S_i^{(1)}\leq-(i-1)\delta_n,-(n-i)\leq S_i^{(2)}\leq-(n-i)\delta_n\Big\}.\label{eq:condition-i-proofuse-3-1-1}
\end{align}
Note that 
\begin{equation}
P\Big\{|h_{1,i}(X_i)|\geq\frac{i-1}{n-1}\frac{\delta_n}{2}\Big\}\geq P\Big\{ h_{1,i}(X_i)\leq-\frac{i-1}{n-1}\frac{\delta_n}{2}\Big\}.\label{eq:condition-i-proofuse-4-1-1}
\end{equation}
Equation (\ref{eq:lemma-bound-h1i-taukendall-1}) follows from (\ref{eq:condition-i-proofuse-2-1-1}), (\ref{eq:condition-i-proofuse-3-1-1}), and (\ref{eq:condition-i-proofuse-4-1-1}).
If $i-1\leq(n-i)\delta_n/2$, the monotonicity property of probability measure gives
\begin{align}
 & P\{h_{1,i}(X_i)\geq\frac{n-i}{n-1}\frac{\delta_n}{2}\}\geq P\Big\{\frac{S_i^{(1)}(X_i)}{n-1}\geq-\frac{i-1}{n-1},\frac{S_i^{(2)}(X_i)}{n-1}\leq-\frac{n-i}{n-1}\delta_n\Big\}\nonumber  \\
= & P\Big\{ S_i^{(1)}\geq-(i-1),S_i^{(2)}\leq-(n-i)\delta_n\Big\}\nonumber  \\
\geq & P\Big\{-(i-1)\leq S_i^{(1)}\leq-(i-1)\delta_n,-(n-i)\leq S_i^{(2)}\leq-(n-i)\delta_n\Big\}.\label{eq:condition-i-proofuse-5-1-1}
\end{align}
Note that 
\begin{equation}
P\Big\{|h_{1,i}(X_i)|\geq\frac{n-i}{n-1}\frac{\delta_n}{2}\Big\}\geq P\Big\{ h_{1,i}(X_i)\geq\frac{n-i}{n-1}\frac{\delta_n}{2}\Big\}.\label{eq:condition-i-proofuse-6-1-1}
\end{equation}
Equation (\ref{eq:lemma-bound-h1i-taukendall-2}) follows from (\ref{eq:condition-i-proofuse-2-1-1}), (\ref{eq:condition-i-proofuse-5-1-1}), and (\ref{eq:condition-i-proofuse-6-1-1}).

This completes the proof.
\end{proof}


\subsection{Proof of Lemma \ref{lem:tail-condition-lemma-1}}
\begin{proof}
As in the statement of Lemma \ref{lem:tail-condition-lemma-1}, we consider a fixed $i\in[n]$.
For any $j\in[n]\backslash\{i\}$, we have $\rho_{ij}^{-1}\leq\rho_n$ and $-r_{ij}\leq R_n$.
This combined with (\ref{eq:tail-condition-lemma-1-zi}) implies that $z_i\geq\rho_{ij}^{-1}t_0-r_{ij}$, or equivalently
\begin{equation}
\rho_{ij}(z_i+r_{ij})\geq t_0.\label{eq:tail-condition-lemma-1-proofuse1}
\end{equation}
Equations (\ref{eq:tail-condition-lemma-1-proofuse1}) and (\ref{eq:tail-condition-1-1}) imply that
\begin{equation}
F_j^{c}\{\rho_{ij}(z_i+r_{ij})\}\leq c_2\{\rho_{ij}(z_i+r_{ij})\}^{-b_2}.\label{eq:tail-condition-lemma-1-proofuse2}
\end{equation}
Define
\[
\delta_n:=\min\Big\{\frac{c_1}{2}R_n^{-b_1},  \frac{c_1}{2}t_0^{-b_1}, \frac{1}{2}\Big\}.
\]
This implies that $\delta_n\in(0,1)$ and
\begin{align}
-\frac{\delta_n}{c_2}+\frac{c_1}{c_2}t_0^{-b_1} & \geq\frac{c_1}{2c_2}t_0^{-b_1},\label{eq:tail-condition-lemma-1-delta-1}\\
~~{\rm and}~~-\frac{\delta_n}{c_2}+\frac{c_1}{c_2}R_n^{-b_1} & \geq\frac{c_1}{2c_2}R_n^{-b_1}.\label{eq:tail-condition-lemma-1-delta-2}
\end{align}
For an arbitrary $j\in[n]\backslash\{i\}$, either $r_{ij}(1+\rho_{ij}^{-2})^{-1/2}\leq t_0$ or $r_{ij}(1+\rho_{ij}^{-2})^{-1/2}>t_0$ holds.
In the following we show $f_{ij}(x)\leq-\delta_n$ for all $j\in[n]\backslash\{i\}$ under these two mutually exclusive and collectively exhaustive cases.

\medskip{}

\textbf{Case 1}: Assume that for a fixed $j$ we have
\begin{equation}
r_{ij}(1+\rho_{ij}^{-2})^{-1/2}\leq t_0.\label{eq:tail-condition-lemma-1-proofuse3}
\end{equation}
By the monotonicity of $F_{ji}^{c}(\cdot)$ we have
\begin{equation}
F_{ji}^{c}\{r_{ij}(1+\rho_{ij}^{-2})^{-1/2}\}\geq F_{ji}^{c}(t_0).\label{eq:tail-condition-lemma-1-proofuse4}
\end{equation}
By (\ref{eq:tail-condition-1-2}) we have
\begin{equation}
F_{ji}^{c}(t_0)\geq c_1t_0^{-b_1}.\label{eq:tail-condition-lemma-1-proofuse5}
\end{equation}
Combining (\ref{eq:tail-condition-lemma-1-proofuse4}) and (\ref{eq:tail-condition-lemma-1-proofuse5}) yields
\begin{equation}
F_{ji}^{c}\{r_{ij}(1+\rho_{ij}^{-2})^{-1/2}\}\geq c_1t_0^{-b_1}.\label{eq:tail-condition-lemma-1-proofuse6}
\end{equation}
Combining (\ref{eq:tail-condition-proofuse-fijFjFji}), (\ref{eq:tail-condition-lemma-1-proofuse2}),
and (\ref{eq:tail-condition-lemma-1-proofuse6}) gives
\begin{equation}
f_{ij}(x)\leq c_2\{\rho_{ij}(z_i+r_{ij})\}^{-b_2}-c_1t_0^{-b_1}.\label{eq:tail-condition-lemma-1-proofuse7}
\end{equation}
Equation (\ref{eq:tail-condition-lemma-1-delta-1}) implies
\begin{equation}
\Big(-\frac{\delta_n}{c_2}+\frac{c_1}{c_2}t_0^{-b_1}\Big)^{-1/b_2}\leq\Big(t_0^{-b_1}\frac{c_1}{2c_2}\Big)^{-1/b_2}.\label{eq:tail-condition-lemma-1-proofuse8}
\end{equation}
Noting that $t_0>0$ and $R_n\geq-r_{ij}$, (\ref{eq:tail-condition-lemma-1-zi}) implies
\begin{equation}
z_i\geq-r_{ij}+\Big(t_0^{-b_1}\frac{c_1}{2c_2}\Big)^{-1/b_2}\rho_n,\label{eq:tail-condition-lemma-1-proofuse9}
\end{equation}
Combining (\ref{eq:tail-condition-lemma-1-proofuse8}) and (\ref{eq:tail-condition-lemma-1-proofuse9})
gives 
\begin{equation}
\rho_{ij}(z_i+r_{ij})\geq \Big(-\frac{\delta_n}{c_2}+\frac{c_1}{c_2}t_0^{-b_1}\Big)^{-1/b_2}.\label{eq:tail-condition-lemma-1-proofuse10}
\end{equation}
Therefore, by (\ref{eq:tail-condition-lemma-1-proofuse7}) and (\ref{eq:tail-condition-lemma-1-proofuse10})
we deduce
\[
f_{ij}(x)\leq-\delta_n+c_1t_0^{-b_1}-c_1t_0^{-b_1}=-\delta_n.
\]

\medskip{}

\textbf{Case 2}: Assume that for a fixed $j$ we have
\begin{equation}
r_{ij}(1+\rho_{ij}^{-2})^{-1/2}>t_0.\label{eq:tail-condition-lemma-1-proofuse11}
\end{equation}
By (\ref{eq:tail-condition-1-2}) we have
\begin{equation}
F_{ji}^{c}\{r_{ij}(1+\rho_{ij}^{-2})^{-1/2}\}\geq c_1\{r_{ij}(1+\rho_{ij}^{-2})^{-1/2}\}^{-b_1}.\label{eq:tail-condition-lemma-1-proofuse12}
\end{equation}
Combining (\ref{eq:tail-condition-proofuse-fijFjFji}), (\ref{eq:tail-condition-lemma-1-proofuse2}),
and (\ref{eq:tail-condition-lemma-1-proofuse12}) gives
\begin{equation}
f_{ij}(x)\leq c_2\{\rho_{ij}(z_i+r_{ij})\}^{-b_2}-c_1\{r_{ij}(1+\rho_{ij}^{-2})^{-1/2}\}^{-b_1}.\label{eq:tail-condition-lemma-1-proofuse13}
\end{equation}
Equation (\ref{eq:tail-condition-lemma-1-delta-2}) implies
\begin{equation}
\Big(-\frac{\delta_n}{c_2}+\frac{c_1}{c_2}R_n^{-b_1}\Big)^{-1/b_2}\leq\Big(\frac{c_1}{2c_2}R_n^{-b_1}\Big)^{-1/b_2}.\label{eq:tail-condition-lemma-1-proofuse14}
\end{equation}
Noting that $t_0>0$ and $R_n\geq-r_{ij}$, (\ref{eq:tail-condition-lemma-1-zi})
implies
\begin{equation}
z_i\geq-r_{ij}+\rho_{ij}^{-1}\Big(\frac{c_1}{2c_2}\Big)^{-1/b_2}R_n^{b_1/b_2}.\label{eq:tail-condition-lemma-1-proofuse15}
\end{equation}
Combining (\ref{eq:tail-condition-lemma-1-proofuse14}) and (\ref{eq:tail-condition-lemma-1-proofuse15})
gives 
\begin{equation}
\rho_{ij}(z_i+r_{ij})\geq \Big(-\frac{\delta_n}{c_2}+\frac{c_1}{c_2}R_n^{-b_1}\Big)^{-1/b_2}.\label{eq:tail-condition-lemma-1-proofuse16}
\end{equation}
Equation (\ref{eq:tail-condition-lemma-1-proofuse16}) implies
\begin{equation}
c_2\{\rho_{ij}(z_i+r_{ij})\}^{-b_2}\leq-\delta_n+c_1R_n^{-b_1}.\label{eq:tail-condition-lemma-1-proofuse17}
\end{equation}
Since $r_{ij}\leq R_n$ and $(1+\rho_{ij}^{-2})^{-1/2}\leq1$, we
have
\begin{equation}
c_1\{r_{ij}(1+\rho_{ij}^{-2})^{-1/2}\}^{-b_1}\geq c_1R_n^{-b_1}.\label{eq:tail-condition-lemma-1-proofuse18}
\end{equation}
Therefore, by (\ref{eq:tail-condition-lemma-1-proofuse13}), (\ref{eq:tail-condition-lemma-1-proofuse17}), and (\ref{eq:tail-condition-lemma-1-proofuse18}) we deduce
\[
f_{ij}(x)\leq-\delta_n+c_1R_n^{-b_1}-c_1R_n^{-b_1}=-\delta_n.
\]

This completes the proof.
\end{proof}


\subsection{Proof of Lemma \ref{lem:tail-condition-lemma-2}}

\begin{proof}
For any $j\in[n]\backslash\{i\}$, we have $\rho_{ij}^{-1}\leq\rho_n$
and $-r_{ij}\leq R_n$. This combined with (\ref{eq:tail-condition-lemma-2-zi})
implies that $z_i\geq\rho_{ij}^{-1}t_0-r_{ij}$, or equivalently
\begin{equation}
\rho_{ij}(z_i+r_{ij})\geq t_0.\label{eq:tail-condition-lemma-2-proofuse1}
\end{equation}
Equations (\ref{eq:tail-condition-lemma-2-proofuse1}) and (\ref{eq:tail-condition-2-1})
imply that
\begin{equation}
F_j^{c}\{\rho_{ij}(z_i+r_{ij})\}\leq c_2\exp[-b_2\{\rho_{ij}(z_i+r_{ij})\}^{\lambda}].\label{eq:tail-condition-lemma-2-proofuse2}
\end{equation}
Define 
\[
\delta_n:=\min\Big\{ \frac{c_1}{2}\exp(-b_1R_n^{\lambda}), \frac{c_1}{2}\exp(-b_1t_0^{\lambda}), \frac{1}{2}\Big\}.
\]
This implies that $\delta_n\in(0,1)$ and that
\begin{align}
-\frac{\delta_n}{c_2}+\frac{c_1}{c_2}\exp(-b_1t_0^{\lambda}) & \geq\frac{c_1}{2c_2}\exp(-b_1t_0^{\lambda})\label{eq:tail-condition-lemma-2-delta-1}\\
~~{\rm and}~~-\frac{\delta_n}{c_2}+\frac{c_1}{c_2}\exp(-b_1R_n^{\lambda}) & \geq\frac{c_1}{2c_2}\exp(-b_1R_n^{\lambda}).\label{eq:tail-condition-lemma-2-delta-2}
\end{align}
In the following we show $f_{ij}(x)\leq-\delta_n$ for all $j\in[n]\backslash\{i\}$ under these two mutually exclusive and collectively exhaustive cases.

\medskip{}

\textbf{Case 1}: Assume that for a fixed $j$ we have
\begin{equation}
r_{ij}(1+\rho_{ij}^{-2})^{-1/2}\leq t_0.\label{eq:tail-condition-lemma-2-proofuse3}
\end{equation}
By the monotonicity of $F_{ji}^{c}(\cdot)$ we have
\begin{equation}
F_{ji}^{c}\{r_{ij}(1+\rho_{ij}^{-2})^{-1/2}\}\geq F_{ji}^{c}(t_0).\label{eq:tail-condition-lemma-2-proofuse4}
\end{equation}
By (\ref{eq:tail-condition-2-2}) we have
\begin{equation}
F_{ji}^{c}(t_0)\geq c_1\exp(-b_1t_0^{\lambda}).\label{eq:tail-condition-lemma-2-proofuse5}
\end{equation}
Combining (\ref{eq:tail-condition-lemma-2-proofuse4}) and (\ref{eq:tail-condition-lemma-2-proofuse5})
yields
\begin{equation}
F_{ji}^{c}\{r_{ij}(1+\rho_{ij}^{-2})^{-1/2}\}\geq c_1\exp(-b_1t_0^{\lambda}).\label{eq:tail-condition-lemma-2-proofuse6}
\end{equation}
Combining (\ref{eq:tail-condition-proofuse-fijFjFji}), (\ref{eq:tail-condition-lemma-2-proofuse2}),
and (\ref{eq:tail-condition-lemma-2-proofuse6}) gives
\begin{equation}
f_{ij}(x)\leq c_2\exp[-b_2\{\rho_{ij}(z_i+r_{ij})\}^{\lambda}]-c_1\exp(-b_1t_0^{\lambda}).\label{eq:tail-condition-lemma-2-proofuse7}
\end{equation}
Equation (\ref{eq:tail-condition-lemma-2-delta-1}) implies
\begin{equation}
-\frac{1}{b_2}\log\{-\frac{\delta_n}{c_2}+\frac{c_1}{c_2}\exp(-b_1t_0^{\lambda})\}\leq-\frac{1}{b_2}\log\frac{c_1}{2c_2}+\frac{b_1}{b_2}t_0^{\lambda}.\label{eq:tail-condition-lemma-2-proofuse8}
\end{equation}
Noting that $t_0>0$ and $R_n\geq-r_{ij}$, (\ref{eq:tail-condition-lemma-2-zi})
implies
\begin{equation}
z_i\geq-r_{ij}+\rho_{ij}^{-1}K_{3}\geq-r_{ij}+\rho_{ij}^{-1}\Big(-\frac{1}{b_2}\log\frac{c_1}{2c_2}+\frac{b_1}{b_2}t_0^{\lambda}\Big)^{1/\lambda}.\label{eq:tail-condition-lemma-2-proofuse9}
\end{equation}
Combining (\ref{eq:tail-condition-lemma-2-proofuse8}) and (\ref{eq:tail-condition-lemma-2-proofuse9})
gives 
\begin{equation}
\rho_{ij}(z_i+r_{ij})\geq \Big[-\frac{1}{b_2}\log\Big\{-\frac{\delta_n}{c_2}+\frac{c_1}{c_2}\exp(-b_1t_0^{\lambda})\Big\}\Big]^{1/\lambda}.\label{eq:tail-condition-lemma-2-proofuse10}
\end{equation}
Therefore, by (\ref{eq:tail-condition-lemma-2-proofuse7}) and (\ref{eq:tail-condition-lemma-2-proofuse10})
we deduce
\[
f_{ij}(x)\leq-\delta_n+c_1\exp(-b_1t_0^{\lambda})-c_1\exp(-b_1t_0^{\lambda})=-\delta_n.
\]

\medskip{}

\textbf{Case 2}: Assume that for a fixed $j$ we have
\begin{equation}
r_{ij}(1+\rho_{ij}^{-2})^{-1/2}>t_0.\label{eq:tail-condition-lemma-2-proofuse11}
\end{equation}
By (\ref{eq:tail-condition-2-2}) we have
\begin{equation}
F_{ji}^{c}\{r_{ij}(1+\rho_{ij}^{-2})^{-1/2}\}\geq c_1\exp\Big[-b_1\Big\{r_{ij}\Big(1+\rho_{ij}^{-2}\Big)^{-1/2}\Big\}^{\lambda}\Big].\label{eq:tail-condition-lemma-2-proofuse12}
\end{equation}
Combining (\ref{eq:tail-condition-proofuse-fijFjFji}), (\ref{eq:tail-condition-lemma-2-proofuse2}),
and (\ref{eq:tail-condition-lemma-2-proofuse12}) gives
\begin{equation}
f_{ij}(x)\leq c_2\exp[-b_2\{\rho_{ij}(z_i+r_{ij})\}^{\lambda}]-c_1\exp[-b_1\{r_{ij}(1+\rho_{ij}^{-2})^{-1/2}\}^{\lambda}].\label{eq:tail-condition-lemma-2-proofuse13}
\end{equation}
Equation (\ref{eq:tail-condition-lemma-2-delta-2}) implies
\begin{equation}
-\frac{1}{b_2}\log\{-\frac{\delta_n}{c_2}+\frac{c_1}{c_2}\exp(-b_1R_n^{\lambda})\}\leq-\frac{1}{b_2}\log\frac{c_1}{2c_2}+\frac{b_1}{b_2}R_n^{\lambda}.\label{eq:tail-condition-lemma-2-proofuse14}
\end{equation}
Equation (\ref{eq:tail-condition-lemma-2-zi}) implies
\begin{equation}
z_i\geq R_n+\rho_n\xi(\lambda^{-1})\Big\{\Big(-\frac{1}{b_2}\log\frac{c_1}{2c_2}\Big)^{1/\lambda}+\Big(\frac{b_1}{b_2}R_n^{\lambda}\Big)^{1/\lambda}\Big\}.\label{eq:tail-condition-lemma-2-proofuse14.5}
\end{equation}
It follows from (\ref{eq:tail-condition-lemma-2-proofuse14.5}) and
Lemma \ref{lem:bound-sum-power} that
\begin{equation}
z_i\geq R_n+\rho_n\Big(-\frac{1}{b_2}\log\frac{c_1}{2c_2}+\frac{b_1}{b_2}R_n^{\lambda}\Big)^{1/\lambda}.\label{eq:tail-condition-lemma-2-proofuse14.6}
\end{equation}
Noting that $t_0>0$ and $R_n\geq-r_{ij}$, (\ref{eq:tail-condition-lemma-2-proofuse14.6})
implies
\begin{equation}
z_i\geq-r_{ij}+\rho_{ij}^{-1}\Big(-\frac{1}{b_2}\log\frac{c_1}{2c_2}+\frac{b_1}{b_2}R_n^{\lambda}\Big)^{1/\lambda}.\label{eq:tail-condition-lemma-2-proofuse15}
\end{equation}
Combining (\ref{eq:tail-condition-lemma-2-proofuse14}) and (\ref{eq:tail-condition-lemma-2-proofuse15})
gives 
\begin{equation}
\rho_{ij}(z_i+r_{ij})\geq\Big[-\frac{1}{b_2}\log\{-\frac{\delta_n}{c_2}+\frac{c_1}{c_2}\exp(-b_1R_n^{\lambda})\}\Big]^{1/\lambda}.\label{eq:tail-condition-lemma-2-proofuse16}
\end{equation}
Equation (\ref{eq:tail-condition-lemma-2-proofuse16}) implies
\begin{equation}
c_2\exp[-b_2\{\rho_{ij}(z_i+r_{ij})\}^{\lambda}]\leq-\delta_n+c_1\exp(-b_1R_n^{\lambda}).\label{eq:tail-condition-lemma-2-proofuse17}
\end{equation}
Since $r_{ij}\leq R_n$ and $(1+\rho_{ij}^{-2})^{-1/2}\leq1$, we
have
\begin{equation}
c_1\exp\Big[-b_1\{r_{ij}(1+\rho_{ij}^{-2})^{-1/2}\}^{\lambda}\Big]\geq c_1\exp(-b_1R_n^{\lambda}).\label{eq:tail-condition-lemma-2-proofuse18}
\end{equation}
Therefore, by (\ref{eq:tail-condition-lemma-2-proofuse13}), (\ref{eq:tail-condition-lemma-2-proofuse17}),
and (\ref{eq:tail-condition-lemma-2-proofuse18}) we deduce
\[
f_{ij}(x)\leq-\delta_n+c_1\exp(-b_1R_n^{\lambda})-c_1\exp(-b_1R_n^{\lambda})=-\delta_n.
\]

This completes the proof.
\end{proof}


\subsection{Proof of Lemma \ref{lem:taukendall-bootstrap-lemma}}
\begin{proof}
Consider an arbitrary vector $(l_1,\ldots,l_n)$, with each $l_i\in[n]$.
Define the sign function $\sgn(x):=\ind(x>0)-\ind(x<0)$. It follows from (\ref{eq:taukendall-h1i}) that 
\begin{align}
\sum_{i=1}^nE\{h_{1,i}^{\ken}(X_{l_i})^2\} & =\frac{1}{(n-1)^2}\sum_{i=1}^nE\Big[\sum_{k=1}^n\sgn(i-k)\{P(X_k>X_{l_i}\mid X_{l_i})-P(X_k>X_i)\}\Big]^2\nonumber  \\
 & =T_1-2T_2+T_{3},\label{eq:taukendall-bootstrap-lemma-proofuse-1}
\end{align}
where
\begin{align}
T_1 & =\frac{1}{(n-1)^2}\sum_{i=1}^n\sum_{k_1,k_2=1}^n\sgn(i-k_1)\sgn(i-k_2)E\left[P(X_{k_1}>X_{l_i}\mid X_{l_i})P(X_{k_2}>X_{l_i}\mid X_{l_i})\right],\label{eq:taukendall-bootstrap-lemma-proofuse-2}\\
T_2 & =\frac{1}{(n-1)^2}\sum_{i=1}^n\sum_{k_1,k_2=1}^n\sgn(i-k_1)\sgn(i-k_2)P(X_{k_1}>X_j)P(X_{k_2}>X_i),\label{eq:taukendall-bootstrap-lemma-proofuse-3}\\
T_{3} & =\frac{1}{(n-1)^2}\sum_{i=1}^n\sum_{k_1,k_2=1}^n\sgn(i-k_1)\sgn(i-k_2)P(X_{k_1}>X_i)P(X_{k_2}>X_i).\label{eq:taukendall-bootstrap-lemma-proofuse-4}
\end{align}
We have
\begin{equation}
\sum_{i=1}^n\sum_{k_1,k_2=1}^n\sgn(i-k_1)\sgn(i-k_2)=\sum_{i=1}^n(2i-n-1)^2=\frac{1}{3}n(n-1)(n+1).\label{eq:taukendall-bootstrap-lemma-proofuse-5}
\end{equation}
It follows from (\ref{eq:taukendall-bootstrap-condition-2}), (\ref{eq:taukendall-bootstrap-lemma-proofuse-2}),
and (\ref{eq:taukendall-bootstrap-lemma-proofuse-5}) that
\begin{equation}
T_1=\frac{n(n-1)(n+1)}{3(n-1)^2}\{\eta^2+O(n^{-1/3})\}=\frac{n(n+1)}{3(n-1)}\eta^2+O(n^{2/3}).\label{eq:taukendall-bootstrap-lemma-proofuse-6}
\end{equation}
It follows from (\ref{eq:taukendall-bootstrap-condition-1}), (\ref{eq:taukendall-bootstrap-lemma-proofuse-3}),
(\ref{eq:taukendall-bootstrap-lemma-proofuse-4}), and (\ref{eq:taukendall-bootstrap-lemma-proofuse-5})
that
\begin{align}
T_2 & =\frac{n(n-1)(n+1)}{3(n-1)^2}\{\theta+O(n^{-1/6})\}^2=\frac{n(n+1)}{3(n-1)}\theta^2+O(n^{5/6}),\label{eq:taukendall-bootstrap-lemma-proofuse-7}\\
T_{3} & =\frac{n(n-1)(n+1)}{3(n-1)^2}\{\theta+O(n^{-1/6})\}^2=\frac{n(n+1)}{3(n-1)}\theta^2+O(n^{5/6}).\label{eq:taukendall-bootstrap-lemma-proofuse-8}
\end{align}
Combining (\ref{eq:taukendall-bootstrap-lemma-proofuse-1}) with (\ref{eq:taukendall-bootstrap-lemma-proofuse-6}),
(\ref{eq:taukendall-bootstrap-lemma-proofuse-7}), and (\ref{eq:taukendall-bootstrap-lemma-proofuse-8})
yields 
\begin{equation}
\sum_{i=1}^nE\left\{ h_{1,i}^{\ken}(X_{l_i})^2\right\} =T_1-2T_2+T_{3}=\frac{n(n+1)}{3(n-1)}(\eta^2-\theta^2)+O(n^{5/6}).\label{eq:taukendall-bootstrap-lemma-proofuse-9}
\end{equation}
In (\ref{eq:taukendall-bootstrap-lemma-proofuse-9}), letting $(l_1,\ldots,l_n)=(j,j,\ldots,j)$ yields (\ref{eq:taukendall-bootstrap-lemma-1}), and letting $(l_1,\ldots,l_n)=(j,j,\ldots,j)$ yields (\ref{eq:taukendall-bootstrap-lemma-2}). 

This completes the proof.
\end{proof}


\subsection{Proof of Lemma \ref{lem:tauap-bootstrap-lemma}}
\begin{proof}
Consider an arbitrary vector $(l_1,\ldots,l_n)$, with each $l_i\in[n]$.
It follows from (\ref{eq:tauap-h1i}) that 
\begin{align}
\sum_{i=1}^nE\{h_{1,i}^{\ap}(X_{l_i})^2\} & =\frac{1}{(n-1)^2}\sum_{i=1}^nE\Big[\sum_{k=1}^n\Big\{\frac{n\ind(k<i)}{i-1}-\frac{n\ind(k>i)}{k-1}\Big\}\{P(X_k>X_{l_i}\mid X_{l_i})-P(X_k>X_i)\}\Big]^2\nonumber  \\
 & =T_1-2T_2+T_{3},\label{eq:tauap-bootstrap-lemma-proofuse-1}
\end{align}
where
\begin{align}
T_1 & =\frac{n^2}{(n-1)^2}\sum_{i=1}^n\sum_{k_1,k_2=1}^n\gamma(i,k_1,k_2)E\left[P(X_{k_1}>X_{l_i}\mid X_{l_i})P(X_{k_2}>X_{l_i}\mid X_{l_i})\right],\label{eq:tauap-bootstrap-lemma-proofuse-2}\\
T_2 & =\frac{n^2}{(n-1)^2}\sum_{i=1}^n\sum_{k_1,k_2=1}^n\gamma(i,k_1,k_2)P(X_{k_1}>X_j)P(X_{k_2}>X_i),\label{eq:tauap-bootstrap-lemma-proofuse-3}\\
T_{3} & =\frac{n^2}{(n-1)^2}\sum_{i=1}^n\sum_{k_1,k_2=1}^n\gamma(i,k_1,k_2)P(X_{k_1}>X_i)P(X_{k_2}>X_i),\label{eq:tauap-bootstrap-lemma-proofuse-4}
\end{align}
and
\[
\gamma(i,k_1,k_2):=\Big\{\frac{\ind(k_1<i)}{i-1}-\frac{\ind(k_1>i)}{k_1-1}\Big\}\Big\{\frac{\ind(k_2<i)}{i-1}-\frac{\ind(k_2>i)}{k_2-1}\Big\}.
\]
By Lemma \ref{lem:sum-weight} and Lemma \ref{lem:bound-harmonic-sum}
we have
\begin{equation}
\sum_{i=1}^n\sum_{k_1,k_2=1}^n\gamma(i,k_1,k_2)=(n-1)+\varphi(n-1)=(n-1)+O(\log n).\label{eq:tauap-bootstrap-lemma-proofuse-5}
\end{equation}
It follows from (\ref{eq:tauap-bootstrap-condition-2}), (\ref{eq:tauap-bootstrap-lemma-proofuse-2}),
and (\ref{eq:tauap-bootstrap-lemma-proofuse-5}) that
\begin{equation}
T_1=\frac{n^2\{(n-1)+O(\log n)\}}{(n-1)^2}\{\eta^2+O(n^{-1/3}(\log n)^2)\}=\frac{n^2}{n-1}\eta^2+O\{n^{2/3}(\log n)^2\}.\label{eq:tauap-bootstrap-lemma-proofuse-6}
\end{equation}
It follows from (\ref{eq:tauap-bootstrap-condition-1}), (\ref{eq:tauap-bootstrap-lemma-proofuse-3}),
(\ref{eq:tauap-bootstrap-lemma-proofuse-4}), and (\ref{eq:tauap-bootstrap-lemma-proofuse-5})
that
\begin{align}
T_2 & =\frac{n^2\{(n-1)+O(\log n)\}}{(n-1)^2}\{\theta+O(n^{-1/6}\log n)\}^2=\frac{n^2}{n-1}\theta^2+O(n^{5/6}\log n),\label{eq:tauap-bootstrap-lemma-proofuse-7}\\
T_{3} & =\frac{n^2\{(n-1)+O(\log n)\}}{(n-1)^2}\{\theta+O(n^{-1/6}\log n)\}^2=\frac{n^2}{n-1}\theta^2+O(n^{5/6}\log n).\label{eq:tauap-bootstrap-lemma-proofuse-8}
\end{align}
Combining (\ref{eq:tauap-bootstrap-lemma-proofuse-1}) with (\ref{eq:tauap-bootstrap-lemma-proofuse-6}),
(\ref{eq:tauap-bootstrap-lemma-proofuse-7}), and (\ref{eq:tauap-bootstrap-lemma-proofuse-8})
yields 
\begin{equation}
\sum_{i=1}^nE\left\{ h_{1,i}^{\ap}(X_{l_i})^2\right\} =T_1-2T_2+T_{3}=\frac{n^2}{n-1}(\eta^2-\theta^2)+O(n^{5/6}\log n).\label{eq:taukendall-bootstrap-lemma-proofuse-9-1}
\end{equation}
In (\ref{eq:taukendall-bootstrap-lemma-proofuse-9-1}), letting $(l_1,\ldots,l_n)=(j,j,\ldots,j)$ yields (\ref{eq:taukendall-bootstrap-lemma-1}), and letting $(l_1,\ldots,l_n)=(j,j,\ldots,j)$ yields (\ref{eq:taukendall-bootstrap-lemma-2}). This completes the proof.
\end{proof}

\section{Auxiliary lemmas and proofs}
\label{sec:aux-lemmas}

In this Section, we state and prove (or give reference to) the auxiliary lemmas that are used in the proofs in earlier sections.


\begin{lemma}
\label{lem:moment-bound} There exists a constant $c_m$ which only
depends on $m$, such that the following results hold.
\begin{enumerate}
\item[(i)] For any $n$ and any $(i_1,\ldots,i_m)\in I_n^m$, 
\[
E\{h_{2;i_1,\ldots,i_m}(X_{i_1},\ldots,X_{i_m})^2\}\leq c_mE\{h(X_{i_1},\ldots,X_{i_m})^2\}.
\]

\item[(ii)] For any $n$, any $i\in[n]$, any $(i_1,\ldots,i_{m-1})\in I_{n-1}^{m-1}(-i)$,
and any $l\in[m]$, 
\[
E[\{f_{i_1,\ldots,i_{m-1}}^{(l)}(X_i)-\theta^{(l)}(i;i_1,\ldots,i_{m-1})\}^{4}]\leq c_mE\{h^{(l)}(X_i;X_{i_1},\ldots,X_{i_{m-1}})^{4}\}.
\]

\item[(iii)] For any $n$, any $(i_1,\ldots,i_m)\in I_n^m$, and any $j_1,\ldots,j_m\in[n]$,
\[
E\{h_{2;i_1,\ldots,i_m}(X_{j_1},\ldots,X_{j_m})^2\}\leq c_m\sup_{1\leq k_1,\ldots,k_m\leq n}E\{h(X_{k_1},\ldots,X_{k_m})^2\}.
\]

\end{enumerate}
\end{lemma}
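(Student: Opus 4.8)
The plan is to prove all three parts by the same mechanism. The centered kernel $h_{2;i_1,\ldots,i_m}$ in \eqref{eq:def-h2} is a sum of at most $2m$ terms, each of which is either $h$ itself, one of the reduced kernels $f^{(l)}_{(i_1,\ldots,i_m)\backslash i_l}$ (a partial integral of $h$ against a product of laws drawn from $\{P_1,\ldots,P_n\}$), or the constant $\theta(i_1,\ldots,i_m)$ (a full integral of $h$). Each of these is, in an appropriate sense, a conditional expectation of $h$, hence an $L^2$-contraction of $h$ by Jensen's inequality; the constant term is bounded in absolute value by $\|h\|_{L^1}\le\|h\|_{L^2}$. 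Thus for (i) and (iii) I would write $h_{2;\bi}$ via \eqref{eq:def-h2}, bound the $L^2$-norm of each summand by the $L^2$-norm of a suitable copy of $h$, and combine by Minkowski's inequality; part (ii) is the analogous statement with $L^4$ in place of $L^2$.

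Concretely, for (i) fix $\bi=(i_1,\ldots,i_m)\in I_n^m$. Because the coordinates of $\Xbi=(X_{i_1},\ldots,X_{i_m})$ are independent, the definition of $f^{(l)}_{\bi\backslash i_l}$ below \eqref{eq:def-h2} gives $f^{(l)}_{\bi\backslash i_l}(X_{i_l})=E\{h(\Xbi)\mid X_{i_l}\}$, so $E\{f^{(l)}_{\bi\backslash i_l}(X_{i_l})^2\}\le E\{h(\Xbi)^2\}$ by conditional Jensen, and $\theta(\bi)^2\le E\{h(\Xbi)^2\}$. Minkowski's inequality applied to \eqref{eq:def-h2} then yields $E\{h_{2;\bi}(\Xbi)^2\}\le (2m)^2 E\{h(\Xbi)^2\}$. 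For (ii), independence of $X_i,X_{i_1},\ldots,X_{i_{m-1}}$ identifies $f^{(l)}_{i_1,\ldots,i_{m-1}}(X_i)$ from \eqref{eq:def-fl} with $E\{h^{(l)}(X_i;X_{i_1},\ldots,X_{i_{m-1}})\mid X_i\}$ and $\theta^{(l)}(i;i_1,\ldots,i_{m-1})$ with its expectation, so the quantity in (ii) is the fourth moment of a centered conditional expectation of $A:=h^{(l)}(X_i;X_{i_1},\ldots,X_{i_{m-1}})$; since $\|E\{A\mid X_i\}-EA\|_{L^4}\le\|E\{A\mid X_i\}\|_{L^4}+|EA|\le 2\|A\|_{L^4}$, raising to the fourth power gives the claim with $c_m=16$.

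Part (iii) is where the bookkeeping genuinely matters, and I expect it to be the only real obstacle. The decomposition is again \eqref{eq:def-h2}, but evaluated at $(X_{j_1},\ldots,X_{j_m})$ with the $j_l$ arbitrary in $[n]$, so $f^{(l)}_{\bi\backslash i_l}(X_{j_l})$ is in general \emph{not} a conditional expectation of $h(X_{j_1},\ldots,X_{j_m})$: the $j$'s may repeat or collide with the $i$'s. The key observation is that $f^{(l)}_{\bi\backslash i_l}(X_{j_l})$ is, by its very definition, the conditional expectation given $X_{j_l}$ of $h$ evaluated with its $l$-th coordinate set to $X_{j_l}$ and the remaining $m-1$ coordinates replaced by \emph{independent} variables with laws $P_{i_1},\ldots,P_{i_{l-1}},P_{i_{l+1}},\ldots,P_{i_m}$; hence, by conditional Jensen, $E\{f^{(l)}_{\bi\backslash i_l}(X_{j_l})^2\}$ is at most the integral of $h^2$ against the product measure $P_{i_1}\otimes\cdots\otimes P_{i_{l-1}}\otimes P_{j_l}\otimes P_{i_{l+1}}\otimes\cdots\otimes P_{i_m}$, all of whose marginals lie in $\{P_1,\ldots,P_n\}$, and which is therefore bounded by $\sup_{1\le k_1,\ldots,k_m\le n}E\{h(X_{k_1},\ldots,X_{k_m})^2\}$. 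The same supremum dominates $\theta(\bi)^2$ and $E\{h(X_{j_1},\ldots,X_{j_m})^2\}$, so Minkowski's inequality gives (iii) with $c_m=(2m)^2$; one may thus take the common constant $c_m=\max\{(2m)^2,16\}$ throughout. Beyond this tracking of which product of marginals each term is integrated against, the argument uses nothing more than Jensen's and Minkowski's inequalities.
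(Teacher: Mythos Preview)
Your proposal is correct and follows essentially the same approach as the paper: identify each summand in \eqref{eq:def-h2} as a (conditional) expectation of $h$ against an appropriate product of marginals from $\{P_1,\ldots,P_n\}$, bound its $L^p$ norm by that of $h$ via Jensen, and combine. The only cosmetic difference is that you use Minkowski's inequality to aggregate the summands, whereas the paper uses the cruder bound $(a_1+\cdots+a_k)^2\le 2^{m+2}\sum a_j^2$; your careful tracking in part~(iii) of which product measure each term is integrated against is exactly what the paper does in \eqref{eq:aux-proofuse-8}.
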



\begin{lemma}[{\citealp[Section 1.3, Theorem 2]{lee1990u}}]
\label{lem:cov-conditional-expectation} Consider three random variables
$X,Y,Z$. Assume $Y$ is independent of $Z$ conditional on $X$.
Then for two measurable functions $f,g:\mathbb{R}^2\to\mathbb{R}$,
we have 
\begin{align*}
\cov\{f(X,Y),g(X,Z)\} & =\cov\Big[E\{f(X,Y)\mid X\},E\{g(X,Z)\mid X\}\Big].
\end{align*}

\end{lemma}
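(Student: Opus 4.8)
The plan is to prove the identity by combining the tower property of conditional expectation with the conditional-independence factorization of the product $f(X,Y)g(X,Z)$. Write $\phi(X):=E\{f(X,Y)\mid X\}$ and $\psi(X):=E\{g(X,Z)\mid X\}$, and assume (as is implicit) that $f(X,Y)$ and $g(X,Z)$ are square-integrable, so all expectations below are finite. The entire argument reduces to the single identity
\[
E\{f(X,Y)g(X,Z)\}=E\{\phi(X)\psi(X)\},
\]
after which everything is bookkeeping.

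First I would establish this identity. By the tower property,
\[
E\{f(X,Y)g(X,Z)\}=E\big[E\{f(X,Y)g(X,Z)\mid X\}\big].
\]
Conditionally on $X$, the random variable $f(X,Y)$ is a measurable function of $Y$ alone and $g(X,Z)$ is a measurable function of $Z$ alone; since $Y$ and $Z$ are independent given $X$, the variables $f(X,Y)$ and $g(X,Z)$ are independent given $X$, so the conditional expectation of their product factorizes:
\[
E\{f(X,Y)g(X,Z)\mid X\}=E\{f(X,Y)\mid X\}\,E\{g(X,Z)\mid X\}=\phi(X)\psi(X)\quad\as
\]
Taking expectations yields the displayed identity.

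Finally I would assemble the covariance. Again by the tower property, $E\{f(X,Y)\}=E\{\phi(X)\}$ and $E\{g(X,Z)\}=E\{\psi(X)\}$, hence
\[
\cov\{f(X,Y),g(X,Z)\}=E\{\phi(X)\psi(X)\}-E\{\phi(X)\}E\{\psi(X)\}=\cov\{\phi(X),\psi(X)\},
\]
which is exactly the right-hand side of the claim. The only genuinely substantive step is the conditional-independence factorization of $E\{f(X,Y)g(X,Z)\mid X\}$; everything else is the law of total expectation. Since this is classical (see, e.g., \citealp{lee1990u}), in the final write-up I would simply cite it rather than reproduce the three lines.
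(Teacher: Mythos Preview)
Your proof is correct and is the standard argument. Note that the paper does not actually supply its own proof of this lemma: it simply cites \citet[Section~1.3, Theorem~2]{lee1990u} and moves on, which is exactly what you suggest doing in your final sentence.
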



\begin{lemma}
\label{lem:var-Pconverge} Consider a sequence of random variables
$X_1,X_2,\ldots$, with $E(X_n)=0$ for all $X_n$. If $\var(X_n)\to0$,
then $X_n\stackrel{P}{\to}0$.
\end{lemma}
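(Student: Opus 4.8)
\textbf{Proof proposal for Lemma \ref{lem:var-Pconverge}.}

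The plan is to invoke Chebyshev's inequality directly. Fix an arbitrary $\epsilon > 0$. Since $E(X_n) = 0$, we have $\var(X_n) = E(X_n^2)$, so Chebyshev's (equivalently Markov's applied to $X_n^2$) inequality gives
\[
P\{|X_n| \geq \epsilon\} \leq \frac{\var(X_n)}{\epsilon^2}.
\]
By hypothesis $\var(X_n) \to 0$ as $n \to \infty$, hence the right-hand side tends to $0$. Therefore $\lim_{n\to\infty} P\{|X_n| \geq \epsilon\} = 0$ for every $\epsilon > 0$, which is precisely the definition of $X_n \stackrel{P}{\to} 0$ recorded in Section \ref{sec:notation}. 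This completes the proof.

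There is essentially no obstacle here: the statement is a one-line consequence of Chebyshev's inequality, and the only thing to be slightly careful about is using the mean-zero assumption to identify $\var(X_n)$ with the second moment before applying Markov's inequality. No auxiliary results beyond elementary probability are needed, so the argument can be stated in full in two or three sentences rather than merely sketched.
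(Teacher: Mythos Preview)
Your proof is correct. The paper does not actually supply a proof of Lemma~\ref{lem:var-Pconverge}; it is listed among the auxiliary results without argument or citation, presumably because it is elementary. Your Chebyshev-inequality argument is the standard one and is entirely adequate.
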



\begin{lemma}[Lyapunov's central limit theorem]
 \label{lem:lyapunov-clt} Let $X_1,X_2,\ldots$ be a sequence
of independent random variables and let $S_n=n^{-1}\sum_{i=1}^nX_i$.
If there exists $\delta>0$ such that 
\begin{align}
\lim_{n\to\infty}\frac{\sum_{i=1}^nE|X_i-E(X_i)|^{2+\delta}}{\Big\{\sum_{i=1}^nE\left|X_i-E(X_i)\right|^2\Big\}^{\frac{2+\delta}{2}}} & =0,\label{eq:lyapunov-clt-condition}
\end{align}
then 
\begin{align*}
\var(S_n)^{-1/2}\{S_n-E(S_n)\} & \stackrel{d}{\to}N(0,1).
\end{align*}

\end{lemma}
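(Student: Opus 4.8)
The plan is to deduce Lyapunov's central limit theorem from the Lindeberg--Feller theorem by showing that the ratio condition \eqref{eq:lyapunov-clt-condition} forces the Lindeberg condition. First I would reduce to the centered case: replacing each $X_i$ by $X_i-E(X_i)$ alters neither the hypothesis nor the conclusion, so assume $E(X_i)=0$. Set $s_n^2:=\sum_{i=1}^n\var(X_i)=\sum_{i=1}^nE(X_i^2)$ (the hypothesis implicitly requires $s_n>0$, since otherwise the denominator vanishes and $\var(S_n)=0$), and define the triangular array $Y_{n,i}:=X_i/s_n$ for $1\le i\le n$. Then the $Y_{n,i}$ are row-wise independent and mean zero with $\sum_{i=1}^nE(Y_{n,i}^2)=1$, and because the factor $n^{-1}$ in the definition of $S_n$ cancels in the ratio, $\var(S_n)^{-1/2}\{S_n-E(S_n)\}=\sum_{i=1}^nY_{n,i}$. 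In this notation the hypothesis reads $L_n:=\sum_{i=1}^nE|Y_{n,i}|^{2+\delta}=s_n^{-(2+\delta)}\sum_{i=1}^nE|X_i|^{2+\delta}\to 0$.

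Next I would verify the Lindeberg condition for $\{Y_{n,i}\}$: for any fixed $\epsilon>0$, on the event $\{|Y_{n,i}|>\epsilon\}$ one has $|Y_{n,i}|/\epsilon>1$, hence (using $\delta>0$)
\[
\sum_{i=1}^nE\big\{Y_{n,i}^2\,\ind(|Y_{n,i}|>\epsilon)\big\}\le\sum_{i=1}^nE\Big\{Y_{n,i}^2\Big(\tfrac{|Y_{n,i}|}{\epsilon}\Big)^{\delta}\ind(|Y_{n,i}|>\epsilon)\Big\}\le\epsilon^{-\delta}L_n\to 0.
\]
With the Lindeberg condition in hand, the Lindeberg--Feller central limit theorem yields $\sum_{i=1}^nY_{n,i}\stackrel{d}{\to}N(0,1)$, which upon translating back is exactly $\var(S_n)^{-1/2}\{S_n-E(S_n)\}\stackrel{d}{\to}N(0,1)$.

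The proof has essentially no obstacle of its own; the only substantive input is the Lindeberg--Feller theorem, which may be cited. If a self-contained argument is preferred, I would instead argue directly with characteristic functions: using $|e^{it}-1-it+t^2/2|\le\min(|t|^3,t^2)$, expand $\log\prod_{i=1}^n\phi_{Y_{n,i}}(u)=\sum_{i=1}^n\log\{1-\tfrac{u^2}{2}E(Y_{n,i}^2)+r_{n,i}(u)\}$, bound the remainders by $L_n\to0$ together with $\max_{1\le i\le n}E(Y_{n,i}^2)\le L_n^{2/(2+\delta)}\to0$ (by the power-mean/Jensen inequality), and conclude via the elementary fact that $\prod_{i}(1+a_{n,i})\to e^{a}$ whenever $\sum_i a_{n,i}\to a$ and $\max_i|a_{n,i}|\to0$, giving $\prod_{i}\phi_{Y_{n,i}}(u)\to e^{-u^2/2}$ and hence the claim by Lévy's continuity theorem. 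Either route is routine; the main care required is simply tracking the normalization $s_n$ correctly and assembling the classical pieces.
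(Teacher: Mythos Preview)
Your argument is correct and is the standard textbook derivation of Lyapunov's theorem from Lindeberg--Feller. Note, however, that the paper does not actually supply a proof of this lemma: it is listed among the auxiliary results in the appendix and is simply stated as a classical fact (alongside other cited results such as Lemma~\ref{lem:cdf-uniform-convergence} and Lemma~\ref{lem:WLLN}), so there is no ``paper's own proof'' to compare against. Your reduction via the pointwise bound $Y_{n,i}^2\,\ind(|Y_{n,i}|>\epsilon)\le \epsilon^{-\delta}|Y_{n,i}|^{2+\delta}$ is exactly the classical route, and the alternative characteristic-function sketch you outline is also standard; either would be an acceptable justification had the paper required one.
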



\begin{lemma}[{{\citealp[Theorem 2.6.1]{lehmann1999elements}}}]
 \label{lem:cdf-uniform-convergence} If a sequence of cumulative
distribution functions $H_n$ tends to a continuous cdf $H$, then
$H_n(x)$ converges to $H(x)$ uniformly in $x$.
\end{lemma}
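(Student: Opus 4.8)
The plan is to exploit the monotonicity of cumulative distribution functions, together with the uniform continuity that a continuous cdf automatically enjoys on all of $\mathbb{R}$, so as to reduce the claimed uniform convergence to convergence at finitely many points (this is the classical P\'olya uniform convergence theorem). Fix $\epsilon > 0$. Since $H$ is continuous and nondecreasing with $H(-\infty) = 0$ and $H(+\infty) = 1$, the intermediate value theorem lets me choose a finite grid $x_1 < x_2 < \cdots < x_k$ with $H(x_j) = j/(k+1)$ for a fixed integer $k$ satisfying $k+1 > 1/\epsilon$; then $H(x_1) < \epsilon$, $1 - H(x_k) < \epsilon$, and $H(x_{j+1}) - H(x_j) < \epsilon$ for every $j$.

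For any $x$ in a bracket $[x_j, x_{j+1}]$, monotonicity of both $H_n$ and $H$ gives the sandwich $H_n(x_j) - H(x_{j+1}) \le H_n(x) - H(x) \le H_n(x_{j+1}) - H(x_j)$, and inserting $\pm H(x_j)$ and $\pm H(x_{j+1})$ on the right and left respectively, together with the grid-spacing bound, yields $|H_n(x) - H(x)| \le \max_{1 \le l \le k} |H_n(x_l) - H(x_l)| + \epsilon$ for all $x \in [x_1, x_k]$. On the left tail $x < x_1$ we have $H(x) \in [0, \epsilon)$ and $0 \le H_n(x) \le H_n(x_1)$, while on the right tail $x > x_k$ we have $H(x) \in (1-\epsilon, 1]$ and $H_n(x_k) \le H_n(x) \le 1$; since $H_n(x_1) \to H(x_1)$ and $H_n(x_k) \to H(x_k)$, for all $n$ large enough both tails contribute at most $2\epsilon$ to $|H_n(x) - H(x)|$.

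Finally, because the grid has only finitely many nodes, the pointwise hypothesis $H_n(x_l) \to H(x_l)$ gives $\max_{1 \le l \le k} |H_n(x_l) - H(x_l)| \to 0$, so for $n$ sufficiently large this maximum is below $\epsilon$; combining with the bracket and tail estimates yields $\sup_{x \in \mathbb{R}} |H_n(x) - H(x)| \le 2\epsilon$, and letting $\epsilon \downarrow 0$ proves the claim. There is no genuine obstacle here — the argument is entirely standard; the only point requiring a little care is the separate treatment of the two tails, and the essential structural input is that cdfs are monotone, so that uniform control on a finite grid propagates to the whole real line.
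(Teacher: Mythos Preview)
Your proof is correct and is exactly the classical P\'olya argument. The paper does not supply its own proof of this lemma at all: it is stated as an auxiliary result with a citation to \citet[Theorem 2.6.1]{lehmann1999elements}, and no proof appears in Section~\ref{sec:aux-lemmas}. So there is nothing to compare against beyond noting that your write-up is the standard textbook proof the citation points to.
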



\begin{lemma}[{{\citealp[Theorem 2.2]{mammen2012bootstrap}}}]
 \label{lem:bootstrap-noniid} Consider a sequence $Y_{n,1},\ldots,Y_{n,n}$
of independent random variables with distribution $P_{n,i}$. For
a function $g_n$ define $\hat{T}_n=n^{-1}\sum_{i=1}^ng_n(Y_{n,i})$.
Consider a bootstrap sample $Y_{n,1}^*,\ldots,Y_{n,n}^*$ and
define $\hat{T}_n^*=n^{-1}\sum_{i=1}^ng_n(Y_{n,i}^*)$.
Then for every sequence $t_n$ the following assertions are equivalent: 
\begin{enumerate}
\item[(i)] There exists $\sigma_n$ such that for every $\epsilon>0$ 
\begin{align}
\sup_{1\leq i\leq n}P\Big\{\Big|\frac{g_n(Y_{n,i})-t_n}{n\sigma_n}\Big|\geq\epsilon\Big\} & \to0,\label{eq:bootstrap-noniid-condition1}\\
\sum_{i=1}^n\Big(E\Big[\frac{g_n(Y_{n,i})-t_n}{n\sigma_n}1\Big\{\Big|\frac{g_n(Y_{n,i})-t_n}{n\sigma_n}\Big|\leq\epsilon\Big\}\Big]\Big)^2 & \to0,\label{eq:bootstrap-noniid-condition2}\\
\sup_{t\in\mathbb{R}}|P(\hat{T}_n-t_n\leq t)-\Phi(t)| & \to0.\label{eq:bootstrap-noniid-condition3}
\end{align}

\item[(ii)] Bootstrap works: 
\begin{align*}
\sup_{t\in\mathbb{R}}|P(\hat{T}_n^*-\hat{T}_n\leq t\mid Y_{n,1},\ldots,Y_{n,n})-P(\hat{T}_n-t_n\leq t)| & \stackrel{P}{\to}0.
\end{align*}

\end{enumerate}
\end{lemma}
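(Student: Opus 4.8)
The plan is to prove the equivalence by relating both (i) and (ii) to the classical triangular-array central limit theorem (Lemma~\ref{lem:lyapunov-clt} and its Lindeberg--Feller converse), once in its unconditional form and once conditionally on the data. Write $W_{n,i}:=g_n(Y_{n,i})-t_n$, so that $\hat T_n-t_n=n^{-1}\sum_{i=1}^nW_{n,i}$, and observe that the three displays in (i) say precisely that the row-independent array $\{W_{n,i}/(n\sigma_n)\}_i$ is uniformly asymptotically negligible, has negligible truncated first moments, and that its (scaled) row-sum is asymptotically $N(0,1)$. By the converse Lindeberg--Feller theorem for negligible arrays --- under (i).1 any subsequential limit of $\hat T_n-t_n$ is infinitely divisible, and the Gaussian limit in (i).3 forces the accompanying law to be purely Gaussian --- conditions (i).1--(i).3 are equivalent to the pair: for every $\epsilon>0$, $(n\sigma_n)^{-2}\sum_iE\{W_{n,i}^2\ind(|W_{n,i}|\le\epsilon n\sigma_n)\}\to1$ (so that $\sigma_n$ is, up to $o(1)$, the standard deviation scale of $\hat T_n-t_n$) and $(n\sigma_n)^{-2}\sum_iE\{W_{n,i}^2\ind(|W_{n,i}|>\epsilon n\sigma_n)\}\to0$ (the Lindeberg tail condition). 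I will use this reformulation freely.

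For (i)$\Rightarrow$(ii), note that the centered bootstrap statistic $\hat T_n^*-\hat T_n=n^{-1}\sum_{i=1}^n\{g_n(Y_{n,i}^*)-\hat T_n\}$ is, conditional on the data, a sum of $n$ i.i.d.\ mean-zero variables drawn from the empirical law of $\{W_{n,j}-(\hat T_n-t_n)\}_{j=1}^n$, with conditional variance $\hat\tau_n^2=n^{-2}\sum_{j=1}^n\{W_{n,j}-(\hat T_n-t_n)\}^2$. The core of this direction is to deduce from the reformulation above the two in-probability facts $\hat\tau_n^2/\sigma_n^2\stackrel{P}{\to}1$ and, for every $\epsilon>0$, $(n\sigma_n)^{-2}\sum_{j=1}^n\{W_{n,j}-(\hat T_n-t_n)\}^2\ind(|W_{n,j}-(\hat T_n-t_n)|>\epsilon n\sigma_n)\stackrel{P}{\to}0$. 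Both follow by comparing the relevant empirical sums with their expectations: the truncated parts concentrate by a one-line variance bound using the independence of the $Y_{n,i}$, the untruncated tails are discarded using the negligibility (i).1, and the bias incurred by recentering at $\hat T_n-t_n$ rather than at $0$ is absorbed using (i).2. The conditional Lindeberg CLT then gives $\sup_{t}|P^*\{(\hat T_n^*-\hat T_n)/\hat\tau_n\le t\}-\Phi(t)|\stackrel{P}{\to}0$, with uniformity supplied by Lemma~\ref{lem:cdf-uniform-convergence}, and combining with $\hat\tau_n/\sigma_n\stackrel{P}{\to}1$ and (i).3 yields the conclusion of (ii).

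For (ii)$\Rightarrow$(i) I would run this argument in reverse. Bootstrap consistency, together with the fact that for each realization of the data the bootstrap law is that of a normalized row-sum of an i.i.d.\ (hence negligible, once tightness is extracted) array, forces along any subsequence the conditional bootstrap law to converge in probability to a fixed infinitely divisible law that also describes the limit of $\hat T_n-t_n$; the converse half of the triangular-array CLT then forces that law to be $N(0,1)$ and forces the empirical negligibility, second-moment, and Lindeberg conditions to hold in probability, which transfer back to the population statements (i).1 and (i).2 by independence and concentration, while the limit law being $N(0,1)$ is exactly (i).3. The main obstacle is precisely this reverse implication: one must show that bootstrap consistency \emph{forces} the original statistic to be asymptotically Gaussian rather than asymptotically stable, which requires the full converse CLT for negligible triangular arrays applied conditionally, plus a careful passage between the random empirical-measure quantities and the deterministic population quantities. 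The forward direction, by contrast, is routine modulo the bookkeeping of the recentering bias in (i).2 and the truncation interchange enabled by (i).1.
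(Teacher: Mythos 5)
The paper does not actually prove this lemma: it is imported verbatim as \citet[Theorem 2.2]{mammen2012bootstrap} and used as a black box, so there is no internal proof to compare yours against. Your sketch does follow the standard route for results of this type (a conditional Lindeberg--Feller argument for (i)$\Rightarrow$(ii), the converse CLT for negligible triangular arrays for (ii)$\Rightarrow$(i)), which is the spirit of Mammen's original argument, so the overall strategy is not unreasonable.

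However, the reformulation you announce at the outset and propose to ``use freely'' is false, and the error is load-bearing. You claim that \eqref{eq:bootstrap-noniid-condition1}--\eqref{eq:bootstrap-noniid-condition3} are jointly equivalent to the pair ``truncated second moments sum to $1$'' plus ``Lindeberg tails vanish.'' Neither implication holds. In one direction, the general normal-convergence criterion for a UAN row-independent array controls the \emph{sum} of the truncated first moments $\sum_iE[\xi_{n,i}\ind(|\xi_{n,i}|\le\epsilon)]$, whereas \eqref{eq:bootstrap-noniid-condition2} controls the sum of their \emph{squares}; neither quantity dominates the other (truncated means all equal to $n^{-3/4}$ satisfy \eqref{eq:bootstrap-noniid-condition2} but not the centering condition; one mean equal to $1$ and the rest $-1/(n-1)$ does the opposite). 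Condition \eqref{eq:bootstrap-noniid-condition2} is precisely the extra, non-redundant heterogeneity hypothesis that makes the conditional bootstrap variance consistent, so a proof treating it as a consequence of asymptotic normality collapses exactly at the step $\hat\tau_n^2/\sigma_n^2\stackrel{P}{\to}1$, and (ii)$\Rightarrow$\eqref{eq:bootstrap-noniid-condition2} cannot come from the converse CLT alone. In the other direction, \eqref{eq:bootstrap-noniid-condition1} and \eqref{eq:bootstrap-noniid-condition3} do not even guarantee finite second moments of $g_n(Y_{n,i})$ (the lemma is moment-free), so your untruncated Lindeberg-type population conditions need not be well defined; Mammen's proof works with truncated variables and tail probabilities throughout. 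Finally, the reverse implication --- which you yourself identify as the main obstacle --- is left entirely as a plan; the passage from in-probability statements about empirical quantities back to the deterministic conditions \eqref{eq:bootstrap-noniid-condition1}--\eqref{eq:bootstrap-noniid-condition2} is the technical heart of the theorem and is not supplied. As written, this is an outline built on an incorrect central equivalence rather than a proof.
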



\begin{lemma}[{{\citealp[Theorem 1.8 C]{serfling2009approximation}}}]
 \label{lem:WLLN} Let $X_1,X_2,\ldots$ be uncorrelated with
means $\mu_1,\mu_2,\ldots$ and variances $\sigma_1^2,\sigma_2^2,\ldots$.
If $\sum_{i=1}^n\sigma_i^2=o(n^{-2})$, $n\to\infty$, then
\[
\frac{1}{n}\sum_{i=1}^nX_i-\frac{1}{n}\sum_{i=1}^n\mu_i\stackrel{P}{\to}0.
\]

\end{lemma}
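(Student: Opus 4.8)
The plan is to reduce the statement to a one-line application of Chebyshev's inequality, which is the classical route for a weak law of large numbers under uncorrelatedness. First I would center the average and set
\[
S_n := \frac{1}{n}\sum_{i=1}^n X_i - \frac{1}{n}\sum_{i=1}^n \mu_i = \frac{1}{n}\sum_{i=1}^n (X_i - \mu_i),
\]
so that $\E(S_n)=0$. The one substantive computation is the variance of $S_n$: since the $X_i$ are uncorrelated, every cross term $\cov(X_i,X_j)$ with $i\neq j$ vanishes, hence
\[
\var(S_n) = \frac{1}{n^2}\sum_{i=1}^n \var(X_i) = \frac{1}{n^2}\sum_{i=1}^n \sigma_i^2 .
\]

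Next I would feed in the hypothesis on $\sum_{i=1}^n\sigma_i^2$ to obtain $\var(S_n)\to 0$ as $n\to\infty$; this is exactly the form in which the condition is invoked in the proofs of Corollaries \ref{cor:bootstrap-taukendall} and \ref{cor:bootstrap-tauap}, where one verifies $n^{-2}\sum_{i=1}^n\sigma_i^2\to 0$ before applying this lemma. Having a mean-zero sequence whose variance tends to zero, I would conclude $S_n\stackrel{P}{\to}0$ by Lemma \ref{lem:var-Pconverge}; equivalently, Chebyshev's inequality gives $\P(|S_n|\geq\epsilon)\leq \epsilon^{-2}\var(S_n)\to 0$ for every $\epsilon>0$.

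There is essentially no obstacle: the result is the standard weak law of large numbers for uncorrelated summands, and the only point needing a line of justification is the disappearance of the off-diagonal terms in $\var(S_n)$, which is the definition of "uncorrelated." For completeness one may also simply cite \citet[Theorem~1.8~C]{serfling2009approximation}.
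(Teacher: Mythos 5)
Your proof is correct, and it is the standard (indeed essentially the only) argument: center, use uncorrelatedness to kill the cross terms in $\var(S_n)=n^{-2}\sum_{i=1}^n\sigma_i^2$, and finish with Chebyshev. The paper itself does not prove this lemma; it only cites \citet[Theorem~1.8~C]{serfling2009approximation}, so your write-up supplies the proof the paper omits. One point worth flagging: the hypothesis as printed, $\sum_{i=1}^n\sigma_i^2=o(n^{-2})$, is evidently a typo for $o(n^{2})$ --- that is the condition in Serfling's theorem, it is what is actually verified when the lemma is invoked in the proofs of Corollaries \ref{cor:bootstrap-taukendall} and \ref{cor:bootstrap-tauap}, and it is exactly what makes $\var(S_n)=n^{-2}\sum_{i=1}^n\sigma_i^2\to0$ in your argument. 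Your proof goes through under either reading (the printed condition is strictly stronger), and you have implicitly worked with the intended one.
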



\begin{lemma}[Bound on the partial sum of harmonic series]
 \label{lem:bound-harmonic-sum} Denote $\varphi(n)=\sum_{k=1}^nk^{-1}$.
Then for any two integers $m,n$ such that $1\leq m\leq n$, 
\begin{align}
\log\frac{n+1}{m+1} & \leq\varphi(n)-\varphi(m)\leq\log\frac{n}{m},\label{eq:bound-harmonic-sum-1}\\
\log(n+1) & \leq\varphi(n)\leq1+\log n.\label{eq:bound-harmonic-sum-2}
\end{align}

\end{lemma}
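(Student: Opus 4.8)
The plan is to compare the sum $\varphi(n)=\sum_{k=1}^n k^{-1}$ against the integral of $x\mapsto 1/x$, exploiting that this function is strictly decreasing on $(0,\infty)$. The basic building block is a pair of elementary bounds: for every integer $k\ge 1$,
\[
\int_k^{k+1}\frac{dx}{x}\le\frac{1}{k},
\]
which holds because $1/x\le 1/k$ on $[k,k+1]$; and for every integer $k\ge 2$,
\[
\frac{1}{k}\le\int_{k-1}^k\frac{dx}{x},
\]
which holds because $1/x\ge 1/k$ on $[k-1,k]$.

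First I would establish \eqref{eq:bound-harmonic-sum-1}. Writing $\varphi(n)-\varphi(m)=\sum_{k=m+1}^n k^{-1}$ (an empty sum, hence $0$, when $m=n$, in which case both sides also vanish), I sum the first building block over $k=m+1,\ldots,n$ to obtain $\varphi(n)-\varphi(m)\ge\int_{m+1}^{n+1}\frac{dx}{x}=\log\frac{n+1}{m+1}$, and sum the second building block over the same range (legitimate since $k\ge m+1\ge 2$ there) to obtain $\varphi(n)-\varphi(m)\le\int_m^n\frac{dx}{x}=\log\frac{n}{m}$. This is precisely \eqref{eq:bound-harmonic-sum-1}.

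Then \eqref{eq:bound-harmonic-sum-2} follows in the same vein. For the lower bound, summing $\int_k^{k+1}\frac{dx}{x}\le k^{-1}$ over $k=1,\ldots,n$ gives $\varphi(n)\ge\int_1^{n+1}\frac{dx}{x}=\log(n+1)$. For the upper bound I isolate the first term, $\varphi(n)=1+\sum_{k=2}^n k^{-1}\le 1+\int_1^n\frac{dx}{x}=1+\log n$, applying the second building block on $k=2,\ldots,n$. (Alternatively, \eqref{eq:bound-harmonic-sum-2} can be read off from \eqref{eq:bound-harmonic-sum-1} with $m=1$.)

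There is no serious obstacle here; the argument is entirely routine. The only points requiring a little care are the boundary cases — the $k=1$ term, for which the bound $k^{-1}\le\int_{k-1}^k dx/x$ fails and must be handled separately in \eqref{eq:bound-harmonic-sum-2}, and the degenerate case $m=n$ in \eqref{eq:bound-harmonic-sum-1}, where the claimed inequalities reduce to $0\le 0\le 0$.
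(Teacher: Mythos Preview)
Your proposal is correct and follows essentially the same integral-comparison argument as the paper's proof, which bounds $\sum_{k=m+1}^n k^{-1}$ between $\int_{m+1}^{n+1}x^{-1}\,dx$ and $\int_m^n x^{-1}\,dx$, and handles \eqref{eq:bound-harmonic-sum-2} by the same device with the first term split off. Your version is slightly more explicit about the elementary per-term inequalities and the edge cases, but the method is identical.
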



\begin{lemma}
\label{lem:bound-sum-power} For any two positive real numbers $a,b$
and real number $p>0$, we have 
\begin{align*}
(a+b)^p & \leq\xi(p)(a^p+b^p),
\end{align*}
where 
\begin{align*}
\xi(p) & =\begin{cases}
2^{p-1} & \mbox{if }p\geq1,\\
1 & \mbox{if }0<p<1.
\end{cases}
\end{align*}

\end{lemma}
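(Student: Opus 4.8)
The plan is to dispose of the two regimes $p\geq 1$ and $0<p<1$ separately, in each case reducing the two-variable claim to a one-variable fact about the power function by homogeneity (scaling so that $a+b=1$).

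First, for $p\geq 1$, I would invoke convexity of $t\mapsto t^p$ on $[0,\infty)$. Applying the definition of convexity at the midpoint $\tfrac{a+b}{2}=\tfrac12 a+\tfrac12 b$ gives $\bigl(\tfrac{a+b}{2}\bigr)^p\leq \tfrac12 a^p+\tfrac12 b^p$, and multiplying through by $2^p$ yields
\[
(a+b)^p\leq 2^{p-1}(a^p+b^p)=\xi(p)(a^p+b^p).
\]

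Next, for $0<p<1$, I would use that $t^p\geq t$ for every $t\in[0,1]$ (immediate since $t^{p-1}\geq 1$ on $(0,1]$). Dividing the target inequality by $(a+b)^p>0$, it suffices to show $\bigl(\tfrac{a}{a+b}\bigr)^p+\bigl(\tfrac{b}{a+b}\bigr)^p\geq 1$. Setting $u=\tfrac{a}{a+b}$ and $v=\tfrac{b}{a+b}$, we have $u,v\in[0,1]$ with $u+v=1$, hence $u^p+v^p\geq u+v=1$, which gives $(a+b)^p\leq a^p+b^p=\xi(p)(a^p+b^p)$.

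There is no genuine obstacle here; the only point deserving a little care is matching the auxiliary estimate to the regime — convexity (so the constant $2^{p-1}$ appears) when $p\geq 1$ versus the subadditivity bound $t^p\geq t$ on $[0,1]$ when $p<1$ — and, if one wishes to remark on sharpness, noting that the stated $\xi(p)$ is attained (at $a=b$ when $p\geq 1$, and in the limit $b\to 0^+$ when $p<1$), so the inequality cannot be tightened within this form.
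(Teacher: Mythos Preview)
Your proof is correct and complete. The paper actually states this lemma without proof (it is listed among the auxiliary lemmas in Section~\ref{sec:aux-lemmas} as a standard elementary inequality, with neither a proof nor a reference supplied), so there is nothing to compare against; your convexity argument for $p\geq 1$ and subadditivity argument via $t^p\geq t$ on $[0,1]$ for $0<p<1$ give exactly what is needed, and your remark on sharpness is a nice bonus.
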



\begin{lemma}
\label{lem:sum-weight}We have
\begin{equation}
\sum_{i=1}^n\sum_{j=1}^n\sum_{k=1}^n\Big\{\frac{\ind(j<i)}{i-1}-\frac{\ind(j>i)}{j-1}\Big\}\Big\{\frac{\ind(k<i)}{i-1}-\frac{\ind(k>i)}{k-1}\Big\}=(n-1)+\varphi(n-1),\label{eq:sum-weight}
\end{equation}
where we define $0/0:=0$ and $\varphi(n):=\sum_{k=1}^nk^{-1}$.
\end{lemma}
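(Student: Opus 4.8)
The plan is to collapse the triple sum into a single sum of squares and then evaluate that sum with two elementary identities for partial sums of the harmonic series. For fixed $i$, put $w_i(j):=\ind(j<i)/(i-1)-\ind(j>i)/(j-1)$, with the convention $0/0:=0$; then the left-hand side of \eqref{eq:sum-weight} equals $\sum_{i=1}^n\big(\sum_{j=1}^n w_i(j)\big)^2$. First I would compute the inner sum: for $2\le i\le n$ the terms with $j<i$ contribute $(i-1)\cdot(i-1)^{-1}=1$, the terms with $j>i$ telescope to $\varphi(n-1)-\varphi(i-1)$, and the $j=i$ term vanishes, so $\sum_{j=1}^n w_i(j)=1-\{\varphi(n-1)-\varphi(i-1)\}$; for $i=1$ every indicator $\ind(j<1)$ is $0$, so the inner sum equals $-\varphi(n-1)$. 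After the reindexing $m=i-1$, the quantity to evaluate becomes $\varphi(n-1)^2+\sum_{m=1}^{n-1}\{1-\varphi(n-1)+\varphi(m)\}^2$.

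The next step is to expand the square, which reduces everything to $\sum_{m=1}^{n-1}\varphi(m)$ and $\sum_{m=1}^{n-1}\varphi(m)^2$. The first is the standard identity $\sum_{m=1}^M\varphi(m)=(M+1)\varphi(M)-M$, obtained by interchanging the order of summation. For the second I would prove $\sum_{m=1}^M\varphi(m)^2=(M+1)\varphi(M)^2-(2M+1)\varphi(M)+2M$ as a short auxiliary lemma, either by induction on $M$ using $\varphi(M+1)=\varphi(M)+(M+1)^{-1}$, or, more transparently, by writing $\varphi(m)^2=\sum_{1\le j,l\le m}(jl)^{-1}$, swapping sums to get $\sum_{1\le j,l\le M}\{M-\max(j,l)+1\}/(jl)$, and then computing $\sum_{j,l\le M}\max(j,l)/(jl)$ by splitting into the cases $j<l$, $j>l$, and $j=l$.

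Finally, writing $H:=\varphi(n-1)$ and substituting $M=n-1$, the target sum becomes $H^2+(n-1)(1-H)^2+2(1-H)\{nH-(n-1)\}+\{nH^2-(2n-1)H+2(n-1)\}$; I expect the $H^2$ contributions to cancel entirely, the $H$ contributions to collapse to the single term $H$, and the constants to collapse to $n-1$, yielding $(n-1)+\varphi(n-1)$ as claimed. I would also check the edge cases $n=1,2$ directly to confirm that the $0/0$ convention and the empty-sum conventions ($\varphi(0)=0$) are handled consistently.

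The only genuinely nontrivial ingredient is the closed form for $\sum_{m=1}^M\varphi(m)^2$; the rest is bookkeeping and a one-line algebraic cancellation, so I would isolate that identity as a lemma and keep the main computation short.
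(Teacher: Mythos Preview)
Your argument is correct, but it follows a different route from the paper. You first collapse the $j$- and $k$-sums into $\big(\sum_j w_i(j)\big)^2$, compute the inner sum in closed form, and then appeal to the identities $\sum_{m=1}^M\varphi(m)=(M+1)\varphi(M)-M$ and $\sum_{m=1}^M\varphi(m)^2=(M+1)\varphi(M)^2-(2M+1)\varphi(M)+2M$; the algebraic cancellation you describe does go through exactly as stated. The paper instead expands the product $\{\,\cdot\,\}\{\,\cdot\,\}$ into four cross terms $T_1-T_2-T_3+T_4$ (corresponding to the four sign combinations of $j<i$ vs.\ $j>i$ and $k<i$ vs.\ $k>i$) and evaluates each $T_\ell$ by a direct change of summation order, obtaining $T_1=n-1$, $T_2=T_3=(n-1)-\varphi(n-1)$, $T_4=2(n-1)-\varphi(n-1)$. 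The paper's route is slightly more elementary in that it never needs the closed form for $\sum_m\varphi(m)^2$; your route is conceptually cleaner (the triple sum \emph{is} a sum of squares) but front-loads the work into that auxiliary identity.
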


\begin{lemma}
\label{lem:bound-normal-cdf} Define $\Phi^{c}(x)=\frac{1}{\sqrt{2\pi}}\int_{x}^{\infty}\exp(-\frac{t^2}{2})dt$ to be the complement distribution function for the standard Gaussian. We
have the following bounds for $\Phi^{c}(x)$:
\begin{eqnarray*}
\frac{1}{\sqrt{2\pi}}\left(\frac{1}{x}-\frac{1}{x^{3}}\right)\exp(-\frac{x^2}{2}) & \leq\Phi^{c}(x)\leq & \frac{1}{\sqrt{2\pi}}\frac{1}{x}\exp(-\frac{x^2}{2}),\qquad\mbox{if }x>0,\\
1+\frac{1}{\sqrt{2\pi}}\frac{1}{x}\exp(-\frac{x^2}{2}) & \leq\Phi^{c}(x)\leq & 1+\frac{1}{\sqrt{2\pi}}\left(\frac{1}{x}-\frac{1}{x^{3}}\right)\exp(-\frac{x^2}{2}),\qquad\mbox{if }x<0.
\end{eqnarray*}

\end{lemma}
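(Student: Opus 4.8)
The plan is to treat the two cases $x>0$ and $x<0$ separately, reducing the latter to the former by the reflection symmetry of the standard Gaussian density; there is no deep idea here, only two integrations by parts and careful sign bookkeeping.

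For $x>0$ I would work directly with $\Phi^{c}(x)=\frac{1}{\sqrt{2\pi}}\int_{x}^{\infty}e^{-t^{2}/2}\,dt$ and exploit the identity $t\,e^{-t^{2}/2}=-\tfrac{d}{dt}e^{-t^{2}/2}$. Writing $e^{-t^{2}/2}=\tfrac{1}{t}\cdot t\,e^{-t^{2}/2}$ and integrating by parts once gives $\int_{x}^{\infty}e^{-t^{2}/2}\,dt=\tfrac{1}{x}e^{-x^{2}/2}-\int_{x}^{\infty}\tfrac{1}{t^{2}}e^{-t^{2}/2}\,dt$; since the remaining integral is nonnegative, this already yields the upper bound $\Phi^{c}(x)\le\frac{1}{\sqrt{2\pi}}\tfrac{1}{x}e^{-x^{2}/2}$. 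Integrating $\int_{x}^{\infty}\tfrac{1}{t^{2}}e^{-t^{2}/2}\,dt$ by parts a second time (now $\tfrac{1}{t^{2}}e^{-t^{2}/2}=\tfrac{1}{t^{3}}\cdot t\,e^{-t^{2}/2}$) produces $\int_{x}^{\infty}\tfrac{1}{t^{2}}e^{-t^{2}/2}\,dt=\tfrac{1}{x^{3}}e^{-x^{2}/2}-3\int_{x}^{\infty}\tfrac{1}{t^{4}}e^{-t^{2}/2}\,dt$, so that $\int_{x}^{\infty}e^{-t^{2}/2}\,dt=(\tfrac{1}{x}-\tfrac{1}{x^{3}})e^{-x^{2}/2}+3\int_{x}^{\infty}\tfrac{1}{t^{4}}e^{-t^{2}/2}\,dt\ge(\tfrac{1}{x}-\tfrac{1}{x^{3}})e^{-x^{2}/2}$, which is the lower bound (valid for every $x>0$, the case $0<x<1$ being trivial anyway). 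An equivalent route, if preferred, is to set $g(x)=\Phi^{c}(x)-\frac{1}{\sqrt{2\pi}}(\tfrac{1}{x}-\tfrac{1}{x^{3}})e^{-x^{2}/2}$, note $g(x)\to0$ as $x\to\infty$, and compute $g'(x)=-\frac{1}{\sqrt{2\pi}}\tfrac{3}{x^{4}}e^{-x^{2}/2}<0$ to conclude $g>0$ on $(0,\infty)$, with the analogous monotone function handling the upper bound.

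For $x<0$ I would first establish the reflection identity $\Phi^{c}(x)=1-\Phi^{c}(-x)$, which follows from $\frac{1}{\sqrt{2\pi}}\int_{-\infty}^{\infty}e^{-t^{2}/2}\,dt=1$ and the substitution $t\mapsto-t$ in $\int_{-\infty}^{x}e^{-t^{2}/2}\,dt$. Applying the already-proved $x>0$ bounds to $\Phi^{c}(-x)$ with $-x>0$ and subtracting from $1$ gives $1-\frac{1}{\sqrt{2\pi}}\tfrac{1}{-x}e^{-x^{2}/2}\le\Phi^{c}(x)\le1-\frac{1}{\sqrt{2\pi}}\big(\tfrac{1}{-x}-\tfrac{1}{(-x)^{3}}\big)e^{-x^{2}/2}$; the sign substitutions $\tfrac{1}{-x}=-\tfrac{1}{x}$ and $\tfrac{1}{(-x)^{3}}=-\tfrac{1}{x^{3}}$ then turn these into exactly the claimed inequalities.

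I do not expect a genuine obstacle. The only place demanding attention is the $x<0$ case, where one must check that the lower and upper roles swap correctly under $\Phi^{c}(x)=1-\Phi^{c}(-x)$ and that $1-\frac{1}{\sqrt{2\pi}}\tfrac{1}{-x}e^{-x^{2}/2}$ and $1-\frac{1}{\sqrt{2\pi}}(\tfrac{1}{-x}-\tfrac{1}{(-x)^{3}})e^{-x^{2}/2}$ simplify to $1+\frac{1}{\sqrt{2\pi}}\tfrac{1}{x}e^{-x^{2}/2}$ and $1+\frac{1}{\sqrt{2\pi}}(\tfrac{1}{x}-\tfrac{1}{x^{3}})e^{-x^{2}/2}$ respectively.
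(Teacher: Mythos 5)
Your proof is correct. The paper states this classical Mills-ratio bound without proof (it is listed among the auxiliary lemmas with no accompanying argument), and your two integrations by parts for $x>0$ plus the reflection identity $\Phi^{c}(x)=1-\Phi^{c}(-x)$ for $x<0$ constitute the standard and complete derivation; the sign bookkeeping in the negative case checks out exactly as you describe.
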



\subsection{Proof of auxiliary lemmas}


\begin{proof}[Proof of Lemma \ref{lem:moment-bound}]
Define $\bi=(i_1,\ldots,i_m)$, $\Xbi=(X_{i_1},\ldots,X_{i_m})$,
and $\bi_{-m}=(i_1,\ldots,i_{m-1})$.
\begin{enumerate}
\item[(i)]  By the definition of $h_{2;\bi}(\cdot)$ in (\ref{eq:def-h2}) we
have 
\begin{equation}
E\{h_{2;}(\Xbi)^2\}\leq2^{m+2}\Big[E\{h(\Xbi)^2\}+\sum_{l=1}^mE\{f_{\bi\backslash i_l}^{(l)}(X_{i_l})^2\}+(m-1)^2\theta^2(\bi)\Big].\label{eq:aux-proofuse-1}
\end{equation}
Jensen's inequality and the law of iterated expectation yield
\begin{equation}
E\{f_{\bi\backslash i_l}^{(l)}(X_{i_l})^2\}=E_{i_l}[E_{\bi\backslash i_l}\{h^{(l)}(X_{i_l};Y_1,\ldots Y_{m-1})\mid X_{i_l}\}^2]\leq E\{h(\Xbi)^2\}\label{eq:aux-proofuse-2}
\end{equation}
and 
\begin{equation}
\theta^2(\bi)\leq E\{h(\Xbi)^2\}.\label{eq:aux-proofuse-3}
\end{equation}
Equations (\ref{eq:aux-proofuse-1}), (\ref{eq:aux-proofuse-2}), and
(\ref{eq:aux-proofuse-3}) imply
\[
E\{h_{2;\bi}(\Xbi)^2\}\leq2^{m+2}\{1+m+(m-1)^2\}E\{h(\Xbi)^2\}.
\]
This proves (i).

\item[(ii)] We have 
\begin{equation}
E[\{f_{\bi_{-m}}^{(l)}(X_i)-\theta^{(l)}(i;\bi_{-m})\}^{4}]\leq2^{4}[E\{f_{\bi_{-m}}^{(l)}(X_i)^{4}\}+\theta^{(l)}(i;\bi_{-m})^{4}]\label{eq:aux-proofuse-4}
\end{equation}
By the definition of $f_{\bi_{-m}}^{(l)}(\cdot)$ in (\ref{eq:def-fl})
and Jensen's inequality we have
\begin{equation}
E\{f_{\bi_{-m}}^{(l)}(X_i)^{4}\}\leq E_i[E_{\bi_{-m}}\{h^{(l)}(X_i;Y_1,\ldots Y_{m-1})^{4}\mid X_i\}]=E\{h^{(l)}(X_i;X_{\bi_{-m}})^{4}\}.\label{eq:aux-proofuse-5}
\end{equation}
Jensen's inequality also implies that
\begin{equation}
\theta^{(l)}(i;\bi_{-m})^{4}=\{Eh^{(l)}(X_i;X_{\bi_{-m}}) \}^{4}\leq E\{h^{(l)}(X_i;X_{\bi_{-m}})^{4}\}.\label{eq:aux-proofuse-6}
\end{equation}
Combining (\ref{eq:aux-proofuse-4}) with (\ref{eq:aux-proofuse-5})
and (\ref{eq:aux-proofuse-6}) yields
\[
E[\{f_{\bi_{-m}}^{(l)}(X_i)-\theta^{(l)}(i;\bi_{-m})\}^{4}]\leq2^{4}E\{h^{(l)}(X_i;X_{\bi_{-m}})^{4}\}.
\]
This proves (ii).

\item[(iii)]  Consider $\bj:=(j_1,\ldots,j_m)$ with each $j_l\in[m]$.
Define $\Xbj:=(X_{j_1},\ldots,X_{j_m})$. By the definition of
$h_{2;\bi}(\cdot)$ in (\ref{eq:def-h2}) we have
\begin{equation}
E\{h_{2;\bi}(\Xbj)^2\}\leq2^{m+2}\Big[E\{h(\Xbj)^2\}+\sum_{l=1}^mE\{f_{\bi\backslash i_l}^{(l)}(X_{j_l})^2\}+(m-1)^2\theta^2(\bi)\Big].\label{eq:aux-proofuse-7}
\end{equation}
By the definition of $f_{\bi_{-m}}^{(l)}(\cdot)$ in (\ref{eq:def-fl}) and Jensen's inequality we have
\begin{equation}
E\{f_{\bi_{-m}}^{(l)}(X_{j_l})^2\}\leq E_{j_l}[E_{\bi_{-m}}\{h^{(l)}(X_{j_l};Y_1,\ldots Y_{m-1})^2\mid X_{j_l}\}]=E\{h^{(l)}(X_{j_l};X_{\bi_{-m}})^2\}.\label{eq:aux-proofuse-8}
\end{equation}
Combining (\ref{eq:aux-proofuse-7}), (\ref{eq:aux-proofuse-8}),
and (\ref{eq:aux-proofuse-3}) yields
\begin{align*}
E\{h_{2;\bi}(\Xbj)^2\} & \leq2^{m+2}\Big[E\{h(\Xbj)^2\}+\sum_{l=1}^mE\{h^{(l)}(X_{j_l};X_{\bi_{-m}})^2\}+(m-1)^2E\{h(\Xbi)^2\}\Big]\\
 & \leq2^{m+2}m^2\sup_{1\leq k_1,\ldots,k_m\leq n}E\{h(X_{k_1},\ldots,X_{k_m})^2\}.
\end{align*}
This proves (iii).
\end{enumerate}
The proof is thus finished.
\end{proof}




\begin{proof}[Proof of Lemma \ref{lem:bound-harmonic-sum}]
We have $\varphi(n)-\varphi(m)=\sum_{k=m+1}^nk^{-1}$. By integral
bound, we have 
\begin{align*}
\log\frac{n+1}{m+1} & =\int_{m+1}^{n+1}\frac{1}{x}dx\leq\sum_{k=m+1}^n\frac{1}{k}\leq\int_m^n\frac{1}{x}dx=\log\frac{n}{m},
\end{align*}
which yields (\ref{eq:bound-harmonic-sum-1}). We also have 
\begin{align*}
\log(n+1) & \leq\int_1^{n+1}\frac{1}{x}dx\leq\sum_{k=1}^n\frac{1}{k}\leq1+\int_1^n\frac{1}{x}dx\leq1+\log n,
\end{align*}
which yields (\ref{eq:bound-harmonic-sum-2}). The proof is thus finished.
\end{proof}


\begin{proof}[Proof of Lemma \ref{lem:sum-weight}]
By algebra we have
\begin{equation}
\sum_{i=1}^n\sum_{j=1}^n\sum_{k=1}^n\Big\{\frac{\ind(j<i)}{i-1}-\frac{\ind(j>i)}{j-1}\Big\}\Big\{\frac{\ind(k<i)}{i-1}-\frac{\ind(k>i)}{k_2-1}\Big\}=T_1-T_2-T_{3}+T_{4},\label{eq:sum-weight-proofuse-1}
\end{equation}
where
\begin{align*}
T_1 & =\sum_{i=1}^n\sum_{j=1}^n\sum_{k=1}^n\frac{\ind(j<i)}{i-1}\cdot\frac{\ind(k<i)}{i-1},~~T_2=\sum_{i=1}^n\sum_{j=1}^n\sum_{k=1}^n\frac{\ind(j<i)}{i-1}\cdot\frac{\ind(k>i)}{k-1},\\
T_{3} & =\sum_{i=1}^n\sum_{j=1}^n\sum_{k=1}^n\frac{\ind(j>i)}{j-1}\cdot\frac{\ind(k<i)}{i-1},~~T_{4}=\sum_{i=1}^n\sum_{j=1}^n\sum_{k=1}^n\frac{\ind(j>i)}{j-1}\cdot\frac{\ind(k>i)}{k-1}.
\end{align*}
For $T_1$ we have
\begin{equation}
T_1=\sum_{i=2}^n\sum_{j=1}^{i-1}\sum_{k=1}^{i-1}\frac{1}{(i-1)^2}=n-1.\label{eq:sum-weight-proofuse-2}
\end{equation}
For $T_2$ we have
\begin{equation}
T_2=\sum_{i=2}^{n-1}\sum_{j=1}^{i-1}\sum_{k=i+1}^n\frac{1}{i-1}\cdot\frac{1}{k-1}=\sum_{k=3}^n\sum_{i=2}^{k-1}\frac{1}{k-1}=\sum_{k=3}^n\Big(1-\frac{1}{k-1}\Big)=(n-1)-\varphi(n-1).\label{eq:sum-weight-proofuse-3}
\end{equation}
By symmetry $T_2=T_{3}$, so
\begin{equation}
T_{3}=(n-1)-\varphi(n-1).\label{eq:sum-weight-proofuse-4}
\end{equation}
For $T_{4}$ we have
\begin{equation}
T_{4}=\sum_{i=1}^{n-1}\sum_{j=i+1}^n\sum_{k=i+1}^n\frac{1}{j-1}\cdot\frac{1}{k-1}=\sum_{j=2}^n\sum_{k=j+1}^n\sum_{i=1}^{j-1}\frac{1}{j-1}\cdot\frac{1}{k-1}+\sum_{j=2}^n\sum_{k=2}^{j}\sum_{i=1}^{k-1}\frac{1}{j-1}\cdot\frac{1}{k-1}.\label{eq:sum-weight-proofuse-5}
\end{equation}
Note that
\begin{align}
\sum_{j=2}^n\sum_{k=j+1}^n\sum_{i=1}^{j-1}\frac{1}{j-1}\cdot\frac{1}{k-1} & =\sum_{j=2}^n\sum_{k=j+1}^n\frac{1}{k-1}=\sum_{k=3}^n\sum_{j=2}^{k-1}\frac{1}{k-1}=(n-1)-\varphi(n-1)\label{eq:sum-weight-proofuse-6}\\
~~{\rm and}~~\sum_{j=2}^n\sum_{k=2}^{j}\sum_{i=1}^{k-1}\frac{1}{j-1}\cdot\frac{1}{k-1} & =\sum_{j=2}^n\sum_{k=2}^{j}\frac{1}{j-1}=\sum_{j=2}^n1=n-1.\label{eq:sum-weight-proofuse-7}
\end{align}
Combining (\ref{eq:sum-weight-proofuse-5}) with (\ref{eq:sum-weight-proofuse-6})
and (\ref{eq:sum-weight-proofuse-7}) yields
\begin{equation}
T_{4}=2(n-1)-\varphi(n-1).\label{eq:sum-weight-proofuse-8}
\end{equation}
Equation (\ref{eq:sum-weight}) follows from (\ref{eq:sum-weight-proofuse-1}),
(\ref{eq:sum-weight-proofuse-2}), (\ref{eq:sum-weight-proofuse-3}),
and (\ref{eq:sum-weight-proofuse-8}). 

This completes the proof.
\end{proof}

\clearpage

\bibliographystyle{apalike}
\bibliography{Ustat}

\end{document}